\titleformat*{\paragraph}{\itshape}
\titlespacing*{\paragraph}{0pt}{3pt plus 1pt minus 1pt}{6pt plus 2pt minus 2pt}
\newcommand{\R}{\mathbb{R}}
\newcommand{\N}{\mathbb{N}}
\newcommand{\Z}{\mathbb{Z}}
\newcommand{\Ss}{\mathbb{S}}
\newcommand{\A}{\mathcal{A}}
\newcommand{\E}{\mathcal{E}}
\newcommand{\be}{\begin{equation}}
\newcommand{\ee}{\end{equation}}
\newcommand{\loc}{\mathrm{loc}}
\renewcommand{\div}{\mathop{\mathrm{div}}\nolimits}
\DeclareMathOperator{\supp}{supp}
\newcommand{\blank}{{\mkern 2mu\cdot\mkern 2mu}}
\DeclareMathOperator{\PV}{PV}
\newcommand{\scp}[2]{\left\langle #1, #2 \right\rangle}
\newcommand{\dd}[2]{\frac{\partial #1}{\partial #2}}
\newcommand{\set}[2]{\left\{ #1 \colon #2 \right\}}
\newtheorem{theorem}{Theorem}
\newtheorem{lemma}[theorem]{Lemma}
\newtheorem{proposition}[theorem]{Proposition}
\newtheorem{corollary}[theorem]{Corollary}
\newtheorem{conjecture}{Conjecture}
\theoremstyle{definition}
\newtheorem{definition}[theorem]{Definition}
\newtheorem{remark}[theorem]{Remark}
\newtheorem*{question}{Question}
\newtheorem*{convention}{Convention}
\theoremstyle{remark}
{}
\title{N\'eel walls with prescribed winding number and how a nonlocal term can change the energy landscape}
\author[1]{Radu Ignat}
\affil[1]{\small Institut de Math\'ematiques de Toulouse,
Universit\'e Paul Sabatier,
31062 Toulouse,
France. \authorcr
E-mail: Radu.Ignat@math.univ-toulouse.fr}
\author[2]{Roger Moser}
\affil[2]{\small Department of Mathematical Sciences,
University of Bath,
Bath BA2 7AY,
UK. \authorcr
E-mail: r.moser@bath.ac.uk}
\begin{document}

\maketitle

\abstract{We study a nonlocal Allen-Cahn type
problem for vector fields of unit length, arising from a
model for domain walls (called N\'eel walls) 
in ferromagnetism. We show that the nonlocal term
gives rise to new features in the energy landscape; in particular,
we prove existence of energy minimisers with prescribed winding number
that would be prohibited in a local model.

\bigskip\noindent
\textbf{Keywords:}
domain walls, Allen-Cahn, nonlocal, existence of minimizers, topological degree, concentration-compactness, micromagnetics 

}


\section{Introduction} \label{sect:intro}

\subsection{Background}

We study a model for one-dimensional transition layers, called N\'eel walls, that occur in thin ferromagnetic films. In the theory of micromagnetics,
the magnetisation of a ferromagnetic sample is described by a vector
field of unit length. In a typical model for N\'eel walls, the sample
can be assumed to be two-dimensional and the vector field is tangential,
which leads to a map with values in $\Ss^1$. We use a simplified model,
also studied by several authors (see e.g. \cite{Chermisi-Muratov:13, Cote-Ignat-Miot, DKO, DKMOreduced, DKMO_rep, Ignat_Knupfer, IO, Me1, Me2}),
where it is assumed that the
transition layers have a one-dimensional profile, described by a map
$m \colon \R \to \Ss^1$.
Our model is variational and the energy functional includes the Dirichlet integral, a multi-well potential, and a nonlocal term. The geometry of the problem allows us to define a topological degree (winding number) for the magnetisation that characterises the connected components of the relevant configuration space. Therefore, it is natural to study whether these connected components contain minimisers.

The corresponding problem for an Allen-Cahn type model (without
a nonlocal term) is well understood: most connected components of the relevant space do not contain minimisers (see {A}ppendix). We will
show that the nonlocal term in our model changes the situation. In the simplest case, we will prove existence of minimisers with any prescribed winding number. We also study another case where a more intricate scenario aises: depending on a parameter, we have existence or nonexistence of
minimisers for certain winding numbers.

\subsection{The variational problem}

We now describe the energy functional studied in this paper
and the spaces where we look for minimisers. Our functional
comprises three terms, coming from four different physical
phenomena: magnetic anisotropy, an external magnetic field,
the stray field generated by the magnetisation, and the
quantum-mechanical spin interaction. The last of these gives rise
to a term called exchange energy, which is modelled simply by the
Dirichlet functional. The effects of the anisotropy and
external field have the same general structure and are combined
in effective anisotropy term in our model.

\paragraph{Anisotropy.} Fix $h \ge 0$ with $h \not= 1$ and set $k = \min\{h, 1\}\in [0,1]$.
Define an anisotropy potential $W \colon \Ss^1 \to [0, \infty)$ by
\be
\label{aniso}
W(m) = \frac{1}{2} (m_1^2 - 2hm_1 + 2hk - k^2)=
\begin{cases}
\frac 1 2 (m_1-k)^2 & \text{if } k=h<1,\\
\frac 1 2 (m_1-k)^2+(h-1)(1-m_1) & \text{if } k=1<h,
\end{cases}
\ee
for $m = (m_1, m_2) \in \Ss^1$. If $h<1$, then $W$ has two wells on $\Ss^1$, at $(k, \pm \sqrt{1-k^2})$, while
in the case $h>1$, the potential
$W$ has one well on $\Ss^1$, at $(1,0)$. In both situations, if we write $m=(\cos \theta, \sin \theta)\in \Ss^1$,  then
we have a pattern of periodically distributed wells in
terms of the phase $\theta$ and $W$ grows quadratically (in $\theta$)
near these wells (see Lemma \ref{lem:gamma}).
This behaviour is essential for our arguments and it is for this
reason why we do not study the case $h = 1$ in this paper.
In physical terms,
$W$ represents a combination of the micromagnetic anisotropy
$m\mapsto m_1^2$, with easy axis parallel to the N\'eel walls,
and an external magnetic field $h_{ext}=h {\bf e_1}$ perpendicular
to the walls.

\paragraph{Stray field potential.} Let $$\R_+^2 = \R \times (0, \infty).$$
For a given map $m=(m_1, m_2) \colon \R \to \Ss^1$ such that $m_1 - k \in H^1(\R)$,
there exists a unique solution $u \in \dot{H}^1(\R_+^2)$,
called the stray field potential, of the boundary value problem
\begin{alignat}{2}
\Delta u & = 0 & \quad & \text{in $\R_+^2$}, \label{eqn:harmonic} \\
\dd{u}{x_2} & = - m_1' && \text{on $\R \times \{0\}$}, \label{eqn:boundary_data}
\end{alignat}
where $m_1'$ denotes the derivative of $m_1$. Here $\dot{H}^1(\R_+^2)$ denotes the completion of
$C_0^\infty(\overline{\R_+^2})$ with respect to the inner
product $\scp{\blank}{\blank}_{\dot{H}^1(\R_+^2)}$, given by
\[
\scp{\phi}{\psi}_{\dot{H}^1(\R_+^2)} = \int_{\R_+^2} \nabla \phi \cdot \nabla \psi \, dx
\]
for $\phi, \psi \in C_0^\infty(\overline{\R_+^2})$. Equivalently, $u$ satisfies the identity \be
\label{weak_stray}
\int_{\R^2_+}\nabla u\cdot \nabla \zeta\, dx=\int_{-\infty}^\infty m_1'\zeta(\blank, 0)\, dx_1 \quad \text{for every } \zeta\in C^\infty_0(\R^2),
\ee
{where $x=(x_1, x_2)$}. The elements of $\dot{H}^1(\R_+^2)$ are not functions
(not even in the almost-everywhere sense), as the corresponding
norm identifies all constants. But
it is often convenient to treat them as functions nevertheless,
while keeping the ambiguity in mind. The Dirichlet integral of $u$, called the stray field energy, can be computed in terms of the homogeneous $\|\cdot\|_{\dot{H}^{1/2}}$-seminorm of $m_1$, namely \cite{Ig}
\be
\label{stray_uniq}
\frac{1}{2} \int_{\R_+^2} |\nabla u|^2 \, dx=\frac 1 2 \int_{\R} \textstyle \left|\left|\frac{d}{dx_1}\right|^{\frac{1}{2}}m_1\right|^2\, dx_1=\frac 1 2 \|m_1-k\|^2_{\dot{H}^{1/2}}.
\ee
For a discussion of how this arises from micromagnetics,
we refer to the work of DeSimone--Kohn--M\"uller--Otto \cite{DKMO_rep}.

\paragraph{Energy functional.} We now define the functional $E_h$ by the formula
\[
E_h(m) = \frac{1}{2} \int_{-\infty}^\infty \left(|m'|^2 + 2W(m)\right) \, dx_1 + \frac{1}{2} \int_{\R_+^2} |\nabla u|^2 \, dx,
\]
where $u \in \dot{H}^1(\R_+^2)$ is determined by \eqref{eqn:harmonic} and
\eqref{eqn:boundary_data}. If $h < 1$, this is well-defined and finite
for any $m \in H_\loc^1(\R; \Ss^1)$
such that $m_1 - k \in H^1(\R)$ and $m_2' \in L^2(\R)$.
If $h > 1$, then we need to assume in addition that $m_1 - 1 \in L^1(\R)$.

If $m \in H^1_\loc(\R; \Ss^1)$ with $E_h(m) < \infty$,
then it is readily seen that the limits $\lim_{x_1 \to \pm \infty} m(x_1)$
exist and coincide with one of the zeros of $W$. That is, if
$h > 1$, then
\[
\lim_{x_1 \to \pm \infty} m_1(x_1) = (1, 0),
\]
and if $h < 1$, then
\[
\lim_{x_1 \to \pm \infty} m_1(x_1) = \left(h, \pm\sqrt{1 - h^2}\right)
\]
(where the signs on both sides of the equation are independent
of one another).
We choose
\[
\alpha \in \left[0, \frac{\pi}{2}\right] \quad \text{such that } k = \cos \alpha.
\]
(Thus $\alpha = 0$ if $h > 1$.)

\paragraph{Winding number.} Let $m^\perp = (-m_2, m_1)$. It is easily seen that the quantity
\[
\deg(m) = \frac{1}{2\pi} \int_{-\infty}^\infty m^\perp \cdot m' \, dx_1
\]
exists and belongs to $\Z + \{0, \pm \frac{\alpha}{\pi}\}$ if $E_h(m) < \infty$.
Moreover, this notion of topological degree (winding number) can be extended
to all continuous maps $m \colon \R \to \Ss^1$ with
$\lim_{x_1 \to \pm \infty} m_1(x_1) = k$. More precisely, for any such continuous map $m\colon \R \to \Ss^1$, there exists a continuous function $\phi \colon \R \to \R$, called the lifting of $m$,
such that
\[
m = (\cos \phi, \sin \phi)\quad \text{in $\R$}
\]
and $\phi(\pm \infty):=\lim_{x_1 \to \pm \infty} \phi(x_1) \in 2\pi\Z+\{-\alpha, \alpha\}$. Our generalised winding number is then given by
\[
\deg(m)=\frac{\phi(+\infty)-\phi(-\infty)}{2\pi}\in \Z + \left\{0, \pm \frac{\alpha}{\pi}\right\}.
\]

\subsection{Main results}
\label{sec:main}
For any fixed $d \in \Z + \{0, \pm \frac{\alpha}{\pi}\}$,
we define
\be
\label{def:ah}
\A_h(d) = \set{m \in H_\loc^1(\R; \Ss^1)}{E_h(m) < \infty \text{ and } \deg(m) = d}
\ee
and
\[
\E_h(d) = \inf_{m \in \A_h(d)} E_h(m).
\]
Note that $\{\A_h(d)\}_{d\in \Z + \{0, \pm \frac{\alpha}{\pi}\}}$ comprises the connected components of $\{m \in H_\loc^1(\R; \Ss^1) \colon E_h(m) < \infty\}$ in the strong $\dot{H}^1(\R)$ topology. Thus it forms
a partition of this set.

The following question is studied in this paper.

\begin{question}
Given $d \in \Z + \{0, \pm \frac{\alpha}{\pi}\}$, is $\E_h(d)$
attained? That is, does $m \in \A_h(d)$ exist such that
$E_h(m) = \E_h(d)$?
\end{question}

The answer is clear for $d = 0$. Since for $m \in \A_h(d)$, we can
construct $\tilde{m}, \hat{m} \in \A_h(-d)$ by $\tilde{m}(x_1) = m(-x_1)$
and $\hat{m}_1 = m_1$, $\hat{m}_2 = -m_2$, it is also clear that
the answer will always be the same for $d$ and $-d$ (and that $\E_h(d) = \E_h(-d)$).
Therefore, it suffices to consider $d > 0$. 

In the case $h < 1$ and $d = \frac{\alpha}{\pi}$ or $d = 1 - \frac{\alpha}{\pi}$,
the answer to the question is positive and was proved
in the work of Chermisi-Muratov \cite{Chermisi-Muratov:13}
(for $h = 0$, see also the work of Melcher \cite{Me1}). In other words, 
if $h \in [0, 1)$, then $\E_h(\alpha/\pi)$ and $\E_h(1 - \alpha/\pi)$
are attained.
These papers also give a lot of information about the structure
of the minimisers.
For $h > 1$ and $d = 1$, some of their arguments still work
and give a positive answer. The underlying method relies on the
symmetrisation of $m_1$ by rearrangements and the observation that
the energy is decreased thereby. 
For higher winding numbers, the
situation is more complicated and requires different arguments.

Our first main result shows that we have energy minimisers of
any admissible winding number if $h > 1$. They correspond to
arrays of N\'eel walls as observed in physical experiments \cite[{Fig.~5.66}]{HS98}.

\begin{theorem} \label{thm:existence_h>1}
Suppose that $h > 1$. Then $\E_h(d)$ is attained for any $d \in \Z$.
\end{theorem}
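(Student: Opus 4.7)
The plan is to combine the direct method of the calculus of variations with Lions' concentration-compactness principle, inducting on $|d|$. The cases $d=0$ (constant map $(1,0)$) and $|d|=1$, noted in the introduction as amenable to the rearrangement arguments of \cite{Chermisi-Muratov:13,Me1}, serve as the base. Assuming inductively that $\E_h(d')$ is attained for every $|d'|<|d|$, take a minimising sequence $(m_n)\subset\A_h(d)$. The uniform energy bound, together with the quadratic growth of $W$ near its unique well $(1,0)$ (ensured by $h>1$, so that $k=1$), yields uniform bounds on $\|m_n'\|_{L^2(\R)}$, $\|1-(m_n)_1\|_{L^1(\R)\cap L^2(\R)}$, and $\|(m_n)_1-1\|_{\dot H^{1/2}(\R)}$. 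After translation and a subsequence, $m_n\rightharpoonup m_\infty$ in $H^1_\loc$, with $E_h(m_\infty)\leq\liminf E_h(m_n)$ by lower-semicontinuity of each of the three terms. The task reduces to ensuring that the full degree $d$ is carried by $m_\infty$.

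I would apply Lions' concentration-compactness to the nonnegative measures $\mu_n=\bigl(\tfrac12|m_n'|^2+W(m_n)\bigr)\,dx_1$. Vanishing is excluded because intervals on which $m_n$ stays near $(1,0)$ contribute no winding, and $\deg(m_n)=d\neq 0$ forces a fixed amount of energy to persist somewhere. In the dichotomy alternative, one obtains asymptotically disjoint pieces $m_n^{(1)},m_n^{(2)}$ of integer degrees $d_1,d_2$ summing to $d$, with $E_h(m_n^{(1)})+E_h(m_n^{(2)})\leq E_h(m_n)+o(1)$ and each piece carrying a positive fraction of the energy. A splitting with some $d_i=0$ is ruled out directly: the corresponding piece has $E_h(m_n^{(i)})\geq\theta>0$, forcing $E_h(m_n^{(j)})\leq\E_h(d)-\theta+o(1)$ for a piece of degree $d$, which contradicts the definition of $\E_h(d)$. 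The remaining splittings, with both $d_1,d_2\in\Z\setminus\{0\}$, must be excluded by strict subadditivity
\[
\E_h(d)<\E_h(d_1)+\E_h(d_2).
\]

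Strict subadditivity is where the nonlocal term is decisive. Use the inductively-provided minimisers $m_i^*$ at degrees $d_i$ (or compactly supported approximants with energy close to $\E_h(d_i)$ in the boundary case where one $|d_i|$ falls outside the inductive range). Let $\phi_i$ be the liftings with $\phi_i(-\infty)=0$ and $\phi_i(+\infty)=2\pi d_i$, and construct $m_T$ through the lifting that equals $\phi_1(x_1)$ on $(-\infty,T/2]$ and $2\pi d_1+\phi_2(x_1-T)$ on $[T/2,\infty)$, bridged smoothly over a unit interval. Exponential decay of $\phi_i-\phi_i(\pm\infty)$ (from the Euler--Lagrange equation and the nondegenerate quadratic minimum of $W$) reduces the local-energy gluing error to $O(e^{-cT})$. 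For the nonlocal term, the sign condition $f_i:=(m_i^*)_1-1\leq 0$, forced by $h>1$, combined with the double-integral representation of $\|\cdot\|_{\dot H^{1/2}}^2$ on $\R$, gives
\[
\langle f_1,f_2(\cdot-T)\rangle_{\dot H^{1/2}}\ \sim\ -\frac{1}{\pi T^2}\Bigl(\int_{\R}f_1\Bigr)\Bigl(\int_{\R}f_2\Bigr)\ <\ 0\qquad(T\to\infty),
\]
a strictly negative cross term of order $T^{-2}$ that dominates the exponentially small local error. Thus $E_h(m_T)<E_h(m_1^*)+E_h(m_2^*)=\E_h(d_1)+\E_h(d_2)$ for $T$ large enough.

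The principal obstacle is this strict subadditivity. It rests on three ingredients: (i)~existence at lower degrees, provided by the induction; (ii)~exponential decay of minimisers, extracted from the Euler--Lagrange analysis and the nondegenerate quadratic structure of $W$ at $(1,0)$; and (iii)~strict negativity of the $\dot H^{1/2}$ cross term between distant pieces, which relies squarely on $h>1$ forcing the unique well at $(1,0)$ and hence the sign $f_i\leq 0$. Once strict subadditivity is in hand, dichotomy is excluded, concentration-compactness delivers compactness of $(m_n)$ up to translation, and the lower-semicontinuity of each energy term identifies $m_\infty$ as a minimiser in $\A_h(d)$, closing the induction.
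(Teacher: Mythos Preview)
Your strategy is essentially the paper's: induction on $|d|$, concentration-compactness to reduce existence to strict subadditivity, and the gluing of inductively-provided minimisers whose exponential tails make the $O(e^{-cT})$ local error lose against the $O(T^{-2})$ nonlocal attraction (the paper packages the latter two steps as Theorems~\ref{thm:existence} and~\ref{thm:strict_subadditivity_h>1}, using the symmetry of Lemma~\ref{lem:symmetry} to cut down the list of splittings, but your generic Lions route is fine).

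One step fails as written: the parenthetical about the ``boundary case where one $|d_i|$ falls outside the inductive range''. With mere approximants $\tilde m_i$ satisfying $E_h(\tilde m_i)\le\E_h(d_i)+\epsilon$ and $\supp(\tilde m_{i,1}-1)\subset[-R_\epsilon,R_\epsilon]$, gluing at separation $T>2R_\epsilon$ gives only $\E_h(d)\le\E_h(d_1)+\E_h(d_2)+2\epsilon-c/T^2$; without an actual minimiser you have no control on how fast $R_\epsilon\to\infty$ as $\epsilon\to0$, so you cannot arrange $c/T^2>2\epsilon$. The repair is not approximants but monotonicity (Proposition~\ref{prop:monotonicity}): if $d_1+d_2=d$ with both $d_i\neq 0$ and, say, $|d_1|\ge|d|$, then $\E_h(d_1)+\E_h(d_2)=\E_h(|d_1|)+\E_h(|d_2|)\ge\E_h(|d|)+\E_h(|d_2|)>\E_h(d)$ outright. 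Hence strict subadditivity is automatic in those cases, and the gluing argument is only needed when both $|d_i|<|d|$, where the induction supplies genuine minimisers with the exponential decay of Theorem~\ref{thm:exponential_decay}.
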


In contrast, for $h < 1$, we sometimes have a negative answer.
In particular, we do not have any minimisers of winding number $1$.

\begin{theorem} \label{thm:non-existence}
If $h \in [0, 1)$, then
$\E_h(1) = \E_h(\alpha/\pi) + \E_h(1 - \alpha/\pi)$
and $\E_h(1)$ is \emph{not} attained.
\end{theorem}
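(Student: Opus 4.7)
The plan rests on a key splitting identity: for any $m \in \A_h(1)$ with lift $\phi$ (WLOG from $-\alpha$ to $2\pi-\alpha$), an intermediate value argument yields $t \in \R$ with $\phi(t)=\alpha$; setting $m^{(1)} \in \A_h(\alpha/\pi)$ equal to $m$ on $(-\infty,t]$ and to $(\cos\alpha,\sin\alpha)$ on $[t,\infty)$ (and $m^{(2)} \in \A_h(1-\alpha/\pi)$ symmetrically), the functions $f_j := (m^{(j)})_1 - k$ satisfy $m_1-k = f_1+f_2$ with $\supp f_1 \subset (-\infty,t]$ and $\supp f_2 \subset [t,\infty)$, and one checks
\[
E_h(m) = E_h(m^{(1)}) + E_h(m^{(2)}) + \bigl\langle f_1, f_2 \bigr\rangle_{\dot H^{1/2}}.
\]

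For the upper bound $\E_h(1) \le \E_h(\alpha/\pi) + \E_h(1-\alpha/\pi)$ I take Chermisi--Muratov minimisers $m_*^{(1)} \in \A_h(\alpha/\pi)$ (lift $-\alpha\to\alpha$) and $m_*^{(2)} \in \A_h(1-\alpha/\pi)$ (lift $\alpha\to 2\pi-\alpha$) and, for $R$ large, concatenate them into $\tilde m_R \in \A_h(1)$ by inserting a plateau at $(\cos\alpha,\sin\alpha)$ of length $R$; the exponential convergence of the sub-minimisers to their wells (from the quadratic behaviour of $W$, Lemma~\ref{lem:gamma}) handles the matching. The local part of $E_h(\tilde m_R)$ is additive, and for the nonlocal part with $f_j := (m_*^{(j)})_1 - k \in \dot H^{1/2}(\R)$,
\[
\tfrac{1}{2}\|f_1+f_2(\blank-R)\|_{\dot H^{1/2}}^2 = \tfrac{1}{2}\|f_1\|_{\dot H^{1/2}}^2 + \tfrac{1}{2}\|f_2\|_{\dot H^{1/2}}^2 + \bigl\langle f_1, f_2(\blank-R)\bigr\rangle_{\dot H^{1/2}},
\]
and the cross term $\mathrm{Re}\int_\R |\xi|\,\widehat{f_1}(\xi)\,\overline{\widehat{f_2}(\xi)}\,e^{iR\xi}\,d\xi$ vanishes as $R\to\infty$ by Riemann--Lebesgue (the integrand lies in $L^1(\R)$ by Cauchy--Schwarz in $\dot H^{1/2}$). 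Thus $E_h(\tilde m_R) \to \E_h(\alpha/\pi)+\E_h(1-\alpha/\pi)$, proving the upper bound.

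Applied to a general $m \in \A_h(1)$, the same concatenation-and-translation construction on the pair $(m^{(1)},m^{(2)})$ produces $\tilde m_R \in \A_h(1)$ with $E_h(\tilde m_R) \to E_h(m^{(1)})+E_h(m^{(2)})$, hence $E_h(m^{(1)})+E_h(m^{(2)}) \ge \E_h(1)$. A concentration--compactness argument along a minimising sequence $\{m^{(n)}\}$ in $\A_h(1)$, combined with the trivial $E_h(m^{(n,j)}) \ge \E_h(d_j)$ and the cross-term control sketched below, then gives $\E_h(1) \ge \E_h(\alpha/\pi)+\E_h(1-\alpha/\pi)$, so equality holds. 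For non-attainment I argue by contradiction: if $m^* \in \A_h(1)$ attains $\E_h(1)$, the chain
\[
\E_h(1) = E_h(m^*) = E_h(m^{(1)})+E_h(m^{(2)})+\langle f_1,f_2\rangle \ge \E_h(\alpha/\pi)+\E_h(1-\alpha/\pi)+\langle f_1,f_2\rangle = \E_h(1)+\langle f_1,f_2\rangle
\]
forces $\langle f_1,f_2\rangle \le 0$; combined with the complementary bound $\langle f_1,f_2\rangle \ge 0$ below, equality holds throughout, so $E_h(m^{(j)}) = \E_h(d_j)$ and each $m^{(j)}$ would itself be a sub-problem minimiser. But $m^{(1)}$ is identically $(\cos\alpha,\sin\alpha)$ on $[t,\infty)$, contradicting the fact that the Chermisi--Muratov minimiser of $\A_h(\alpha/\pi)$ is a smooth monotone profile reaching its asymptotic value only at infinity.

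The principal obstacle is controlling the sign of $\langle f_1,f_2\rangle_{\dot H^{1/2}}$. Because $f_1$ and $f_2$ have disjoint supports separated by $t$, the Gagliardo representation of the $\dot H^{1/2}$-inner product simplifies to
\[
\langle f_1,f_2\rangle_{\dot H^{1/2}} = -\tfrac{1}{\pi}\int_{-\infty}^t\!\int_t^\infty \frac{f_1(x)\,f_2(y)}{(x-y)^2}\,dy\,dx,
\]
so the required nonnegativity follows if $f_1 \ge 0$ on $(-\infty,t]$ and $f_2 \le 0$ on $[t,\infty)$, equivalently if the lift $\phi$ lies in $[-\alpha,\alpha]$ on $(-\infty,t]$ and in $[\alpha,2\pi-\alpha]$ on $[t,\infty)$, i.e.\ if $\phi$ is monotone. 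Reducing to this monotone case, either via a rearrangement decreasing $E_h$ or via Euler--Lagrange analysis of a putative minimiser, while tracking the interplay between the Dirichlet, potential, and nonlocal $\dot H^{1/2}$-terms, is the delicate step; once it is in place, both the lower bound and the non-attainment follow as outlined.
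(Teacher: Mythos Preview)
Your upper bound via concatenation and Riemann--Lebesgue is fine (this is the paper's subadditivity, Proposition~\ref{prop:subadditivity}). The gap is exactly where you locate it: the sign of the cross term $\langle f_1,f_2\rangle_{\dot H^{1/2}}$. Your spatial splitting at a point $t$ with $\phi(t)=\alpha$ gives $f_1,f_2$ with disjoint supports, but nothing forces $f_1\ge 0$ on $(-\infty,t]$ and $f_2\le 0$ on $[t,\infty)$ unless the lift $\phi$ is monotone, and monotonicity of minimisers in $\A_h(d)$ is explicitly stated as an open question in the paper. Your proposed reductions (rearrangement, Euler--Lagrange analysis) do not close this: rearrangement of $m_1$ need not preserve the winding number $1$, and the regularity theory for critical points does not yield phase monotonicity. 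So your non-attainment chain collapses at the step ``$\langle f_1,f_2\rangle\ge 0$''.

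The paper's proof avoids this entirely by changing the decomposition. Instead of cutting $m$ at a spatial point, it splits $m_1-k$ by \emph{sign}: set $m_1^+=\max\{m_1,k\}$ and $m_1^-=\min\{m_1,k\}$. Because $\deg(m)=1$, the function $m_1$ must hit both $+1$ and $-1$, so neither part is trivial; by Lemma~\ref{lem:degree} one can complete $m_1^\pm$ to maps $m^+\in\A_h(\alpha/\pi)$ and $m^-\in\A_h(1-\alpha/\pi)$. The local part of $E_h$ splits exactly, since at each point one of $m_1^\pm-k$ vanishes. For the nonlocal part one invokes Lemma~\ref{lem:repulsion}: writing $f=m_1-k$, $f_+=m_1^+-k\ge 0$, $f_-=m_1^--k\le 0$, the Gagliardo formula gives $\langle f_+,f_-\rangle_{\dot H^{1/2}}=-\frac{1}{\pi}\iint \frac{f_+(s)f_-(t)}{(s-t)^2}\,ds\,dt\ge 0$, with equality only if $f_+\equiv 0$ or $f_-\equiv 0$. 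Since both are nontrivial, the inequality is strict, and one obtains $E_h(m)>E_h(m^+)+E_h(m^-)\ge \E_h(\alpha/\pi)+\E_h(1-\alpha/\pi)$ for \emph{every} $m\in\A_h(1)$. This simultaneously gives the lower bound and non-attainment, with no monotonicity needed and no appeal to the structure of the Chermisi--Muratov minimiser.
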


In general, the case $h < 1$ is much more subtle than $h > 1$,
because the nonlocal term $\frac{1}{2} \int_{\R_+^2} |\nabla u|^2 \, dx$
in the energy gives rise simultaneously to attractive and repulsive
interactions between different parts of the profile of $m$.
We have only partial results here, but we do know the following.

\begin{theorem} \label{thm:existence_h<1}
There exists $H \in (0, 1)$ such that $\E_h(2 - \alpha/\pi)$
is attained whenever $h \in [H, 1)$.
\end{theorem}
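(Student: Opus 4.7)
The plan is to apply a concentration-compactness analysis to a minimising sequence $(m^n) \subset \A_h(2-\alpha/\pi)$. The bound $E_h(m^n) \le C$ controls $\|(m^n)'\|_{L^2}$ and, combined with the compactness of $\Ss^1$, yields an $H^1_{\loc}$-bound; after extraction and finitely many translations $\tau_n^j$, a profile decomposition produces weak limits $v_j \in \A_h(d_j)$ with $d_1 + \cdots + d_K = 2 - \alpha/\pi$ and
\begin{equation*}
\E_h(2-\alpha/\pi) \,=\, \lim_{n\to\infty} E_h(m^n) \,\ge\, \sum_{j=1}^K E_h(v_j) \,\ge\, \sum_{j=1}^K \E_h(d_j),
\end{equation*}
by weak lower semicontinuity of the exchange and anisotropy terms together with the asymptotic additivity of $\|m_1 - k\|_{\dot H^{1/2}}^2$ across well-separated profiles. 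A minimiser exists iff $K = 1$ and $d_1 = 2 - \alpha/\pi$.

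To rule out all non-trivial decompositions, I would enumerate them. Topological bookkeeping at the two wells of $W$ shows that the minimum of $\sum_j \E_h(d_j)$ over non-trivial admissible splittings of $2-\alpha/\pi$ equals $2\E_h(1-\alpha/\pi)+\E_h(\alpha/\pi)$, achieved by the triple $(1-\alpha/\pi, \alpha/\pi, 1-\alpha/\pi)$ and, via Theorem \ref{thm:non-existence}, also by the pair $(1, 1-\alpha/\pi)$; all other splittings are at least as expensive by the standard subadditivity $\E_h(d+d') \le \E_h(d) + \E_h(d')$. The task thus reduces to the strict inequality
\begin{equation*}
\E_h(2-\alpha/\pi) \,<\, 2\E_h(1-\alpha/\pi) + \E_h(\alpha/\pi) \quad \text{for } h \in [H,1),
\end{equation*}
with some $H$ close to $1$.

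For this I would build an explicit competitor exploiting the sign structure of $m_1 - k$ along the elementary walls: along a $(1-\alpha/\pi)$-wall the profile passes through $(-1,0)$, so $m_1 - k$ is strictly negative throughout, while along an $\alpha/\pi$-wall it passes through $(1,0)$, so $m_1 - k$ is strictly positive. Using the integral representation
\begin{equation*}
\|f\|_{\dot H^{1/2}}^2 = c \iint \frac{(f(x)-f(y))^2}{(x-y)^2}\,dx\,dy,
\end{equation*}
two far-apart bumps of the same sign have a strictly negative cross-interaction, while opposite-sign bumps have a strictly positive one. Concatenating, at separation $L$, two Chermisi-Muratov minimisers of $\E_h(1-\alpha/\pi)$ flanking a central Chermisi-Muratov minimiser of $\E_h(\alpha/\pi)$, with smooth interpolation over $O(1)$ junction regions, produces a configuration with three bumps $f_L, f_C, f_R$ in $m_1 - k$: the two lateral negative bumps attract each other, while the small central positive bump is repelled by each of them. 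Careful asymptotics of the three pairwise $\dot H^{1/2}$ cross-terms as $L \to \infty$ then show the net nonlocal correction is of order $L^{-2}$ with a coefficient negative precisely when the mass of the central $\alpha/\pi$-wall is small compared with the mass of each lateral $(1-\alpha/\pi)$-wall; balanced against the $O(e^{-cL})$ local overlap cost from exchange and anisotropy, one obtains a strictly negative total correction for an appropriate $L$.

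The main obstacle is making this interaction estimate rigorous and identifying the regime of $h$ where the mass condition holds. As $h \to 1^-$, the angular well separation $2\alpha$ shrinks, so the $\alpha/\pi$-wall carries vanishing $m_1 - k$-mass, whereas the lateral $(1-\alpha/\pi)$-walls always pass through $(-1,0)$ and carry a bump of bounded strength; thus choosing $H$ close enough to $1$ so that the mass-dominance condition holds uniformly on $[H,1)$ secures strict subadditivity throughout this interval. Once that is in hand, concentration-compactness forces $K=1$, and a minimiser in $\A_h(2-\alpha/\pi)$ emerges from weak lower semicontinuity of $E_h$ together with preservation of the winding number under $H^1_{\loc}$-convergence of finite-energy configurations.
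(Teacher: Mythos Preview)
Your overall strategy matches the paper's: reduce to the strict subadditivity inequality
\[
\E_h(2-\alpha/\pi) < 2\E_h(1-\alpha/\pi)+\E_h(\alpha/\pi),
\]
and establish it by the competitor that sandwiches a small $\alpha/\pi$-wall between two large $(1-\alpha/\pi)$-walls, exploiting that the lateral--lateral attraction in $\dot H^{1/2}$ dominates the lateral--central repulsion when the central $L^1$-mass is small (which it is as $h\to 1^-$). The paper also uses symmetry to cut down the list of splittings to exactly this one, but your more direct enumeration would work as well.

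There is, however, a genuine gap. You assert that the local overlap/gluing cost is $O(e^{-cL})$. For $h>1$ this would be true (the minimisers decay exponentially), but for $h<1$ the nonlocal term makes the tails of the elementary N\'eel-wall minimisers decay only \emph{polynomially}. The paper spends considerable effort (the linearisation via the fundamental solution $G_\alpha$ of $-\psi''+\psi-\sin\alpha\,\Lambda\psi$, and a bootstrap) to prove that for $f=m_1-h$ one has $f(x_1)=O(x_1^{-2})$, $f'(x_1)=O(x_1^{-3})$, $f''(x_1)=O(x_1^{-4})$, and $\Lambda f(x_1)=O(x_1^{-2})$. Feeding these into the localisation estimate then shows that truncating each minimiser to an interval of size $\sim L$ costs $O(L^{-3})$, not $O(e^{-cL})$. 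This is the crux of the argument: only because $L^{-3}\ll L^{-2}$ does the net $\dot H^{1/2}$ interaction of order $L^{-2}$ survive. If the tails decayed merely like $x_1^{-1}$ (which is all the energy bound alone gives), the truncation error would be $O(L^{-1})$ and would swamp the gain.

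A second, related point: to make the ``mass condition'' quantitative you need the uniform bound $\|m_1^\sharp-h\|_{L^1(\R)}\le C\alpha$ for the $\alpha/\pi$-minimiser, with $C$ independent of $\alpha$. This does not follow from the energy bound directly (which would only give an $L^2$ bound); in the paper it comes out of the same sharp decay analysis combined with the cubic estimate $\E_h(\alpha/\pi)\le C\alpha^3$. Without these two ingredients---the $O(L^{-3})$ localisation cost and the $O(\alpha)$ $L^1$-mass of the small wall---your interaction balance cannot be closed.
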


We also prove the following Pohozaev identity for every critical point $m$ of our energy, expressing equality of
the exchange energy and the anisotropy energy.

\begin{proposition}
\label{pro:Poho}
Let $m:\R\to \Ss^1$ be a critical point of $E_h$ with $E_h(m)<\infty$. Then
$$\int_{\R} |m'|^2\, dx_1=2\int_{\R} W(m)\, dx_1.$$
\end{proposition}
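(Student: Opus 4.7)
The plan is to exploit the scaling invariance of the three pieces of $E_h$ under the dilation $m_\lambda(x_1):=m(x_1/\lambda)$, which maps $\Ss^1$-valued maps to $\Ss^1$-valued maps, preserves the limits at $\pm\infty$, and therefore belongs to $\A_h(d)$ whenever $m$ does. A direct change of variables gives $\frac{1}{2}\int_\R |m_\lambda'|^2\,dx_1=\frac{1}{2\lambda}\int_\R|m'|^2\,dx_1$ and $\int_\R W(m_\lambda)\,dx_1=\lambda \int_\R W(m)\,dx_1$. For the nonlocal part, the unique stray-field potential associated to $m_\lambda$ is $u_\lambda(x_1,x_2)=u(x_1/\lambda,x_2/\lambda)$, as one checks directly from \eqref{eqn:harmonic}--\eqref{eqn:boundary_data}, and then a change of variables in $\R^2_+$ shows that $\frac{1}{2}\int_{\R^2_+}|\nabla u_\lambda|^2\,dx=\frac{1}{2}\int_{\R^2_+}|\nabla u|^2\,dx$, which is also the content of the scale invariance of the one-dimensional $\dot H^{1/2}$-seminorm of $m_1-k$ recorded in \eqref{stray_uniq}. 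Hence
\[
E_h(m_\lambda)=\frac{1}{\lambda}\cdot\frac{1}{2}\int_\R|m'|^2\,dx_1+\lambda\int_\R W(m)\,dx_1+\frac{1}{2}\int_{\R^2_+}|\nabla u|^2\,dx.
\]

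Differentiating in $\lambda$ at $\lambda=1$ and using that $m$ is a critical point of $E_h$ gives
\[
-\frac{1}{2}\int_\R|m'|^2\,dx_1+\int_\R W(m)\,dx_1=0,
\]
which is precisely the stated identity.

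The technical step is justifying that criticality with respect to smooth, compactly supported tangential perturbations (the standard meaning of critical point here) implies the vanishing of this scaling derivative, because the infinitesimal generator $-x_1 m'$ of the family $\{m_\lambda\}$ is tangential but not compactly supported. I would deal with this by a cut-off: fix a smooth plateau $\chi_R$ equal to $1$ on $[-R,R]$ and supported in $[-2R,2R]$, set $V_R(x_1):=\chi_R(x_1)\,x_1\,m'(x_1)$, and test the Euler--Lagrange equation against $V_R$. The exchange and anisotropy contributions can be integrated by parts and, using that $|m'|^2,W(m)\in L^1(\R)$ together with decay of $m-m(\pm\infty)$ at infinity (which follows from the Euler--Lagrange equation and the quadratic growth of $W$ near its zeros provided by Lemma~\ref{lem:gamma}), converge to $\frac{1}{2}\int_\R|m'|^2$ and $-\int_\R W(m)$ respectively as $R\to\infty$. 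The nonlocal contribution reduces to showing that $\int_\R x_1\,g'\,|D|g\,dx_1=0$ for $g=m_1-k\in \dot H^{1/2}(\R)$, which is exactly the infinitesimal form of the dilation invariance of the $\dot H^{1/2}$-seminorm and is seen immediately on the Fourier side.

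The main obstacle is therefore not the computation itself but this cut-off justification, and within it the control of the boundary terms $x_1|m'(x_1)|^2$ and $x_1 W(m(x_1))$ as $|x_1|\to\infty$, which requires the decay of $m$ towards its limits at infinity. This is not recorded as a separate lemma in the present excerpt, so I would include a short argument showing that finite-energy critical points satisfy enough decay (e.g.\ integrability of $x_1|m'|^2$ and $x_1 W(m)$) for the passage $R\to\infty$ to be valid.
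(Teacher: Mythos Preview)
Your scaling heuristic is exactly the one the paper records before its proof, and for minimisers it settles the matter immediately. For general critical points you are right that the issue is to justify differentiation at $\lambda=1$, but you have located the difficulty in the wrong place.

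With a smooth plateau $\chi_R(x_1)=\chi(x_1/R)$ the \emph{local} terms cause no trouble: testing \eqref{eqn:Euler-Lagrange_phi} against $\chi_R\, x_1\phi'$ and integrating by parts produces error terms of the form $\int (\phi')^2\,x_1\chi_R'\,dx_1$ and $\int W(\phi)\,x_1\chi_R'\,dx_1$; since $|x_1\chi_R'|\le 2\|\chi'\|_{L^\infty}$ uniformly in $R$ and $\supp\chi_R'\subset\{R\le|x_1|\le 2R\}$, these vanish in the limit just from $(\phi')^2, W(\phi)\in L^1(\R)$. No pointwise decay of $x_1|m'(x_1)|^2$ or $x_1 W(m(x_1))$ is needed, so the passage you flag as the ``main obstacle'' is in fact free.

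The genuine work is in the \emph{nonlocal} term, which you dismiss as ``seen immediately on the Fourier side''. After the cutoff one must show
\[
\int_\R \chi_R(x_1)\,x_1\,g'(x_1)\,\Lambda g(x_1)\,dx_1\longrightarrow 0,\qquad g=m_1-k,
\]
and with only $g\in H^1(\R)$ the function $x_1 g'$ need not lie in any space where the pairing with $\Lambda g\in L^2$ is obviously controlled; the formal identity $\int x_1 g'\,|D|g\,dx_1=0$ is precisely what has to be \emph{justified}, not invoked. The paper handles this by lifting to the half-plane: it uses the divergence-free Pohozaev stress tensor $\tfrac{1}{2}|\nabla u|^2\,x-(x\cdot\nabla u)\nabla u$ for the harmonic stray-field potential $u=U(m)$ on half-discs $B_R^+$, converts the flat boundary integral into $\int_{-R}^R x_1\big(\phi''-\partial_\phi W(\phi)\big)\phi'\,dx_1$ via \eqref{eqn:boundary_data} and \eqref{eqn:Euler-Lagrange_phi}, and then selects a sequence $R_k\to\infty$ along which $R_k$ times the energy density on $C_{R_k}^+\cup\{\pm R_k\}$ tends to zero, which is available simply because $E_h(m)<\infty$. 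This ``good radii'' device replaces the decay you propose to prove, and the argument goes through with no regularity beyond finite energy and the smoothness from Proposition~\ref{prop:regularity}.

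Your cutoff route can be completed, but only by performing essentially the same half-plane lifting for the nonlocal piece (extend $\chi_R$ to $\R^2_+$ and apply the stress tensor there), at which point it becomes a smooth-cutoff variant of the paper's proof rather than a purely one-dimensional argument.
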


We will also prove some qualitative and quantitative properties of the minimizers of $E_h$: symmetry properties (see Lemma \ref{lem:symmetry} below) and decay rates at infinity that are exponential in the case of $h>1$ and polynomial if $h<1$, respectively (see Theorems \ref{thm:exponential_decay}, \ref{thm:decay_h<1} and \ref{thm:alpha_large} below).

\subsection{Heuristics} \label{sect:heuristics}

The key to the proofs of our results is control of
the nonlocal energy. For this purpose,
we need to understand the shape of energy minimising profiles $m$. A prescribed winding number $d$ 
gives rise to a certain number of transitions of $m$ between the wells of the anisotropy potential $W$.
Each of these transitions represents a N\'eel wall (to use the
micromagnetics jargon). In the case of $h>1$, we have $2\pi$-N\'eel walls, while for $1>h=\cos \alpha$ (with $\alpha\in (0, \frac \pi 2]$), we have N\'eel walls of angle $2\alpha$ and $2\pi-2\alpha$, respectively (see Figure
\ref{fig:arrows}).

\begin{figure}[htbp]
\centering
\begin{minipage}{.3\linewidth}
\centering
\includegraphics[width=\linewidth]{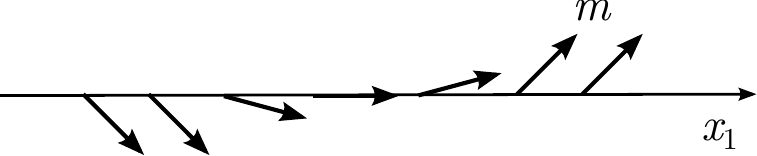}
\end{minipage}
\hspace{1cm}
\begin{minipage}{.3\linewidth}
\centering
\includegraphics[width=\linewidth]{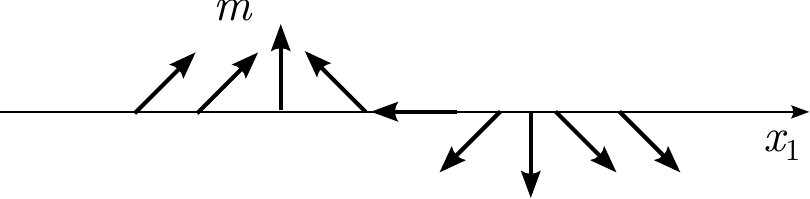}
\end{minipage}
\caption{Schematic representation of a N\'eel wall of angle $2\alpha$ (left) and $2\pi - 2\alpha$ (right).}
\label{fig:arrows}
\end{figure}

In terms of the $m_1$ component, we can distinguish these two types of walls
as follows: if $h<1$, then a wall of angle $2\alpha$ entails that $m_1$ attains the value
$1$ somewhere during the transition and we expect that $m_1$ exceeds $\cos \alpha$ throughout, while for a wall of angle $2\pi-2\alpha$,
we expect that $m_1$ is below $\cos \alpha$ and attains $-1$ at some point. For $h>1$ (i.e., when $W$ has a single well at $(1,0)$), only the second alternative can occur (see Figure \ref{fig:h>1}). 

Our first observation is that the stray field energy will give rise to attraction between pairs of walls where $m_1-\cos \alpha$ 
has the same sign, and repulsion otherwise. In particular, in the case $h>1$, we only have attraction. 
We will prove that this effect of the nonlocal energy term dominates the interaction coming from the local energy terms. 

As our energy controls the ${H}^1$-norm of $m$, the only possible cause for lack of compactness is escape to infinity of some walls. We can rule this
out, using the previously described attraction, in the following cases.
\begin{enumerate}
\item If $h>1$, only attraction is possible; this is the situation in Theorem \ref{thm:existence_h>1} (see also Figure \ref{fig:h>1}).

\item If $h<1$, the attraction between the outermost walls may
be strong enough to keep the whole profile together. This is the case in Theorem \ref{thm:existence_h<1} where a small wall is ``sandwiched'' between two large walls (see Figure \ref{fig:h<1}, right).
\end{enumerate}

\begin{figure}[htbp]
\centering
 \includegraphics[width=.3\linewidth]{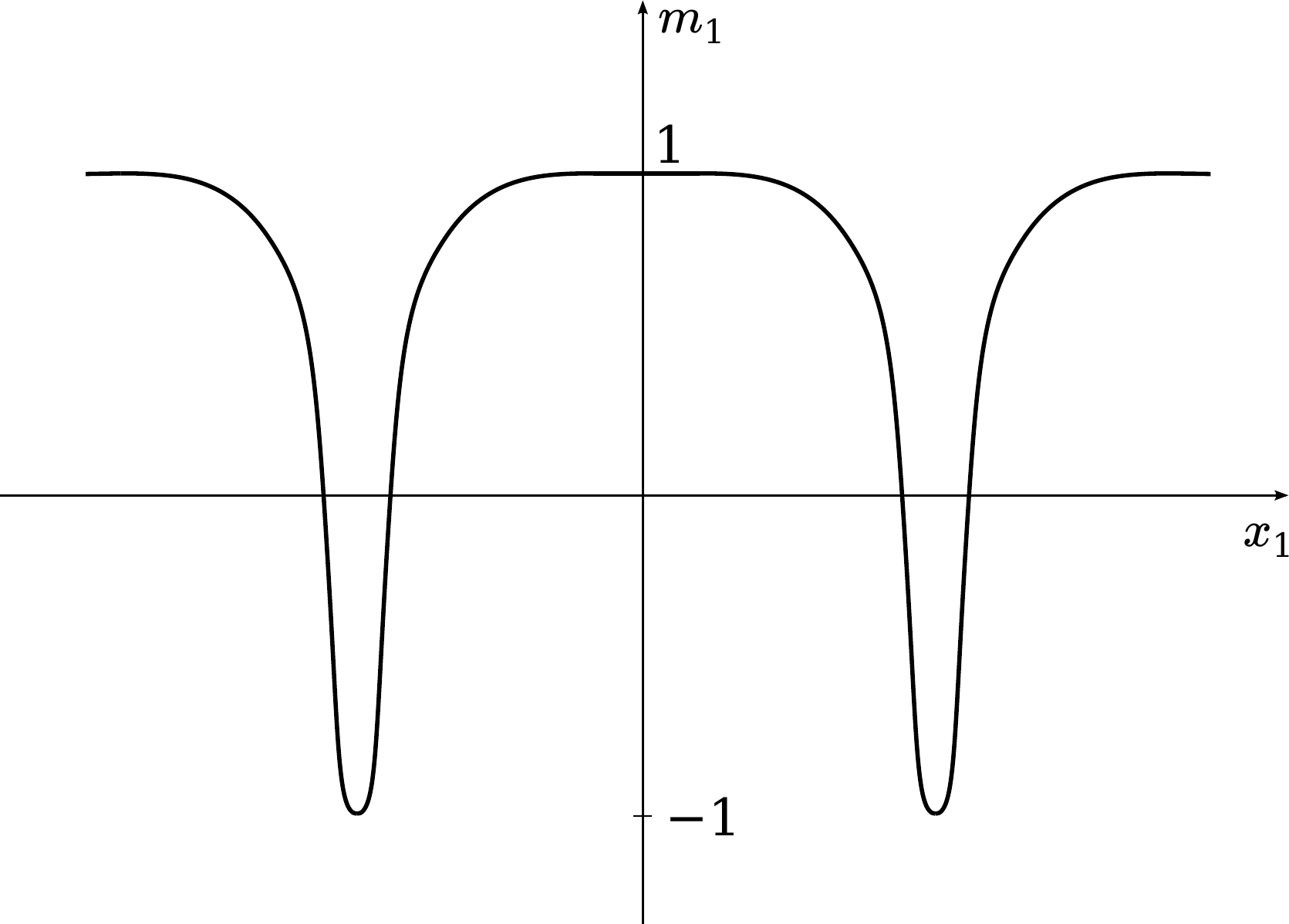}
\caption{For $h > 1$, a pair of N\'eel walls of total winding number $2$, represented in terms of $m_1$.}
\label{fig:h>1}
\end{figure}

On the other hand, if one of the outermost walls is small relative to the adjoining one (or of comparable size), then there will be a strong repulsion
that cannot be compensated by the remaining profile (as it is further away), in which case we expect nonexistence (see Figure \ref{fig:h<1}, left).
We prove this when $h<1$ and the winding number is one (see Theorem \ref{thm:non-existence}).

\begin{figure}[htbp]
\centering
 \begin{minipage}{0.3\linewidth}
 \includegraphics[width=\textwidth]{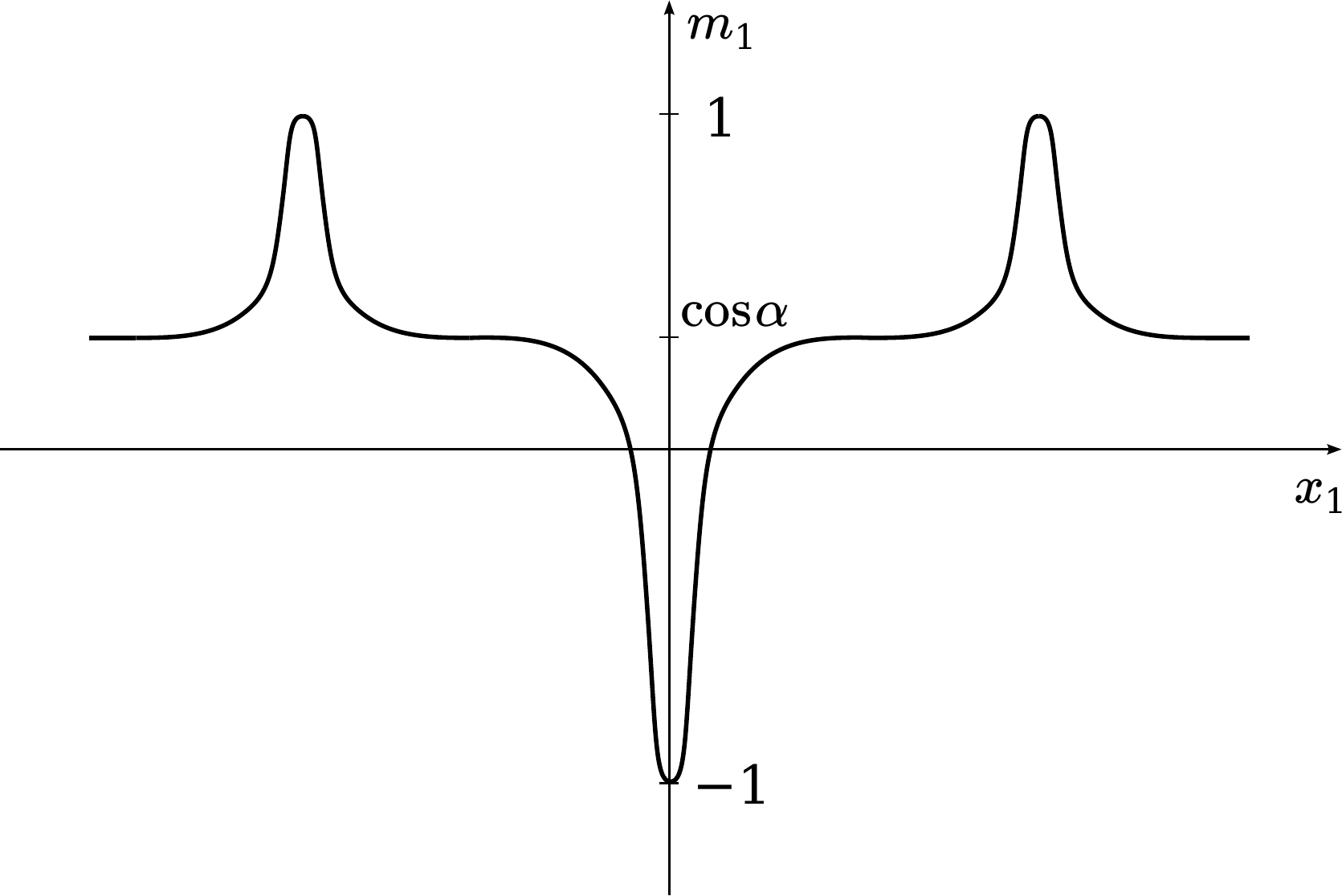}
 \end{minipage}
 \hspace{1cm}
 \begin{minipage}{0.3\linewidth}
 \includegraphics[width=\textwidth]{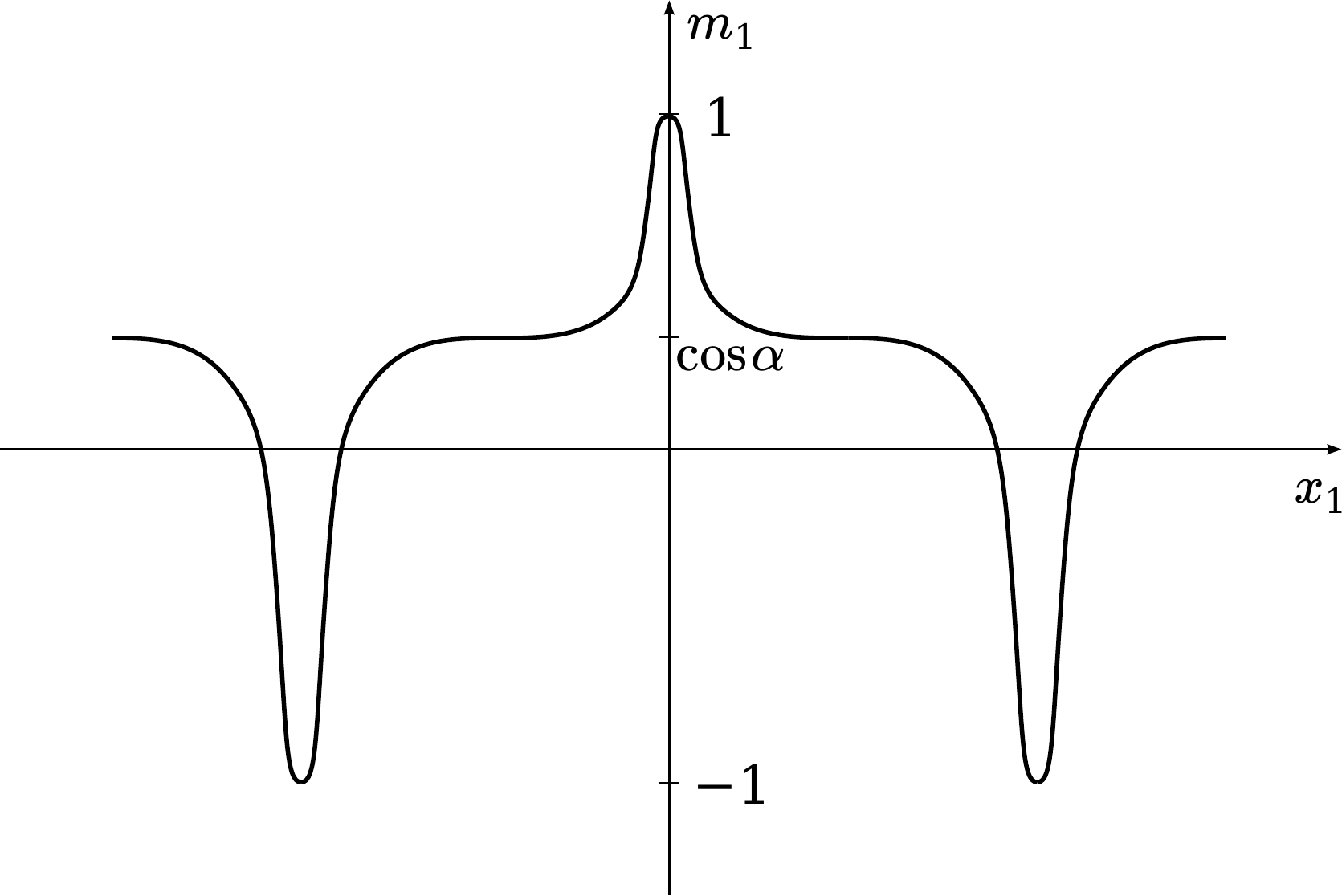}
 \end{minipage}
\caption{For $h < 1$, a hypothetical array of N\'eel walls of total winding number $1 + \alpha/\pi$ (left) and an existing one of winding number $2 - \alpha/\pi$ (right).}
  \label{fig:h<1}
\end{figure}

In the remaining cases, we do not have any proof yet, but the following behaviour
seems plausible.

\begin{conjecture}
If $h \in [0, 1)$, then for any $d \in \N=\{1,2, \dots\}$, neither $\E_h(d)$
nor $\E_h(d + \alpha/\pi)$ are attained.
\end{conjecture}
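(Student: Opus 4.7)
The plan is to treat the conjecture as a concentration-compactness problem and to quantify the amount of energy that can escape to infinity along a minimising sequence. I would proceed in three steps.

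First, derive a splitting formula for the stray-field energy. Given $m^{(1)}\in\A_h(d_1)$ and $m^{(2)}\in\A_h(d_2)$, construct for each $T>0$ a competitor $m^T\in\A_h(d_1+d_2)$ by placing translated copies around $\mp T$, joined through a cheap transition in a well of $W$. Using the seminorm representation \eqref{stray_uniq} together with the polynomial decay of $m_1-k$ established in Theorems \ref{thm:decay_h<1} and \ref{thm:alpha_large}, one expands
\[
\tfrac12\|m^T_1-k\|_{\dot{H}^{1/2}}^2 = \tfrac12\|m^{(1)}_1-k\|_{\dot{H}^{1/2}}^2 + \tfrac12\|m^{(2)}_1-k\|_{\dot{H}^{1/2}}^2 - \frac{c}{T^2}\,I_1 I_2 + o(T^{-2}),
\]
where $I_i=\int_\R(m^{(i)}_1-k)\,dx_1$ and $c>0$. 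The sign of $I_1 I_2$ thus records whether the two blocks attract (same sign of $I$) or repel (opposite sign); letting $T\to\infty$ yields in either case the sub-additivity $\E_h(d_1+d_2)\le\E_h(d_1)+\E_h(d_2)$.

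Second, use the canonical decomposition into elementary walls. By Chermisi-Muratov \cite{Chermisi-Muratov:13}, the infima $\E_h(\pm\alpha/\pi)$ and $\E_h(\pm(1-\alpha/\pi))$ are attained by profiles with $I>0$ (a $2\alpha$-wall, $m_1\ge k$) and $I<0$ (a $2\pi-2\alpha$-wall, $m_1\le k$) respectively. If $d\in\N$ or $d=n+\alpha/\pi$ with $n\ge 1$, any representation of $d$ as a signed sum of $\alpha/\pi$ and $1-\alpha/\pi$ must involve at least one wall of each type (by integrality, together with irrationality of $\alpha/\pi$ in the generic case). Iterating the competitor construction of Step 1 along such an elementary decomposition gives the upper bound
\[
\E_h(d)\le\sum_j\E_h(d^{\sharp}_j),\qquad d^{\sharp}_j\in\bigl\{\pm\alpha/\pi,\pm(1-\alpha/\pi)\bigr\}.
\]

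Third, establish the matching lower bound and rule out attainment. For the lower bound one would apply concentration-compactness to a minimising sequence $(m_n)$: using the locally uniform bounds on $m'_n$ and the polynomial decay of $m_{n,1}-k$ to control tails, localise the first elementary wall (around a point where $|m_{n,1}|=1$), cut at a nearby point where $m_{n,1}=k$, and apply an induction hypothesis to the residual profile of smaller degree. Combined with Step 2, this gives $\E_h(d)=\sum_j\E_h(d^{\sharp}_j)$. Non-attainment then follows by the same mechanism as in Theorem \ref{thm:non-existence}: a hypothetical minimiser $m^\star$ must contain two outermost elementary walls of opposite type, and the competitor obtained by pushing them apart by an extra distance $T$ decreases the stray-field energy by a strictly positive term of order $T^{-2}$ as $T\to\infty$, contradicting minimality.

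The principal obstacle lies in Step 3. For $d=1$, Theorem \ref{thm:non-existence} exploits the attainment of the two elementary pieces, supplied by Chermisi-Muratov, to compare directly with reference profiles. For $d\ge 2$ or $d=n+\alpha/\pi$ with $n\ge 2$, the intermediate infima are themselves conjecturally not attained, so one cannot use attainment as a black box: the splitting formula and non-attainment must be proved in tandem by a simultaneous induction on the number of elementary walls. The concentration-compactness profile decomposition must then be robust enough to accommodate cascades of blocks escaping to infinity at possibly very different rates, and one must control the many pairwise stray-field interactions uniformly. With only polynomial decay of $m_1-k$ available, this is where existing technology appears to fall short, which is why the statement is stated as a conjecture rather than a theorem.
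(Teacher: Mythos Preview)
The paper does \emph{not} prove this statement: it is stated as a conjecture in Sect.~\ref{sect:heuristics} and left open. The only case established is $d=1$ (Theorem~\ref{thm:non-existence}), whose proof relies on the pointwise decomposition $m_1-k=(m_1-k)_++(m_1-k)_-$ together with Lemma~\ref{lem:repulsion}; this argument does not extend to higher $d$. So there is no ``paper's proof'' to compare your proposal against, and your final paragraph correctly identifies why the general statement remains conjectural.

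That said, your Step~3 contains a concrete gap even at the heuristic level. For $d=n+\alpha/\pi$ with $n\ge1$, the lifting runs (generically) from $-\alpha$ to $2\pi n+\alpha$, so the sequence of elementary walls is small--large--$\cdots$--large--small: the two \emph{outermost} walls are both of the small type ($m_1\ge k$) and hence \emph{attract} in the stray-field interaction. Your competitor that ``pushes the outermost walls apart'' therefore \emph{increases} the energy to leading order, and does not yield a contradiction. The paper's heuristic for non-existence in this case (see the discussion around Fig.~\ref{fig:h<1}, left) is different: the outermost small wall is repelled by its \emph{adjacent} large wall, and this repulsion is claimed to dominate the attraction coming from the far walls because it acts over a shorter distance. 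Making this precise for a putative minimiser---where you do not control the inter-wall spacings a priori---is exactly the difficulty you flag, and your sketch does not resolve it.

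A second minor point: in Step~2 you invoke ``irrationality of $\alpha/\pi$ in the generic case'' to force both wall types to appear. The conjecture is for all $h\in[0,1)$, so this argument would need to be replaced by something robust (for $d\in\N$ the correct reason is simply that the lifting must cross both $0$ and $\pi$ modulo $2\pi$, cf.\ Lemma~\ref{lem:passages}).
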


\begin{conjecture}
For any $d \in \N$, there exists $H \in (0, 1)$ such that
$\E_h(d - \alpha/\pi)$ is attained whenever $h \in [H, 1)$.
\end{conjecture}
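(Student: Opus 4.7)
The plan is to apply the direct method combined with Lions' concentration-compactness principle. Let $(m^n) \subset \A_h(2 - \alpha/\pi)$ be a minimizing sequence for $\E_h(2 - \alpha/\pi)$. Standard coercivity of $E_h$ controls $\|m_1^n - k\|_{H^1(\R)}$ and $\|(m^n)'\|_{L^2(\R)}$, so after extraction and an appropriate translation $m^n \rightharpoonup m$ in $H^1_\loc(\R; \Ss^1)$ with $E_h(m) \leq \E_h(2 - \alpha/\pi)$ by lower semicontinuity. The only obstruction to compactness is dichotomy: parts of the profile carrying non-trivial admissible winding escape to infinity. The heart of the proof is to rule this out via strict subadditivity of the energy with respect to decompositions of the winding number.

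A case analysis of the splits of $2 - \alpha/\pi$ into admissible degrees in $\Z + \{0, \pm \alpha/\pi\}$, using the identity $\E_h(1) = \E_h(\alpha/\pi) + \E_h(1 - \alpha/\pi)$ from Theorem \ref{thm:non-existence} and the positive energy of any elementary wall, reduces the problem to proving the strict inequality
\begin{equation}
\label{eq:goal}
\E_h(2 - \alpha/\pi) < 2\E_h(1 - \alpha/\pi) + \E_h(\alpha/\pi)
\end{equation}
for $h$ close to $1$. To this end, let $m^\flat \in \A_h(1 - \alpha/\pi)$ and $m^\sharp \in \A_h(\alpha/\pi)$ denote minimizers (available from \cite{Chermisi-Muratov:13}). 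For $R$ large, construct a test map $m^R \in \A_h(2 - \alpha/\pi)$ by gluing $m^\flat(\cdot + R)$ on the left, $m^\sharp$ in the middle, and $m^\flat(\cdot - R)$ on the right; the wells of $W$ at $(k, \pm \sqrt{1 - k^2})$ match at the junctions, the winding contributions combine into $2 - \alpha/\pi$, and the polynomial tail bounds of Theorem \ref{thm:decay_h<1} ensure that the exchange plus anisotropy contributions sum to $2 E_h(m^\flat) + E_h(m^\sharp) + o(1)$ as $R \to \infty$.

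The nonlocal term $\frac{1}{2} \|m_1^R - k\|_{\dot{H}^{1/2}}^2$ decomposes into self- and cross-terms. Since $m_1^\flat - k \in L^1(\R)$ with integral $-c_\flat(h) < 0$ (large wall dips below $k$) and $m_1^\sharp - k \in L^1(\R)$ with integral $c_\sharp(h) > 0$ (small wall rises above $k$), a leading-order asymptotic analysis of the $\dot{H}^{1/2}$ inner product via the Poisson kernel (or equivalently Fourier) representation shows that the large-large cross-term is attractive of order $-A\,c_\flat(h)^2/R^2$ while each of the two large-small cross-terms is repulsive of order $B\,c_\flat(h)c_\sharp(h)/R^2$, for explicit positive constants $A, B$. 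Hence
\[
E_h(m^R) = 2 E_h(m^\flat) + E_h(m^\sharp) - \frac{A\, c_\flat(h)^2 - 2 B\, c_\flat(h) c_\sharp(h)}{R^2} + o(R^{-2}).
\]
As $h \nearrow 1$, the angular excursion $2\alpha$ of the small wall shrinks to zero, forcing $c_\sharp(h) \to 0$, while $c_\flat(h)$ remains bounded below by a positive constant uniform in $h$, because the large wall must span a near-$2\pi$ transition (a positive lower bound on $\|m_1^\flat - k\|_{L^1}$ follows from the winding constraint together with the exchange-energy bound on the wall). Hence there exists $H \in (0, 1)$ such that $A\,c_\flat(h) > 2 B\,c_\sharp(h)$ for every $h \in [H, 1)$; choosing $R$ sufficiently large gives $E_h(m^R) < 2 \E_h(1 - \alpha/\pi) + \E_h(\alpha/\pi)$, establishing \eqref{eq:goal}. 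With strict subadditivity in place, Lions' concentration-compactness argument rules out dichotomy (vanishing is excluded by the positive energy of any admissible wall), so $(m^n)$ is compact up to translation and its limit $m$ lies in $\A_h(2 - \alpha/\pi)$ with $E_h(m) = \E_h(2 - \alpha/\pi)$.

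The main obstacle is the singular limit $h \to 1^-$: the two wells of $W$ collapse onto one another at $h = 1$, so uniform estimates on the minimizers $m^\flat, m^\sharp$ as $h \nearrow 1$ are needed to certify both $c_\sharp(h) \to 0$ and $c_\flat(h)$ bounded below by a positive constant. A second technical challenge is the precise large-$R$ asymptotic analysis of the $\dot{H}^{1/2}$ cross-terms, which depends sensitively on the polynomial tail behavior of the wall profiles provided by Theorem \ref{thm:decay_h<1}.
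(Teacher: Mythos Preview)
The statement you are attempting is a \emph{conjecture} in the paper: the authors do not prove it, so there is no proof to compare against. What the paper does prove is the special case $d=2$, stated separately as Theorem~\ref{thm:existence_h<1}. Your entire argument is written for the winding number $2-\alpha/\pi$, i.e.\ precisely this case $d=2$; it says nothing about $d\ge 3$.

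For $d=2$ your approach is essentially the same as the paper's. The paper combines a concentration-compactness principle (Theorem~\ref{thm:existence}) with the strict subadditivity inequality $\E_h(2-\alpha/\pi)<2\E_h(1-\alpha/\pi)+\E_h(\alpha/\pi)$ (Theorem~\ref{thm:strict_subadditivity_h<1}), proved by the same large--small--large gluing you describe. The paper's implementation differs in that it first localises each wall with a cut-off (Proposition~\ref{prop:localisation}), paying an error $O(R^{-3})$, and then estimates the cross-terms via Lemma~\ref{lem:attraction}, rather than doing a bare $o(R^{-2})$ asymptotic expansion. The key quantitative inputs are identical to yours: $\|m_1^\sharp-h\|_{L^1}\le C\alpha\to 0$ from Theorem~\ref{thm:decay_h<1} and a uniform positive lower bound on $\|m_1^\flat-h\|_{L^1}$ (obtained in the paper from Lemma~\ref{lem:energy_estimate_for_1-alpha/pi} and a $C^{0,1/2}$ argument). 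One small point: the tail decay for $m^\flat$ comes from Theorem~\ref{thm:alpha_large}, not Theorem~\ref{thm:decay_h<1}, and those constants are not uniform in $\alpha$; this is harmless because only the $R^{-3}$ localisation error depends on them.

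For general $d\ge 3$ there is a genuine gap. The concentration-compactness argument requires strict subadditivity against \emph{every} admissible splitting $d-\alpha/\pi=d'+(d-\alpha/\pi-d')$, and the number of such splittings grows with $d$. To rule each of them out by your gluing strategy you would need minimisers in the intermediate classes $\A_h(d')$, but for $d'\in\{1+\alpha/\pi,\,2-\alpha/\pi,\,2,\,\ldots\}$ their existence is exactly what is unknown (and for $d'=1$ they provably do \emph{not} exist, by Theorem~\ref{thm:non-existence}). An induction on $d$ therefore does not close without substantial new input, which is why the authors leave the general statement as a conjecture.
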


\begin{conjecture}
For any $d \in \N \setminus \{1\}$, there exists $K \in (0, 1)$ such
that $\E_h(d - \alpha/\pi)$ is \emph{not} attained whenever $h \in [0, K]$.
\end{conjecture}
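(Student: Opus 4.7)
The plan is to extend the mechanism behind Theorem \ref{thm:non-existence} to winding number $d - \alpha/\pi$ for $d \geq 2$ when $h$ is sufficiently small. The target is the strict splitting
\[
\E_h(d - \alpha/\pi) = d\,\E_h(1 - \alpha/\pi) + (d-1)\,\E_h(\alpha/\pi),
\]
from which non-attainment follows because the right-hand side can only be realised by sequences in which all $2d-1$ atomic walls drift arbitrarily far apart. The upper bound $\leq$ is routine: take near-optimal atomic profiles $m_s \in \A_h(\alpha/\pi)$ and $m_\ell \in \A_h(1-\alpha/\pi)$, whose existence is given by \cite{Chermisi-Muratov:13}, interleave $d$ translated copies of $m_\ell$ with $d-1$ translated copies of $m_s$, glue smoothly near the common wells, and let all gaps tend to infinity; the local energies add up, and the cross terms of $\|m_1 - k\|_{\dot{H}^{1/2}}^2$ vanish in the limit.

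The main obstacle is the matching lower bound. I would use a concentration-compactness analysis in the spirit of the $d=1$ case: by translating along a chosen wall and iterating on the escaping mass, any minimising sequence $(m^n)\subset \A_h(d - \alpha/\pi)$ decomposes into finitely many asymptotic bubbles $m^{(1)}, \ldots, m^{(p)}$ with winding numbers $d_j \in \Z + \{0, \pm\alpha/\pi\}$ summing to $d - \alpha/\pi$ and $\sum_j E_h(m^{(j)}) \leq \E_h(d - \alpha/\pi)$. The crucial step is a strict subadditivity: for $h \in [0, K]$ and $d_1, d_2$ each of absolute value at least $\alpha/\pi$,
\[
\E_h(d_1 + d_2) \geq \E_h(d_1) + \E_h(d_2).
\]
This would force every $d_j$ to be atomic, i.e.\ $\pm\alpha/\pi$ or $\pm(1 - \alpha/\pi)$, and then optimising over such allocations subject to $\sum_j d_j = d - \alpha/\pi$ selects exactly $d$ walls of degree $1-\alpha/\pi$ and $d-1$ walls of degree $\alpha/\pi$, matching the target sum. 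Proving this subadditivity is the hardest part: it reduces to controlling the sign of the stray-field cross term $\int_{\R_+^2}\nabla u^{(1)} \cdot \nabla u^{(2)}\,dx$ between two adjacently placed wall profiles. When $h$ is small, $k = \cos\alpha$ is close to $0$, so a small wall and a large wall placed next to one another carry $m_1 - k$ of opposite signs, making the cross term repulsive; a quantitative estimate on this repulsion, using the explicit Riesz kernel representation of $|d/dx_1|^{1/2}$, should both justify the inequality and identify the threshold $K$.

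Once the splitting identity holds, non-attainment is automatic. A supposed minimiser $m^* \in \A_h(d - \alpha/\pi)$ would, by the analysis of wells, have a lifting traversing $2d - 1$ anisotropy wells in the alternating large-small-large-$\cdots$-large pattern, so all atomic walls would coexist at finite distances within a single profile, leaving a strictly positive residual interaction energy in contradiction with $E_h(m^*) = \E_h(d - \alpha/\pi)$. An alternative route to the same conclusion avoids the full concentration-compactness argument: starting from any presumed minimiser, construct a one-parameter family obtained by sliding one of the outermost large walls off to $+\infty$, and show that the derivative of $E_h$ along this family is strictly negative for $h \in [0, K]$, which directly rules out critical points of the prescribed winding number.
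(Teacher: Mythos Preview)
The statement you are attempting to prove is labelled a \emph{Conjecture} in the paper; the authors do not prove it and explicitly list it among the open cases in Sect.~\ref{sect:heuristics}. There is therefore no proof in the paper to compare your proposal against.

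As for the proposal itself, the overall strategy is reasonable but the key step is not established. What you call ``strict subadditivity'' is in fact a \emph{superadditivity} inequality $\E_h(d_1+d_2)\ge \E_h(d_1)+\E_h(d_2)$, and this is precisely the hard part that the paper leaves open. Your justification---that adjacent small and large walls carry $m_1-k$ of opposite sign and hence repel---is correct for nearest neighbours, but a profile of winding number $d-\alpha/\pi$ with $d\ge 2$ has $d$ large walls and $d-1$ small walls in an alternating pattern, so the two outermost walls are both large and \emph{attract}. Theorem~\ref{thm:existence_h<1} shows that for $h$ near $1$ this attraction can dominate and produce a minimiser; your task is to show that for $h$ small the next-neighbour repulsion wins, and nothing in your sketch quantifies this competition. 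The alternative ``sliding'' argument at the end has the same defect: sliding the rightmost large wall to $+\infty$ weakens its repulsion with the adjacent small wall (energy gain) but also weakens its attraction with the other large walls (energy loss), and you have not shown the derivative has a definite sign.
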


\subsection{Other representations of the energy and the winding number}

It is sometimes convenient to represent the energy functional
$E_h$ in terms of a phase (lifting) $\phi$ of $m$ such that
$m = (\cos \phi, \sin \phi)$. Abusing notation and writing
$W(\phi)$ and $E_h(\phi)$ instead of $W(m)$ and $E_h(m)$,
respectively, we have
\[
E_h(\phi) = \frac{1}{2} \int_{-\infty}^\infty \left((\phi')^2 + 2W(\phi)\right) \, dx_1 + \frac{1}{2} \int_{\R_+^2} |\nabla u|^2 \, dx.
\] 

By definition, the potential $W$ depends only on $m_1$, so
we abuse notation further and
write $W(m_1)$ instead of $W(m)$ when convenient.
Since the stray field energy is determined by $m_1$ as well,
we can rewrite the energy $E_h$ in terms of $m_1$ only:
\[
E_h(m) = \frac{1}{2} \int_{-\infty}^\infty \left(\frac{(m_1')^2}{1-m_1^2} + 2W(m_1)\right) \, dx_1 + \frac 1 2 \int_{\R} \textstyle \left|\left|\frac{d}{dx_1}\right|^{\frac 1 2}m_1\right|^2\, dx_1.
\]
In fact, often it is convenient to study our variational problem
in terms of $m_1$ only, ignoring the second component $m_2$.
Then we note that the winding number is characterised
implicitly by the following simple observation.

\begin{lemma} \label{lem:degree}
Let $d \in \N + \{0, \pm \frac{\alpha}{\pi}\} \cup \{\frac{\alpha}{\pi}\}$.
Let $m_1 \colon \R \to [-1, 1]$ be a continuous
function with $\lim_{x_1 \to \pm \infty} m_1(x_1) = k$.
Suppose that there exist $a_1, \ldots, a_I \in \R$ with
$a_1 < a_2 < \cdots < a_I$ and there exists $\epsilon \in \{\pm 1\}$
such that $m_1(a_j) = \epsilon (-1)^j$ for
$j = 1, \ldots, I$. Further suppose that one of the following conditions
is satisfied:
\begin{enumerate}
\item \label{item:degree_d-alpha/pi}
$I$ is odd and $d = \frac{I + 1}{2} - \frac{\alpha}{\pi}$ and $\epsilon = 1$; or
\item $I$ is odd and $d = \frac{I - 1}{2} + \frac{\alpha}{\pi}$
and $\epsilon = -1$ and $h < 1$; or
\item $I$ is even and $d = \frac{I}{2}$ and $h < 1$.
\end{enumerate}
Then there exists a continuous function $m_2 \colon \R \to [-1, 1]$
such that the map $m = (m_1, m_2)$ takes values in $\Ss^1$ and
$\deg(m) = d$.
\end{lemma}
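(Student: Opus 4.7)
The plan is to construct $m_2$ explicitly by choosing a sign on each of the $I+1$ subintervals determined by the $a_j$'s. Set $a_0 = -\infty$, $a_{I+1} = +\infty$, and on each interval $J_j := (a_j, a_{j+1})$ pick $\sigma_j \in \{-1, +1\}$; then put
\[
m_2(x_1) := \sigma_j\,\sqrt{1 - m_1(x_1)^2} \quad\text{for } x_1 \in \overline{J_j}.
\]
For $1 \le j \le I$ we have $m_1(a_j) = \pm 1$ and hence $m_2(a_j) = 0$, so $m_2$ is automatically continuous at each $a_j$ regardless of how the signs are chosen. Consequently $m = (m_1, m_2) \colon \R \to \Ss^1$ is continuous, and it lifts to a continuous $\phi \colon \R \to \R$ with $m = (\cos\phi, \sin\phi)$.

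The first key step is to show that, for each interior $j \in \{1, \ldots, I-1\}$, the increment $\Delta_j := \phi(a_{j+1}) - \phi(a_j)$ lies in $\{\pi, -\pi\}$, with the choice of sign prescribed by $\sigma_j$ alone. The point is that constancy of $\sigma_j$ on $J_j$ confines $\phi$ (modulo $2\pi$) to an interval of width $\pi$, and combined with the fact that $m_1(a_j) = \epsilon(-1)^j$ and $m_1(a_{j+1}) = -\epsilon(-1)^j$ are $\pm 1$ of opposite signs, a direct enumeration of four cases gives $\Delta_j = \sigma_j\,\epsilon\,(-1)^j\,\pi$. Choosing $\sigma_j = \epsilon(-1)^j$ therefore yields $\Delta_j = \pi$ for every interior $j$, contributing a total interior change of $(I-1)\pi$.

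The second key step treats the outer intervals $J_0, J_I$. Here the analysis is analogous, but the value of $\phi(\pm\infty) \in 2\pi\Z + \{-\alpha, \alpha\}$ is also part of the choice, linked to $\sigma_0$ (resp.\ $\sigma_I$) through the same width-$\pi$ confinement. An enumeration shows that if $m_1(a_1) = -1$ one can realise $\Delta_0 \in \{\pi-\alpha,\,-(\pi-\alpha)\}$, while if $m_1(a_1) = +1$ one has $\Delta_0 \in \{\alpha,\,-\alpha\}$; the corresponding statement holds for $\Delta_I$ in terms of $m_1(a_I)$. It then remains to arrange $\Delta_0 + \Delta_I = 2\pi d - (I-1)\pi$. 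In case (i) ($m_1(a_1) = m_1(a_I) = -1$) the target is $2\pi - 2\alpha$, achieved by $\Delta_0 = \Delta_I = \pi - \alpha$; in case (ii) ($m_1(a_1) = m_1(a_I) = +1$) the target is $2\alpha$, achieved by $\Delta_0 = \Delta_I = \alpha$; in case (iii) the target is $\pi$, achieved by $\Delta_0 = \pi - \alpha$, $\Delta_I = \alpha$ if $\epsilon = 1$ (and symmetrically if $\epsilon = -1$). The hypothesis $h < 1$, i.e.\ $\alpha > 0$, in cases (ii) and (iii) is exactly what makes the required outer combinations available. Summing finally yields $\sum_{j=0}^{I} \Delta_j = 2\pi d$, so $\deg(m) = d$.

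The only real obstacle is the bookkeeping of signs. One minor subtlety is that $m_1$ may equal $\pm 1$ at points strictly inside some $J_j$; this causes no trouble because with a single $\sigma_j$ chosen on the whole of $J_j$ the function $m_2$ remains continuous and the increment $\Delta_j$ depends only on the boundary values of $m_1$ and on $\sigma_j$. The rest of the argument is a routine case check, which I expect to fit on a few lines once the formulas $\Delta_j = \sigma_j\,\epsilon\,(-1)^j\,\pi$ and the list of admissible outer contributions are established.
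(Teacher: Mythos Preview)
Your proposal is correct and follows essentially the same approach as the paper: define $m_2$ by choosing a sign $\sigma_j$ for $\sqrt{1-m_1^2}$ on each of the $I+1$ subintervals, then verify the winding number. The paper's proof is in fact much terser than yours---for case (i) it simply writes down the sign choices ($\sigma_0=+1$, $\sigma_j=(-1)^j$ for $1\le j\le I-1$, $\sigma_I=-1$) and asserts that ``this clearly gives rise to a map with the desired winding number,'' leaving the increment computation you carry out implicit. Your formula $\Delta_j=\sigma_j\,\epsilon\,(-1)^j\pi$ and the enumeration of outer contributions make that verification explicit. One minor quibble: your closing remark that the hypothesis $h<1$ in cases (ii) and (iii) is ``exactly what makes the required outer combinations available'' is not quite the reason---the construction would still go through for $\alpha=0$, but those cases are then redundant with case (i) applied to a smaller set of $a_j$'s, which is presumably why the paper restricts them.
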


\begin{proof}
We only give the arguments under the condition \ref{item:degree_d-alpha/pi},
as the proof is similar for the other cases.
Since we need to satisfy $m_1^2 + m_2^2 = 1$ everywhere,
we only need to determine the sign of $m_2$. Assuming
that \ref{item:degree_d-alpha/pi} is satisfied, we can do this
as follows: in $(-\infty, a_1)$, we choose $m_2 = \sqrt{1 - m_1^2}$;
in $[a_j, a_{j + 1})$, we choose $m_2 = (-1)^j \sqrt{1 - m_1^2}$ for
$j = 1, \ldots, I - 1$; and in $[a_I, \infty)$, we choose
$m_2 = -\sqrt{1 - m_1^2}$. This clearly gives rise to a map
$m = (m_1, m_2)$ with the desired winding number.
\end{proof}

\subsection{Notation}
\label{sec:notation}

\paragraph{The stray field potential $U(m)$.} Recalling the Neumann problem \eqref{eqn:harmonic}--\eqref{eqn:boundary_data} for $m_1-k\in H^1(\R)$,
we highlight that the solutions $u$ in $\dot{H}^{1}(\R_+^2)$ have a limit for $|x| \to \infty$.
Indeed, if we extend $u$ to $\R^2$ by even reflection, then we obtain
a harmonic function near $\infty$ with finite Dirichlet
energy, and it is well-known that the limit exists at $\infty$.
Then we normalise this constant and define $U(m)$
(sometimes also denoted $U(m_1)$) to be the unique solution
of \eqref{weak_stray} in $\dot{H}^{1}(\R_+^2)$
with\label{def:U}
\[
U(m) \to 0 \quad \text{as } |x| \to \infty.
\]
If we denote the Fourier transform with respect to $x_1$
by $\cal F$, then the solution $U(m)$ is given by \cite[Proposition 4]{Ig}
\be
\label{def:Um}
{\cal F}U(m)(\xi, x_2)=\frac{1}{\sqrt{2\pi}}\int_{\R} e^{-i\xi x_1} U(m)(x)\, dx_1= \frac{e^{-|\xi| x_2}}{|\xi|} {\cal F}(m_1'){ (\xi)}, \quad \xi\in \R, \ x_2\geq 0.\ee
Note that $U(m)\in L^2(\R^2_+)$ if, and only if, $m_1-k\in \dot{H}^{-1/2}(\R)$, where the homogeneous Sobolev space $\dot{H}^{s}(\R)$ (for $s\in \R$) is the set of tempered distributions $f$ such that ${\cal F}f\in L^1_\loc(\R)$ and
$$\|f\|^2_{\dot{H}^{s}(\R)}=\int_{\R} |\xi|^{2s} |{\cal F} f|^2\, d\xi<\infty.$$

\paragraph{The conjugate harmonic potential $V(m)$.} In addition,
we consider the conjugate harmonic function $V(m)\in \dot{H}^1(\R^2_+)$ (sometimes also denoted $V(m_1)$) with $$\nabla^\perp V(m) = - \nabla U(m) \quad \text{in }\, \, \R^2_+.$$ In other words, $V(m)$ is the unique solution of the Dirichlet problem
\begin{align}
\Delta V(m) & = 0 \quad \text{in $\R^2_+$}, \label{eqn:v} \\
V(m) & = m_1-k \quad \text{on $\R \times \{0\}$}. \label{eqn:v-dir}
\end{align}
Equivalently, $V(m)$ is the unique minimiser for the problem
\[
\int_{\R^2_+} |\nabla V(m)|^2 \, dx = \inf \set{\int_{\R^2_+} |\nabla v|^2 \, dx}{v \in \dot{H}^{1}(\R^2_+) \text{ with } \eqref{eqn:v-dir}}.
\]
It is given by the following formula,
similar to \eqref{def:Um} \cite[Proposition 3]{Ig}:
\be
\label{def:Vm}
{\cal F}V(m)(\xi, x_2)=e^{-|\xi| x_2} {\cal F}(m_1-k)(\xi), \quad \xi\in \R, \ x_2\geq 0.\ee
As ${\cal F}\left(x_1\to \frac{x_2}{x_1^2+x_2^2}\right)(\xi)=\sqrt{\frac{\pi}{2}}e^{-x_2|\xi|}$, we deduce the following Poisson formula:
$$V(m)(x)=\frac{x_2}{\pi}\int_{\R}\frac{m_1(t)-k}{(t-x_1)^2+x_2^2}\, dt, \quad x\in \R^2_+.$$

\paragraph{The Dirichlet-to-Neumann operator $\Lambda$.} Consider
the operator $\Lambda\colon \dot{H}^1(\R)\to L^2(\R)$ given by
$$\Lambda: f \mapsto \textstyle - \left(-\frac{d^2}{dx_1^2}\right)^{\frac{1}{2}}f, \quad \text{i.e.,} \quad {\cal F}(\Lambda f)(\xi)=-|\xi| {\cal F}f(\xi), \quad \xi\in \R.$$
We can represent $\Lambda$ by 
the following formula \cite[(3.1)]{DiNezza-Palatucci-Valdinoci:12}:
\begin{equation} \label{eqn:Dirichlet-to-Neumann}
\Lambda f(x_1) = \frac{1}{\pi} \PV\int_{-\infty}^\infty \frac{f(t) - f(x_1)}{(t - x_1)^2} \, dt,  \quad x_1\in \R.
\end{equation}
By \eqref{def:Um} and \eqref{def:Vm}, we obtain
\be
\label{egalite}
\Lambda (m_1-k)(x_1)=\frac{\partial}{\partial x_1} U(m)(x_1, 0)=\frac{\partial}{\partial x_2} V(m)(x_1, 0), \quad x_1\in \R. 
\ee
Therefore, this is a Dirichlet-to-Neumann operator for the
boundary value problem \eqref{eqn:v}--\eqref{eqn:v-dir}.
If $u=U(m)$, we will often write $u'$ for
the quantity $u'(x_1)=\frac{\partial}{\partial x_1} U(m)(x_1, 0)$, where $x_1\in \R$.

\begin{remark}
\label{rem:lambda_h1loc}
The Dirichlet-to-Neumann operator can be also defined on the space $\dot{H}^{1/2}(\R)$, such that
$\Lambda\colon \dot{H}^{1/2}(\R)\to \dot{H}^{-1/2}(\R)$.
Moreover, we have
$$\Lambda\left(H^1_{\loc}\cap L^\infty\cap\dot{H}^{1/2}(\R)\right) \subset L^2_{\loc}\cap \dot{H}^{-1/2}(\R).$$ 
Indeed, let $f\in H^1_\loc\cap L^\infty\cap \dot{H}^{1/2}(\R)$ and $R>0$. We want to show that $\Lambda f\in L^2(-R,R)$. To this end, we
choose a smooth cut-off function $\zeta$ with $\zeta \equiv 1$ in
$(-2R, 2R)$ and $\zeta \equiv 0$ outside of
$(-3R, 3R)$. We decompose $f = f_0 + f_1$, where $f_0 := f \zeta$. Clearly, $f_0\in H^1(\R)$ with compact support in $[-3R, 3R]$, so that $\Lambda f_0\in L^2(\R)$ and $f_1\in H^1_\loc\cap L^\infty\cap\dot{H}^{1/2}(\R)$. Since $\Lambda$ is linear, it is enough to show that $\Lambda f_1\in L^2(-R,R)$.
This follows from the estimate
\[
\begin{split}
    (\Lambda f_1, \eta)_{\dot{H}^{-1/2}, \dot{H}^{1/2}} &= -\frac{1}{2\pi }\int_\R \int_\R \frac
    {(f_1(t)-f_1(s))(\eta(t)- \eta(s))}{(t-s)^2} \ dt \ ds\\
    &= \frac{1}{\pi }\int_ {\R\setminus [-2R, 2R]} \int_{-R}^R \frac
    {f_1(t) \eta(s)}{(t-s)^2} \ dt \ ds\\
    &\leq \frac{2\sqrt{2} R}{\pi} \|f_1\|_{L^\infty(\R)} \|\eta\|_{L^2(\R)} \left(\int_{2R}^\infty \frac{ds}{(s-R)^4}\right)^{1/2} \\
    &=  \frac{2\sqrt{2}}{\pi\sqrt{3R}} \|f_1\|_{L^\infty(\R)} \|\eta\|_{L^2(\R)}
  \end{split}
\]
for $\eta \in C^\infty_0(-R,R)$.

\end{remark}

\begin{convention}
Throughout the paper, when we speak of a \emph{universal constant},
we mean a constant that depends neither on the parameter $h$
nor on any of the variables of the problem.
\end{convention}

\subsection{Organisation of the paper}

The rest of the paper is devoted to the proofs of our
results. We first prove a few auxiliary statements in
Sect.\ \ref{sect:prelim}. Among these are estimates for
$\mathcal{E}_h$, a proof that $W(\phi)$ grows quadratically
in the phase $\phi$ near its zeros, and estimates of
the energy for a profile localised with a cut-off function.

In Sect.\ \ref{sect:Euler-Lagrange}, we state the
Euler-Lagrange equation for critical points of $E_h$ and a
regularity result. We prove Proposition \ref{pro:Poho} here
and we establish further consequences of the Euler-Lagrange
equation, in particular a result on the symmetry of
minimisers and $H^2$-estimates.

As the control of the nonlocal part of the energy is
crucial for our analysis, we study this term in Sect.\ \ref{sect:nonlocal}.
We derive several estimates based on cut-off arguments similar
to Remark \ref{rem:lambda_h1loc} and we establish the
attraction/repulsion described in Sect.\ \ref{sect:heuristics}.

In Sect.\ \ref{sect:decay}, we analyse the tails of energy
minimisers and their decay as $x_1 \to \pm \infty$. For $h > 1$,
we obtain exponential decay. For $h < 1$, we can expect
polynomial decay at best, and we prove this for winding
numbers $\alpha/\pi$ and $1 - \alpha/\pi$ with the help of
a linearisation of the Euler-Lagrange equation. These
estimates are important in order to see that the attraction
or repulsion of the nonlocal terms dominates everything else.

In Sect.\ \ref{sect:concentration-compactness}, we establish
a general concentration-compactness result that allows to
prove existence of minimisers by finding good estimates
for the energy. Finally, in Sect. \ref{sect:proofs},
we combine all the ingredients and prove
Theorems \ref{thm:existence_h>1}--\ref{thm:existence_h<1}.
In order to compare our results with the situation for
a similar functional without a nonlocal term, we discuss
the known results for the latter in the {A}ppendix.

\paragraph{Acknowledgements.} Part of this research was carried out
at the ICMS Edinburgh, and the authors wish to thank the centre for
its hospitability. 
RI acknowledges partial support from the ANR project ANR-14-CE25-0009-01.

\section{Preliminary observations} \label{sect:prelim}

\subsection{A simple energy estimate}

Suppose that $h \in [0, 1)$ and we study $\E_h(\alpha/\pi)$.
While the work of Chermisi-Muratov \cite{Chermisi-Muratov:13}
gives a lot of information about this situation (especially concerning
the structure of the energy minimisers), 
we also need to know how $\E_h(\alpha/\pi)$ depends on $\alpha$
(and therefore on $h$). In particular the growth behaviour
in $\alpha$ near $0$ is important, e.g., for the proof of Theorem \ref{thm:existence_h<1}. An estimate can be obtained
by a scaling argument as follows.

\begin{lemma}[Cubic growth in $\alpha$] \label{lem:cubic_growth}
There exists a universal constant $C>0$ such that for all $h \in [0, 1)$,
\[
\E_h(\alpha/\pi) \le C\alpha^3,
\]
where $\alpha\in (0, \frac \pi 2]$ with $\cos \alpha=h$.
\end{lemma}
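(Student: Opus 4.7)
The plan is to exhibit an explicit test map $m_\alpha\in\A_h(\alpha/\pi)$ with $E_h(m_\alpha)\le C\alpha^3$. Fix once and for all a smooth nondecreasing profile $\psi\colon\R\to[-1,1]$ with $\psi\equiv-1$ on $(-\infty,-1]$ and $\psi\equiv+1$ on $[1,\infty)$. For $\alpha\in(0,\pi/2]$, I would take
\[
\phi_\alpha(x_1):=\alpha\,\psi(\alpha x_1),\qquad m_\alpha:=(\cos\phi_\alpha,\sin\phi_\alpha).
\]
Since $\phi_\alpha$ is a continuous lifting of $m_\alpha$ with $\phi_\alpha(\pm\infty)=\pm\alpha$, the map $m_\alpha$ sits in $\A_h(\alpha/\pi)$, and the lemma reduces to the energy estimate. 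The spatial scale $1/\alpha$ is dictated by the balance between the $\alpha^2/\lambda$ exchange contribution and the $\alpha^4\lambda$ anisotropy contribution, which is precisely what produces the exponent $3$.

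The local part is handled by direct scaling. A change of variable gives $\int(\phi_\alpha')^2\,dx_1=\alpha^3\int(\psi')^2\,dy$. For the anisotropy, the elementary inequality $|\cos\phi-\cos\alpha|\le\tfrac12(\alpha^2-\phi^2)$ for $|\phi|\le\alpha$ (a one-line consequence of $|\sin x|\le|x|$ applied to the product-to-sum identity $\cos\phi-\cos\alpha=-2\sin\tfrac{\phi+\alpha}{2}\sin\tfrac{\phi-\alpha}{2}$), combined with the formula $W=\tfrac12(m_1-\cos\alpha)^2$ that holds in the case $h<1$, yields
\[
2W(\phi_\alpha)\le\tfrac14\alpha^4\bigl(1-\psi(\alpha x_1)^2\bigr)^2,
\]
whose integral is $O(\alpha^3)$ by another rescaling, since $(1-\psi^2)^2$ is compactly supported.

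The only slightly delicate step is the nonlocal term. I would use the one-dimensional interpolation
\[
\|f\|_{\dot{H}^{1/2}(\R)}^2\le\|f\|_{L^2(\R)}\,\|f\|_{\dot{H}^1(\R)},
\]
which follows from Cauchy--Schwarz on the Fourier side, applied to $f:=m_{\alpha,1}-k=\cos\phi_\alpha-\cos\alpha$. The bound of the previous paragraph already yields $\|f\|_{L^2}^2=O(\alpha^3)$, while $|f'|=|\sin\phi_\alpha||\phi_\alpha'|\le\alpha|\phi_\alpha'|$ gives $\|f\|_{\dot{H}^1}^2=O(\alpha^5)$. Consequently $\|f\|_{\dot{H}^{1/2}}^2=O(\alpha^4)$, and since $\alpha\le\pi/2$ the stray-field energy is dominated by $\alpha^3$. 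Summing the three contributions yields the universal constant $C$ in the lemma. No step is genuinely hard; the substantive point is that the scale invariance of $\|\cdot\|_{\dot{H}^{1/2}}$ in one dimension keeps the nonlocal term subdominant, which is why the simple one-scale ansatz is already optimal up to a constant.
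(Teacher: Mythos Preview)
Your argument is correct. Both the paper and you build a test map by rescaling a fixed profile and then estimate each term, so the broad strategy is the same; the differences are in how the rescaling is set up and how the nonlocal term is handled.

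The paper works directly with the first component: it fixes a reference $\tilde m\in\A_0(1/2)$, rescales $\tilde m_1$ in amplitude via $\hat m_1=1-(1-\cos\alpha)(1-\tilde m_1)$, and then rescales the spatial variable by $\sqrt{1-\cos\alpha}$. The point of this particular spatial scale is that the one-dimensional $\dot H^{1/2}$-seminorm is dilation invariant, so the nonlocal term picks up only the amplitude factor $(1-\cos\alpha)^2\sim\alpha^4/4$, while the exchange and anisotropy terms balance at $(1-\cos\alpha)^{3/2}\sim\alpha^3$. You instead rescale the phase, $\phi_\alpha=\alpha\psi(\alpha\,\cdot)$, and control the nonlocal term by the $L^2$--$\dot H^1$ interpolation inequality, obtaining the same $O(\alpha^4)$. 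Your route avoids the slightly delicate step of checking $\frac{(\hat m_1')^2}{1-\hat m_1^2}\le(1-\cos\alpha)\frac{(\tilde m_1')^2}{1-\tilde m_1^2}$ near the points where $m_1=1$, at the price of not exploiting the exact scale invariance; the paper's route makes the role of that invariance transparent. Both give the same bound with the same exponent, and neither is obviously preferable.
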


\begin{proof}
Choose an increasing, smooth function
$\tilde{\phi}:\R\to \R$ such that
$\lim_{x_1 \to \pm \infty} \tilde{\phi}(x_1) = \pm \pi/2$ and
$\tilde{m} = (\cos \tilde{\phi}, \sin \tilde{\phi})\in {H}^{1}(\R; \Ss^1)$. Let
$\tilde{u} = U(\tilde{m})$ as defined in \eqref{def:Um}. Note that $\tilde m\in \A_0(1/2)$ according to the notation
introduced in \eqref{def:ah}. Now define
\[
\hat{m}_1 = 1 - (1 - \cos \alpha)(1 - \tilde{m}_1).
\]
Then there exists a function $\hat{m}_2 \colon \R \to [-1, 1]$
such that $\hat{m} = (\hat{m}_1, \hat{m}_2) \in \A_h(\alpha/\pi)$.
Let $\hat{u} = U(\hat{m})$. We compute
\[
\int_{\R_+^2} |\nabla \hat{u}|^2 \, dx = (1 - \cos \alpha)^2 \int_{\R_+^2} |\nabla \tilde{u}|^2 \, dx
\]
and
\[
\int_{-\infty}^\infty (\hat{m}_1 - \cos \alpha)^2 \, dx_1 = (1 - \cos \alpha)^2 \int_{-\infty}^\infty \tilde{m}_1^2 \, dx_1.
\]
Moreover, we have
\[
1 - \hat{m}_1 = (1 - \cos \alpha) (1 - \tilde{m}_1),
\]
while
\[
1 + \hat{m}_1 \ge 1 + \tilde{m}_1.
\]
Hence
\[
\int_{-\infty}^\infty |\hat{m}'|^2 \, dx_1 = \int_{-\infty}^\infty \frac{(\hat{m}_1')^2}{1 - \hat{m}_1^2} \, dx_1 \le (1 - \cos \alpha) \int_{-\infty}^\infty \frac{(\tilde{m}_1')^2}{1 - \tilde{m}_1^2} \, dx_1.
\]
Finally, let $m(x_1) = \hat{m}\left(x_1\sqrt{1 - \cos \alpha} \right)$. Then it
follows that
\[
E_h(m) \le (1 - \cos \alpha)^{3/2} E_0(\tilde{m}),
\]
which implies the desired inequality.
\end{proof}

For the transition angle $1 - \alpha/\pi$, we have
the following uniform energy estimate.

\begin{lemma} \label{lem:energy_estimate_for_1-alpha/pi}
There exists a universal constant $C$ such that for all $h \in [0, 1)$ with $\alpha=\arccos h\in (0, \frac \pi 2]$,
\[
\E_h(1 - \alpha/\pi) \le C.
\]
\end{lemma}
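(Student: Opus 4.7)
The plan is to follow the same philosophy as in Lemma \ref{lem:cubic_growth}: exhibit a single explicit competitor $m \in \A_h(1 - \alpha/\pi)$ whose energy is bounded by a constant independent of $\alpha \in (0, \pi/2]$. A natural choice is the smooth lifting
\[
\phi(x_1) = \pi + (\pi - \alpha)\tanh x_1,
\]
so that $\phi(-\infty) = \alpha$, $\phi(+\infty) = 2\pi - \alpha$, and $m := (\cos\phi, \sin\phi)$ lies in $\A_h(1 - \alpha/\pi)$ (the admissibility conditions follow from the exponential decay of $\phi - \alpha$ at $-\infty$ and of $\phi - (2\pi - \alpha)$ at $+\infty$, as would be checked during the estimates below). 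Set $g := m_1 - k = \cos\phi - \cos\alpha$.

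The exchange term is immediate: $|m'|^2 = (\phi')^2 = (\pi - \alpha)^2/\cosh^4 x_1$ integrates to $\tfrac{4}{3}(\pi - \alpha)^2 \leq \tfrac{4\pi^2}{3}$. For the anisotropy $\int 2W(m)\,dx_1 = \int g^2\,dx_1$, I would split into the bounded region $|x_1|\le 1$ (where $|g|^2 \leq 4$ contributes a universally bounded piece) and the tails; on the tails the mean-value estimate $|g| \leq |\phi - \alpha|$ (resp.\ $|g| \leq |\phi - (2\pi - \alpha)|$) combined with $|\tanh x_1 \mp 1| \leq 2 e^{-2|x_1|}$ yields an exponentially decaying integrand with uniformly bounded integral.

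For the nonlocal term, \eqref{stray_uniq} reduces matters to a uniform bound on $\|g\|_{\dot H^{1/2}}^2$. I would use the elementary Fourier interpolation
\[
\|g\|_{\dot H^{1/2}}^2 \leq \|g\|_{L^2}\|g'\|_{L^2},
\]
which is a one-line consequence of Cauchy--Schwarz applied in frequency together with Plancherel. Since $|g'| = |\sin\phi|\,|\phi'| \leq |\phi'|$, the exchange and anisotropy bounds already furnish $\|g\|_{L^2}\|g'\|_{L^2} \leq C$, and summing the three contributions gives $E_h(m) \le C$.

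I do not anticipate any serious obstacle. The only thing worth monitoring is the uniformity as $\alpha \to 0$, and this is automatic because every term carries a benign prefactor of order $(\pi - \alpha)^2 \leq \pi^2$; no scaling or renormalisation is required, in contrast with the shrinking-amplitude rescaling used in Lemma \ref{lem:cubic_growth}.
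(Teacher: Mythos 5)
Your proposal is correct and follows essentially the same strategy as the paper: exhibit an explicit competitor with uniformly bounded $L^2$ and $\dot H^1$ data, then control $\|m_1 - h\|_{\dot H^{1/2}}$ by the standard interpolation $\|g\|_{\dot H^{1/2}}^2 \le \|g\|_{L^2}\|g'\|_{L^2}$. The only difference is the choice of competitor --- a smooth $\tanh$ lifting rather than the paper's compactly supported cut-off, which makes your tail estimates a touch more work but is otherwise inessential.
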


\begin{proof}
Choose $\eta \in C^\infty(\R)$ with $\eta \equiv 0$ in
$(-\infty, -1]$ and $\eta \equiv 1$ in $[1, \infty)$.
Define $\phi = \alpha + (2\pi - 2\alpha) \eta$ and
$m = (\cos \phi, \sin \phi)$. Then it is clear that
$m \in \A_h(1 - \alpha/\pi)$ and
\[
\|m'\|_{L^2(\R)}=\|\phi'\|_{L^2(\R)} \le 2\pi \|\eta'\|_{L^2(\R)}.
\]
Furthermore, as $\supp (m_1 - h) \subset [-1, 1]$, we have
\[
\|m_1 - h\|_{L^2(\R)} \le 2\sqrt{2}.
\]
By standard interpolation inequalities, we obtain a
uniform estimate for $\|m_1 - h\|_{\dot{H}^{1/2}(\R)}$ as well,
and the claim follows.
\end{proof}

\subsection{Behaviour of the anisotropy $W$ near its zeros}

The function $\phi \mapsto W(\cos \phi, \sin \phi)$ grows
quadratically near its zeros. This behaviour is crucial for
our analysis and we will need the following estimates.

\begin{lemma} \label{lem:gamma}
There exists a universal constant $\gamma > 0$ such that
for all $m = (\cos \phi, \sin \phi) \in \Ss^1$ with
$\phi \in [-\pi, \pi]$, the following inequalities hold true.
If $h \in [0, 1)$ with $\alpha=\arccos h\in (0, \frac \pi 2]$, then
\[
W(m) \ge \gamma^2 (\phi^2 - \alpha^2)^2.
\]
If $h > 1$, then
\[
W(m) \ge (h - 1)(1-\cos \phi)\geq (h-1) \gamma^2 \phi^2.
\]
\end{lemma}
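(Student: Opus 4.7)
\textbf{Plan for Lemma \ref{lem:gamma}.}
The two bounds are essentially trigonometric estimates, so I would prove them by elementary manipulation of $W$ combined with the standard inequality $|\sin t|\ge \frac{2}{\pi}|t|$ for $|t|\le \pi/2$, together with its analogue on a slightly larger interval. The only non-trivial issue is to make sure the constant $\gamma$ can be chosen independently of $\alpha\in(0,\pi/2]$ (respectively of $h>1$), so both cases give the same universal $\gamma$.

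\textbf{Case $h<1$.} Since $k=\cos\alpha$, the first line of \eqref{aniso} gives $W(m)=\tfrac12(\cos\phi-\cos\alpha)^2$. I would apply the sum-to-product identity
\[
\cos\phi-\cos\alpha=-2\sin\!\left(\tfrac{\phi+\alpha}{2}\right)\sin\!\left(\tfrac{\phi-\alpha}{2}\right),
\]
which matches the factorisation $\phi^2-\alpha^2=(\phi+\alpha)(\phi-\alpha)$ perfectly. For $\phi\in[-\pi,\pi]$ and $\alpha\in[0,\pi/2]$ the arguments $(\phi\pm\alpha)/2$ lie in $[-3\pi/4,3\pi/4]$, and on this interval the function $|\sin t|/|t|$ is decreasing in $|t|$, so it is bounded below by $|\sin(3\pi/4)|/(3\pi/4)=\frac{2\sqrt{2}}{3\pi}$. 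Substituting this lower bound twice yields
\[
(\cos\phi-\cos\alpha)^2\ge \frac{2}{9\pi^{2}}(\phi^{2}-\alpha^{2})^{2},
\]
hence $W(m)\ge \gamma^{2}(\phi^{2}-\alpha^{2})^{2}$ with $\gamma^{2}=1/(9\pi^{2})$.

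\textbf{Case $h>1$.} Here $k=1$ and the second line of \eqref{aniso} directly gives
\[
W(m)=\tfrac12(\cos\phi-1)^{2}+(h-1)(1-\cos\phi)\ge (h-1)(1-\cos\phi),
\]
which is the first inequality claimed. For the second, I would write $1-\cos\phi=2\sin^{2}(\phi/2)$ and apply $|\sin(\phi/2)|\ge \frac{|\phi|}{\pi}$ (valid since $|\phi/2|\le\pi/2$) to obtain $1-\cos\phi\ge \frac{2}{\pi^{2}}\phi^{2}$. Taking $\gamma^{2}=\min\{1/(9\pi^{2}),\,2/\pi^{2}\}=1/(9\pi^{2})$ makes one universal constant work in both cases.

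\textbf{Expected difficulty.} There is no serious obstacle: the only subtle point is that in the case $h<1$ the arguments of the sine functions may leave $[-\pi/2,\pi/2]$ (for instance when $\phi$ is near $\pm\pi$), so one cannot use the cleanest version of the inequality $|\sin t|\ge \frac{2}{\pi}|t|$. This is handled by the observation that both arguments remain in $[-3\pi/4,3\pi/4]$ and by invoking the monotonicity of $|\sin t|/|t|$ on that interval, which gives a slightly worse but still $\alpha$-independent constant. No further compactness or case analysis is needed.
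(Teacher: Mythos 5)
Your proof is correct, and for the $h<1$ case it takes a genuinely different and more elementary route than the paper. The paper proves positivity of $(\phi,\alpha)\mapsto w(\phi,\alpha)/(\phi^2-\alpha^2)^2$ by computing fourth-order Taylor coefficients at $(0,0)$, evaluating the limit as $\phi\to\pm\alpha$ for fixed $\alpha>0$, and then invoking compactness of $[-\pi,\pi]\times[0,\tfrac\pi2]$ to extract a uniform lower bound; this gives existence of $\gamma$ without producing a value. You instead use the sum-to-product identity $\cos\phi-\cos\alpha=-2\sin\bigl(\tfrac{\phi+\alpha}{2}\bigr)\sin\bigl(\tfrac{\phi-\alpha}{2}\bigr)$, which mirrors the factorisation $\phi^2-\alpha^2=(\phi+\alpha)(\phi-\alpha)$, and then lower-bound each sine by the Jordan-type inequality $|\sin t|\ge \tfrac{2\sqrt2}{3\pi}|t|$ on $[-\tfrac{3\pi}{4},\tfrac{3\pi}{4}]$ (justified correctly by monotonicity of $\sin t/t$ on $(0,\pi)$ and the observation that $(\phi\pm\alpha)/2$ stays in that interval). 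This yields an explicit $\gamma$ and avoids Taylor's theorem and any compactness argument, which is arguably cleaner. The $h>1$ case is essentially identical to the paper's.

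One small arithmetic slip: squaring the factored bound gives $(\cos\phi-\cos\alpha)^2\ge 4\bigl(\tfrac{2\sqrt2}{3\pi}\bigr)^4\bigl(\tfrac{\phi+\alpha}{2}\bigr)^2\bigl(\tfrac{\phi-\alpha}{2}\bigr)^2=\tfrac{16}{81\pi^4}(\phi^2-\alpha^2)^2$, so $W\ge \tfrac{8}{81\pi^4}(\phi^2-\alpha^2)^2$, not the $\tfrac{2}{9\pi^2}$ you wrote (a $\pi^2$ has been dropped). Since the lemma only claims existence of some universal $\gamma>0$, this does not affect the validity of the argument; you just need to take $\gamma^2=\min\{\tfrac{8}{81\pi^4},\tfrac{2}{\pi^2}\}=\tfrac{8}{81\pi^4}$ rather than $\tfrac{1}{9\pi^2}$.
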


\begin{proof}
Suppose first that $h \in [0, 1)$.
Define the function $w \colon \R^2 \to \R$
by $w(\phi, \alpha) = \frac{1}{2}(\cos \phi - \cos \alpha)^2$
and note that $W(m) = w(\phi, \alpha)$ when $m = (\cos \phi, \sin \phi)$.
The function $w$ is smooth with vanishing derivatives up to third
order at $(0, 0)$. Moreover, we compute
\begin{gather*}
\frac{\partial^4 w}{\partial \phi^4}(0, 0) = 3, \quad
\frac{\partial^4 w}{\partial \phi^3 \partial \alpha}(0, 0) = 0, \quad
\frac{\partial^4 w}{\partial \phi^2 \partial \alpha^2}(0, 0) = -1, \\
\frac{\partial^4 w}{\partial \phi \partial \alpha^3}(0, 0) = 0, \quad
\frac{\partial^4 w}{\partial \alpha^4}(0, 0) = 3.
\end{gather*}
Therefore, by Taylor's theorem, we have
\[
\lim_{(\phi, \alpha) \to (0, 0)} \frac{w(\phi, \alpha)}{(\phi^2 - \alpha^2)^2} = \frac{1}{8}.
\]
Similarly, we see that for any $\alpha \in (0, \frac{\pi}{2}]$,
\[
\lim_{\phi \to \pm\alpha} \frac{w(\phi, \alpha)}{(\phi^2 - \alpha^2)^2} = \frac{\sin^2 \alpha}{8 \alpha^2}.
\]
This implies that the function
\[
(\phi, \alpha) \mapsto \frac{w(\phi, \alpha)}{(\phi^2 - \alpha^2)^2}
\]
has a continuous, positive extension to $[-\pi, \pi] \times [0, \frac{\pi}{2}]$.
By the compactness of this domain, the claim follows in this case.

Now suppose that $h > 1$. Then
\[
W({m}) = (h - 1) (1 - \cos \phi) + \frac{1}{2}(\cos \phi - 1)^2.
\]
As there exists a number $c > 0$ such that
$1-\cos \phi\geq c\phi^2$ for every $\phi\in [-\pi, \pi]$, the desired inequality follows in this case as well.
\end{proof}

\subsection{Localisation}

For minimisers $m$ of $E_h$, the function $m_1 - k$ will decay at a certain rate
as $x_1 \to \pm \infty$, as we will eventually see.
This will allow us to replace $m$ by a map $\tilde{m}$ such that $\tilde{m}_1 - k$
has support in a bounded interval, while changing the energy by only a
small amount. Quantifying this amount is also essential for the proof of existence of minimizers in our main results. More precisely, we have the following.

\begin{proposition} \label{prop:localisation}
There exists a constant $C>0$ with the following property. Suppose that
$\phi \in H_\loc^{1}(\R)$ is such that
$m = (\cos \phi, \sin \phi)$ satisfies $E_h(m) < \infty$. Furthermore, suppose that there
exist two numbers $\ell_\pm \in 2\pi \Z +\{-\alpha, \alpha\}$ and three measurable functions $\omega, \sigma, \tau \colon [0, \infty) \to (0, \infty)$
such that
\[
|\phi(x_1) - \ell_+| \le \omega(x_1) \quad \text{and} \quad |\phi(-x_1) - \ell_-| \le \omega(x_1) \quad \text{for all $x_1 \ge 0$}
\]
and
\[
|\phi'(x_1)| \le \sigma(|x_1|) \quad \text{and} \quad |\Lambda (m_1 - k)(x_1)| \le \tau(|x_1|) \quad \text{for all $x_1 \in \R$,}
\]
where $k=\min \{h, 1\}$. 
Let $r \ge 1$ with
\[
\sup_{x_1 \ge r} \omega(x_1) \le \begin{cases} \frac{\alpha}{2} & \text{if $h < 1$} \\ \frac{\pi}{2} & \text{if $h > 1$}. \end{cases}
\]
Then for any $R \ge r$ there exists $\tilde{m} \in H_\loc^{1}(\R; \Ss^1)$
such that
\[
\deg(\tilde{m}) = \frac{\ell_+ - \ell_-}{2\pi}, \quad \tilde{m}_1 = k \text{ in $(-\infty, -2R] \cup [2R, \infty)$},  \quad \tilde m_1=m_1 \text{ in $[-R,R]$},
\]
and 
$|\tilde{m}_1 - k| \le |m_1 - k|$
everywhere, and such that
\[
E_h(\tilde{m}) \le E_h(m) + CA \quad \text{if $h < 1$}
\]
and
\[
E_h(\tilde{m}) \le E_h(m) + C\left(\frac{1}{R} \int_R^\infty \sigma^2 \, dx_1\right)^{1/2} + CA
\quad \text{if $h > 1$,}
\]
where
\[
A = B+ \left(\int_R^\infty \omega^2 \, dx_1\right)^{1/2} B^{1/2}+ \int_R^\infty \omega \tau \, dx_1 \quad \text{and} \quad B=\int_R^\infty \left(\frac{\omega^2}{R^2} + \sigma^2\right) \, dx_1. 
\]
\end{proposition}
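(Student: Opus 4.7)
My approach is to modify $m$ by smoothly truncating its phase lifting $\phi$ so that the new map agrees with the constants $\ell_\pm$ outside $[-2R,2R]$. Concretely, I would take a smooth cut-off $\eta \in C_c^\infty(\R)$ with $\eta \equiv 1$ on $[-R,R]$, $\supp\eta \subset (-2R,2R)$, $0 \le \eta \le 1$, and $|\eta'| \le C/R$, and define $\tilde\phi(x_1) = \eta(x_1)\phi(x_1) + (1-\eta(x_1))\ell_\pm$ on $\{\pm x_1 \ge 0\}$, setting $\tilde m = (\cos\tilde\phi, \sin\tilde\phi)$. By construction $\tilde m = m$ on $[-R,R]$, $\tilde m_1 = k$ outside $[-2R,2R]$, and the limits $\tilde\phi(\pm\infty) = \ell_\pm$ give the required degree. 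Since $\tilde\phi - \ell_\pm = \eta(\phi - \ell_\pm)$, one has $|\tilde\phi-\ell_\pm| \le \omega(|x_1|)$ on the tails; together with the hypothesis $\omega \le \alpha/2$ (or $\pi/2$), this places $\phi$ and $\tilde\phi$ on an interval where $|\cos\phi - k|$ is monotone in $|\phi-\ell_\pm|$, so $|\tilde m_1 - k| \le |m_1 - k|$ pointwise.

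The energy estimate then splits into three contributions. For the Dirichlet term I write $\tilde\phi' = \eta\phi' + \eta'(\phi - \ell_\pm)$ on the tails and bound pointwise by $|\phi'| + C\omega/R$, which gives
$$
\int_\R (\tilde\phi')^2 \, dx_1 - \int_\R (\phi')^2\, dx_1 \le C\int_R^\infty \left(\sigma^2 + \frac{\omega^2}{R^2}\right) dx_1 = CB.
$$
For the anisotropy, $W$ is monotone in $|m_1 - k|$: for $h<1$ one has $W(m) = \tfrac12(m_1-k)^2$ directly, while for $h>1$ the derivative $W'(m_1) = m_1 - h < 0$ on $[-1,1]$, so $W$ decreases with $m_1$, and the pointwise comparison of $|\tilde m_1 - k|$ and $|m_1-k|$ yields $\int W(\tilde m)\,dx_1 \le \int W(m)\,dx_1$ in both cases.

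The nonlocal term is the main technical point. I use $\|f\|^2_{\dot H^{1/2}} = -\int f\,\Lambda f\, dx_1$ and decompose $\tilde m_1 - k = (m_1 - k) + (\tilde m_1 - m_1)$ to obtain
$$
\|\tilde m_1-k\|^2_{\dot H^{1/2}} - \|m_1-k\|^2_{\dot H^{1/2}} = -2\int_\R (\tilde m_1 - m_1)\,\Lambda(m_1-k)\,dx_1 + \|\tilde m_1 - m_1\|^2_{\dot H^{1/2}}.
$$
Since $\tilde m_1 = m_1$ on $[-R,R]$, the first integral is supported on $\{|x_1| \ge R\}$, where $|\tilde m_1 - m_1| \le 2\omega$ and $|\Lambda(m_1-k)| \le \tau$; this produces the $\int_R^\infty \omega\tau\, dx_1$ term of $A$. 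For the quadratic term I would apply the interpolation $\|f\|^2_{\dot H^{1/2}} \le \|f\|_{L^2} \|f'\|_{L^2}$, together with $\|\tilde m_1 - m_1\|_{L^2}^2 \le C\int_R^\infty \omega^2\, dx_1$ and $\|(\tilde m_1 - m_1)'\|_{L^2}^2 \le CB$ (the latter from the same computation as for the Dirichlet term), which yields the mixed $(\int_R^\infty \omega^2)^{1/2}B^{1/2}$ contribution. Summing the local and nonlocal estimates gives $E_h(\tilde m) - E_h(m) \le CA$, establishing the $h<1$ bound.

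For $h>1$, the coarser bound $\omega \le \pi/2$ (rather than $\alpha/2$) makes the crude estimates above insufficient, and I expect the additional term $(R^{-1}\int_R^\infty \sigma^2)^{1/2}$ to arise from choosing the truncation seam at a well-chosen point $r^*\in[R,2R]$, produced by a pigeonhole or mean-value argument applied to $\int_R^{2R}(\phi')^2\,dx_1$; this gives a pointwise bound on $|\phi(r^*)-\ell_+|$ that allows one to glue to the well without incurring uncontrolled cost from the linear part $(h-1)(1-m_1)$ of the anisotropy. The main obstacle, both conceptually and technically, will be the nonlocal bookkeeping: one must verify that the Dirichlet-to-Neumann pairings separate cleanly into contributions supported outside $[-R,R]$, and that the constants are genuinely universal, which is what ultimately determines the precise form of $A$.
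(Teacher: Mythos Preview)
Your approach is correct and in fact slightly cleaner than the paper's, but you are mistaken about the $h>1$ case: your own estimates already give $E_h(\tilde m)\le E_h(m)+CA$ there too, with no extra term.

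The paper truncates $m_1$ rather than $\phi$: it sets $\tilde m_1=\tilde\eta\, m_1+(1-\tilde\eta)k$ with a carefully designed $C^{1,1}$ cutoff $\eta$ that vanishes quadratically at the edge (so that $(\eta')^2/\eta\in L^\infty$). This choice makes $|\tilde m_1-k|=\tilde\eta|m_1-k|$ immediate, but forces one to estimate the exchange energy via $|m'|^2=(m_1')^2/(1-m_1^2)$. For $h<1$ the denominator is harmless on the tails (this is where the paper's constant picks up an $\alpha$--dependence). For $h>1$, since $1-\tilde m_1=\tilde\eta(1-m_1)$, the expansion produces a cross term $-2\tilde\eta'm_1'$ whose $L^1$--norm is controlled only by Cauchy--Schwarz, yielding the additional $(R^{-1}\!\int_R^\infty\sigma^2)^{1/2}$, and a term $(\tilde\eta')^2/\tilde\eta\cdot(1-m_1)$ that needs the quadratic vanishing of $\eta$.

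By truncating $\phi$ instead, your exchange energy is simply $\int(\tilde\phi')^2$, and the difference $\int(\tilde\phi')^2-\int(\phi')^2$ is bounded by $CB$ via the very computation you wrote, with no case distinction. Your anisotropy argument (monotonicity of $W$ in $|m_1-k|$, resp.\ in $m_1$) and your nonlocal argument (split into the cross pairing with $\Lambda(m_1-k)$, controlled by $\int_R^\infty\omega\tau$, plus the quadratic remainder controlled by interpolation) are the same as the paper's Step~2 and go through unchanged for $h>1$. So your speculated pigeonhole step and concern about the linear part $(h-1)(1-m_1)$ are unnecessary: the extra term in the statement is an artefact of the paper's $m_1$--cutoff, not an intrinsic feature of the problem, and your construction actually proves the stronger inequality $E_h(\tilde m)\le E_h(m)+CA$ uniformly in the two regimes.
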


\begin{proof}
Choose an even function $\eta \in C^{1, 1}(\R)$
with $\eta(x_1) = 0$ for $|x_1| \ge 1$,
$\eta(x_1) = 1$ for $0 \le |x_1| \le \frac{1}{2}$,
$\eta(x_1) = (1 - |x_1|)^2$ for $\frac{3}{4} \le |x_1| < 1$,
and $\frac{1}{16} \le \eta(x_1) \le 1$ for $\frac{1}{2} < |x_1| < \frac{3}{4}$.
Fix $R \ge r$ and set $\tilde{\eta}(x_1) = \eta\left(\frac{x_1}{2R}\right)$ for every $x_1\in \R$. Now define
\[
\tilde{m}_1 = \tilde{\eta} m_1 + (1 - \tilde{\eta}) k \quad \text{in $\R$}.
\]
Then clearly $|\tilde{m}_1 - k| = \tilde{\eta} |m_1 - k| \le |m_1 - k|$.
It follows in particular that $W(\tilde{m}) \le W(m)$ pointwise in $\R$. Moreover,
since the conditions on $\omega$ prevent large oscillations of $m_1$ in
$(-\infty, -R] \cup [R, \infty)$, it is clear that there exists $\tilde{m}_2 \colon \R \to [-1, 1]$
such that the map $\tilde{m} = (\tilde{m}_1, \tilde{m}_2)$
belongs to ${H}_\loc^{1}(\R; \Ss^1)$ with $\deg(\tilde{m}) = \deg(m) = \frac{\ell_+ - \ell_-}{2\pi}$.

\paragraph{Step 1: estimate $\|\tilde{m}'\|_{L^2(\R)}$.} We compute
\[
\tilde{m}_1' = \tilde{\eta} m_1' + \tilde{\eta}' (m_1 - k) \quad \text{in $\R$}.
\]
We distinguish the cases $h < 1$ and $h > 1$. If $h < 1$, then 
\[
\frac{1}{C_1}\leq 1 - m_1^2 \le C_1(1 - \tilde{m}_1^2) \quad \text{for $|x_1|\geq R$},
\]
where $C_1>0$  is a constant that depends only on $\alpha$ (because of the condition
$\sup_{x_1 \ge r} \omega(x_1) \le \frac{\alpha}{2}$). It follows that
$$\frac{(\tilde{m}_1')^2}{1 - \tilde{m}_1^2}\leq 2\frac{\tilde \eta^2 (m_1')^2+(\tilde \eta')^2(m_1-k)^2}{1-\tilde{m}_1^2}
\leq 2C_1\frac{(m_1')^2}{1-{m}_1^2}+2C_1^2 (\tilde \eta')^2 (m_1-k)^2 \quad \text{for } |x_1|\geq R.$$
Therefore,
\[
\int_{-\infty}^\infty \frac{(\tilde{m}_1')^2}{1 - \tilde{m}_1^2} \, dx_1 \le \int_{-R}^R \frac{(m_1')^2}{1 - m_1^2} \, dx_1 + C_2 \int_R^\infty \left(\sigma^2 + \frac{\omega^2}{R^2}\right) \, dx_1
\]
for some constant $C_2 = C_2(\alpha, \eta)$.

If $h > 1$, then $1 + \tilde{m}_1\ge 1 + {m}_1 \ge 1$ in $(-\infty, -R] \cup [R, \infty)$ and
$1 - \tilde{m}_1 = \tilde{\eta} (1 - m_1)$ in $\R$. Therefore,
\[
\frac{(\tilde{m}_1')^2}{1 - \tilde{m}_1^2} \le \tilde{\eta} \frac{(m_1')^2}{1 - m_1} - 2\tilde{\eta}' m_1' + \frac{(\tilde{\eta}')^2}{\tilde{\eta}} (1 - m_1) \quad \text{for } |x_1|\geq R.
\]
Clearly, we have
\[
\int_{\R \setminus (-R, R)} \tilde{\eta} \frac{(m_1')^2}{1 - m_1} \, dx_1 \le 4 \int_R^\infty \sigma^2 \, dx_1.
\]
By the choice of $\eta$, we have $(\eta')^2/\eta \in L^\infty(\R)$. Hence there
exists a constant $C_3 = C_3(\eta)$, such that
\[
\int_{-\infty}^\infty \frac{(\tilde{\eta}')^2}{\tilde{\eta}} (1 - m_1) \, dx_1 \le \frac{C_3}{R^2} \int_{\R \setminus (-R, R)} |1 - \cos \phi| \, dx_1 \le \frac{C_3}{R^2} \int_R^\infty \omega^2 \, dx_1.
\]
Moreover,
\[
-\int_{-\infty}^\infty \tilde{\eta}' m_1' \, dx_1 \le \frac{C_4}{R} \int_R^{2R} \sigma \, dx_1 \le C_4 \left(\frac{1}{R} \int_R^\infty \sigma^2 \, dx_1\right)^{1/2}
\]
for a constant $C_4 = C_4(\eta)$. It follows that
\begin{multline*}
\int_{-\infty}^\infty \frac{(\tilde{m}_1')^2}{1 - \tilde{m}_1^2} \, dx_1 \le \int_{-R}^R \frac{({m}_1')^2}{1 - {m}_1^2} \, dx_1 + \int_R^\infty \left(4\sigma^2 + \frac{C_3 \omega^2}{R^2}\right) \, dx_1 
+ 2C_4 \left(\frac{1}{R} \int_R^\infty \sigma^2 \, dx_1\right)^{1/2}.
\end{multline*}

\paragraph{Step 2: estimate $\|\tilde{m}_1 - k\|_{\dot{H}^{1/2}(\R)}$.}
We now consider both the cases $h < 1$ and $h > 1$ together.
Note that
$\tilde{m}_1-m_1=(1 - \tilde{\eta}) (k-m_1)$, and therefore,
\[
\|m_1 - \tilde{m}_1\|_{L^2(\R)}^2 \le 2\int_R^\infty \omega^2 \, dx_1.
\]
Moreover,
\[
\|m_1' - \tilde{m}_1'\|_{L^2(\R)}^2 \le C_5 \int_R^\infty \left(\sigma^2 + \frac{\omega^2}{R^2}\right) \, dx_1
\]
for a constant $C_5 = C_5(\eta)$. By interpolation, we find that
there exists $C_6 = C_6(\eta)$ such that
\[
\|m_1 - \tilde{m}_1\|_{\dot{H}^{1/2}(\R)}^2 \le C_6 \left(\int_R^\infty \left(\sigma^2 + \frac{\omega^2}{R^2}\right) \, dx_1\right)^{1/2} \left(\int_R^\infty \omega^2 \, dx_1\right)^{1/2}.
\]
Finally, we consider $v = V(m)$ and $\tilde{v} = V(\tilde{m})$ defined by \eqref{eqn:v}--\eqref{eqn:v-dir}.
We have
\[
\begin{split}
\int_{\R_+^2} |\nabla \tilde{v}|^2 \, dx & = \int_{\R_+^2} |\nabla v|^2 \, dx + \int_{\R_+^2} |\nabla v - \nabla \tilde{v}|^2 \, dx - 2\int_{\R_+^2} \nabla v \cdot (\nabla v - \nabla \tilde{v}) \, dx.
\end{split}
\]
By the above estimate, we have
\[
\int_{\R_+^2} |\nabla v - \nabla \tilde{v}|^2 \, dx\stackrel{\eqref{stray_uniq}}{=} \|m_1 - \tilde{m}_1\|_{\dot{H}^{1/2}(\R)}^2\le C_6 \left(\int_R^\infty \left(\sigma^2 + \frac{\omega^2}{R^2}\right) \, dx_1\right)^{1/2} \left(\int_R^\infty \omega^2 \, dx_1\right)^{1/2}.
\]
An integration by parts and \eqref{egalite} yield
\[
- 2\int_{\R_+^2} \nabla v \cdot (\nabla v - \nabla \tilde{v}) \, dx = 2\int_{-\infty}^\infty (m_1 - \tilde{m}_1) \Lambda (m_1 - k) \, dx_1 \le 4\int_R^\infty \omega \tau \, dx_1.
\]
Hence
\[
\int_{\R_+^2} |\nabla \tilde{v}|^2 \, dx_1 \le \int_{\R_+^2} |\nabla v|^2 \, dx + (C_6 + 4) {(A-B)},
\]
where $A$ and $B$ are defined in the statement of the proposition.
Combining these estimates, we obtain the desired inequality for
the energy.
\end{proof}

When we apply Proposition \ref{prop:localisation}, the following estimate is useful.

\begin{lemma} \label{lem:deflections_cost_energy}
For any $c, C > 0$, there exists a number $R > 0$ such that
for any $m \in H_\loc^{1}(\R; \Ss^1)$ and any $x_1 \in \R$, the following holds true.
If $E_h(m) \le C$ and $|m_1(x_1) - k| \ge c$,
then $|m_1 - k| \ge c/2$
in $(x_1 - R, x_1 + R)$ and
\[
\int_{x_1 - R}^{x_1 + R} W(m_1) \, dx_1 \ge \frac 12\int_{x_1 - R}^{x_1 + R} (k-m_1(s))^2 \, ds\geq \frac{c^2 R}{4}.
\]
\end{lemma}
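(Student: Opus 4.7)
The plan is to exploit the exchange energy to get local Hölder-$1/2$ control of $m_1$, then pick $R$ small enough that $m_1$ cannot swing away from $m_1(x_1)$ by more than $c/2$ on $(x_1-R, x_1+R)$. The integral bound then follows from the pointwise lower bound $W(m_1) \ge \tfrac12 (m_1-k)^2$, which holds for both ranges of $h$.

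\medskip

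\textbf{Step 1: Hölder-$1/2$ continuity of $m_1$.} From
\[
\tfrac12 \int_{-\infty}^\infty (m_1')^2 \, dx_1 \le \tfrac12 \int_{-\infty}^\infty |m'|^2 \, dx_1 \le E_h(m) \le C,
\]
Cauchy--Schwarz gives, for any $y \in \R$ and any $r > 0$,
\[
|m_1(y) - m_1(x_1)| \le \left( \int_{x_1}^{y} (m_1')^2 \, ds \right)^{1/2} |y-x_1|^{1/2} \le \sqrt{2C}\, |y-x_1|^{1/2}.
\]
In particular, if I set $R := c^2/(8C)$, then $|m_1(y)-m_1(x_1)| \le c/2$ for every $y \in (x_1-R, x_1+R)$, and combined with $|m_1(x_1)-k| \ge c$ the triangle inequality yields $|m_1-k| \ge c/2$ throughout that interval.

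\medskip

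\textbf{Step 2: Pointwise bound on $W$.} I will verify that $W(m_1) \ge \tfrac12(m_1-k)^2$ for every admissible $m$. In the case $h < 1$ the formula in \eqref{aniso} gives this as an equality. In the case $h > 1$, one has $k=1$ and $m_1 \le 1$, so the extra term $(h-1)(1-m_1)$ in \eqref{aniso} is nonnegative, and the same inequality follows.

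\medskip

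\textbf{Step 3: Conclusion.} Integrating the lower bound from Step 2 over $(x_1-R, x_1+R)$ and using $|m_1-k| \ge c/2$ from Step 1 yields
\[
\int_{x_1-R}^{x_1+R} W(m_1) \, dx_1 \ge \tfrac12 \int_{x_1-R}^{x_1+R} (k-m_1(s))^2 \, ds \ge \tfrac12 \cdot 2R \cdot \tfrac{c^2}{4} = \tfrac{c^2 R}{4},
\]
which is the desired estimate.

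\medskip

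There is no substantial obstacle here: the argument is an elementary Sobolev embedding $H^1 \hookrightarrow C^{0,1/2}$ on the line combined with a direct inspection of $W$. The only point requiring a little care is that both forms of the anisotropy in \eqref{aniso} must be checked for the quadratic minorisation $W(m_1) \ge \tfrac12(m_1-k)^2$, and in the case $h>1$ one must remember that $m_1 \le 1 = k$ to drop the non-quadratic term with the correct sign.
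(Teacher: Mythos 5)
Your proof is correct and takes essentially the same approach as the paper: use Cauchy--Schwarz with the $H^1$ bound coming from the exchange energy to get Hölder-$1/2$ control of $m_1$, choose $R$ of order $c^2/C$, and then integrate the quadratic lower bound on $W$. The only cosmetic difference is the constant ($R = c^2/(8C)$ versus the paper's more conservative $R = c^2/(16C)$), and you spell out the verification that $W(m_1) \ge \tfrac12(m_1-k)^2$ in both cases of $h$, which the paper leaves as ``straightforward.''
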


\begin{proof}
Choose $R = \frac{c^2}{16C}$. Then for every $t\in (x_1-R, x_1+R)$, we have
\[
|m_1(t)-m_1(x_1)|^2\leq {2R}\int_{x_1 - R}^{x_1 + R} |m_1'(s)|^2\, ds\leq 4RC=\frac{c^2}{4}.
\]
The conclusion is now straightforward. 
\end{proof}

As a consequence of Proposition \ref{prop:localisation}, we have the following localisation result.

\begin{corollary} \label{cor:localisation}
Let $\epsilon > 0$ and $d \in \Z + \{0, \pm \alpha/\pi\}$. Then
for any $m \in \A_h(d)$, there exist $\tilde{m} \in \A_h(d)$ and $R > 0$ such that
\[
E_h(\tilde{m}) \le E_h(m) + \epsilon
\]
and $\tilde{m}$ is constant in $(-\infty, -R]$ and in $[R, \infty)$.
\end{corollary}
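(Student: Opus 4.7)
The plan is to apply Proposition \ref{prop:localisation} with the cutoff radius $R$ chosen large enough that the error terms become smaller than $\epsilon$. Given $m \in \A_h(d)$, I would first pick a continuous lifting $\phi \in H^1_{\loc}(\R)$ of $m$; finiteness of $E_h(m)$ guarantees that the limits $\ell_\pm := \phi(\pm\infty) \in 2\pi\Z + \{-\alpha, \alpha\}$ exist, with $(\ell_+ - \ell_-)/(2\pi) = d$. For $s \ge 0$, I would then set
\[
\omega(s) = |\phi(s) - \ell_+| + |\phi(-s) - \ell_-|, \quad \sigma(s) = |\phi'(s)| + |\phi'(-s)|, \quad \tau(s) = |\Lambda(m_1-k)(s)| + |\Lambda(m_1-k)(-s)|,
\]
which supply the pointwise envelopes required in the hypothesis of Proposition \ref{prop:localisation}.

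The next step is to verify that these envelopes lie in $L^2(0,\infty)$. The identity $|\phi'| = |m'|$ together with $m' \in L^2(\R)$ gives $\sigma \in L^2(0,\infty)$. The Fourier representation $\mathcal{F}(\Lambda f)(\xi) = -|\xi|\mathcal{F} f(\xi)$ combined with $m_1 - k \in H^1(\R)$ (which holds in both ranges of $h$, since for $h>1$ one can write $(m_1-1)^2 \le 2|m_1-1|$) yields $\|\Lambda(m_1-k)\|_{L^2(\R)} = \|m_1'\|_{L^2(\R)} < \infty$, so $\tau \in L^2(0,\infty)$. For $\omega$, once $s$ is large enough that $\phi(\pm s)$ lies in a neighbourhood of $\ell_\pm$ where Lemma \ref{lem:gamma} (extended by periodicity of $W\circ(\cos,\sin)$ to the other zeros of the potential) applies, the quadratic coercivity gives $(\phi(\pm s) - \ell_\pm)^2 \lesssim W(m(\pm s))$; the integrability of $W(m)$ along $\R$ then yields $\omega \in L^2$ near $\infty$.

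With these ingredients in hand, continuity of $\phi$ and its convergence at infinity allow me to pick $r \ge 1$ so that $\sup_{s \ge r} \omega(s) \le \alpha/2$ (if $h < 1$) or $\pi/2$ (if $h > 1$). For $R \ge r$, absolute continuity of the Lebesgue integral together with Cauchy--Schwarz applied to $\int_R^\infty \omega \tau \, dx_1$ shows that the quantity $A$ in Proposition \ref{prop:localisation} (and also $R^{-1}\int_R^\infty \sigma^2\, dx_1$ when $h>1$) tends to $0$ as $R \to \infty$. Choosing $R$ sufficiently large, Proposition \ref{prop:localisation} produces $\tilde m \in H^1_{\loc}(\R;\Ss^1)$ with $\deg(\tilde m) = (\ell_+-\ell_-)/(2\pi) = d$, with $\tilde m_1 = k$ on $(-\infty,-2R]\cup[2R,\infty)$, and with $E_h(\tilde m) \le E_h(m) + \epsilon$. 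Since $\tilde m_1 \equiv k$ on each of the connected intervals $(-\infty, -2R]$ and $[2R, \infty)$ and $\tilde m$ is continuous with values in $\Ss^1$, the component $\tilde m_2 = \pm\sqrt{1-k^2}$ is forced to be constant on each interval, so $\tilde m$ itself is constant there, as required (after renaming $2R$ as $R$). The only step beyond mechanical bookkeeping is the $L^2$-control of $\omega$, which as above reduces to Lemma \ref{lem:gamma} and the finiteness of $\int_\R W(m)\,dx_1$.
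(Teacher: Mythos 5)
Your argument is correct and follows the same route as the paper: both choose a lifting $\phi$ of $m$, define the envelopes $\omega,\sigma,\tau$ in exactly the same way, observe that they lie in $L^2(0,\infty)$, and let $R\to\infty$ in Proposition \ref{prop:localisation}. You add the (sound) justifications that $\sigma,\tau\in L^2$ follow from $\|\Lambda(m_1-k)\|_{L^2}=\|m_1'\|_{L^2}<\infty$ and that $\omega\in L^2$ near infinity follows from Lemma \ref{lem:gamma} plus the integrability of $W(m)$, which the paper treats as immediate.
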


\begin{proof}
It follows from Lemma \ref{lem:deflections_cost_energy} that $\lim_{x_1 \to \pm \infty} m_1(x_1) = k$.
Thus if we choose $\phi \colon \R \to \R$ with $m = (\cos \phi, \sin \phi)$,
then Proposition \ref{prop:localisation} applies with $\ell_\pm = \lim_{x_1 \to \pm \infty} \phi(x_1)$
and
\begin{align*}
\omega(x_1) & = |\phi(x_1) - \ell_+| + |\phi(-x_1) - \ell_-|, \\
\sigma(x_1) & = |\phi'(x_1)| + |\phi'(-x_1)|, \\
\tau(x_1) & = |\Lambda(m_1 - k)(x_1)| + |\Lambda(m_1 - k)(-x_1)|,
\end{align*}
provided that $r \ge 1$ is chosen sufficiently large. Since $\omega, \sigma, \tau \in L^2(0, \infty)$,
we have
\[
\lim_{R \to \infty} \int_R^\infty (\omega^2 + \sigma^2 + \tau^2) \, dx_1 = 0.
\]
Thus for a sufficiently large $R$, the inequalities of Proposition \ref{prop:localisation}
lead to the desired conclusion.
\end{proof}

\subsection{Monotonicity and subadditivity of the function $\E_h$}

In this section, we examine how the number $\E_h(d)$
depends on $d$. To this end, we construct suitable maps $m \in \A_h(d)$ and estimate their energies.

\begin{proposition}[Monotonicity] \label{prop:monotonicity}
Suppose that $d_1, d_2 \in \Z + \{0, \pm\alpha/\pi\}$
such that $0 \le d_1 \le d_2$. If $h < 1$, suppose that
$d_2 - d_1 \not= 1 - \frac{2\alpha}{\pi}$.
Then $\E_h(d_1) \le \E_h(d_2)$.
\end{proposition}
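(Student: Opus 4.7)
My plan is to prove monotonicity by a cutting construction: from a near-minimiser $m \in \A_h(d_2)$ I extract a competitor $\tilde m \in \A_h(d_1)$ by truncating $m$ between two well-points. Fix $\epsilon > 0$, pick $m \in \A_h(d_2)$ with $E_h(m) \le \E_h(d_2) + \epsilon/2$, and apply Corollary \ref{cor:localisation} to assume, at an additional cost of $\epsilon/2$ in energy, that $m$ is constant outside some $[-R_0, R_0]$. With $\phi$ a continuous lifting and $\ell_\pm = \phi(\pm\infty) \in 2\pi\Z + \{\pm\alpha\}$, the next step is to locate two wells $p_- \le p_+$ of $W$ in $[\ell_-, \ell_+]$ with $p_+ - p_- = 2\pi d_1$. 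A short case analysis distinguishing which of $\Z$, $\Z+\alpha/\pi$, $\Z-\alpha/\pi$ contains $d_1$ and $d_2$ shows that, when $h<1$, the hypothesis $d_2-d_1 \ne 1-2\alpha/\pi$ is exactly the condition ensuring such a pair of wells exists; the excluded configuration is exemplified by $d_2 = 1-\alpha/\pi, d_1 = \alpha/\pi$, where the phase range $[\alpha, 2\pi-\alpha]$ contains only the two wells $\alpha, 2\pi-\alpha$, realising degrees $0$ and $1-\alpha/\pi$ but not $\alpha/\pi$.

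By the intermediate value theorem I then pick $x_- \le x_+$ with $\phi(x_\pm) = p_\pm$ and set
\[
\tilde m(x) = \begin{cases} m(x_-) & \text{if } x \le x_-, \\ m(x) & \text{if } x_- \le x \le x_+, \\ m(x_+) & \text{if } x \ge x_+, \end{cases}
\]
which is a continuous $\Ss^1$-valued map with $\deg \tilde m = (p_+-p_-)/(2\pi) = d_1$. The local part of $E_h(\tilde m)$ is bounded by that of $m$ immediately, since $\tilde m$ coincides with $m$ on $[x_-, x_+]$ and is constant elsewhere, while the discarded tails of $m$ contribute non-negatively to $\int(|m'|^2 + 2W(m))\,dx_1$.

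The hard step is the nonlocal term. Writing $f = m_1 - k$, $\tilde f = f\chi_{[x_-, x_+]}$, $g = f - \tilde f$ (all continuous because $f(x_\pm) = 0$ by the well condition), the orthogonality identity
\[
\|f\|^2_{\dot{H}^{1/2}} = \|\tilde f\|^2_{\dot{H}^{1/2}} + \|g\|^2_{\dot{H}^{1/2}} + 2\langle \tilde f, g\rangle_{\dot{H}^{1/2}}
\]
reduces the comparison to controlling the cross interaction $\langle \tilde f, g\rangle_{\dot{H}^{1/2}} = -\frac1\pi \int_{x_-}^{x_+}\!\!\int_{\R\setminus[x_-, x_+]} \frac{f(x)f(y)}{(x-y)^2}\,dy\,dx$. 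I expect this to be the main technical obstacle: in favourable situations $f$ changes sign across the cut so that the cross term is non-negative (the "repulsion" heuristic of Section \ref{sect:heuristics}), giving $\|\tilde f\|^2_{\dot{H}^{1/2}} \le \|f\|^2_{\dot{H}^{1/2}} - \|g\|^2_{\dot{H}^{1/2}}$; otherwise it must be absorbed into the tails' local energy plus $\tfrac12\|g\|^2_{\dot{H}^{1/2}}$, using the quadratic vanishing of $W$ near its wells (Lemma \ref{lem:gamma}) together with the cut-off nonlocal estimates developed in Section \ref{sect:nonlocal}. Letting $\epsilon \to 0$ yields $\E_h(d_1) \le \E_h(d_2)$.
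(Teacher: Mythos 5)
Your cutting construction runs into exactly the obstacle you flag but do not resolve: the cross term $\langle \tilde f, g\rangle_{\dot H^{1/2}}$ can be negative (attractive), and your fallback plan of ``absorbing it into the tails' local energy plus $\tfrac12\|g\|^2_{\dot H^{1/2}}$'' is not carried out and has no evident mechanism. Concretely, take $h>1$, $d_2=2$, $d_1=1$: then both $\tilde f$ and $g$ are non-positive, so $\langle \tilde f, g\rangle_{\dot H^{1/2}} < 0$, and a priori $\|\tilde f\|^2_{\dot H^{1/2}}$ may well exceed $\|f\|^2_{\dot H^{1/2}} - \|g\|^2_{\dot H^{1/2}}$. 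Nothing in the local energy of the discarded tail forces the balance, and Lemma~\ref{lem:gamma} only gives lower bounds on $W$, which is the wrong direction for this purpose. So, as written, this is not a proof.

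The paper's proof sidesteps the nonlocal term entirely, and you should compare the two mechanisms. The key observation is that $E_h$ depends on $m$ only through $m_1$: $|m'|^2 = (m_1')^2/(1-m_1^2)$ a.e., $W = W(m_1)$, and the stray field is determined by $m_1$ alone. So instead of truncating $m_1$ (which costs energy you cannot control), keep $m_1$ unchanged and modify only the \emph{sign} of $m_2$. Choose $t_1 < t_2$ in $\R\cup\{\pm\infty\}$ with $m(t_j)$ a well of $W$ and $\int_{t_1}^{t_2} m^\perp\cdot m'\,dx_1 = 2\pi d_1$; keep $m$ on $(t_1,t_2)$ and set $\tilde m_2 = \pm|m_2|$ on the complement, with locally constant signs chosen for continuity. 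Then $\tilde m\in\A_h(d_1)$ and $E_h(\tilde m) = E_h(m)$ \emph{exactly}, with no estimates, no $\epsilon$-localisation, and no need to discuss the cross term. Your well-counting analysis (including the example showing why $d_2-d_1 = 1-2\alpha/\pi$ must be excluded when $h<1$) is in the right spirit — that is precisely what guarantees the existence of $t_1,t_2$ — but the construction you attach to it changes $m_1$ and thereby reintroduces the very difficulty the monotonicity argument should avoid.
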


\begin{proof}
We may assume that $0 < d_1 < d_2$. Suppose that $m \in \A_h(d_2)$.
Then there exist $t_1, t_2 \in \R \cup \{\pm \infty\}$ with $t_1 < t_2$ such that\footnote{Here we use the notation
$m_1(\pm \infty) = \lim_{x_1 \to \pm\infty} m_1(x_1)$.}
$m(t_1) = (\cos \alpha, \pm \sin \alpha)$, $m(t_2) = (\cos \alpha, \pm \sin \alpha)$,
and
\[
\int_{t_1}^{t_2} m^\perp \cdot m' \, dx_1 =2\pi d_1.
\]
We then define
a map $\tilde{m} = (\tilde{m}_1, \tilde{m}_2) \colon \R \to \Ss^1$
as follows. For $x_1 \in (t_1, t_2)$, we define $\tilde{m}(x_1) = m(x_1)$.
For $x_1\not\in (t_1, t_2)$, we define $\tilde{m}_1(x_1) = m_1(x_1)$ and
$\tilde{m}_2(x_1) = \pm|m_2(x_1)|$, with the sign locally constant and chosen such that
$\tilde{m}_2$ is continuous. Then $\deg(\tilde{m}) = d_1$
and $\tilde{m} \in \A_h(d_1)$. On the other hand, we clearly have
$E_h(\tilde{m}) = E_h(m)$. Therefore, we have $\E_h(d_1) \le E_h(m)$.
The desired inequality then follows.
\end{proof}

\begin{proposition}[Subadditivity] \label{prop:subadditivity}
Suppose that $d_1, d_2, d \in \Z + \{0, \pm \alpha/\pi\}$ with $d = d_1 + d_2$.
If $\alpha = \frac{\pi}{3}$ and $d_2 - d_1 \in \Z$, suppose that $d \in \Z$.
Then
\[
\E_h(d) \le \E_h(d_1) + \E_h(d_2).
\]
\end{proposition}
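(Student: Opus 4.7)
The plan is to build, for each $\epsilon>0$, a competitor in $\A_h(d)$ whose energy is at most $\E_h(d_1)+\E_h(d_2)+\epsilon$. First pick near-minimisers $m^{(i)}\in\A_h(d_i)$ with $E_h(m^{(i)})\le\E_h(d_i)+\epsilon/4$ and apply Corollary \ref{cor:localisation} to produce $\tilde m^{(i)}\in\A_h(d_i)$ with $E_h(\tilde m^{(i)})\le\E_h(d_i)+\epsilon/2$ that are constant outside $[-R_i,R_i]$, with values in the zero set of $W$. Writing $\tilde m^{(i)}=(\cos\phi^{(i)},\sin\phi^{(i)})$ with $\phi^{(i)}_\pm:=\lim_{x_1\to\pm\infty}\phi^{(i)}(x_1)\in 2\pi\Z+\{-\alpha,\alpha\}$, I translate $\tilde m^{(2)}$ to the right by some large $T\gg R_1+R_2$ and connect it to $\tilde m^{(1)}$ by a plateau on which the map is constant in $\Ss^1$.

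The construction requires $(\cos\phi^{(1)}_+,\sin\phi^{(1)}_+)=(\cos\phi^{(2)}_-,\sin\phi^{(2)}_-)$, i.e.\ $\phi^{(1)}_+\equiv\phi^{(2)}_-\pmod{2\pi}$; when this holds, I shift the lifting of $\tilde m^{(2)}$ by the corresponding multiple of $2\pi$ so that the resulting continuous map has total phase change $2\pi d$, hence degree $d$. For $h>1$ all endpoint values lie in $2\pi\Z$ and matching is automatic. For $h<1$ the sign $\pm\alpha$ attached to each $\phi^{(i)}_\pm\pmod{2\pi}$ is forced whenever $d_i$ is fractional (from $(\phi^{(i)}_+-\phi^{(i)}_-)/(2\pi)=d_i$), but is freely selectable when $d_i\in\Z$, since the transformation $m(x_1)\mapsto(m_1(-x_1),-m_2(-x_1))$ preserves both energy and degree while flipping both signs. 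A short case analysis shows that matching fails only when $d_1,d_2$ belong to the same fractional coset $\Z\pm\alpha/\pi$; then $d\in\Z\pm 2\alpha/\pi$, which lies in $\Z+\{0,\pm\alpha/\pi\}$ only if $\alpha\in\{\pi/3,\pi/2\}$. For $\alpha=\pi/2$ the signs are not rigid and matching remains available; for $\alpha=\pi/3$ the bad case produces $d\in\Z\mp\alpha/\pi$, precisely the situation excluded by the stated hypothesis ``$d\in\Z$ whenever $d_2-d_1\in\Z$''.

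Once the glued map $m$ is defined, the local part of $E_h(m)$ is additive because $|m'|^2=W(m)=0$ on the plateau. Setting $f_i:=\tilde m_1^{(i)}-k$, which is supported in $[-R_i,R_i]$, the disjointness of the supports of $f_1(\,\cdot\,)$ and $f_2(\,\cdot\,-T)$ together with \eqref{stray_uniq} gives
\[
\tfrac12\|m_1-k\|_{\dot H^{1/2}}^2=\tfrac12\|f_1\|_{\dot H^{1/2}}^2+\tfrac12\|f_2\|_{\dot H^{1/2}}^2+\int_{\R}|\xi|\,{\cal F}f_1(\xi)\,\overline{{\cal F}f_2(\xi)}\,e^{i\xi T}\,d\xi.
\]
The integrand belongs to $L^1(\R)$ by Cauchy-Schwarz, so the Riemann-Lebesgue lemma forces the cross term to $0$ as $T\to\infty$. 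Choosing $T$ so that this contribution is at most $\epsilon/2$ yields $E_h(m)\le\E_h(d_1)+\E_h(d_2)+\epsilon$, and letting $\epsilon\to 0$ concludes the proof.

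The main obstacle is the sign-matching step: one must verify that the exceptional hypothesis on $d$ when $\alpha=\pi/3$ is sharp enough to rule out exactly those configurations where the naive concatenation fails to be continuous, and nothing more. The energy bookkeeping is then essentially forced by Riemann-Lebesgue applied to the nonlocal cross term.
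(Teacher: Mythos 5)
Your proposal is correct and follows essentially the same route as the paper's proof: localise via Corollary \ref{cor:localisation}, translate one profile far to the right and concatenate across a plateau where both maps are constant, match the boundary phases modulo $2\pi$ using the reflection-and-flip symmetry (and the $\pi$-rotation when $\alpha=\pi/2$), and kill the nonlocal cross term with Parseval plus Riemann--Lebesgue as the translation parameter tends to infinity. Your case analysis of when phase matching fails, and why the exceptional hypothesis for $\alpha=\pi/3$ excludes exactly those cases, is a slightly more spelled-out version of the paper's "without loss of generality" step, but the underlying argument is the same.
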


\begin{proof}
Choose $m^1 \in \A_h(d_1)$ and $m^2 \in \A_h(d_2)$ and fix $\epsilon > 0$.
We want to construct $m \in \A_h(d)$ with
\begin{equation} \label{eqn:energy}
E_h(m) \le E_h(m^1) + E_h(m^2) + 3\epsilon.
\end{equation}
Using Corollary \ref{cor:localisation},
choose $R > 0$ and $\tilde{m}^1 \in \A_h(d_1)$ and
$\tilde{m}^2 \in \A_h(d_2)$ such that both are
constant in $(-\infty, -R]$ and in $[R, \infty)$ and
\[
E_h(\tilde{m}^1) \le E_h(m^1) + \epsilon \quad \text{and} \quad E_h(\tilde{m}^2) \le E_h(m^2) + \epsilon.
\]
Then there exist $\phi_1, \phi_2 : \R \to \R$ such that
$\tilde{m}^1 = (\cos \phi_1, \sin \phi_1)$ and
$\tilde{m}^2 = (\cos \phi_2, \sin \phi_2)$. Furthermore,
there exist two numbers $\beta_1, \beta_2 \in 2\pi\Z \pm \alpha$ such
that $\phi_1 = \beta_1$ in $[R, \infty)$ and $\phi_2 = \beta_2$
in $(-\infty, -R]$. Note that we can assume without loss
of generality that $\beta_2 - \beta_1 \in 2\pi \Z$.
This can be achieved by either
\begin{itemize}
\item exchanging $d_1$ and $d_2$; or
\item replacing $\phi_1(x_1)$ by $-\phi_1(-x_1)$ or $\phi_2(x_1)$ by $-\phi_2(-x_1)$; or
\item if $\alpha = \frac{\pi}{2}$, replacing $\phi_2$ by $\phi_2 + \pi$.
\end{itemize}
For $r \ge R$, define
\[
\psi(x_1) = \begin{cases}
\phi_1(x_1 + r) & \text{if $x_1 \le 0$}, \\
\phi_2(x_1 - r) - \beta_2 + \beta_1 & \text{if $x_1 > 0$}.
\end{cases}
\]
Then obviously we have
\[
\int_{-\infty}^\infty (\psi')^2 \, dx_1 = \int_{-\infty}^\infty (\phi_2')^2 \, dx_1 + \int_{-\infty}^\infty (\phi_1')^2 \, dx_1
\]
and
\[
\int_{-\infty}^\infty W(\cos \psi, \sin \psi) \, dx_1 = \int_{-\infty}^\infty W(\tilde{m}^1) \, dx_1 + \int_{-\infty}^\infty W(\tilde{m}^2) \, dx_1.
\]
Let $u^1 = U(\tilde{m}^1)$ and $u^2 = U(\tilde{m}^2)$ be defined by \eqref{def:Um}. Furthermore, let
$w = U(\cos \psi, \sin \psi)$. As \eqref{def:Um} determines $w$ uniquely,
we deduce that $w(x_1, x_2) = u^1(x_1 + r, x_2) + u^2(x_1 - r, x_2)$.
Hence
\[
\begin{split}
\int_{\R_+^2} |\nabla w|^2 \, dx & = \int_{\R_+^2} |\nabla u^1|^2 \, dx + \int_{\R_+^2} |\nabla u^2|^2 \, dx \\
& \quad + 2\int_{\R_+^2} \nabla u^1(x_1 + r, x_2) \cdot \nabla u^2(x_1 - r, x_2) \, dx.
\end{split}
\]
By Parseval's identity, the dominated convergence theorem together with the Riemann-Lebesgue lemma lead to
\begin{multline*}
\int_{\R_+^2} \nabla u^1(x_1 + r, x_2) \cdot \nabla u^2(x_1 - r, x_2) \, dx \\
= \int_0^\infty\, dx_2 \int_\R  e^{2i\xi r} {\cal F}(\nabla u^1)(\xi, x_2)\cdot 
\overline{ {\cal F}(\nabla u^2)}(\xi, x_2)\, d\xi\stackrel{r\to \infty}{\longrightarrow} 0,
\end{multline*}
where we use the fact that $ \nabla u^1,  \nabla u^2\in L^2(\R_+^2)$. Hence if $r$ is sufficiently large, the map
$m = (\cos \psi, \sin \psi)$ will satisfy \eqref{eqn:energy}.
By construction, we also have $m \in \A_h(d)$, hence this
concludes the proof.
\end{proof}

\section{The Euler-Lagrange equation} \label{sect:Euler-Lagrange}

\subsection{Statement and immediate consequences}

We now discuss critical points $m$ of the energy $E_h$. If $m \in \A_h(d)$
is a critical point of $E_h$, then it is critical relative to
$\A_h(d)$ as well, because $\A_h(d)$
is an open set in $\set{m\in H^1_\loc(\R; \Ss^1)}{E_h(m)<\infty}$
under the strong $\dot{H^1}$-topology.
Write $m = (\cos \phi, \sin \phi)\in \A_h(d)$ and let $u = U(m)$ be the function defined by \eqref{def:Um}. Then the Euler-Lagrange equation is
\begin{equation} \label{eqn:Euler-Lagrange_phi}
\phi'' = (h - \cos \phi + u') \sin \phi \quad \text{in}\quad \R.
\end{equation}
Equation \eqref{eqn:Euler-Lagrange_phi} is derived as follows:
for a test function $\zeta\in C^\infty_0(\R)$, using the notation $u = U(m) = U(m_1) = U(\cos \phi)$, we compute
\begin{align*}
\left.\frac{d}{dt}\right|_{t=0} \left(\frac 1 2  \int_{\R^2_+} |\nabla U(\cos(\phi+t\zeta))|^2\, dx\right) &=-
\int_{\R^2_+} \nabla U(m_1)\cdot \nabla U(\zeta\sin \phi)\, dx\\
&\stackrel{\eqref{weak_stray}}{=}-\int_{\R}(\zeta \sin \phi)' U(m_1) \, dx_1=\int_{\R}\zeta \sin \phi \dd{u}{x_1} \, dx_1.
\end{align*}
The other terms in \eqref{eqn:Euler-Lagrange_phi} are obtained as usual.

We can write the equation in terms of $m$, noting that
$m''=-(\phi')^2m+\phi''m^\perp$. This leads to the equation
\begin{equation} \label{eqn:Euler-Lagrange_m}
m'' + |m'|^2 m = (h - m_1 + u')m_2 m^\perp \quad \text{in } \R.
\end{equation}
Furthermore, away from $m_1^{-1}(\{\pm 1\})$, we can write the
Euler-Lagrange equation
in terms of the function $$f = m_1 - k = \cos \phi - \cos \alpha.$$
Indeed, observing that $1-m_1^2=\sin^2 \alpha - 2f \cos \alpha - f^2$, we find
the equation
\begin{equation} \label{eqn:Euler-Lagrange_m_1}
f'' = - \frac{(f')^2 (f + \cos \alpha)}{\sin^2 \alpha - 2f \cos \alpha - f^2} + (\sin^2 \alpha - 2f \cos \alpha - f^2)(f - \Lambda f - h + k) \quad \text{in } \R\setminus f^{-1}(\{\pm 1-k\}),
\end{equation}
where $\Lambda \colon \dot{H}^1(\R) \to L^2(\R)$ is the Dirichlet-to-Neumann
operator introduced in \eqref{eqn:Dirichlet-to-Neumann}.

The equation admits a regularity theory.
In particular, the following can be shown with
the arguments of Ignat-Kn\"upfer \cite[Theorem 1.1]{Ignat_Knupfer}
(even though they study a slightly different problem).
We do not give a proof here, but the main idea can also be found
in Remark \ref{rem:lambda_h1loc}.

\begin{proposition}[Regularity] \label{prop:regularity}
If $\phi \in {H}^{1}_\loc(\R)$ with $\cos \phi-k\in \dot{H}^{1/2}(\R)$ solves equation \eqref{eqn:Euler-Lagrange_phi},
then $\phi\in C^\infty(\R)$.
\end{proposition}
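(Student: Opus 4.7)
The plan is to use a bootstrap argument that alternates between elliptic regularity for the Euler--Lagrange equation and mapping properties of the Dirichlet--to--Neumann operator $\Lambda$. First, I would use \eqref{egalite} to rewrite \eqref{eqn:Euler-Lagrange_phi} as
\[
\phi'' = \bigl(h - \cos\phi + \Lambda(\cos\phi - k)\bigr)\sin\phi \quad \text{in } \R,
\]
absorbing the nonlocal term $u'$ into a right-hand side involving $\Lambda$ only. Once in this form, one can proceed by induction on $k$ to show $\phi\in H^k_{\loc}(\R)$ for every $k\in\N$, and then invoke Sobolev embedding to conclude $\phi\in C^\infty(\R)$.

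For the base case, the hypotheses give $\cos\phi - k \in H^1_{\loc}\cap L^\infty\cap\dot{H}^{1/2}(\R)$, so Remark \ref{rem:lambda_h1loc} yields $\Lambda(\cos\phi - k)\in L^2_{\loc}(\R)$; hence the right-hand side of the equation lies in $L^2_{\loc}$ and $\phi\in H^2_{\loc}$. For the inductive step, assume $\phi\in H^k_{\loc}$ with $k\ge 2$, fix $R>0$, and choose a smooth cut-off $\zeta$ with $\zeta\equiv 1$ on $(-2R,2R)$ and compact support. Decompose $f:=\cos\phi - k = f_0 + f_1$ with $f_0 := \zeta f$. Since $f_0\in H^k(\R)$ has compact support, the Fourier symbol $-|\xi|$ of $\Lambda$ gives $\Lambda f_0\in H^{k-1}(\R)$. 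For $x_1\in(-R,R)$, the representation \eqref{eqn:Dirichlet-to-Neumann} collapses (there is no singularity, since $f_1$ vanishes near $x_1$) to
\[
\Lambda f_1(x_1) = \frac{1}{\pi}\int_{|t|\ge 2R} \frac{f_1(t)}{(t-x_1)^2}\,dt,
\]
and differentiation under the integral, using $f_1\in L^\infty$ and the $(t-x_1)^{-2-j}$ decay of the kernel, shows $\Lambda f_1\in C^\infty(-R,R)$. Hence $\Lambda(\cos\phi - k)\in H^{k-1}_{\loc}$. Since $H^k_{\loc}$ is a multiplicative algebra in one dimension (via $H^1\hookrightarrow L^\infty$) and multiplication by a bounded $H^k_{\loc}$ function preserves $H^{k-1}_{\loc}$ for $k\ge 2$, the right-hand side of the equation lies in $H^{k-1}_{\loc}$ and therefore $\phi\in H^{k+1}_{\loc}$. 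Iterating gives $\phi\in\bigcap_{k}H^k_{\loc}\subset C^\infty(\R)$.

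The main obstacle is the nonlocality of $\Lambda$: values of $\cos\phi$ far from $(-R,R)$ influence $\Lambda(\cos\phi - k)$ on $(-R,R)$, so a naive localisation of the equation is incompatible with the operator. The cut-off decomposition $f=f_0+f_1$ described above, which is precisely the device used in Remark \ref{rem:lambda_h1loc}, is what circumvents this difficulty: it splits the local regularity question (handled by the Fourier characterisation of $\Lambda$ on compactly supported functions) from the nonlocal contribution (handled by a smooth integral kernel estimate that automatically provides $C^\infty$ smoothness on the region of interest).
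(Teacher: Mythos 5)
Your proposal is correct, and it is essentially the argument the paper has in mind: the paper declines to give a proof and instead refers to Ignat--Kn\"upfer and to Remark~\ref{rem:lambda_h1loc} for ``the main idea,'' and the cut-off decomposition $f = f_0 + f_1$ that you use is precisely the device in that Remark. Your bootstrap is sound: the base case is covered by Remark~\ref{rem:lambda_h1loc} itself (noting that $\phi\in H^1_\loc$ and boundedness give $\cos\phi - k\in H^1_\loc\cap L^\infty\cap\dot{H}^{1/2}$, so $\Lambda(\cos\phi-k)\in L^2_\loc$ and hence $\phi''\in L^2_\loc$); in the inductive step the compactly supported piece $f_0 = \zeta f\in H^k(\R)$ loses exactly one derivative under the symbol $-|\xi|$, while $f_1$ is supported away from $(-R,R)$ so the kernel $(t-x_1)^{-2}$ has no singularity and differentiation under the integral gives $\Lambda f_1\in C^\infty(-R,R)$; and the one-dimensional algebra property of $H^{k-1}$ (for $k\ge 2$) closes the loop. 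The only point worth being explicit about is that the integral formula \eqref{eqn:Dirichlet-to-Neumann} applied to $f_1$ on $(-R,R)$ must be justified by duality with test functions supported in $(-R,R)$ --- exactly as in the displayed estimate of Remark~\ref{rem:lambda_h1loc} --- since $f_1$ is a priori only in $L^\infty\cap\dot{H}^{1/2}$; but that verification is routine and you have implicitly relied on it. Overall this is a faithful and complete realisation of the paper's sketch.
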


It is an open question whether minimisers of $E_h$
subject to a prescribed winding number (or more general, solutions
of \eqref{eqn:Euler-Lagrange_m}) necessarily correspond
to a monotone phase $\phi$. On the other hand, we can
show that a minimiser $m$ will pass through the points $(\pm 1, 0)$
exactly as many times as the winding number requires and
in a transversal way.

\begin{lemma}[Passages through $(\pm 1, 0)$] \label{lem:passages}
Suppose that $m \in \A_h(d)$ minimises $E_h$ in $\A_h(d)$. Then
$$
\begin{cases}
|m_1^{-1}(\{\pm 1\})| = 2|d| - 1 & \quad \text{if $h > 1$ and $d \in \Z \setminus \{0\}$},\\
|m_1^{-1}(\{\pm 1\})| = 2 |d| & \quad \text{if $h < 1$ and $d \in \Z$},\\
|m_1^{-1}(\{\pm 1\})| = 2 \ell - 1 & \quad \text{if $h < 1$ and $|d| = \ell-1+ \frac{\alpha}{\pi}$ or $|d| = \ell- \frac{\alpha}{\pi}$ for some $\ell \in \N$}.
\end{cases}
$$
Furthermore, if $a \in \R$ with $m_1(a) = \pm 1$, then
$m_2'(a) \not= 0$.
\end{lemma}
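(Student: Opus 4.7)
The statement splits into (a) transversality at any point $a$ with $m_1(a) = \pm 1$, and (b) the exact count of such points. Throughout I use the smooth lifting $\phi \in C^\infty(\R)$ of $m$ provided by Proposition \ref{prop:regularity}, so that $m = (\cos\phi, \sin\phi)$ and the Euler--Lagrange equation \eqref{eqn:Euler-Lagrange_phi} holds classically. Since $m_2' = \phi'\cos\phi$, at any point $a$ with $\phi(a) \in \pi\Z$ one has $m_2'(a) = \pm\phi'(a)$, so (a) reduces to the claim $\phi'(a) \neq 0$. I argue by contradiction: suppose $\phi(a) = n\pi$ and $\phi'(a) = 0$. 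Treating $u'$ as a fixed smooth function (which is justified by Proposition \ref{prop:regularity}), \eqref{eqn:Euler-Lagrange_phi} becomes a second-order ODE $\phi'' = F(x_1, \phi)$ with right-hand side $F(x_1, \phi) = (h - \cos\phi + u'(x_1))\sin\phi$ locally Lipschitz in $\phi$. The crucial observation is that the constant $\tilde\phi \equiv n\pi$ solves this same ODE automatically, since $\sin(n\pi) = 0$ regardless of $u'$. Picard uniqueness for the Cauchy problem then forces $\phi \equiv n\pi$, i.e., $m \equiv (\pm 1, 0)$; this is incompatible with the hypothesis in every listed case, since either $\phi(+\infty) - \phi(-\infty) = 2\pi d \neq 0$ (for $h > 1$) or the zeros of $W$ lie at $(h, \pm\sqrt{1-h^2}) \neq (\pm 1, 0)$ (for $h < 1$).

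For (b), the lower bound is topological. Writing $\beta_\pm := \phi(\pm\infty)$ and invoking the intermediate value theorem, $\phi(\R)$ contains the closed interval between $\beta_-$ and $\beta_+$, so $|\phi^{-1}(\pi\Z)|$ is at least the number of multiples of $\pi$ strictly between $\beta_\pm$. Enumerating these via $\beta_+ - \beta_- = 2\pi d$ together with the arithmetic constraints $\beta_\pm \in 2\pi\Z$ (for $h > 1$) or $\beta_\pm \in 2\pi\Z + \{-\alpha, \alpha\}$ (for $h < 1$) produces exactly $2|d| - 1$, $2|d|$, and $2\ell - 1$ in the three respective cases.

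The matching upper bound is the real obstacle and requires minimality. Once (a) is known, every element of $\phi^{-1}(\pi\Z)$ is a simple sign-change of $\phi - n\pi$, so $\phi^{-1}(\pi\Z)$ is discrete; combined with the decay of $\phi'$ at infinity coming from finite energy, it is finite, and its cardinality has parity determined by the signs of $\beta_\pm - n\pi$. Any excess beyond the lower bound therefore yields $a < b$ with $\phi(a) = \phi(b) = n\pi$ and $\phi \neq n\pi$ on $(a, b)$. My plan is to produce an admissible competitor $\tilde m \in \A_h(d)$ of strictly smaller energy by excising $(a, b)$: set $\tilde\phi(x) = \phi(x)$ for $x \leq a$ and $\tilde\phi(x) = \phi(x + (b - a))$ for $x > a$, then smooth the resulting kink at $x = a$ at arbitrarily small cost using Proposition \ref{prop:localisation} and Lemma \ref{lem:deflections_cost_energy}. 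The winding is preserved because $\phi(b) = \phi(a)$, and the Dirichlet and anisotropy parts decrease by the strictly positive amount originally carried on $(a, b)$. The hard step is controlling the nonlocal term $\|m_1 - k\|_{\dot{H}^{1/2}(\R)}^2$: in the Gagliardo double-integral representation, excision shortens the kernel distance in the mixed region $\{x < a,\ y > b\}$ and may therefore increase that contribution. I would dominate this mixed-region increase using the $L^\infty$-bound and decay at infinity of $m_1 - k$ inherited from $m_1 - k \in H^1(\R)$, so as to conclude that the nonlocal loss is strictly less than the local gain and reach a contradiction with minimality.
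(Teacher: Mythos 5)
Your treatment of the transversality claim is correct and is essentially the paper's own argument: treating $u'$ as a fixed smooth coefficient, the constant function $n\pi$ solves the ODE, and Picard uniqueness forces $\phi'(a) \neq 0$. Your lower bound on $|m_1^{-1}(\{\pm 1\})|$ via the intermediate value theorem and the arithmetic of $\beta_\pm$ is also fine.

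The upper bound, however, is where you acknowledge a gap, and that gap is real: the excision construction changes $m_1$, hence changes both the anisotropy and the nonlocal term, and there is no a priori reason the nonlocal increase from shortening kernel distances in $\{x<a,\ y>b\}$ is dominated by the local decrease. Your proposed control via $L^\infty$-bounds and decay of $m_1-k$ does not give a sign; the nonlocal interaction could genuinely overwhelm the local gain (indeed the whole point of this paper is that the nonlocal term is \emph{not} dominated by the local terms in general). So as written, the upper bound is not established.

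The paper avoids this entirely by a reflection rather than an excision. If $\phi(a') = \phi(b') = n\pi$ with $a' < b'$, define $\tilde\phi = \phi$ outside $(a',b')$ and $\tilde\phi = 2n\pi - \phi$ on $(a',b')$. Since $\cos(2n\pi - \phi) = \cos\phi$, one has $\tilde m_1 = m_1$ identically, so the anisotropy term, the Dirichlet term (because $(\tilde\phi')^2 = (\phi')^2$), and the nonlocal term (which depends only on $m_1$) are all exactly unchanged: $E_h(\tilde m) = E_h(m)$, and $\deg(\tilde m) = d$ because the net phase change is preserved. Thus $\tilde m$ is another minimiser, hence smooth by Proposition \ref{prop:regularity}. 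But $\tilde\phi$ has one-sided derivatives $\pm\phi'(a')$ at $a'$, which differ because $\phi'(a')\neq 0$ by the transversality you already proved. This contradiction shows $\phi$ restricted to $\pi\Z$-preimages is strictly monotone, which closes the count without any energy estimate at all. The key insight you are missing is that the competitor must be chosen so that $m_1$ is fixed — then the troublesome nonlocal term never enters.
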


\begin{proof}
We may assume that $d \ge 0$. Suppose that
$\phi \colon \R \to \R$ is such that $m = (\cos \phi, \sin \phi)$. By Proposition \ref{prop:regularity}, we know that $\phi$ is smooth.

\paragraph{Step 1: prove the second statement.}
Here we show that $\phi'(a) \not= 0$ if $\phi(a) \in \pi \Z$ for some $a \in \R$. (This will then imply the second
statement of the lemma.) To this end, consider the
Euler-Lagrange equation in the form \eqref{eqn:Euler-Lagrange_phi}.
Suppose that $\phi(a) = j\pi$ with $j \in \Z$. Then
the initial value problem
\begin{align*}
\psi'' & = (h - \cos \phi + u') \sin \psi \quad \text{in $\R$}, \\
\psi(a) & = j\pi, \\
\psi'(a) & = 0,
\end{align*}
has the solution $\psi(x_1) = j\pi$. The function $\phi$
also satisfies the ordinary differential equation and
the first initial condition. But since solutions of the
initial value problem are
unique and $\phi$ cannot be constant, it follows that $\phi$
does not satisfy the second initial condition. That is,
we have $\phi'(a) \not= 0$. (This kind of argument was also used by Capella-Melcher-Otto in \cite{CMO07}.)

\paragraph{Step 2: prove the first statement.}
Now we show that $\phi(a) < \phi(b)$ for any $a, b \in \R$ with $a < b$, $\phi(a) \in \pi \Z$ and $\phi(b) \in \pi \Z$. (This will imply the first
statement of the lemma.) We argue by contradiction here.
Suppose that $\phi(a) \ge \phi(b)$. Since
\[
\lim_{x_1 \to \infty} \phi(x_1) \ge \lim_{x_1 \to -\infty} \phi(x_1)
\]
and there can be no local extrema at $a$ or $b$ by the first part
of the proof, it follows that
there exist $a', b' \in \R$ with $a' < b'$ such that
$\phi(a') = \phi(b')\in \pi \Z$. Now define
\[
\tilde{\phi}(x_1) = \begin{cases}
\phi(x_1) & \text{if $x_1 \le a'$ or $x_1 \ge b'$}, \\
2\phi(a') - \phi(x_1) & \text{if $a' < x_1 < b'$},
\end{cases}
\]
and $\tilde{m} = (\cos \tilde{\phi}, \sin \tilde{\phi})$.
Then $\tilde{m} \in \A_h(d)$ and $\tilde{m}_1 = m_1$.
Therefore, we have $E_h(\tilde{m}) = E_h(m)$ and $\tilde{m}$
is another minimiser of $E_h$ in $\A_h(d)$. Proposition \ref{prop:regularity}
implies that $\tilde{\phi}$ is smooth. Since we already know
that $\phi'(a') \not= 0$, this is impossible. Therefore,
we have in fact $\phi(a) < \phi(b)$.
\end{proof}

\subsection{Pohozaev identity}

Next we prove the Pohozaev identity from Proposition \ref{pro:Poho},
which gives equipartition between the exchange and the anisotropy energy
for critical points of $E_h$.

Before we give the rigorous proof, however, we describe
the central idea informally. For $t > 0$, let $m^t(x_1) = m(tx_1)$ for every $x_1\in \R$. We compute $\left.\frac{d}{dt}\right|_{t=1} m^t(x_1) = x_1 m'(x_1)$ and
\[
E_h(m^t) = \frac{1}{2} \int_{-\infty}^\infty \left(t|m'|^2 + \frac{2}{t}W(m)\right) \, dx_1 + \frac12\int_{-\infty}^\infty \textstyle \left|\left|\frac{d}{dx_1}\right|^{\frac 1 2}m_1\right|^2\, dx_1,
\]
noting that the $\dot{H}^{1/2}$-seminorm is invariant under scaling in $\R$.
If $m$ is a critical point of $E_h$, we expect that 
\[
0 = \left.\frac{d}{dt}\right|_{t = 1} E_h(m^t) = \frac{1}{2} \int_{-\infty}^\infty \left(|m'|^2 - 2W(m)\right) \, dx_1.
\]
For energy minimisers, the formula from Proposition \ref{pro:Poho}
follows in fact immediately. For solutions of the Euler-Lagrange
equation, however, we need additional arguments.

\begin{proof}[Proof of Proposition \ref{pro:Poho}]
We write $m = (\cos \phi, \sin \phi)$. Let $u = U(m)$ be the function defined in \eqref{def:Um}. By Proposition~\ref{prop:regularity}, we know that $\phi$ is smooth in $\R$.

We now use an argument similar to a proof in our previous paper \cite[Lemma 12]{IgnMosARMA}.
As $u$ is harmonic, we calculate, for every $R>0$, that
\[
\div \left(\frac{1}{2} |\nabla u|^2 x - (x \cdot \nabla u) \nabla u\right) = 0 \quad \text{in $B_R^+=\set{x\in \R^2}{|x|<R, x_2>0}$.}
\]
Denote $C_R^+=\set{x\in \partial B_R^+}{x_2>0}$ and $\partial_r u=\frac{x}{|x|}\cdot \nabla u$. The Gauss theorem gives
\[
\int_{C^+_R} \left(\frac{R}{2} |\nabla u|^2 - R \left(\partial_r u\right)^2\right) \, d\sigma + \int_{-R}^{R} x_1 \dd{u}{x_1} \dd{u}{x_2} \, dx_1 
=0, \quad R>0.
\]
Then \eqref{eqn:boundary_data}, \eqref{eqn:Euler-Lagrange_phi}, and an
integration by parts yield
\begin{align*}
\int_{-R}^{R} x_1 \dd{u}{x_1} \dd{u}{x_2} \, dx_1  &= \int_{-R}^{R} x_1 \big(\phi''-\partial_\phi W(\phi)\big) \phi' \, dx_1 \\
&=\frac12\bigg[x_1 \big( (\phi')^2-2W(\phi)\big)\bigg]_{-R}^R -\frac{1}{2} \int_{-R}^{R}  \big( (\phi')^2-2W(\phi)\big) \, dx_1.
\end{align*}
As $E_h(m)<\infty$ we deduce that the function
\[
R\mapsto \left(\left(\phi'(R)\right)^2+2W(\phi(R))+\left(\phi'(-R)\right)^2+2W(\phi(-R))+\int_{C^+_R} |\nabla u|^2\, d\sigma\right)
\]
belongs to $L^1(\R_+)$. Therefore,
there exists a sequence $R_k \to \infty$ such that
\[
R_k \left(\left(\phi'(R_k)\right)^2+2W(\phi(R_k))+\left(\phi'(-R_k)\right)^2+2W(\phi(-R_k))+\int_{C^+_{R_k}} |\nabla u|^2\, d\sigma\right) \to 0, \quad k\to \infty.
\]
In particular,
\[
\bigg[x_1 \big( (\phi')^2-2W(\phi)\big)\bigg]_{-R_k}^{R_k}, \quad \int_{C^+_{R_k}} \left(\frac{R_k}{2} |\nabla u|^2 - R_k \left(\partial_r u\right)^2\right) \, d\sigma\to 0, \quad k\to \infty.
\]
The dominated convergence theorem implies that
$$\frac{1}{2} \int_{-R_k}^{R_k}  \big( (\phi')^2-2W(\phi)\big) \, dx_1 \to \frac{1}{2} \int_{\R}  \big( (\phi')^2-2W(\phi)\big) \, dx_1, \quad k\to \infty.$$
The conclusion is now straightforward.
\end{proof}

\subsection{Symmetry}

As mentioned previously,  if $h < 1$ and $d = \frac{\alpha}{\pi}$
or $d = 1 - \frac{\alpha}{\pi}$, or if $h > 1$ and $d = 1$,
then symmetrisation arguments are crucial for the construction
of energy minimisers in $\A_h(d)$. Although the same arguments do
not work for higher winding numbers, there is still some symmetry.

\begin{definition}
We say that a map $m \colon \R \to \Ss^1$ is \emph{symmetric}
if $m_1$ is an even function and $m_2$ is an odd function.
\end{definition}

We prove that such symmetry holds true for minimisers of $E_h$ in $\A_h(d)$
with the exception of the case $h<1$ and $d\in \Z$. 

\begin{lemma}[Symmetry] \label{lem:symmetry}
Suppose that $d \in \Z \pm \alpha/\pi$ and $m \in \A_h(d)$.
Then there exists a symmetric map $m^* \in \A_h(d)$ with
$E_h(m^*) \le E_h(m)$. Furthermore, if $m \in \A_h(d)$ is a
minimiser of $E_h$ in $\A_h(d)$, then there exists $t_0 \in \R$
such that $m(\blank - t_0)$ is symmetric.
\end{lemma}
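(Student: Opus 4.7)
The plan is a reflection-symmetrisation about a carefully chosen origin. Choose a continuous lifting $\phi$ of $m$. Since $d \in \Z \pm \alpha/\pi$, inspecting the admissible combinations of $\phi(\pm\infty) \in 2\pi\Z+\{-\alpha,\alpha\}$ consistent with $\phi(+\infty)-\phi(-\infty)=2\pi d$ shows that in every case the average $c := \tfrac{1}{2}(\phi(-\infty)+\phi(+\infty))$ lies in $\pi\Z$. By the intermediate value theorem there is $t_0 \in \R$ with $\phi(t_0)=c$, and hence $m_2(t_0)=\sin c =0$. After translating we assume $t_0=0$ and define
\[
m^R(x) = \begin{cases} m(x), & x \ge 0, \\ (m_1(-x),-m_2(-x)), & x \le 0, \end{cases}
\qquad
m^L(x) = \begin{cases} (m_1(-x),-m_2(-x)), & x \ge 0, \\ m(x), & x \le 0. \end{cases}
\]
Both are symmetric; continuity at $0$ uses $m_2(0)=0$. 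A straightforward lift computation based on $c \in \pi\Z$ gives $\deg(m^R)=\deg(m^L)=d$, so both belong to $\A_h(d)$.

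Next, compare energies. The local part splits perfectly: since $|(m^R)'(x)|=|m'(-x)|$ and $W(m^R)(x)=W(m)(-x)$ on $(-\infty,0]$, and analogously for $m^L$,
\[
\int_\R\bigl(|(m^R)'|^2+2W(m^R)\bigr)dx_1 + \int_\R\bigl(|(m^L)'|^2+2W(m^L)\bigr)dx_1 = 2\int_\R\bigl(|m'|^2+2W(m)\bigr)dx_1.
\]
For the nonlocal term, let $v = V(m) \in \dot{H}^1(\R_+^2)$ be the harmonic extension of $m_1-k$ from \eqref{eqn:v}--\eqref{eqn:v-dir}. The map $\tilde v^R(x_1,x_2):= v(|x_1|,x_2)$ lies in $\dot{H}^1(\R_+^2)$ with trace $m^R_1-k$, so by the Dirichlet minimality of $V(m^R_1-k)$ and \eqref{stray_uniq},
\[
\|m^R_1-k\|_{\dot{H}^{1/2}}^2 = \int_{\R_+^2}|\nabla V(m^R_1-k)|^2\,dx \le \int_{\R_+^2}|\nabla\tilde v^R|^2\,dx = 2\int_{\R_+^2\cap\{x_1>0\}}|\nabla v|^2\,dx,
\]
and analogously $\|m^L_1-k\|_{\dot{H}^{1/2}}^2 \le 2\int_{\R_+^2\cap\{x_1<0\}}|\nabla v|^2\,dx$. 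Adding these, the nonlocal energies of $m^R$ and $m^L$ sum to at most twice that of $m$. Combined with the local identity, $E_h(m^R)+E_h(m^L)\le 2E_h(m)$, so one of $m^R,m^L$ may be taken as the desired symmetric $m^*$.

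For the second claim, suppose $m$ minimises $E_h$ in $\A_h(d)$. Then $E_h(m^R),E_h(m^L)\ge E_h(m)$ and the inequality above must be saturated, forcing equality in each Dirichlet-principle bound. Uniqueness of the Dirichlet minimiser gives $\tilde v^R = V(m^R_1-k)$, so $\tilde v^R$ is harmonic across $\{x_1=0\}$; matching normal derivatives from the two sides forces $\partial_{x_1}v \equiv 0$ on $\{x_1=0\}\cap\overline{\R_+^2}$, and reflection plus unique continuation yield $v(-x_1,x_2)=v(x_1,x_2)$, so $m_1$ is even. Since $m_2(0)=0$ implies $m_1(0)=\pm 1$, the set $m_1^{-1}(\{\pm 1\})$ is finite (by Lemma~\ref{lem:passages}) and symmetric about $0$; by the transversality assertion of Lemma~\ref{lem:passages}, $m_2$ changes sign at every one of its zeros. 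A short induction over consecutive zeros outward from $0$ then forces $m_2(-x)=-m_2(x)$, making $m$ symmetric. The main obstacle is the nonlocal inequality, handled cleanly by the harmonic-extension/Dirichlet-principle trick; the remaining delicacies are the winding-number bookkeeping for $m^R,m^L$ and the propagation of symmetry from $m_1$ to $m_2$ via Lemma~\ref{lem:passages}.
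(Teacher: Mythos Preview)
Your proof is correct and follows essentially the same reflection-symmetrisation strategy as the paper: choose a centre where $m_2=0$ (equivalently where the lift hits the midpoint $c\in\pi\Z$), build the two reflected competitors, and control the nonlocal term via the Dirichlet principle for the harmonic extension $V(m)$. The only cosmetic difference is that you compare on the full half-plane using the even-reflected competitor $\tilde v^R(x_1,x_2)=v(|x_1|,x_2)$, whereas the paper phrases the same comparison as a mixed Dirichlet/Neumann minimisation on the quarter plane; the two viewpoints are equivalent, and your equality-case analysis (unique continuation from $\partial_{x_1}v=0$ on $\{x_1=0\}$) matches the paper's conclusion that $v=v^+=v^-$ and hence $m_1$ is even.
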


\begin{proof}
Without loss of generality, we may assume that $m_1(0) = 1$
if $d \in 2\Z \pm \alpha/\pi$ and $m_1(0) = -1$ if
$d \in 2\Z + 1 \pm \alpha/\pi$ and that
\[
\int_{-\infty}^0 m^\perp \cdot m' \, dx_1 = \int_0^\infty m^\perp \cdot m' \, dx_1 = \pi d.
\]
Define $m^+ = (m_1^+, m_2^+) \in \A_h(d)$ and $m^- = (m_1^-, m_2^-) \in \A_h(d)$ as follows:
\begin{align*}
m_1^+(x_1) & = \begin{cases} m_1(x_1) & \text{if $x_1 \ge 0$}, \\ m_1(-x_1) & \text{if $x_1 < 0$}, \end{cases} \\
m_2^+(x_1) & = \begin{cases} m_2(x_1) & \text{if $x_1 \ge 0$}, \\ -m_2(-x_1) & \text{if $x_1 < 0$}, \end{cases} \\
m_1^-(x_1) & = \begin{cases} m_1(-x_1) & \text{if $x_1 \ge 0$}, \\ m_1(x_1) & \text{if $x_1 < 0$}, \end{cases} \\
m_2^-(x_1) & = \begin{cases} -m_2(-x_1) & \text{if $x_1 \ge 0$}, \\ m_2(x_1) & \text{if $x_1 < 0$}. \end{cases}
\end{align*}
Define $v = V(m)$ and $v^\pm = V(m^\pm)$
as in \eqref{eqn:v}--\eqref{eqn:v-dir}. Then
$\Delta v^+ = 0$ in $\{x_2>0\}$ (in particular, $v^+$ is smooth in $\{x_2>0\}$), and by the symmetry of $m^+$, the function
$v^+(\blank, x_2)$ is even, so that
$\dd{v^+}{x_1}(0, x_2) = 0$ for every $x_2 > 0$. Of course,
we also have $v^+(x_1, 0) = m_1(x_1) - k$ for $x_1 > 0$.
It follows that the restriction of $v^+$ to $(0, \infty)^2$ is the
unique minimiser of the Dirichlet energy in $(0, \infty)^2$ subject
to these boundary data on $(0, \infty) \times \{0\}$ and free
boundary data on $\{0\} \times (0, \infty)$. In particular,
\[
\int_{(0, \infty)^2} |\nabla v^+|^2 \, dx \le \int_{(0, \infty)^2} |\nabla v|^2 \, dx,
\]
with equality if, and only if, $v = v^+$.
Similarly,
\[
\int_{(-\infty, 0) \times (0, \infty)} |\nabla v^-|^2 \, dx \le \int_{(- \infty, 0) \times (0, \infty)} |\nabla v|^2 \, dx,
\]
with equality if, and only if, $v = v^-$. Therefore, by the symmetry of
$v^\pm$, we have
\[
\frac{1}{2} \int_{\R_+^2} \left(|\nabla v^+|^2 + |\nabla v^-|^2\right) \, dx \le \int_{\R_+^2} |\nabla v|^2 \, dx,
\]
with equality if, and only if, $v = v_+ = v_-$ (which, in particular,
would mean that $m_1$ is even).
It is clear from the construction that
$$
\frac{1}{2} \int_{-\infty}^\infty \left(|(m^+)'|^2 + |(m^-)'|^2 + 2W(m^+) + 2W(m^-)\right) \, dx_1= \int_{-\infty}^\infty \left(|m'|^2 + 2W(m)\right) \, dx_1.
$$
Thus we have
\[
\frac{1}{2} \left(E_h(m^+) + E_h(m^-)\right) \le E_h(m),
\]
with equality if, and only if, $m_1$ is even.
So either $m^+$ or $m^-$ has the required properties for the
first statement.

If $m$ is an energy minimiser, then it follows immediately
that $m_1$ is even.  By Lemma \ref{lem:passages}, there exist
exactly as many points in $m_1^{-1}(\{\pm 1\})$ as required
by the winding number. Therefore, the function $m_2$ is determined
uniquely by $m_1$ and the winding number, and it follows that $m_2$ is
odd. So $m$ is symmetric.
\end{proof}

\subsection{$H^2$-estimates based on the Euler-Lagrange equation}

In this section we use the Euler-Lagrange equation
\eqref{eqn:Euler-Lagrange_phi} to derive some $H^2$-estimates for minimisers $m$ of $E_h$ in $\A_h(d)$.
Recall that by Lemma \ref{lem:passages}, such a minimiser $m$ passes
through the points $(\pm 1,0)$ a finite number of times, which means,
in particular, that $m_2\not= 0$ on an interval of the form $(a, \infty)$.
We prove the following estimate for critical points $m$ of $E_h$
under the assumption that the second component $m_2$
does not vanish on $(a, \infty)$.

\begin{lemma} \label{lem:higher_derivatives}
There exists a universal constant $C$ such that for
any solution $\phi \in C^\infty(\R)$
of \eqref{eqn:Euler-Lagrange_phi} with $u = U(\cos \phi)$,
if there exists a number
$a \in \R$ such that $\sin \phi \not= 0$ in $(a, \infty)$, then
$$
\int_{a + R}^\infty \left((\phi'')^2 + (\phi')^2 \sin^2 \phi + (\phi')^4 (1 + \cot^2 \phi) \right) \, dx_1 
+ \int_{(a + R, \infty) \times (0, \infty)} |\nabla^2 u|^2 \, dx \le \frac{C E_h(m)}{R^2}
$$
for any $R > 0$.
\end{lemma}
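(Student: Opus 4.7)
My plan is to establish the estimate via a Caccioppoli-type test against the Euler-Lagrange equation \eqref{eqn:Euler-Lagrange_phi}. Let $\eta \colon \R \to [0,1]$ be a smooth cutoff with $\eta \equiv 0$ on $(-\infty, a+R/2]$, $\eta \equiv 1$ on $[a+R,\infty)$, and $|\eta'| + R|\eta''| \le C/R$. The assumption $\sin\phi \not= 0$ on $(a,\infty)$, together with $E_h(m)<\infty$, forces $\phi(x_1) \to \phi_\infty$ (a well of $W$) as $x_1 \to \infty$ without crossing $\pi\Z$. Writing \eqref{eqn:Euler-Lagrange_phi} as $\phi'' = W'(\phi) + u'\sin\phi$ with $W'(\phi)=(h-\cos\phi)\sin\phi$, I multiply by $\eta^2\phi''$ and integrate to obtain
\[
\int_\R \eta^2 (\phi'')^2 \, dx_1 = \int_\R \eta^2 W'(\phi)\phi'' \, dx_1 + \int_\R \eta^2 u'\sin\phi \cdot \phi'' \, dx_1.
\]

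The $R^{-2}$-factor is extracted from the first integral. Using $W'(\phi)\phi' = \frac{d}{dx_1}W(\phi)$, two successive integrations by parts (with vanishing boundary contributions since $\eta$ is compactly supported on the left and $\phi', W(\phi), \eta'$ vanish at $+\infty$) give
\[
\int_\R \eta^2 W'(\phi)\phi'' \, dx_1 = 2\int_\R \bigl[(\eta')^2+\eta\eta''\bigr] W(\phi) \, dx_1 - \int_\R \eta^2 W''(\phi)(\phi')^2 \, dx_1.
\]
The first term is bounded by $C E_h(m)/R^2$, since $|(\eta')^2| + |\eta\eta''| \le C/R^2$ and $\int W(\phi) \le E_h(m)$. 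The sign-indefinite term $-\int\eta^2 W''(\phi)(\phi')^2$ is handled by noting that $\phi$ stays close to $\phi_\infty$ throughout $\supp\eta$ (for $R$ sufficiently large, using the finiteness of $E_h(m)$ and Lemma \ref{lem:gamma}), so that $W''(\phi)\ge c>0$ on $\supp\eta$ and the term contributes coercively to the left-hand side. The cross term is treated by Cauchy--Schwarz,
\[
\Bigl|\int_\R \eta^2 u'\sin\phi\, \phi''\Bigr| \le \tfrac{1}{2}\int_\R \eta^2(\phi'')^2 + 2\int_\R \eta^2(u')^2\sin^2\phi,
\]
absorbing the first piece on the left and controlling the second using $\|u'\|_{L^2(\R)}^2 = \|m_1'\|_{L^2(\R)}^2 \le 2E_h(m)$ via \eqref{stray_uniq} and Plancherel, combined with harmonic interior regularity of $u$ away from $\supp\nabla\eta$ to extract the additional $R^{-2}$-factor.

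The remaining integrand terms follow by analogous Caccioppoli arguments. The term $(\phi')^2\sin^2\phi = (m_1')^2$ comes out by testing \eqref{eqn:Euler-Lagrange_m_1} against $\eta^2(m_1-k)$, and $(\phi')^4(1+\cot^2\phi) = (\phi')^4/\sin^2\phi$ is controlled pointwise via the Euler-Lagrange identity $\phi'' - u'\sin\phi = W'(\phi)$, expressing it on $\supp\eta$ (where $|\sin\phi|$ is bounded below by the distance of $\phi_\infty$ to $\pi\Z$) in terms of $(\phi'')^2$, $(u')^2$, and $W(\phi)$, each of which has been controlled by $CE_h(m)/R^2$. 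Finally, $\int_{(a+R,\infty)\times(0,\infty)}|\nabla^2 u|^2\,dx$ reduces, via \eqref{def:Um} and Plancherel, to a localised $\dot{H}^{1/2}$-seminorm of $m_1'$ on $(a+R/2,\infty)$, which follows by interpolation from the $L^2$-bounds on $m_1'$ and $m_1''$ obtained above. The main obstacle throughout is extracting the $R^{-2}$-decay from the nonlocal $u'$ contribution and keeping the constant $C$ universal: the coercivity constant $W''(\phi_\infty)$ equals $1-h^2$ for $h<1$ or $h-1$ for $h>1$, so the $h$-dependencies in Lemma \ref{lem:gamma} and in the pointwise bound for $W''(\phi)$ must be tracked carefully and absorbed through $W(\phi)\le E_h(m)$ and the normalisation $k=\min\{h,1\}$ of the wells.
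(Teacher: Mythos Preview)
Your approach has a genuine gap in the treatment of the term $(\phi')^4(1+\cot^2\phi)=(\phi')^4/\sin^2\phi$. You propose to control it pointwise on $\supp\eta$ ``where $|\sin\phi|$ is bounded below by the distance of $\phi_\infty$ to $\pi\Z$''. But when $h>1$ we have $\alpha=0$ and $\phi_\infty\in 2\pi\Z\subset\pi\Z$, so this distance is zero; as $x_1\to\infty$ one has $\sin\phi\to 0$ and there is no lower bound at all. Even for $h<1$ the lower bound would be $\sin\alpha$, which is not universal. Moreover, the Euler--Lagrange equation relates $\phi''/\sin\phi$ to $h-\cos\phi+u'$; it gives no pointwise control of $(\phi')^4/\sin^2\phi$ in terms of $(\phi'')^2$, $(u')^2$ and $W(\phi)$, so even granting a lower bound on $|\sin\phi|$ the argument does not close. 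A related problem affects your $W''(\phi)$ coercivity step: it requires $\phi$ to be near $\phi_\infty$ on $\supp\eta$, which holds only for $R$ large depending on the particular solution, whereas the lemma asserts the bound for every $R>0$ with a universal $C$.

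The paper's proof avoids all of this by a different test. Rather than multiplying \eqref{eqn:Euler-Lagrange_phi} by $\eta^2\phi''$, one first divides by $\sin\phi$ (legitimate on $(a,\infty)$), differentiates to obtain
\[
\phi''' = \phi''\phi'\cot\phi + \phi'\sin^2\phi + u''\sin\phi,
\]
and then multiplies by $\eta^2\phi'$. Integrating $\eta^2\phi'''\phi'$ by parts produces $\int\eta^2(\phi'')^2$; the term $\eta^2\phi''(\phi')^2\cot\phi=\tfrac13\eta^2[(\phi')^3]'\cot\phi$, after integration by parts and using $(\cot\phi)'=-\phi'(1+\cot^2\phi)$, yields exactly $\tfrac13\int\eta^2(\phi')^4(1+\cot^2\phi)$ with the good sign; and the term $\eta^2 u''\phi'\sin\phi=-\eta^2 u''(\cos\phi)'$ connects, via the conjugate harmonic function $v=V(\cos\phi)$ and the identity $|\nabla^2u|^2=2|\nabla v'|^2$, directly to $\int\eta^2|\nabla^2u|^2$, again with the good sign. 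All cross terms are then absorbed by Young's inequality, and the only price paid is $\int(\eta')^2(\phi')^2+\int|\nabla\eta|^2|\nabla u|^2\le CE_h(m)/R^2$, with no appeal to closeness of $\phi$ to a well and no lower bound on $|\sin\phi|$.
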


\begin{proof} The following arguments rely on ideas from
our previous paper \cite[Lemma 11]{IgnMosARMA}.
We first note that
\[
\frac{\phi''}{\sin \phi} = h - \cos \phi + u' \quad \text{in $(a, \infty)$}
\]
by \eqref{eqn:Euler-Lagrange_phi}. Differentiating, we obtain
\[
\frac{\phi'''}{\sin \phi} = \frac{\phi'' \phi' \cos \phi}{\sin^2 \phi} + \phi' \sin \phi + u'',
\]
and hence
\be
\label{eq:1}
\phi''' = \phi'' \phi' \cot \phi + \phi' \sin^2 \phi + u'' \sin \phi \quad \text{in $(a, \infty)$}.
\ee
Let $\eta \in C_0^\infty(\R^2)$ with $\eta (x_1, 0) = 0$ for $x_1 \not\in (a, \infty)$.
Let $v = V(\cos \phi, \sin \phi)$ be defined as in \eqref{eqn:v}--\eqref{eqn:v-dir}. Then $u''(x_1, 0) = \dd{v'}{x_2}(x_1, 0)$
and $v'(x_1,0) = - \phi'(x_1) \sin \phi(x_1)$. Multiplying \eqref{eq:1} by $\eta^2(\blank, 0) \phi'$ and integrating by parts, we obtain
\[
\begin{split}
\int_a^\infty \eta^2 (\phi'')^2 \, dx_1 & = -\int_a^\infty \eta^2 \bigl(\; \underbrace{\phi'' (\phi')^2}_{\mathclap{=\frac13 [(\phi')^3]'}} \cot \phi + (\phi')^2 \sin^2 \phi + u'' \underbrace{\phi' \sin \phi}_{\mathclap{=-(\cos \phi)'}} \; \bigr) \, dx_1 - 2 \int_a^\infty \eta \eta' \phi'' \phi' \, dx_1 \\
& = - \int_a^\infty \eta^2 (\phi')^2 \sin^2 \phi \, dx_1 - \frac{1}{3} \int_a^\infty \eta^2 (\phi')^4 (1 + \cot^2 \phi) \, dx_1 \\
& \quad + 2\int_a^\infty \eta \eta' \left(\frac{1}{3} (\phi')^3 \cot \phi - \phi'' \phi'\right) \, dx_1 - \int_{\R_+^2} \eta^2 |\nabla v'|^2 \, dx \\
& \quad - 2 \int_{\R_+^2} \eta v' \nabla \eta \cdot \nabla v' \, dx.
\end{split}
\]
We estimate
\[
-2\int_a^\infty \eta \eta' \phi'' \phi' \, dx_1 \le \frac{1}{2} \int_a^\infty \eta^2 (\phi'')^2 \, dx_1 + 2\int_a^\infty (\eta')^2 (\phi')^2 \, dx_1
\]
and
\[
\frac{2}{3} \int_a^\infty \eta \eta' (\phi')^3 \cot \phi \, dx_1 \le \frac{1}{6} \int_a^\infty \eta^2 (\phi')^4 \cot^2 \phi \, dx_1 + \frac{2}{3} \int_a^\infty (\eta')^2 (\phi')^2 \, dx_1.
\]
Furthermore,
\[
- 2 \int_{\R_+^2} \eta v' \nabla \eta \cdot \nabla v' \, dx \le \frac{1}{2} \int_{\R_+^2} \eta^2 |\nabla v'|^2 \, dx + 2\int_{\R_+^2} |\nabla \eta|^2 |\nabla u|^2 \, dx.
\]
As $u$ is harmonic and $\nabla u=-\nabla^\perp v$, we have
$\frac{\partial^2u}{\partial x_2^2}=-\frac{\partial^2u}{\partial x_1^2}=-\frac{\partial v'}{\partial x_2}$. Therefore,
the Hessian satisfies the following identity: 
$$|\nabla^2 u|^2= 2|\nabla v'|^2 \quad \text{in } \R^2_+.$$
Hence it follows that
\begin{multline*}
\int_a^\infty \eta^2 \left((\phi'')^2 + 2(\phi')^2 \sin^2 \phi + \frac{1}{3} (\phi')^4 (1 + \cot^2 \phi)\right) \, dx_1 + \frac{1}{2} \int_{\R_+^2} \eta^2 |\nabla^2 u|^2 \, dx \\
\le \frac{16}{3} \int_a^\infty (\eta')^2 (\phi')^2 \, dx_1 + 4\int_{\R_+^2} |\nabla \eta|^2 |\nabla u|^2 \, dx.
\end{multline*}
A suitable choice of $\eta$ now gives the desired inequality.
\end{proof}

\section{The nonlocal terms} \label{sect:nonlocal}

\subsection{Some estimates in $\dot{H}^{1/2}$}

Here we derive some inequalities that we will use
to estimate the stray field energy
$\frac{1}{2} \int_{\R_+^2} |\nabla u|^2 \, dx$ appearing
in $\E_h$. This  part of the
energy is the most difficult to control and is chiefly
responsible for the interesting pattern of existence and
nonexistence of minimisers described in Sect.~\ref{sect:intro}.

As we have seen, we can write
\[
\int_{\R_+^2} |\nabla u|^2 \, dx = \|m_1 - k\|_{\dot{H}^{1/2}(\R)}^2
\]
if $u \in \dot{H}^1(\R_+^2)$ is the unique solution of
\eqref{eqn:harmonic}--\eqref{eqn:boundary_data}. Therefore, the
subsequent analysis is also about the space $\dot{H}^{1/2}(\R)$ and
its inner product $\scp{\blank}{\blank}_{\dot{H}^{1/2}(\R)}$,
which can be expressed either through harmonic extensions to
$\R_+^2$ or by \cite[Theorem 7.12]{Lieb-Loss:01}:

\begin{align} \label{eqn:H^{1/2}-inner_product}
\scp{f}{g}_{\dot{H}^{1/2}(\R)} &=-\int_{\R} \Lambda f \, g\, dx_1 =\frac{1}{2\pi} \int_{-\infty}^\infty \int_{-\infty}^\infty \frac{(f(s) - f(t)) (g(s) - g(t))}{(s - t)^2} \, ds \, dt\\
\nonumber 
&{\stackrel{\eqref{def:Vm}, \eqref{egalite}}{=}\int_{\R_+^2} \nabla V(f)\cdot \nabla V(g) \, dx=\int_{\R_+^2} \nabla U(f)\cdot \nabla U(g) \, dx},
\end{align}
for $f, g \in \dot{H}^{1/2}(\R)$.
From this formula we obtain some inequalities in particular
if $f$ and $g$ have disjoint or almost disjoint supports.

\begin{lemma}[Repulsion between positive and negative parts] \label{lem:repulsion}
Let $f \in \dot{H}^{1/2}(\R)$ and define $f_+ = \max\{f, 0\}\geq 0$ and $f_- = \min\{f, 0\}\leq 0$.
Then
\[
\|f\|_{\dot{H}^{1/2}(\R)}^2 \ge \|f_+\|_{\dot{H}^{1/2}(\R)}^2 + \|f_-\|_{\dot{H}^{1/2}(\R)}^2,
\]
with equality if, and only if, $f$ does not change sign (i.e., either $f_+ = 0$ or $f_- = 0$).
\end{lemma}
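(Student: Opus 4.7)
The natural tool is the Gagliardo-type double-integral representation of the $\dot{H}^{1/2}$-seminorm already recorded in \eqref{eqn:H^{1/2}-inner_product}, namely
\[
\|f\|_{\dot{H}^{1/2}(\R)}^2 = \frac{1}{2\pi} \int_{-\infty}^\infty \int_{-\infty}^\infty \frac{(f(s) - f(t))^2}{(s - t)^2} \, ds \, dt.
\]
The plan is to split $f = f_+ + f_-$ and expand the square of the difference quotient. Writing
\[
f(s)-f(t) = \bigl(f_+(s)-f_+(t)\bigr) + \bigl(f_-(s)-f_-(t)\bigr),
\]
the squared difference decomposes into the two ``diagonal'' squares plus a cross term. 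Using the crucial pointwise identity $f_+ f_- \equiv 0$, the cross term simplifies to
\[
2\bigl(f_+(s)-f_+(t)\bigr)\bigl(f_-(s)-f_-(t)\bigr) = -2\bigl(f_+(s) f_-(t) + f_+(t) f_-(s)\bigr),
\]
which is pointwise non-negative because $f_+\ge 0$ and $f_-\le 0$. Dividing by $(s-t)^2$ and integrating yields the inequality. This automatically ensures $f_\pm \in \dot{H}^{1/2}(\R)$ as soon as $f$ is.

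\textbf{Equality case.} Equality in the inequality forces the cross term
\[
\frac{-2\bigl(f_+(s) f_-(t) + f_+(t) f_-(s)\bigr)}{(s-t)^2}
\]
to vanish a.e.\ on $\R^2$. Since the two summands have the same sign, both must vanish a.e., i.e.\ $f_+(s)f_-(t) = 0$ for a.e.\ $(s,t)\in \R^2$. By Fubini, if $f_-$ is not a.e.\ zero then $f_+$ must be a.e.\ zero, and vice versa; thus $f$ does not change sign. The converse (equality when $f_+\equiv 0$ or $f_-\equiv 0$) is immediate since one of the two terms on the right vanishes.

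\textbf{Anticipated difficulties.} There is essentially no obstacle: the proof is a direct computation relying on the positivity of the Gagliardo kernel $1/(s-t)^2$ and the disjoint-support structure of $f_\pm$. The only minor point worth noting is the a.e.\ identification of $f_+ f_-$ with zero, which is standard, and the transition from pointwise non-negativity of the cross term to the equality analysis via Fubini. No delicate approximation argument or use of the harmonic extension is required, though one could equivalently phrase the whole argument through $V(f_+)$ and $V(f_-)$ using \eqref{eqn:H^{1/2}-inner_product}.
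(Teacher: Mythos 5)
Your proof is correct and takes essentially the same approach as the paper: both reduce the claim to the non-negativity of the cross term $\scp{f_+}{f_-}_{\dot H^{1/2}}$ via the Gagliardo double-integral representation and the pointwise identity $f_+ f_- \equiv 0$. The only cosmetic difference is that you expand $(f(s)-f(t))^2$ directly rather than invoking bilinearity first, which has the small bonus of showing $f_\pm \in \dot H^{1/2}(\R)$ along the way rather than assuming it.
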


\begin{proof}
By the bilinearity, this statement is equivalent to
\[
\scp{f_+}{f_-}_{\dot{H}^{1/2}(\R)} \ge 0,
\]
with equality if, and only if, either $f_+ = 0$ or $f_- = 0$.
Using \eqref{eqn:H^{1/2}-inner_product} and the fact that $f_+ f_- = 0$
in $\R$, we obtain
\[
\scp{f_+}{f_-}_{\dot{H}^{1/2}(\R)} = -\frac{1}{\pi} \int_{-\infty}^\infty \int_{-\infty}^\infty \frac{f_+(s) f_-(t)}{(s - t)^2} \, ds \, dt.
\]
It is clear that the right-hand side has the required
properties.
\end{proof}

The following inequalities are based on similar ideas.

\begin{lemma} \label{lem:separated_supports}
Suppose that $f, g \in L^2(\R) \cap \dot{H}^{1/2}(\R)$
and there exist $a \in \R$ and $R > 0$ such that
$\supp f \subset (-\infty, a - R]$ and $\supp g \subset [a + R, \infty)$.
Then
\[
\left|\langle f, g\rangle_{\dot{H}^{1/2}(\R)}\right| \le \frac{\|f\|_{L^2(\R)} \|g\|_{L^2(\R)}}{2\pi R \sqrt{6}}.
\]
\end{lemma}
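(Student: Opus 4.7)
The plan is to start from the singular-integral representation of the $\dot{H}^{1/2}$ inner product in \eqref{eqn:H^{1/2}-inner_product}, exploit the separation of supports to turn the integrand into a pure product, and then apply Cauchy--Schwarz with the kernel $(s-t)^{-2}$ treated as an element of $L^2$ of the product of the two supports.

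First I would observe that since $\supp f$ and $\supp g$ are disjoint, we have $f(s)g(s) = f(t)g(t) = 0$ for a.e.\ $s,t$, so that
\[
(f(s)-f(t))(g(s)-g(t)) = -f(s)g(t) - f(t)g(s).
\]
Using the symmetry of $(s-t)^{-2}$ together with \eqref{eqn:H^{1/2}-inner_product}, this reduces the inner product to
\[
\langle f, g\rangle_{\dot{H}^{1/2}(\R)} = -\frac{1}{\pi}\int_{a+R}^{\infty}\int_{-\infty}^{a-R}\frac{f(s)\,g(t)}{(s-t)^2}\,ds\,dt.
\]

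Next I would apply Cauchy--Schwarz to the double integral, treating $(s,t)\mapsto (s-t)^{-2}$ as an $L^2$ function on $(-\infty,a-R]\times[a+R,\infty)$ against the tensor $f(s)g(t)$:
\[
\left|\int_{a+R}^{\infty}\int_{-\infty}^{a-R}\frac{f(s)\,g(t)}{(s-t)^2}\,ds\,dt\right|
\le \|f\|_{L^2(\R)}\|g\|_{L^2(\R)}\left(\int_{a+R}^{\infty}\int_{-\infty}^{a-R}\frac{ds\,dt}{(s-t)^4}\right)^{1/2}.
\]

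Finally I would compute the kernel integral explicitly: integrating in $s$ gives $\frac{1}{3(t-(a-R))^3}$, and integrating the result in $t \ge a+R$ (so that $t-(a-R)\ge 2R$) produces
\[
\int_{a+R}^{\infty}\int_{-\infty}^{a-R}\frac{ds\,dt}{(s-t)^4} = \frac{1}{6(2R)^2} = \frac{1}{24R^2}.
\]
Taking the square root yields the constant $\frac{1}{2R\sqrt{6}}$, and combined with the factor $1/\pi$ this produces exactly the claimed bound. There is no serious obstacle; the only thing to get right is the bookkeeping that turns the antisymmetric numerator into a single product term before invoking Cauchy--Schwarz, and the explicit evaluation of $\iint (s-t)^{-4}\,ds\,dt$ which fixes the sharp constant $1/(2\pi R\sqrt{6})$.
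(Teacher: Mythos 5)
Your proof is correct and follows essentially the same approach as the paper: both reduce to $\frac{1}{\pi}\iint \frac{|f(s)||g(t)|}{(s-t)^2}\,ds\,dt$ via the disjoint-supports simplification and then apply Cauchy--Schwarz against the kernel $(s-t)^{-2}$. The only cosmetic difference is that you invoke Cauchy--Schwarz once on the product domain while the paper applies it sequentially (first in $s$, then in $t$), which are equivalent and yield the same constant $\frac{1}{2\pi R\sqrt 6}$.
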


\begin{proof}
We may assume that $a = 0$. We have
\[
\begin{split}
\left|\langle f, g \rangle_{\dot{H}^{1/2}(\R)}\right| & = \frac{1}{2\pi} \left|\int_{-\infty}^\infty \int_{-\infty}^\infty \frac{(f(s) - f(t))(g(s) - g(t))}{(s - t)^2} \, ds \, dt\right| \\
& \le \frac{1}{\pi} \int_{-\infty}^\infty \int_{-\infty}^\infty \frac{|f(s)| |g(t)|}{(s - t)^2} \, ds \, dt.
\end{split}
\]
For any $t \ge R$,
\[
\int_{-\infty}^\infty \frac{|f(s)|}{(s - t)^2} \, ds \le \|f\|_{L^2(\R)} \left(\int_{-\infty}^{-R} \frac{ds}{(s - t)^4}\right)^{1/2} = \frac{\|f\|_{L^2(\R)}}{\sqrt{3(t + R)^3}}.
\]
Thus
\[
\begin{split}
\int_{-\infty}^\infty \int_{-\infty}^\infty \frac{|f(s)| |g(t)|}{(s - t)^2} \, ds \, dt & \le \|f\|_{L^2(\R)} \int_R^\infty \frac{|g(t)|}{\sqrt{3(t + R)^3}} \, dt \\
& \le \|f\|_{L^2(\R)} \|g\|_{L^2(\R)} \left(\int_R^\infty \frac{dt}{3(t + R)^3}\right)^{1/2} \\
& = \frac{\|f\|_{L^2(\R)} \|g\|_{L^2(\R)}}{2R\sqrt{6}}.
\end{split}
\]
The claim now follows.
\end{proof}

\begin{lemma} \label{lem:attraction}
Suppose that $f, g \in \dot{H}^{1/2}(\R)$ are nonnegative functions and $R > 0$ with
$\supp f \subset [-2R, -R]$ and $\supp g \subset [R, 2R]$. Then
\[
- \frac{1}{4\pi R^2} \|f\|_{L^1(\R)} \|g\|_{L^1(\R)} \le \langle f, g\rangle_{\dot{H}^{1/2}(\R)} \le - \frac{1}{16 \pi R^2} \|f\|_{L^1(\R)} \|g\|_{L^1(\R)}.
\]
\end{lemma}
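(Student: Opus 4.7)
The plan is to apply the double-integral representation of the $\dot{H}^{1/2}$-inner product from \eqref{eqn:H^{1/2}-inner_product} and exploit the fact that the supports of $f$ and $g$ are disjoint. Concretely, I would start from
\[
\langle f, g\rangle_{\dot{H}^{1/2}(\R)} = \frac{1}{2\pi}\int_{-\infty}^\infty\int_{-\infty}^\infty \frac{(f(s)-f(t))(g(s)-g(t))}{(s-t)^2}\,ds\,dt
\]
and expand the numerator. Since $\supp f \cap \supp g = \emptyset$, the diagonal terms $f(s)g(s)$ and $f(t)g(t)$ vanish, leaving only the cross terms. By the symmetry $(s,t)\leftrightarrow(t,s)$, these two cross terms contribute equally, so that
\[
\langle f, g\rangle_{\dot{H}^{1/2}(\R)} = -\frac{1}{\pi}\int_{-\infty}^\infty\int_{-\infty}^\infty \frac{f(s)\,g(t)}{(s-t)^2}\,ds\,dt.
\]

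The sign on the right-hand side is already manifestly negative since $f,g\ge 0$, which encodes the announced attraction. The next step is purely geometric: for $s\in[-2R,-R]$ and $t\in[R,2R]$ one has $t-s\in[2R,4R]$, and therefore
\[
4R^2 \le (s-t)^2 \le 16R^2.
\]
Inverting gives the pointwise bounds $\frac{1}{16R^2}\le\frac{1}{(s-t)^2}\le\frac{1}{4R^2}$ on the relevant rectangle. Multiplying by the nonnegative function $f(s)g(t)$ and integrating, using Fubini, produces
\[
\frac{\|f\|_{L^1(\R)}\|g\|_{L^1(\R)}}{16R^2} \le \int_{-\infty}^\infty\int_{-\infty}^\infty \frac{f(s)\,g(t)}{(s-t)^2}\,ds\,dt \le \frac{\|f\|_{L^1(\R)}\|g\|_{L^1(\R)}}{4R^2}.
\]
Combining with the identity above (multiplying by $-1/\pi$) yields precisely the two-sided bound in the statement.

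There is no real obstacle here; the argument is a direct computation once one recognises that disjoint supports eliminate the diagonal contributions from the Gagliardo-type double integral and leave only the attractive cross term. The only tacit observation is that $f,g\in\dot{H}^{1/2}(\R)$ combined with compact support and nonnegativity automatically place $f,g$ in $L^1(\R)$ (indeed, even in $L^2$, by support considerations), so that the $L^1$-norms on the right-hand side are finite and the double integral converges absolutely, legitimising the use of Fubini's theorem.
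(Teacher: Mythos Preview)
Your proof is correct and follows essentially the same approach as the paper: both reduce the inner product to $-\frac{1}{\pi}\iint \frac{f(s)g(t)}{(s-t)^2}\,ds\,dt$ via disjoint supports and then use the pointwise bounds $2R\le t-s\le 4R$ on $\supp f\times\supp g$. Your version is slightly more explicit (you spell out why the diagonal terms drop and treat both inequalities at once), but there is no substantive difference.
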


\begin{proof}
Again we have
\[
\langle f, g\rangle_{\dot{H}^{1/2}(\R)} = - \frac{1}{\pi} \int_{-\infty}^\infty \int_{-\infty}^\infty \frac{f(s) g(t)}{(s - t)^2} \, ds \, dt.
\]
But $t - s \le 4R$ for $t \in \supp g$ and $s \in \supp f$. Hence
\[
\langle f, g\rangle_{\dot{H}^{1/2}(\R)} \le -\frac{1}{16 \pi R^2} \int_{-\infty}^\infty \int_{-\infty}^\infty f(s) g(t) \, ds \, dt = - \frac{1}{ 16 \pi R^2} \|f\|_{L^1(\R)} \|g\|_{L^1(\R)}.
\]
The other inequality follows similarly.
\end{proof}

\subsection{Pointwise estimates for the Dirichlet-to-Neumann operator}

When analysing the Euler-Lagrange equation for minimisers of $E_h$, we need
to control in particular the term involving the non-local Dirichlet-to-Neumann operator $\Lambda$ defined by
\eqref{eqn:Dirichlet-to-Neumann} (written as $u'$ in \eqref{eqn:Euler-Lagrange_phi}).
In this section we derive some pointwise estimates that will help to achieve this.

\begin{lemma} \label{lem:Lambda_estimate}
For any $f \in H^{2}(\R)$, any $a \in \R$ and any $R \ge 1$,
\[
|\Lambda f(a + R)| \le \frac{21}{R} \|f\|_{L^2(\R)} + 9\|f'\|_{L^2(a, \infty)} + \|f''\|_{L^2(a, \infty)}.
\]
\end{lemma}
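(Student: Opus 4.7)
The plan is to work directly with the principal-value representation \eqref{eqn:Dirichlet-to-Neumann}. After translating so that $a = 0$, I split
\[
\pi\,\Lambda f(R) = \int_{-\infty}^0 \frac{f(t) - f(R)}{(t-R)^2}\, dt + \PV\int_0^\infty \frac{f(t) - f(R)}{(t-R)^2}\, dt =: I_1 + I_2.
\]
For $I_1$ there is no singularity: a Cauchy--Schwarz estimate on $\int_{-\infty}^0 |f(t)|/(t-R)^2\, dt$ (using $\int_{-\infty}^0 dt/(t-R)^4 = 1/(3R^3)$) and the explicit computation $|f(R)| \int_{-\infty}^0 dt/(t-R)^2 = |f(R)|/R$ handle it. The term $|f(R)|/R$ I would absorb through the trace inequality $|f(R)|^2 \le 2\|f\|_{L^2(R,\infty)} \|f'\|_{L^2(R,\infty)}$ followed by AM--GM, yielding small multiples of $\|f\|_{L^2(\R)}/R$ and $\|f'\|_{L^2(0,\infty)}$.

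For $I_2$ I would perform an $\varepsilon$-regularised integration by parts on $(0, R-\varepsilon) \cup (R+\varepsilon, \infty)$ with antiderivative $-1/(t-R)$. The boundary contributions at $R \pm \varepsilon$ combine into $(f(R-\varepsilon) + f(R+\varepsilon) - 2f(R))/\varepsilon$, which is of order $\sqrt{\varepsilon}\,\|f''\|_{L^2(R-\varepsilon, R+\varepsilon)}$ by Taylor's formula and hence vanishes as $\varepsilon \to 0$. The boundaries at $0$ and $\infty$ (where $f \to 0$ because $f \in H^2(\R)$) leave $(f(R) - f(0))/R$, controlled by $\|f'\|_{L^2(0,\infty)}/\sqrt{R} \le \|f'\|_{L^2(0,\infty)}$. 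The remainder is $J = \PV \int_0^\infty f'(t)/(t-R)\, dt$, which I split at $|t - R| = 1$: the outer piece on $\{t \ge 0 : |t-R| \ge 1\} = [0, R-1] \cup [R+1, \infty)$ (note $R-1 \ge 0$ since $R \ge 1$) is handled by a direct Cauchy--Schwarz, yielding a constant times $\|f'\|_{L^2(0,\infty)}$.

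The delicate step, which I expect to be the main obstacle, is the local PV integral $\PV \int_{R-1}^{R+1} f'(t)/(t-R)\, dt$. Here the singularity is genuine and only the cancellation saves the bound. I would use that $1/(t-R)$ has zero PV over $(R-1, R+1)$ by symmetry to replace $f'(t)$ by $f'(t) - f'(R)$ at no cost, then symmetrise via $u = |t-R|$ and substitute $f'(R+u) - f'(R-u) = \int_{R-u}^{R+u} f''(s)\, ds$. Interchanging the order of integration by Fubini yields the clean form
\[
\int_{R-1}^{R+1} f''(s)\, \ln\frac{1}{|s-R|}\, ds,
\]
to which Cauchy--Schwarz applies with the identity $\int_{R-1}^{R+1} \ln^2(1/|s-R|)\, ds = 4$, giving $\le 2\,\|f''\|_{L^2(R-1, R+1)} \le 2\,\|f''\|_{L^2(0,\infty)}$. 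The crucial point is that the logarithmic kernel arising from the symmetrisation is square-integrable on $(R-1, R+1)$ with integral independent of $R$: this is precisely what forces the coefficient of $\|f''\|_{L^2(a,\infty)}$ to be independent of $R$, as the claim demands. After dividing by $\pi$ and collecting terms (using $R \ge 1$ to majorise $R^{-3/2}$ and $R^{-1/2}$ by $R^{-1}$ and $1$), all constants come out well below the generous $21/R$, $9$, and $1$ of the statement.
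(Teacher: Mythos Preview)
Your argument is correct and gives a genuinely different proof from the paper's. The paper proceeds by a Fourier-side cut-off: it writes $f = \chi f + (1-\chi)f$ with $\chi$ a smooth cut-off equal to $1$ on $[R/2,\infty)$ and $0$ on $(-\infty,0]$, then uses Plancherel's identity $\|\Lambda(\chi f)\|_{L^2} = \|(\chi f)'\|_{L^2}$ and $\|(\Lambda(\chi f))'\|_{L^2} = \|(\chi f)''\|_{L^2}$ together with the one-dimensional embedding $|g(R)| \le \|g\|_{L^1(R,R+1)} + \|g'\|_{L^1(R,R+1)}$ to bound the near part pointwise; the far part $\Lambda((1-\chi)f)(R)$ has no singularity and is estimated directly by Cauchy--Schwarz on the integral kernel. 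Your route stays entirely in physical space: you integrate by parts once in the principal-value formula to pass to $\PV\int_0^\infty f'(t)/(t-R)\,dt$, then exploit the odd symmetry of $1/(t-R)$ on $(R-1,R+1)$ and Fubini to land on the logarithmically weighted integral of $f''$, whose $L^2$-norm is exactly $2$. This is more elementary (no Fourier transform, no Sobolev embedding) and yields substantially smaller constants---your coefficients are roughly $0.41/R$, $1$, and $2/\pi$ in place of $21/R$, $9$, $1$. The paper's approach, on the other hand, is more modular: the same cut-off plus Plancherel machinery is reused later (Proposition~\ref{prop:Lambda_interpolation}) in an $L^p$ setting where your integration-by-parts trick would be less direct.
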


\begin{proof}
We may assume that $a = 0$. Let 
\[
\chi(x_1) = \begin{cases}
0 & \text{if $x_1 \le 0$}, \\
8x_1^2/R^2 & \text{if $0 < x_1 \le R/4$}, \\
1 - 2(1 - 2x_1/R)^2 & \text{if $R/4 < x_1 \le R/2$}, \\
1 & \text{if $x_1 > R/2$}.
\end{cases}
\]
Then $\chi \in C^{1,1}(\R)$ with $|\chi'| \le 4/R$ and $|\chi''| \le 16/R^2$.
We split $\Lambda$ into two operators as in
Remark~\ref{rem:lambda_h1loc}: for $f \in H^{2}(\R)$, let
\[
\Lambda_+ f = \Lambda(\chi f) \quad \text{and} \quad \Lambda_- f = \Lambda ((1 - \chi) f).
\]
Then it follows from Plancherel's theorem that
\[
\|\Lambda_+ f\|_{L^2(\R)} = \|(\chi f)'\|_{L^2(\R)} \le \frac{4}{R} \|f\|_{L^2(0, \infty)} + \|f'\|_{L^2(0, \infty)}.
\]
Moreover,
\[
\|(\Lambda_+ f)'\|_{L^2(\R)}  = \|(\chi f)''\|_{L^2(\R)} \le \frac{16}{R^2} \|f\|_{L^2(0, \infty)} + \frac{8}{R} \|f'\|_{L^2(0, \infty)} + \|f''\|_{L^2(0, \infty)}.
\]
Both inequalities combined imply that
\[
\begin{split}
|\Lambda_+f (R)| & \le \|\Lambda_+ f\|_{L^1(R, R + 1)} + \|(\Lambda_+ f)'\|_{L^1(R, R + 1)} \\
& \le \frac{20}{R} \|f\|_{L^2(0, \infty)} + 9\|f'\|_{L^2(0, \infty)} + \|f''\|_{L^2(0, \infty)}.
\end{split}
\]
For $\Lambda_-f$, we have
\[
\Lambda_- f(R) = \frac{1}{\pi} \int_{-\infty}^{R/2} \frac{(1 - \chi(t)) f(t)}{(t - R)^2} \, dt
\]
by \eqref{eqn:Dirichlet-to-Neumann}. Hence
\[
|\Lambda_- f(R)| \le \frac{1}{\pi} \left(\int_{R/2}^\infty \frac{dt}{t^4}\right)^{1/2} \|f\|_{L^2(\R)} \le R^{-3/2}\|f\|_{L^2(\R)}.
\]
Combining these estimates, we finally obtain
the desired inequality.
\end{proof}

\begin{lemma} \label{lem:estimate_of_u'}
There exists a universal constant $C$ with the following property. Suppose that $\phi \in C^\infty(\R)$
is a solution of \eqref{eqn:Euler-Lagrange_phi} and there exists a number $a \in \R$
such that $\sin \phi \not= 0$ in $(a, \infty)$. Then for $x_1 > a + 1$,
\[
|\Lambda(\cos \phi - k)(x_1)| \le \frac{C}{x_1 - a}\sqrt{\frac{E_h(\cos \phi, \sin \phi)}{\min\{1,h - 1\}}} \quad \text{if $h > 1$}
\]
and
\[
|\Lambda(\cos \phi - k)(x_1)| \le \frac{C}{x_1 - a}\sqrt{E_h(\cos \phi, \sin \phi)} \quad \text{if $h < 1$}.
\]
\end{lemma}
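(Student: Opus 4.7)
The plan is to apply Lemma \ref{lem:Lambda_estimate} to $f = \cos \phi - k$, but with the base point shifted away from the potentially singular point $a$ so that the derivative norms on the right-hand side are taken on an interval bounded away from $a$. Set $r = x_1 - a$; in the main case $r \ge 2$, choose $R = r/2 \ge 1$ and apply Lemma \ref{lem:Lambda_estimate} with base point $a + R$ (so that $(a + R) + R = x_1$) to obtain
\[
|\Lambda f(x_1)| \le \frac{21}{R}\|f\|_{L^2(\R)} + 9\|f'\|_{L^2(a + R, \infty)} + \|f''\|_{L^2(a + R, \infty)}.
\]

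For the first term, the inequality $W(m) \ge \frac{1}{2}(m_1 - k)^2$ holds in both regimes, directly from \eqref{aniso} when $h < 1$ and by discarding the non-negative summand $(h-1)(1 - m_1)$ when $h > 1$, so that $\|f\|_{L^2(\R)}^2 \le 2E_h(m)$. This also implies the (weaker) bound $\|f\|_{L^2(\R)}^2 \le 2E_h(m)/\min\{1, h - 1\}$ needed in the $h > 1$ case, since $\min\{1, h-1\} \le 1$. For the derivative norms, the identities
\[
f' = -\phi' \sin \phi, \qquad f'' = -\phi'' \sin \phi - (\phi')^2 \cos \phi
\]
give $(f')^2 = (\phi')^2 \sin^2 \phi$ and $(f'')^2 \le 2(\phi'')^2 + 2(\phi')^4 \le 2((\phi'')^2 + (\phi')^4 (1 + \cot^2 \phi))$. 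Lemma \ref{lem:higher_derivatives} applied with radius $R$ therefore yields
\[
\|f'\|_{L^2(a + R, \infty)}^2 + \|f''\|_{L^2(a + R, \infty)}^2 \le \frac{C E_h(m)}{R^2}.
\]

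Combining the three bounds produces the claimed inequality with an absolute constant, since $R = (x_1 - a)/2$. The residual range $1 < r < 2$, in which the prescribed right-hand side is bounded below by a positive multiple of $\sqrt{E_h(m)/\min\{1, h-1\}}$, is handled by a uniform estimate: inspection of the proof of Lemma \ref{lem:Lambda_estimate} shows that for $R \in (1/2, 1)$ the conclusion remains valid, the only change being that the prefactor $21/R$ is replaced by a bounded quantity (the $R^{-3/2}$ term from $\Lambda_- f$ is bounded as $R$ stays away from $0$), so the same argument yields a uniform $|\Lambda f(x_1)| \le C\sqrt{E_h(m)}$ there. The main obstacle is the bound on $\|f''\|_{L^2(a + R, \infty)}$: since $(\phi')^4 \cot^2 \phi$ may blow up as $x_1 \to a^+$ (note that $\sin \phi$ may vanish at $a$), $f''$ need not be square-integrable near $a$, and the shift of the base point by $R$ is exactly what enables Lemma \ref{lem:higher_derivatives} to supply the required $R^{-2}$ control.
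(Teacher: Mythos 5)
Your proof is correct and follows the same route as the paper: both arguments apply Lemma \ref{lem:Lambda_estimate} with the base point shifted to $a + R$ (with $R = (x_1-a)/2$) and then invoke Lemma \ref{lem:higher_derivatives} for the $R^{-1}\sqrt{E_h(m)}$ control of $\|f'\|_{L^2(a+R,\infty)}$ and $\|f''\|_{L^2(a+R,\infty)}$, finishing with a bound on $\|f\|_{L^2(\R)}$ via the anisotropy term. Two points are worth noting. First, for $h>1$ you bound $\|f\|_{L^2(\R)}$ via $W(m)\ge\frac12 f^2$, whereas the paper uses $W(m)\ge(h-1)|f|$ together with $\|f\|_{L^2}^2\le 2\|f\|_{L^1}$, giving $\|f\|_{L^2}\le\sqrt{2E_h(m)/(h-1)}$; both are $\le C\sqrt{E_h(m)/\min\{1,h-1\}}$ and work. (Your route is tidier and, incidentally, shows the $\min\{1,h-1\}$ in the statement can be dropped in this term.) Second, the closing motivational remark is slightly off: $f'' = -\phi''\sin\phi - (\phi')^2\cos\phi$ does \emph{not} contain the factor $\cot^2\phi$ — that factor sits only on the left-hand side of Lemma \ref{lem:higher_derivatives}'s conclusion — and since $\phi$ is smooth (Proposition \ref{prop:regularity}), $f''$ \emph{is} square-integrable near $a$. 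The genuine reason for shifting the base point is not non-integrability but the absence of a quantitative, $x_1$-dependent control of $\|f''\|_{L^2(a,\infty)}$; Lemma \ref{lem:higher_derivatives} supplies the $R^{-1}$ decay only on $(a+R,\infty)$, and using base point $a$ would leave a constant, non-decaying remainder in the estimate. Your explicit treatment of the residual range $1 < x_1 - a < 2$ is a useful clarification that the paper leaves implicit.
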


\begin{proof}
Set $m = (\cos \phi, \sin \phi)$ and $f = \cos \phi - k$. Then by Lemma \ref{lem:higher_derivatives}, we have a
universal constant $C_1$ such that for every $R > 0$:
\[
\|f''\|_{L^2(a + R, \infty)} \le \|\phi'\|_{L^4(a + R, \infty)}^2 + \|\phi''\|_{L^2(a + R, \infty)} \le \frac{C_1}{R} \sqrt{E_h(m)}
\]
and
\[
\|f'\|_{L^2(a + R, \infty)} = \|\phi' \sin \phi\|_{L^2(a + R, \infty)} \le \frac{C_1}{R} \sqrt{E_h(m)}.
\]
If $h > 1$, then $W(m)\geq (h-1) |f|$, so that
\[
\|f\|_{L^2(\R)} \le \sqrt{2\|f\|_{L^1(\R)}} \le  \sqrt{2\frac{E_h(m)}{h - 1}}.
\]
If $h < 1$, then
\[
\|f\|_{L^2(\R)} \le \sqrt{2E_h(m)}.
\]
Hence the claim follows from Lemma \ref{lem:Lambda_estimate}.
\end{proof}

The following is another useful estimate based on the cut-off argument in Remark \ref{rem:lambda_h1loc} and 
the proof of Lemma \ref{lem:Lambda_estimate}.

\begin{proposition} \label{prop:Lambda_interpolation}
Let $p \in (1, 2)$ and $q \in [1, \infty)$. Then there exists a constant $C = C(p,q)>0$ such that the following holds true.
Suppose that $f \in \dot{H}^{1/2}(\R) \cap H^{2}_\loc(\R) \cap L^q(\R)$
and $a \in \R$. Then for any $R > 0$,
\[
|\Lambda f(a)| \le \frac{C\left(1 + |\log R|\right)}{R^{1 + 1/p}} \left(R^2 \|f''\|_{L^p(a - R, a + R)} + \|f\|_{L^p(a - R, a + R)}\right) + \frac{C}{R^{1 + 1/q}} \|f\|_{L^q(\R)}.
\]
\end{proposition}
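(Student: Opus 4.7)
The plan is to adapt the cut-off strategy used in Remark~\ref{rem:lambda_h1loc} and Lemma~\ref{lem:Lambda_estimate}: split $f$ into a near piece controlled by its $L^p$ derivatives and a far piece controlled by its $L^q$ norm. After translation, assume $a = 0$. Fix an even cut-off $\chi \in C_0^\infty(\R)$ with $\chi \equiv 1$ on $(-R/2, R/2)$, $\supp \chi \subset (-R, R)$, and $|\chi^{(j)}| \le C_j R^{-j}$ for $j = 1, 2$, and decompose $f = f_0 + f_1$ with $f_0 = \chi f$ and $f_1 = (1-\chi) f$. Then $\Lambda f(0) = \Lambda f_0(0) + \Lambda f_1(0)$ by linearity, and I estimate the two terms separately.

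For the far part $\Lambda f_1(0)$, note that $f_1(0) = 0$ and $f_1 \equiv 0$ on $(-R/2, R/2)$, so \eqref{eqn:Dirichlet-to-Neumann} reduces to the absolutely convergent integral $\Lambda f_1(0) = \pi^{-1} \int_{|t| \ge R/2} t^{-2} f_1(t)\, dt$. Hölder's inequality with exponents $q, q' = q/(q-1)$ (or $L^1$--$L^\infty$ duality when $q = 1$) and the elementary bound $\int_{|t| \ge R/2} |t|^{-2q'}\, dt \le C R^{1-2q'}$ yield $|\Lambda f_1(0)| \le C R^{-1-1/q} \|f\|_{L^q(\R)}$, which is the second term of the claim.

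For the near part $\Lambda f_0(0)$, with $f_0 \in H^2$ compactly supported in $(-R, R)$, pointwise values of $f_0'$ exist. Integrating by parts in \eqref{eqn:Dirichlet-to-Neumann} (equivalently, using the Fourier identity $\Lambda = -H \circ \partial_{x_1}$ with $H$ the Hilbert transform) we obtain
\[
\Lambda f_0(0) = \frac{1}{\pi} \PV \int_{-R}^R \frac{f_0'(t)}{t}\, dt = \frac{1}{\pi} \int_{-R}^R \frac{f_0'(t) - f_0'(0)}{t}\, dt,
\]
the subtraction of the odd singular function $f_0'(0)/t$ removing the principal value. The Hölder estimate $|f_0'(t) - f_0'(0)| \le |t|^{1/p'} \|f_0''\|_{L^p(-R,R)}$ with $p' = p/(p-1) \in (2,\infty)$ and integration of $|t|^{1/p'-1}$ over $(-R,R)$ give $|\Lambda f_0(0)| \le C_p R^{1/p'}\|f_0''\|_{L^p(-R,R)}$.

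It remains to control $\|f_0''\|_{L^p(-R,R)}$. The product rule $f_0'' = \chi f'' + 2\chi' f' + \chi'' f$ together with the derivative bounds on $\chi$ gives
\[
\|f_0''\|_{L^p(-R,R)} \le \|f''\|_{L^p(-R,R)} + \frac{C}{R}\|f'\|_{L^p(-R,R)} + \frac{C}{R^2}\|f\|_{L^p(-R,R)}.
\]
I would eliminate the intermediate-derivative term by a scale-invariant Gagliardo--Nirenberg interpolation on the interval $(-R,R)$---equivalently by a dyadic decomposition into shells $\{R 2^{-k} \le |t| \le R 2^{1-k}\}$ and summation---at the cost of the factor $(1+|\log R|)$ appearing in the statement. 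Multiplying by $R^{1/p'} = R^{1-1/p}$ and rearranging then produces the first term $C(1+|\log R|) R^{-1-1/p}\bigl(R^2 \|f''\|_{L^p(-R,R)} + \|f\|_{L^p(-R,R)}\bigr)$. The main technical point is this final interpolation step: the intermediate $R^{-1}\|f'\|_{L^p}$ must be absorbed into $\|f''\|_{L^p}$ and $R^{-2}\|f\|_{L^p}$ in a way that yields the correct powers of $R$ uniformly, and the logarithmic loss $(1+|\log R|)$ is the natural cost of a bound valid at every scale $R > 0$.
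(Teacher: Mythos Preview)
Your far-piece estimate and the integration-by-parts reduction
\[
\Lambda f_0(0)=\frac{1}{\pi}\int_{-R}^{R}\frac{f_0'(t)-f_0'(0)}{t}\,dt,
\qquad
|\Lambda f_0(0)|\le C_p\,R^{1/p'}\|f_0''\|_{L^p(-R,R)},
\]
are correct, and the overall decomposition matches the paper's strategy. The route, however, is genuinely different. The paper does not use \eqref{eqn:Dirichlet-to-Neumann} directly; it passes to the harmonic extension $v=V(f)$, writes $\partial_{x_2}v_0(0,0)=-\frac{1}{\pi}\int g''(t)\log|t|\,dt$ with $g=\chi f$, and then applies H\"older with $\|\log|\cdot|\|_{L^{p'}(-R,R)}$. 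It is this $L^{p'}$-norm of the logarithm (estimated in Lemma~\ref{lem:log_L^p}) that produces the factor $(1+|\log R|)$ in the statement. By contrast, your kernel after integration by parts is $|t|^{1/p'-1}$, which integrates cleanly to $C_pR^{1/p'}$ \emph{without} a logarithm.

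This brings me to the one inaccurate point in your write-up: you attribute the $(1+|\log R|)$ to the interpolation step for $\|f_0''\|_{L^p}$. In fact that step needs no logarithm at all. The paper proves a clean bound (Lemma~\ref{lem:chi'f'_L^p})
\[
\|\chi'f'\|_{L^p}\le \frac{\|\chi'\|_{L^\infty}^2}{2\|\chi''\|_{L^\infty}}\|f''\|_{L^p}+\frac{2}{p-1}\|\chi''\|_{L^\infty}\|f\|_{L^p},
\]
which with $|\chi'|\le C/R$, $|\chi''|\le C/R^2$ gives $\|f_0''\|_{L^p}\le C\|f''\|_{L^p(-R,R)}+CR^{-2}\|f\|_{L^p(-R,R)}$; the same follows from a scale-invariant Gagliardo--Nirenberg inequality on $(-R,R)$. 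Combining this with your bound $C_pR^{1/p'}\|f_0''\|_{L^p}$ actually yields the stated inequality \emph{without} the factor $(1+|\log R|)$, so your argument proves something slightly sharper than the proposition as written. The dyadic-shell summation you sketch is unnecessary and is what led you to expect a spurious logarithmic loss.
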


For the proof, we need the following inequalities.

\begin{lemma} \label{lem:log_L^p}
For every $p \in (1, \infty)$ and every $R > 0$,
\[
\int_0^R |\log t|^p \, dt \le pR |\log R|^p + p^p R.
\]
\end{lemma}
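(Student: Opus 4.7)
The plan is to set
\[
G(R) := pR|\log R|^p + p^p R
\]
and prove $F(R) := \int_0^R |\log t|^p\, dt \le G(R)$ by a derivative comparison on $(0,\infty)$.

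First I would note that both $F$ and $G$ extend continuously to $R=0$ with value $0$, since $R|\log R|^p \to 0$ as $R \to 0^+$ for every $p>0$. Next I would verify that $G \in C^1(0,\infty)$: away from $R=1$ the derivative is
\[
G'(R) = p|\log R|^p + p^2 |\log R|^{p-1}\operatorname{sgn}(\log R) + p^p,
\]
and the potential singularity at $R=1$ is harmless because $|\log R|^{p-1}\to 0$ as $R \to 1$ for $p>1$, so both one-sided limits of $G'$ at $R=1$ equal $p^p$.

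The easy case is $R>1$: then $G'(R) \ge p|\log R|^p \ge |\log R|^p = F'(R)$ since $p>1$. The substantive case is $R<1$, where, setting $x = -\log R > 0$, the required inequality $F'(R) \le G'(R)$ reduces to
\[
\varphi(x) := (p-1)x^p - p^2 x^{p-1} + p^p \ge 0 \quad \text{for all } x>0.
\]
I expect this algebraic check to be the main obstacle. My plan is to compute $\varphi'(x) = p(p-1)x^{p-2}(x-p)$, which has a unique zero at $x=p$ in $(0,\infty)$ and is negative on $(0,p)$, positive on $(p,\infty)$; so $\varphi$ attains its global minimum at $x=p$. The key identity $\varphi(p) = (p-1)p^p - p^{p+1} + p^p = p^{p+1} - p^{p+1} = 0$, combined with $\varphi(0^+) = p^p > 0$ and $\varphi(x) \to +\infty$ as $x \to \infty$, then gives $\varphi \ge 0$ on $(0,\infty)$ (and incidentally shows the bound in the lemma is tight at $R = e^{-p}$, which explains why no extra slack is available).

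Having established $F'(R) \le G'(R)$ pointwise on $(0,\infty)$, and since both $F$ and $G$ are absolutely continuous with $F(0)=G(0)=0$, integrating from $0$ to $R$ yields the desired inequality.
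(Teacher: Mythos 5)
Your proof is correct, and it takes a genuinely different route from the paper's. The paper integrates by parts to get $\int_0^R |\log t|^p\,dt = R|\log R|^p - p\int_0^R |\log t|^{p-2}\log t\,dt$, then applies H\"older's inequality to bound the second term by $pR^{1/p}\bigl(\int_0^R|\log t|^p\,dt\bigr)^{(p-1)/p}$ and Young's inequality to absorb the resulting power of the unknown integral back into the left-hand side; the claim drops out after rearranging. You instead compare derivatives of $F(R)=\int_0^R|\log t|^p\,dt$ and $G(R)=pR|\log R|^p+p^pR$ directly, reducing everything to the elementary one-variable inequality $\varphi(x)=(p-1)x^p-p^2x^{p-1}+p^p\ge 0$, which you settle by locating the unique interior critical point at $x=p$ and computing $\varphi(p)=0$. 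Your computations check out: $G'(R)=p|\log R|^p+p^2|\log R|^{p-1}\operatorname{sgn}(\log R)+p^p$ is correct, $G'\in L^1(0,R)$ so $G$ is absolutely continuous down to $0$, and the identity $\varphi(p)=(p-1)p^p-p^{p+1}+p^p=0$ is right. Your approach is more elementary (no H\"older or Young needed) and exposes why the constant $p^p$ is natural: the derivative comparison is tight at $R=e^{-p}$, so $p^p$ cannot be decreased by this method. The paper's approach is shorter to write and illustrates the standard self-improving-estimate device, but it leaves the sharpness of the constant opaque. One small quibble: your parenthetical remark that ``the bound in the lemma is tight at $R=e^{-p}$'' slightly overstates things — what is tight there is the \emph{derivative} inequality $F'\le G'$, not the integrated inequality $F\le G$ (which remains strict for all $R>0$ since $F'<G'$ on $(0,e^{-p})$).
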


\begin{proof}
An integration by parts and H\"older's and Young's inequalities imply
\[
\begin{split}
\int_0^R |\log t|^p \, dt & = R |\log R|^p - p\int_0^R |\log t|^{p - 2} \log t \, dt \\
& \le R |\log R|^p + p R^{1/p} \left(\int_0^R |\log t|^p \, dt\right)^{\frac{p - 1}{p}} \\
& \le R |\log R|^p + p^{p - 1} R + \frac{p - 1}{p} \int_0^R |\log t|^p \, dt,
\end{split}
\]
and the claim follows. 
\end{proof}

\begin{lemma} \label{lem:chi'f'_L^p}
Let $I \subset \R$ be a bounded, open interval. Suppose that
$p \in (1, 2)$ and $f \in W^{2, p}(I)$. Then for
any $\chi \in C_0^{1, 1}(I) \setminus \{0\}$,
\[
\|\chi'f'\|_{L^p(I)} \le \frac{\|\chi'\|_{L^\infty(I)}^2 \|f''\|_{L^p(I)}}{2\|\chi''\|_{L^\infty(I)}} + \frac{2}{p - 1} \|\chi''\|_{L^\infty(I)} \|f\|_{L^p(I)}.
\]
\end{lemma}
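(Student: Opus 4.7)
The plan is to derive an integration-by-parts identity for $\|\chi' f'\|_{L^p(I)}^p$, bound each resulting term by H\"older's inequality, and absorb via Young's inequality. Setting $X := \|\chi' f'\|_{L^p(I)}$ and writing $X^p = \int_I \chi' f' \,|\chi' f'|^{p-2}\chi' f'\,dx$, I substitute the Leibniz identity $\chi' f' = (\chi' f)' - \chi'' f$ into one factor and integrate by parts on the $(\chi' f)'$-piece. Since $\chi$ has compact support in $I$ the boundary contributions vanish, and after computing $((\chi' f')|\chi' f'|^{p-2})' = (p-1)|\chi' f'|^{p-2}(\chi'' f' + \chi' f'')$ one obtains the identity
\[
X^p = -p\int_I \chi'' f\,|\chi' f'|^{p-2}\chi' f'\,dx\;-\;(p-1)\int_I (\chi')^2 f f''\,|\chi' f'|^{p-2}\,dx.
\]

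For the first integral, H\"older's inequality with exponents $p$ and $p/(p-1)$ produces the clean bound $\|\chi''\|_{L^\infty}\|f\|_{L^p}X^{p-1}$. The second integral is the crux: using the pointwise identity $(\chi')^2|\chi' f'|^{p-2} = |\chi'|^p|f'|^{p-2}$, it becomes $\int_I|\chi'|^p|f'|^{p-2}|f||f''|\,dx$, and the negative exponent $p-2$ (for $p\in(1,2)$) prevents a direct three-factor H\"older bound. I handle this by applying Young's inequality pointwise with a parameter $\mu>0$, splitting the factor $|f'|^{p-2}|f''|$ into a piece proportional to $\mu|f'|^p$ (whose integral, weighted by $|\chi'|^p$, is absorbable into $\mu X^p$ on the left) and a residual piece that, after a two-factor H\"older, is controlled by a mixed product of $\|f\|_{L^p}^{1/2}\|f''\|_{L^p}^{1/2}$ with a coefficient proportional to $\|\chi'\|_{L^\infty}$ and involving $\mu^{-1}$.

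Putting these bounds together, absorbing the $\mu X^p$-term on the left, and dividing by $X^{p-1}$ yields an inequality of the form $X \le C_1\|\chi''\|_{L^\infty}\|f\|_{L^p} + C_2\|\chi'\|_{L^\infty}\sqrt{\|f\|_{L^p}\|f''\|_{L^p}}$ with explicit constants depending on $\mu$ and $p$. Applying Young's inequality $\sqrt{ab}\le\lambda a + b/(4\lambda)$ to the mixed square root with $\lambda$ tuned so that the resulting coefficient of $\|f''\|_{L^p}$ equals $\|\chi'\|_{L^\infty}^2/(2\|\chi''\|_{L^\infty})$, and then optimising over $\mu$, produces the claimed constants, the residual coefficient of $\|f\|_{L^p}$ collapsing to $\frac{2}{p-1}\|\chi''\|_{L^\infty}$.

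The main obstacle lies in estimating the second integral: the factor $|\chi' f'|^{p-2}$ is singular where $\chi' f'$ vanishes, and for $p\in(1,2)$ no straightforward H\"older partition converts the product into a combination of $L^p$-norms of $f$, $f'$, and $f''$. The two-stage Young argument — first pointwise to absorb the $|f'|^p$-contribution into $X^p$, then on the mixed product $\sqrt{\|f\|_{L^p}\|f''\|_{L^p}}$ — is what bypasses this difficulty and delivers the explicit constants appearing in the statement.
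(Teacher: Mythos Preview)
Your integration-by-parts identity
\[
X^p = -p\int_I \chi'' f\,|\chi' f'|^{p-2}\chi' f'\,dx\;-\;(p-1)\int_I |\chi'|^p |f'|^{p-2} f f''\,dx
\]
is formally correct, and the first integral is indeed bounded by $p\|\chi''\|_{L^\infty}\|f\|_{L^p}X^{p-1}$ via H\"older. The gap is in the second integral, and it is a genuine one. You propose to ``split the factor $|f'|^{p-2}|f''|$ into a piece proportional to $\mu|f'|^p$ and a residual piece'' via pointwise Young. But for $p\in(1,2)$ the exponent $p-2$ is negative, and no Young inequality with positive exponents can produce $|f'|^p$ from $|f'|^{p-2}$: you would need to raise $|f'|^{p-2}$ to the power $p/(p-2)<0$. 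Any attempt to peel off an absorbable $\mu|f'|^p$ leaves a residual carrying an even more singular negative power of $|f'|$ (e.g.\ $|f'|^{p-4}$), which cannot be controlled by $\|f\|_{L^p}$ and $\|f''\|_{L^p}$. The sketch here is too vague precisely because the step does not go through.

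There is also a regularity issue one level earlier: for $p<2$ the map $t\mapsto |t|^{p-2}t$ is only $C^{0,p-1}$, not Lipschitz, so $|\chi' f'|^{p-2}\chi' f'$ need not lie in $W^{1,1}(I)$ when $f\in W^{2,p}(I)$, and the chain-rule computation of its derivative (hence the integration by parts itself) is not justified without further argument.

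The paper sidesteps both problems by a different decomposition: it introduces $f_\epsilon=\sqrt{f^2+\epsilon^2}$ and applies H\"older to $\int |\chi'|^p|f_\epsilon'|^p$ with the weight $f_\epsilon^{p-2}$ (a power of $f$, not of $f'$), which is finite since $f_\epsilon\ge\epsilon$. After integrating by parts in $\int (\chi')^2(f_\epsilon')^2 f_\epsilon^{p-2}$, one uses $f_\epsilon''\ge ff''/f_\epsilon$ and H\"older once more to reach an inequality of the form $Y\le \sqrt{(Y+A)B}$, from which Young gives the stated constants. The key structural difference is that the singular weight sits on $f_\epsilon$ rather than on $f'$, so it never obstructs the estimates.
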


\begin{proof}
For $\epsilon > 0$, set $f_\epsilon = \sqrt{f^2 + \epsilon^2}$ and note
that $f_\epsilon' = ff'/f_\epsilon$ and $f_\epsilon'' = ff''/f_\epsilon + \epsilon^2(f')^2/f_\epsilon^3 \ge ff''/f_\epsilon$. Hence
using H\"older's inequality, an integration by parts, and
H\"older's inequality again, we find that
\[
\begin{split}
\int_I |\chi'|^p |f_\epsilon'|^p \, dx_1
& \le \left(\int_I (\chi')^2 (f_\epsilon')^2 f_\epsilon^{p - 2} \, dx_1\right)^{p/2} \left(\int_I f_\epsilon^p \, dx_1\right)^{1 - p/2} \\
& = \left(\frac{1}{1 - p} \int_I \left((\chi')^2 f_\epsilon'' f_\epsilon^{p - 1} + 2 \chi' \chi'' f_\epsilon' f_\epsilon^{p - 1}\right) \, dx_1\right)^{p/2} \left(\int_I f_\epsilon^p \, dx_1\right)^{1 - p/2} \\
& \le \left(\frac{1}{1 - p} \int_I \left((\chi')^2 f'' f f_\epsilon^{p - 2} + 2 \chi' \chi'' f_\epsilon' f_\epsilon^{p - 1}\right) \, dx_1\right)^{p/2} \left(\int_I f_\epsilon^p \, dx_1\right)^{1 - p/2} \\
& \le \left(\frac{1}{p - 1} \|\chi'\|_{L^\infty(I)}^2 \|f''\|_{L^p(I)} + \frac{2}{p - 1} \|\chi''\|_{L^\infty(I)} \|\chi' f_\epsilon'\|_{L^p(I)}\right)^{p/2} \|f_\epsilon\|_{L^p(I)}^{p/2}.
\end{split}
\]
Now Young's inequality yields
\[
\begin{split}
\|\chi' f_\epsilon'\|_{L^p(I)} & \le \left(\|\chi' f_\epsilon'\|_{L^p(I)} + \frac{\|\chi'\|_{L^\infty(I)}^2 \|f''\|_{L^p(I)}}{2\|\chi''\|_{L^\infty(I)}}\right)^{1/2} \left(\frac{2}{p - 1} \|\chi''\|_{L^\infty(I)} \|f_\epsilon\|_{L^p(I)}\right)^{1/2} \\
& \le \frac{1}{2} \|\chi' f_\epsilon'\|_{L^p(I)} + \frac{\|\chi'\|_{L^\infty(I)}^2 \|f''\|_{L^p(I)}}{4\|\chi''\|_{L^\infty(I)}} + \frac{1}{p - 1} \|\chi''\|_{L^\infty(I)} \|f_\epsilon\|_{L^p(I)}.
\end{split}
\]
We conclude that
\[
\|\chi'f_\epsilon'\|_{L^p(I)} \le \frac{\|\chi'\|_{L^\infty(I)}^2 \|f''\|_{L^p(I)}}{2\|\chi''\|_{L^\infty(I)}} + \frac{2}{p - 1} \|\chi''\|_{L^\infty(I)} \|f_\epsilon\|_{L^p(I)}.
\]
The claim now follows from Lebesgue's dominated convergence
theorem.
\end{proof}

\begin{proof}[Proof of Proposition \ref{prop:Lambda_interpolation}]
We may assume without loss of generality that $a = 0$. 
Let $v \in \dot{H}^1(\R_+^2)$ be the harmonic extension of
$f$ to the half-plane, i.e., $v=V(f)$ as defined in \eqref{def:Vm}. By the Poisson formula, we have
\[
v(x_1, x_2) = \frac{x_2}{\pi} \int_{-\infty}^\infty \frac{f(t)}{(t - x_1)^2 + x_2^2} \, dt.
\]
As in the proof of Lemma \ref{lem:Lambda_estimate}, we 
choose a cut-off function $\chi \in C_0^{1, 1}(-R, R)$ with
$0 \le \chi \le 1$ and with $\chi \equiv 1$ in $(-R/2, R/2)$,
such that 
\be
\label{ineg_chi}
|\chi'| \le 4/R \quad \text{and} \quad |\chi''| \le 16/R^2.
\ee
We decompose, as in Remark \ref{rem:lambda_h1loc},
$$v = v_0 + v_1,\quad v_0=V(\chi f), \quad v_1=V\big((1-\chi)f\big);$$ 
that is,
\[
v_0(x_1, x_2) =\frac{x_2}{\pi} \int_{-\infty}^\infty \frac{f(t) \chi(t)}{(t - x_1)^2 + x_2^2} \, dt
\]
and
\[
v_1(x_1, x_2) = \frac{x_2}{\pi} \int_{-\infty}^\infty \frac{f(t) (1 - \chi(t))}{(t - x_1)^2 + x_2^2} \, dt.
\]
By \eqref{egalite}, we have
$$|\Lambda f(0)|= \left|\dd{v}{x_2}(0, 0)\right|\leq \left|\dd{v_0}{x_2}(0, 0)\right|+\left|\dd{v_1}{x_2}(0, 0)\right|.$$

\paragraph{Step 1: estimate for $\dd{v_1}{x_2}(0, 0)$.}
For any $q>1$, we have the estimate 
\[
\begin{split}
\left|\dd{v_1}{x_2}(0, 0)\right| & \le \frac{1}{\pi} \int_{\R \setminus (-R/2, R/2)} \frac{|f(t)|}{t^2} \, dt \\
& \le \frac{1}{\pi} \left(2\int_{R/2}^\infty \frac{dt}{t^{2q/(q - 1)}}\right)^{\frac{q - 1}{q}} \|f\|_{L^q(\R)} =\frac{1}{\pi} \left(\frac{2q - 2}{q + 1}\right)^{\frac{q - 1}{q}} \left(\frac{R}{2}\right)^{- \frac{q + 1}{q}} \|f\|_{L^q(\R)}.
\end{split}
\]
A similar inequality also holds if $q = 1$.

\paragraph{Step 2: estimate for $\dd{v_0}{x_2}(0, 0)$.}
We write $g = \chi f\in H^2(\R)$ with $\supp g\subset [-R,R]$. For $v_0$, we then perform the change of variables $t=x_2 s + x_1$ and
obtain
\[
v_0(x_1, x_2) = \frac{1}{\pi} \int_{-\infty}^\infty \frac{g(x_2 s + x_1)}{s^2 + 1} \, ds.
\]
Hence
\[
\dd{v_0}{x_2}(x_1, x_2) = \frac{1}{\pi} \int_{-\infty}^\infty \frac{s g'(x_2 s + x_1)}{s^2 + 1} \, ds.
\]
As
\[
\frac{d}{ds} \left(\frac{1}{2} \log \left(x_2^2s^2 + x_2^2\right)\right) = \frac{s}{s^2 + 1},
\]
an integration by parts yields
\[
\begin{split}
\dd{v_0}{x_2}(x_1, x_2) & = -\frac{x_2}{2\pi} \int_{-\infty}^\infty g''(x_2 s + x_1) \log(x_2^2 s^2 + x_2^2) \, ds \\
& = - \frac{1}{2\pi} \int_{-\infty}^\infty g''(t) \log((t - x_1)^2 + x_2^2) \, dt.
\end{split}
\]
In particular,
\[
\dd{v_0}{x_2}(0, 0) = - \frac{1}{\pi} \int_{-\infty}^\infty g''(t) \log |t| \, dt,
\]
which implies, for $p\in (1,2)$, that
\[
\left|\dd{v_0}{x_2}(0, 0)\right| \le \frac 1\pi \left(2\int_0^R |\log t|^{p/(p - 1)} \, dt\right)^{\frac{p - 1}{p}} \|g''\|_{L^p(\R)}.
\]
As a consequence of this and Lemma \ref{lem:log_L^p}, we obtain a constant $C_1 = C_1(p)$ such that
\[
\left|\dd{v_0}{x_2}(0, 0)\right| \le C_1\left(1 + |\log R|\right) R^{(p - 1)/p} \|g''\|_{L^p(\R)}.
\]
It remains to estimate the $L^p$-norm of $g''$.
To this end, we observe that $g'' = \chi f'' + 2 \chi' f' + \chi'' f$.
Hence
\[
\|g''\|_{L^p(\R)} \le \|f''\|_{L^p(-R, R)} + 2\|\chi' f'\|_{L^p(\R)} + \|\chi''\|_{L^\infty(\R)} \|f\|_{L^p(-R, R)}.
\]
Lemma \ref{lem:chi'f'_L^p} provides an estimate for the
second term. Using \eqref{ineg_chi}, we then see that
there exists a constant $C_2 = C_2(p)$ satisfying
\[
\|g''\|_{L^p(\R)} \le 2\|f''\|_{L^p(-R, R)} + \frac{C_2}{R^2} \|f\|_{L^p(-R, R)}.
\]
Now it suffices to combine the above inequalities.
\end{proof}

\section{Analysis of the Euler-Lagrange equation} \label{sect:decay}

We now analyse the Euler-Lagrange
equation for minimisers $m=(\cos \phi, \sin \phi)$ of $E_h$ in $\A_h(d)$ for a given $d \in \N$ in the case $h > 1$
and for $d = \alpha/\pi$ or $d = 1 - \alpha/\pi$ in the case $h < 1$.
Of particular interest is the rate of decay of $m_1$ near $\pm \infty$.

\subsection{Exponential decay for $h > 1$}

We proceed to establish exponential decay of minimisers $\phi$ and its
derivatives. To this end, we
first prove the following lemmas.

\begin{lemma} \label{lem:decreasing}
Let $h>1$ and $a > 0$, and let $\phi \colon \R \to \R$ be a smooth function such that $1-\cos \phi \in \dot{H}^{1/2}(\R)$ and
$$\begin{cases}
&\text{$\phi$ is solution of \eqref{eqn:Euler-Lagrange_phi} in $(a, \infty)$,}\\
& 0<\phi<\pi \quad \text{and} \quad |\Lambda(1 - \cos \phi)| \le \frac{h - 1}{2} \quad \text{in } (a, \infty).
\end{cases}$$
Then $\phi' \le 0$ in $[a, \infty)$.
\end{lemma}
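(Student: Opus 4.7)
The plan is to show that the strict convexity of $\phi$ on $(a, \infty)$, combined with the a priori bound $0 < \phi < \pi$, forces $\phi'$ to be nonpositive.

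First I would show that $\phi'' > 0$ on $(a, \infty)$. Using $k = 1$ for $h > 1$ and the identity $u'(x_1) = \partial_{x_1}U(m)(x_1, 0) = \Lambda(\cos\phi - 1)(x_1) = -\Lambda(1 - \cos\phi)(x_1)$ from \eqref{egalite}, the hypothesis $|\Lambda(1-\cos\phi)| \le (h-1)/2$ becomes $|u'| \le (h-1)/2$ on $(a, \infty)$. Therefore
\[
h - \cos\phi + u' \;\ge\; (h - 1) + (1 - \cos\phi) - \tfrac{h-1}{2} \;=\; \tfrac{h-1}{2} + (1 - \cos\phi) \;>\; 0
\]
on $(a, \infty)$. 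Since $0 < \phi < \pi$, we also have $\sin\phi > 0$, so the Euler--Lagrange equation \eqref{eqn:Euler-Lagrange_phi} yields $\phi'' = (h - \cos\phi + u')\sin\phi > 0$ on $(a, \infty)$.

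Next I would conclude from strict convexity. The function $\phi'$ is continuous on $[a, \infty)$ and strictly increasing on $(a, \infty)$. Suppose for contradiction that $\phi'(x_0) > 0$ for some $x_0 \in [a, \infty)$. By continuity, we may assume $x_0 > a$, and then $\phi'(x) \ge \phi'(x_0) > 0$ for all $x \ge x_0$. Integrating gives
\[
\phi(x) \;\ge\; \phi(x_0) + \phi'(x_0)(x - x_0) \;\longrightarrow\; +\infty \quad \text{as } x \to \infty,
\]
contradicting the hypothesis $\phi < \pi$ on $(a, \infty)$. Hence $\phi' \le 0$ on $[a, \infty)$.

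There is no real obstacle here: the content of the lemma is just the transparent fact that the bound on the nonlocal term is strong enough to preserve the sign of $h - \cos\phi$ perturbatively, giving convexity; the rest is a one-line barrier argument using boundedness of $\phi$. The only subtle point to be careful with is including the endpoint $x = a$, handled by the continuity argument above.
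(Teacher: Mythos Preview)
Your proof is correct and follows essentially the same route as the paper: both use the bound $|u'| \le (h-1)/2$ together with $0 < \phi < \pi$ to force $\phi'' > 0$ on $(a,\infty)$, then obtain a contradiction from the boundedness of $\phi$. Your execution is in fact slightly more direct---you immediately note that strict convexity plus $\phi' (x_0)>0$ sends $\phi$ to $+\infty$, whereas the paper argues via a point $c>b$ where $\phi'$ would have dropped to $\phi'(b)/2$ and shows this is impossible---but the substance is identical.
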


\begin{proof}
Suppose, by way of contradiction, that there exists $b \ge a$
with $\phi'(b) > 0$. Then there exists $c > b$ such that
$\phi' > 0$ in $[b, c)$ and $\phi'(c) = \phi'(b)/2$. As $\sin \phi>0$ and $|\Lambda(1 - \cos \phi)| \le \frac{h - 1}{2}$ in $(a, \infty)$, equation \eqref{eqn:Euler-Lagrange_phi}
implies
\begin{equation} \label{eqn:inequality_second_derivative}
\phi'' \ge \frac{1}{2} (h - \cos \phi) \sin \phi \quad \text{in $(a, \infty)$}.
\end{equation}
Hence
\[
\frac{d}{dx_1} (\phi'(x_1))^2 \ge  [(h - \cos \phi) \sin \phi] \, \phi'> 0 \quad \text{in $(b, c)$}.
\]
It follows that $\phi'(c) > \phi'(b) > 0$,
in contradiction to the choice of $c$.
\end{proof}

\begin{proposition} \label{prop:exponential_decay}
Let $h > 1$. Then there exists a constant $c > 0$
with the following property. Let $a > 0$ and let $\phi\colon \R\to \R$
be a smooth function such that 
$1-\cos \phi \in \dot{H}^{1/2}(\R)$ and
$$\begin{cases}
&\text{$\phi$ is solution of \eqref{eqn:Euler-Lagrange_phi} in $(a, \infty)$,}\\
& 0<\phi\leq 1 \quad \text{and} \quad |\Lambda(1 - \cos \phi)| \le \frac{h - 1}{2} \quad \text{in } (a, \infty),\\
& \lim_{x_1 \to \infty} \phi(x_1) = 0.
\end{cases}$$
Then
\[
\phi(x_1) \le e^{c(a - x_1)} \quad \text{for all $x_1 \ge a$.}
\]
\end{proposition}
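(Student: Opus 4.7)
\textbf{Proof plan for Proposition \ref{prop:exponential_decay}.} The strategy is to reduce to a linear second-order differential inequality of the form $\phi'' \ge c_1 \phi$ and then run a standard comparison/energy argument.

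First, I would invoke Lemma~\ref{lem:decreasing} (whose hypotheses are a weaker version of ours) to conclude that $\phi' \le 0$ on $[a,\infty)$, so $\phi$ is monotone non-increasing. Next, exactly as in the derivation of \eqref{eqn:inequality_second_derivative}, the Euler-Lagrange equation \eqref{eqn:Euler-Lagrange_phi} together with $\sin\phi>0$ and $|\Lambda(1-\cos\phi)|\le (h-1)/2$ yields
\[
\phi'' \ge \tfrac{1}{2}(h-\cos\phi)\sin\phi \ge \tfrac{1}{2}(h-1)\sin\phi \quad \text{in } (a,\infty).
\]
Because $0<\phi\le 1<\pi/2$, the elementary bound $\sin\phi\ge \tfrac{2}{\pi}\phi$ on $[0,1]$ gives
\[
\phi'' \ge c_1 \phi \quad \text{in } (a,\infty), \qquad c_1 := \tfrac{h-1}{\pi}>0.
\]

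Now I would check that $\phi'(x_1)\to 0$ as $x_1\to\infty$. Since $\phi''\ge 0$, the derivative $\phi'$ is non-decreasing on $[a,\infty)$; being also non-positive it admits a limit $L\le 0$, and $L<0$ would force $\phi(x_1)\to -\infty$, contradicting $\phi(x_1)\to 0$. Thus $L=0$. Multiplying the inequality $\phi''\ge c_1\phi$ by $\phi'\le 0$ reverses the sign:
\[
\phi''\phi' \le c_1 \phi\phi' \quad \text{in } (a,\infty).
\]
Integrating from $x_1$ to $+\infty$ and using $\phi(\infty)=\phi'(\infty)=0$ gives $(\phi'(x_1))^2 \ge c_1 \phi(x_1)^2$, hence
\[
\phi'(x_1) \le -\sqrt{c_1}\, \phi(x_1), \qquad x_1\ge a.
\]

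Finally, because $\phi>0$ on $[a,\infty)$, dividing and integrating yields $\phi(x_1)\le \phi(a)e^{-\sqrt{c_1}(x_1-a)}\le e^{-\sqrt{c_1}(x_1-a)}$ since $\phi(a)\le 1$, and the proposition is proved with $c=\sqrt{(h-1)/\pi}$. The only mildly delicate point is the argument that $\phi'(\infty)=0$, which allows the boundary term in the energy integration to vanish; everything else is routine once the pointwise bound $\phi''\ge c_1\phi$ has been extracted from the non-local Euler-Lagrange equation via the assumed control on $\Lambda(1-\cos\phi)$.
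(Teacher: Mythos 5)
Your proof is correct and follows essentially the same route as the paper: invoke Lemma~\ref{lem:decreasing} to get $\phi'\le 0$, use the pointwise bound $|\Lambda(1-\cos\phi)|\le(h-1)/2$ to derive $\phi''\ge\frac12(h-\cos\phi)\sin\phi$, multiply by $\phi'$, integrate with the boundary terms vanishing at $+\infty$, and finish with a Gr\"onwall argument. The only cosmetic difference is that you linearise the nonlinearity via $\sin\phi\ge\frac{2}{\pi}\phi$ \emph{before} the energy integration, arriving directly at $(\phi')^2\ge c_1\phi^2$, whereas the paper keeps the exact potential and shows $(\phi')^2\ge W(\cos\phi,\sin\phi)$ (via monotonicity of $(\phi')^2-\frac12(h-\cos\phi)^2$) and only then applies the quadratic lower bound on $W$ from Lemma~\ref{lem:gamma}; both yield $c\sim\sqrt{h-1}$.
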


\begin{remark}
It will not be necessary to know the value of $c$ explicitly,
but we will prove the inequality for $c = \gamma \sqrt{h - 1}$,
where $\gamma$ is the constant introduced in Lemma \ref{lem:gamma}.
\end{remark}

\begin{proof}
Under the hypotheses of the lemma, equation
\eqref{eqn:Euler-Lagrange_phi} gives rise to the inequality
\eqref{eqn:inequality_second_derivative} in $(a, \infty)$
again. As $\phi' \le 0$ in $[a, \infty)$ by Lemma \ref{lem:decreasing},
this implies that $\limsup_{x_1 \to \infty} \phi'(x_1) \leq 0$ and
\[
\frac{d}{dx_1} \left((\phi'(x_1))^2 - \frac{1}{2} (h - \cos \phi(x_1))^2\right) \le 0 \quad \text{in } (a, \infty).
\]
As $\lim_{x_1 \to \infty} \phi(x_1) = 0$, we deduce $\limsup_{x_1 \to \infty} \phi'(x_1) = 0$ and
$\lim_{x_1 \to \infty} \cos \phi = 1$, so it follows that
\[
(\phi'(x_1))^2 \ge \frac{1}{2} \left(\cos^2 \phi(x_1) - 2h\cos \phi(x_1) + 2h - 1\right) = W(\cos \phi(x_1), \sin \phi(x_1)) \quad \text{for all $x_1 \ge a$}.
\]
Therefore,
\[
\phi'(x_1) \le - \sqrt{W(\cos \phi(x_1), \sin \phi(x_1))} \quad \text{for all $x_1 \ge a$}.
\]
Since $W(\cos \phi, \sin \phi) \ge c^2 \phi^2$ for $c = \gamma \sqrt{h - 1}$
by Lemma \ref{lem:gamma}, we conclude that $\phi' \le -c \phi$
in $[a, \infty)$, from which we finally obtain the desired inequality.
\end{proof}

For minimisers in $\A_h(d)$ with $d \in \Z$, we can now prove exponential decay at $\pm \infty$.
For convenience, we consider \emph{negative} winding numbers in the statement of the
next result, but of course we immediately obtain a statement for positive
winding numbers as well.

\begin{theorem}[Exponential decay for $h > 1$] \label{thm:exponential_decay}
Let $h > 1$, $d \in \N$, $\beta < 2$, and let
$m = (\cos \phi, \sin \phi) \in \A_h(-d)$ be a minimiser of $E_h$ in $\A_h(-d)$ such that
$$\lim_{x_1 \to \infty} \phi(x_1) = 0.$$
Then there exist $a \in \R$ and $c, C > 0$ such that for all $x_1 \ge a$: {$\phi'(x_1)\leq 0$ and}
\begin{equation} \label{eqn:exponential_decay}
\max\{|\phi(x_1)|, |\phi'(x_1)|, |\phi''(x_1)|\} \le e^{c(a - x_1)}
\end{equation}
and
\[
|\Lambda (m_1 - 1) (x_1)| \le \frac{C}{(x_1 - a)^\beta}.
\]
\end{theorem}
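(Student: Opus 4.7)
The plan is to combine Lemma \ref{lem:passages}, which forces $\sin \phi$ to have constant sign near $+\infty$, with the exponential decay of Proposition \ref{prop:exponential_decay}, and to upgrade the $1/(x_1-a_0)$ bound of Lemma \ref{lem:estimate_of_u'} to the sharper $\beta<2$ decay via Proposition \ref{prop:Lambda_interpolation}. First, I would let $a_0$ denote the largest point where $m_1(a_0)=\pm 1$ (finite by Lemma \ref{lem:passages}), so $\sin \phi$ has constant sign on $(a_0,\infty)$. Since $\phi(x_1)\to 0$, necessarily $\phi\in(0,\pi)$ or $\phi\in(-\pi,0)$ on $(a_0,\infty)$; up to the symmetry $\phi \mapsto -\phi$ (which leaves $m_1$ unchanged), I assume $\phi\in(0,\pi)$. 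Lemma \ref{lem:estimate_of_u'} gives $|\Lambda(\cos\phi-1)(x_1)|\le C_1/(x_1-a_0)$ for $x_1 > a_0+1$, so choosing $a>a_0$ sufficiently large guarantees both $0<\phi\le 1$ and $|\Lambda(1-\cos\phi)|\le (h-1)/2$ on $[a,\infty)$. Proposition \ref{prop:exponential_decay} then yields $\phi(x_1)\le e^{c_0(a-x_1)}$ with $c_0=\gamma\sqrt{h-1}$, and Lemma \ref{lem:decreasing} gives $\phi'\le 0$.

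To extend the exponential decay from $\phi$ to $\phi'$ and $\phi''$, I would run the argument in Proposition \ref{prop:exponential_decay} in reverse. The bound $|u'|\le (h-1)/2\le (h-\cos\phi)/2$ applied to \eqref{eqn:Euler-Lagrange_phi} produces the \emph{upper} inequality $\phi''\le\frac{3}{2}(h-\cos\phi)\sin\phi$ on $[a,\infty)$. Multiplying by $\phi'\le 0$, the map $x_1\mapsto (\phi')^2-\frac{3}{2}(h-\cos\phi)^2$ is nondecreasing and tends to $-\frac{3}{2}(h-1)^2$ at $+\infty$, so after expansion one obtains $(\phi')^2\le 3W(\cos\phi,\sin\phi)$ on $[a,\infty)$. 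Since $W(\cos\phi,\sin\phi)\le C_2\phi^2$ for $\phi\in[0,1]$ (by Lemma \ref{lem:gamma} or direct Taylor expansion), this gives $|\phi'|\le C_3|\phi|$. Plugging this back into \eqref{eqn:Euler-Lagrange_phi} yields $|\phi''|\le(h-\cos\phi+|u'|)|\sin\phi|\le C_4|\phi|$. Thus $\max\{|\phi|,|\phi'|,|\phi''|\}\le C_5 e^{c_0(a-x_1)}$ on $[a,\infty)$; enlarging $a$ to some $a'\ge a+c_0^{-1}\log C_5$ (and keeping $c=c_0$, or reducing $c$ slightly) absorbs the constant $C_5$ and delivers \eqref{eqn:exponential_decay}.

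For the refined decay of $\Lambda(m_1-1)$, I would apply Proposition \ref{prop:Lambda_interpolation} to $f=\cos\phi-1$ at a point $x_1>a+2$ with radius $R=(x_1-a)/2$, using that $f\in L^1(\R)\cap L^\infty(\R)$ because $W(m)\ge(h-1)(1-\cos\phi)=(h-1)|f|$ implies $\|f\|_{L^1}\le E_h(m)/(h-1)<\infty$. Choosing $q\in[1,\infty)$ with $1+1/q\ge\beta$ (e.g.\ $q=1/(\beta-1)$ when $\beta>1$, or $q$ large when $\beta\le 1$), the second term of Proposition \ref{prop:Lambda_interpolation} is bounded by $C\|f\|_{L^q(\R)}R^{-(1+1/q)}\le C/(x_1-a)^\beta$. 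The first term involves $\|f\|_{L^p}$ and $\|f''\|_{L^p}$ on $(x_1-R,x_1+R)\subset(a,\infty)$, where $|f|$, $|f'|=|\sin\phi\,\phi'|$ and $|f''|=|\cos\phi\,(\phi')^2+\sin\phi\,\phi''|$ all decay exponentially by the previous step; hence this first contribution is exponentially small in $R$ and is dominated by the polynomial part. Combining the two bounds gives $|\Lambda(m_1-1)(x_1)|\le C/(x_1-a)^\beta$.

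The main obstacle is the step from the decay of $\phi$ to the joint decay of $\phi,\phi',\phi''$: one has to produce the reverse differential inequality $(\phi')^2\le 3W$ that uses both the pointwise smallness of $u'$ and the behaviour of $\phi'$ at $+\infty$, and then absorb multiplicative constants so that the final bound has a clean $e^{c(a-x_1)}$ form. A secondary but more technical difficulty is securing the polynomial rate $\beta<2$ from Proposition \ref{prop:Lambda_interpolation}, which requires applying it with $q$ as small as $1$; this is only possible thanks to the additional $L^1$-integrability of $1-\cos\phi$ that is provided specifically by the case $h>1$ of the anisotropy $W$.
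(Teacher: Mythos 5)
Your proof is essentially correct and follows the same route as the paper: Lemma \ref{lem:passages} $\to$ Lemma \ref{lem:estimate_of_u'} $\to$ Proposition \ref{prop:exponential_decay} and Lemma \ref{lem:decreasing} for the exponential decay of $\phi$, followed by Proposition \ref{prop:Lambda_interpolation} (exploiting the $L^1$-integrability of $1-m_1$ afforded by $h>1$) for the refined $\Lambda$ bound. The one genuine variation is how you upgrade the decay to $\phi'$ and $\phi''$: the paper first reads off $|\phi''| \le \frac{3h-1}{2}\,\phi$ directly from \eqref{eqn:Euler-Lagrange_phi}, then integrates from $+\infty$ (using $\liminf_{x_1\to\infty}|\phi'(x_1)|=0$) to bound $|\phi'|$; you instead derive a reverse Hamiltonian inequality $(\phi')^2 \le 3W \le C\phi^2$ by a Gronwall-type argument and then recover $|\phi''|$ from the equation. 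Both work, and the paper's is slightly more elementary. Two small points to correct: (i) invoking ``the symmetry $\phi\mapsto-\phi$'' to assume $\phi\in(0,\pi)$ near $+\infty$ is not quite right, since that symmetry changes the prescribed degree to $+d$ and reverses the sign of $\phi'$, which is part of the conclusion; what actually forces $\sin\phi>0$ near $+\infty$ is the hypothesis $\deg(m)=-d<0$ together with $\phi(+\infty)=0$, as the paper points out explicitly; (ii) Lemma \ref{lem:gamma} gives a \emph{lower} bound on $W$, not the upper bound $W\le C\phi^2$ that you need at this step, so the Taylor-expansion alternative you mention is the correct justification and the parenthetical reference to that lemma should be dropped.
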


\begin{proof} By Proposition \ref{prop:regularity}, we know that $\phi$ is smooth.
By the hypothesis and Lemma \ref{lem:passages}, there exists
$a' \geq 1$ such that 
$$0 < \sin \phi \leq \phi \le 1 \quad \text{in } [a', \infty).$$ 
(The fact that the degree of $m$ is $-d<0$ is essential for the
positive sign of $m_2=\sin \phi$ near $+\infty$.) Moreover, by Lemma \ref{lem:estimate_of_u'},
we may assume that 
$$|\Lambda (1 - \cos \phi)| \le \frac{h - 1}{2} \quad \text{in } [a', \infty)$$ 
as well.
Hence, {Lemma \ref{lem:decreasing} implies that $\phi$ is monotone} in $[a', \infty)$; also, we may apply Proposition \ref{prop:exponential_decay} and we obtain
a constant $c > 0$ such that
\[
\phi(x_1) \le e^{c(a' - x_1)} \quad \text{for } x_1\geq a'.
\]
Using equation \eqref{eqn:Euler-Lagrange_phi}, we then obtain
\[
|\phi''(x_1)| \le \frac{3h - 1}{2} \phi(x_1) \le \frac{3h - 1}{2} e^{c(a' - x_1)}  \quad \text{for } x_1\geq a'.
\]
If $a'' \ge a'$ is chosen sufficiently large, then it follows that
$
|\phi''(x_1)| \le e^{c(a'' - x_1)}
$
for $x_1 \ge a''$. Since $\liminf_{x_1 \to \infty} |\phi'(x_1)| = 0$ (because $\phi(x_1)\to 0$ as $x_1 \to \infty$),
this implies
\[
|\phi'(x_1)| \le \int_{x_1}^\infty |\phi''(t)| \, dt \le \frac{1}{c} e^{c(a'' - x_1)} \quad \text{for } x_1\geq a''.
\]
Choosing $a$ sufficiently large, we obtain inequality \eqref{eqn:exponential_decay}.

It remains to establish the decay of  $\Lambda (m_1 - 1)$ at $\infty$. 
Lemma \ref{lem:estimate_of_u'} already gives the decay $1/{x_1}$ as $x_1\to \infty$. In order to improve it, 
we may assume without loss of
generality that inequalities similar to \eqref{eqn:exponential_decay} hold for
$2\pi d - \phi(x_1)$ and for the derivatives $\phi'(x_1)$
and $\phi''(x_1)$ when $x_1 \le -a''$
(because the behaviour
of $\phi$ as $x_1 \to -\infty$ is similar, albeit with
limit $2\pi d$). 
Fix $p \in (1, 2)$ such that $\beta<1+1/p$. Then it follows immediately that
\[
\|\cos \phi - 1\|_{L^p(\R)} \le C_1
\]
for a constant $C_1$ that depend only on $p$, $c$ and $a''$.
Moreover, for every $x_1 \ge 2a''$ and $R=\frac{x_1-a''}{2}$, we have
the inequality
\[
\int_{x_1 - R}^{x_1 + R} \left(|\phi''(t)|^p + |\phi'(t)|^{2p}\right) \, dt \le C_2 e^{cp(a'' - x_1)/2},
\]
where $C_2 = C_2(p, c, a'')$.
We apply Proposition \ref{prop:Lambda_interpolation} for $f=1-\cos \phi$ and $q=p$. Since $|f''|\leq |\phi''|+|\phi'|^2$, then
there exists a constant $C_3$ with
\[
|\Lambda (1-\cos \phi) (x_1)| \le \frac{C_3(1 + |\log (x_1-a'')|)}{(x_1 - a'')^{1 + 1/p}}[1 + (x_1 - a'')^2 e^{c(a'' - x_1)/2}]
\]
for all $x_1 \ge 2a''$. If we choose $a \ge 2a''$ large enough, then
the desired inequality follows for all $x_1\geq a$.
\end{proof}

\subsection{The linearised equation for $h < 1$}

When $h=\cos \alpha \in [0, 1)$ with $\alpha \in (0, \frac \pi 2]$, we will not obtain exponential decay of the
minimising profile, because the contribution of the non-local
differential operator in \eqref{eqn:Euler-Lagrange_phi}
is no longer dominated by the local terms. Our analysis here
is motivated by the analysis of Chermisi-Muratov \cite{Chermisi-Muratov:13}
for the winding numbers $\frac{\alpha}{\pi}$ and $1 - \frac{\alpha}{\pi}$.
An important tool is the fundamental solution of the linearisation
of \eqref{eqn:Euler-Lagrange_phi} about the trivial solution $\phi_0=\alpha$, which is
calculated in the aforementioned work. The paper also gives estimates
for the fundamental solution, which we improve somewhat here.

We consider the differential operator $L$, given by\footnote{The linearisation
of \eqref{eqn:Euler-Lagrange_phi} about $\phi_0=\alpha$ is then given by $L\left(x_1\mapsto \sin^2\alpha \psi\left(\frac{x_1}{\sin \alpha}\right)\right)$.}
\begin{equation} \label{eqn:linearised}
L\psi = -\psi'' + \psi - \sin \alpha \, \Lambda \psi.
\end{equation}
The fundamental solution $G_\alpha$ for the equation $L\psi = 0$ (satisfying $L G_\alpha=\delta_0$, where $\delta_0$ is the Dirac measure at $0$) is computed, using the Fourier transform and contour integration,
by Chermisi--Muratov \cite[Lemma A.1]{Chermisi-Muratov:13}. It is
\begin{equation} \label{eqn:fundamental_solution}
G_\alpha(x_1) = \frac{1}{2\pi}\int_{\R} \frac{e^{i\xi x_1}\, d\xi }{\xi^2+1+ |\xi |\sin \alpha}=\frac{\sin \alpha}{\pi} \int_0^\infty \frac{t e^{-t|x_1|}}{t^2 \sin^2 \alpha + (t^2 - 1)^2} \, dt \quad \text{for all } x_1\in \R.
\end{equation}
That is, for a solution $g \in H^{2}(\R)$ of the equation $Lg = f$
with $f \in L^2(\R)$, we have
\[
g = G_\alpha * f.
\]

\begin{lemma} \label{lem:fundamental_solution}
There exists a constant $C>0$ such that for any
$\alpha \in (0, \frac{\pi}{2}]$, the fundamental
solution $G_\alpha$ of the operator $L$ defined in
\eqref{eqn:linearised} satisfies, for all $x_1 \not= 0$, the inequalities
\[
0 \le G_\alpha(x_1) \le \frac{C \sin \alpha}{1 + x_1^2} + Ce^{-|x_1|/2}
\]
and
\[
0 \le - \frac{x_1}{|x_1|} G_\alpha'(x_1) \le \frac{C \sin \alpha}{1 + |x_1|^3} + Ce^{-|x_1|/2}
\]
and\footnote{In the sense of distributions, we have $G_\alpha''\in \delta_0+L^2(\R)$, so we estimate the diffuse part of $G_\alpha''$ here 
(still denoted $G_\alpha''$).}
\[
0 \le G_\alpha''(x_1) \le C\sin \alpha \frac{\big|\log |x_1|\, \big|}{1 + x_1^4 \big|\log |x_1|\, \big|}+ Ce^{-|x_1|/2}.
\]
\end{lemma}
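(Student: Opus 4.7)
The plan is to work directly with the explicit integral representation \eqref{eqn:fundamental_solution} and derive analogous formulas for $G_\alpha'$ and the diffuse part of $G_\alpha''$ by differentiating under the integral sign, valid for $x_1 \ne 0$. Positivity of $G_\alpha$ is immediate. Using $\partial_{x_1} e^{-t|x_1|} = -t \operatorname{sgn}(x_1) e^{-t|x_1|}$ and $\partial_{x_1}^2 e^{-t|x_1|} = t^2 e^{-t|x_1|}$, and checking that the resulting integrals converge for $x_1 \ne 0$ (they do: for $G_\alpha'$ at large $t$ the integrand is $O(t^{-2}e^{-t|x_1|})$; for $G_\alpha''$ at large $t$ it is $O(t^{-1}e^{-t|x_1|})$, which is integrable precisely when $x_1\ne 0$), one obtains the non-negative representations
\[
-\operatorname{sgn}(x_1) G_\alpha'(x_1) = \frac{\sin\alpha}{\pi}\int_0^\infty \frac{t^2 e^{-t|x_1|}}{t^2\sin^2\alpha + (t^2-1)^2}\, dt, \qquad G_\alpha''(x_1) = \frac{\sin\alpha}{\pi}\int_0^\infty \frac{t^3 e^{-t|x_1|}}{t^2\sin^2\alpha + (t^2-1)^2}\, dt.
\]
This simultaneously gives all three sign statements.

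For the upper bounds, the key idea is to split each integral $J_k(x_1) := \int_0^\infty \frac{t^k e^{-t|x_1|}}{t^2\sin^2\alpha + (t^2-1)^2}\, dt$ (for $k=1,2,3$) at $t=1/2$ and $t=3/2$. On the \emph{near} piece $[1/2,3/2]$, bound $e^{-t|x_1|}\le e^{-|x_1|/2}$ and estimate the remaining $t$-integral. The substitution $s=t^2-1$ (so $ds=2t\,dt$) reduces it to an integral of the form $\int_{-3/4}^{5/4} \frac{ds}{(s+1)\sin^2\alpha + s^2}$ multiplied by a bounded factor; splitting the $s$-range at $|s|=\sin\alpha/2$ yields a bound of order $1/\sin\alpha$. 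Hence $\frac{\sin\alpha}{\pi}\cdot(\text{near part of }J_k) \le C e^{-|x_1|/2}$ for each $k$, producing the exponential term in each inequality. On the \emph{far} pieces $[0,1/2]\cup[3/2,\infty)$, we have $(t^2-1)^2 \ge c(1+t^4)$, so the denominator simplifies. The contribution from $[0,1/2]$ reduces to $\int_0^{1/2} t^k e^{-t|x_1|}\,dt \le C\min(|x_1|^{-k-1},1)\le C/(1+|x_1|^{k+1})$. The contribution from $[3/2,\infty)$ reduces to $\int_{3/2}^\infty t^{k-4} e^{-t|x_1|}\,dt$, which for $k\in\{1,2\}$ is at most $Ce^{-3|x_1|/2}\le Ce^{-|x_1|/2}$. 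Combining with the near estimate gives the bounds on $G_\alpha$ and $G_\alpha'$.

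The main difficulty is the $k=3$ case, where $t^{k-4}=1/t$ fails to be integrable at infinity against $e^{-t|x_1|}$ as $|x_1|\to 0$. Here I would treat $\int_{3/2}^\infty e^{-t|x_1|}/t\,dt$ separately. For $|x_1|\ge 1$ use $1/t\le 2/3$ and $\int_{3/2}^\infty e^{-t|x_1|}\,dt = e^{-3|x_1|/2}/|x_1| \le Ce^{-|x_1|/2}$. For $0<|x_1|<1$, substitute $s=t|x_1|$ to get $\int_{3|x_1|/2}^\infty e^{-s}/s\,ds \le C(1+|\log|x_1||)$. Together with the $[0,1/2]$ bound $C/(1+|x_1|^4)$, the far contribution to $G_\alpha''$ is dominated by $C\sin\alpha\cdot\min\bigl(|\log|x_1||,\,|x_1|^{-4}\bigr)$ (for $|x_1|$ small or large respectively), while the middle range $|x_1|\approx 1$ is absorbed by the exponential term. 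A final elementary case analysis confirms $\min(|\log|x_1||,C|x_1|^{-4}) \le C\frac{|\log|x_1||}{1+x_1^4|\log|x_1||}$ in both regimes ($x_1^4|\log|x_1||\le 1$ vs.\ $\ge 1$), which yields the third inequality. The crux of the argument is therefore controlling this logarithmic singularity at $x_1=0$ and matching it precisely with the peculiar combination $|\log|x_1||/(1+x_1^4|\log|x_1||)$ appearing in the statement.
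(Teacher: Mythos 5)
Your proof is correct and reaches all three estimates, but the route differs from the paper's in a way worth noting. Both proofs start from the same explicit integral representations for $G_\alpha$, $G_\alpha'$ and (the diffuse part of) $G_\alpha''$, and both split the $t$-integral into a ``near-resonance'' piece around $t=1$ and far pieces. The differences are twofold. First, for the near piece the paper (when $\alpha\le\pi/6$) splits further at $t=1\pm\sin\alpha$ and bounds each sub-piece individually, whereas you use a single near piece $[1/2,3/2]$ with the change of variables $s=t^2-1$ and a split at $|s|=\sin\alpha/2$; both yield the same $O(1/\sin\alpha)$ bound and hence the $Ce^{-|x_1|/2}$ term. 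Second, and more substantially, the paper treats $|x_1|\ge 1$ and $|x_1|<1$ with entirely different arguments: for $|x_1|<1$ it invokes the uniform $H^1(\R)$ (hence $L^\infty$) bound on $G_\alpha$ obtained from the Fourier representation $\widehat{G_\alpha}(\xi)=(2\pi)^{-1/2}(\xi^2+1+|\xi|\sin\alpha)^{-1}$, and then bounds $|G_\alpha'|$ and $|G_\alpha''|$ by multiples of $G_\alpha$ plus corrections supported on $t\ge 2$. You avoid the Fourier argument entirely: your split at $t=1/2,\ 3/2$ and the resulting estimates are valid uniformly for all $x_1\neq 0$, with the mid-range $|x_1|\approx 1$ simply absorbed into the exponential term (this absorption is legitimate because the far contributions carry the factor $\sin\alpha\le 1$ and $e^{-|x_1|/2}$ is bounded below on bounded sets). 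Your approach is therefore somewhat more unified and elementary; the paper's Fourier-based treatment of $|x_1|<1$ is shorter once the $H^1$ bound is in hand, but needs the Plancherel argument as an additional input.
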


\begin{proof} By definition, the Fourier transform of $G_\alpha$ is given by
$${\cal F} G_\alpha(\xi)=\frac{1}{\sqrt{2\pi}} \frac{1}{\xi^2+1+|\xi|\sin \alpha}, \quad \xi\in \R,$$
which immediately implies that $G_\alpha\in H^1(\R)$ with 
$$\|G_\alpha\|_{{H}^1(\R)}\leq C_1$$
for a constant $C_1>0$ independent of $\alpha$. 
As $LG_\alpha=\delta_0$, we deduce that $G''_\alpha\in\delta_0+L^2(\R)$ (as a distribution). As a function, however, $G_\alpha$ is smooth
at every $x_1 \not= 0$ with
\[
G_\alpha'(x_1) = -\frac{x_1}{|x_1|} \frac{\sin \alpha}{\pi} \int_0^\infty \frac{t^2 e^{-t|x_1|}}{t^2 \sin^2 \alpha + (t^2 - 1)^2} \, dt
\]
and
\[
G_\alpha''(x_1) = \frac{\sin \alpha}{\pi} \int_0^\infty \frac{t^3 e^{-t|x_1|}}{t^2 \sin^2 \alpha + (t^2 - 1)^2} \, dt.
\]

\paragraph{Step 1: estimates for $|x_1| \ge 1$.} We have
\[
\int_0^{1/2} \frac{te^{-t|x_1|}}{t^2 \sin^2 \alpha + (t^2 - 1)^2} \, dt \le 4\int_0^\infty te^{-t|x_1|} \, dt = \frac{4}{x_1^2} \int_0^\infty se^{-s} \, ds.
\]
If $\alpha \le \frac{\pi}{6}$, then
\[
\int_{1/2}^{1 - \sin \alpha} \frac{te^{-t|x_1|}}{t^2 \sin^2 \alpha + (t^2 - 1)^2} \, dt \le e^{-|x_1|/2} \int_{-\infty}^{1 - \sin \alpha} \frac{dt}{(t - 1)^2} = \frac{e^{-|x_1|/2}}{\sin \alpha}
\]
and
\[
\int_{1 + \sin \alpha}^\infty \frac{te^{-t|x_1|}}{t^2 \sin^2 \alpha + (t^2 - 1)^2} \, dt \le e^{-|x_1|} \int_{1 + \sin \alpha}^\infty \frac{dt}{(t - 1)^2} = \frac{e^{-|x_1|}}{\sin \alpha}
\]
as well. Moreover,
\[
\int_{1 - \sin \alpha}^{1 + \sin \alpha} \frac{te^{-t|x_1|}}{t^2 \sin^2 \alpha + (t^2 - 1)^2} \, dt \le \frac{e^{-|x_1|/2}}{\sin^2 \alpha} \int_{1 - \sin \alpha}^{1 + \sin \alpha} \frac{dt}{t} \le \frac{4e^{-|x_1|/2}}{\sin \alpha}.
\]
If $\alpha > \frac{\pi}{6}$, then we observe instead that
\[
\int_{1/2}^\infty \frac{te^{-t|x_1|}}{t^2 \sin^2 \alpha + (t^2 - 1)^2} \, dt \le e^{-|x_1|/2} \int_{1/2}^\infty \frac{t}{t^2/4 + (t^2 - 1)^2} \, dt.
\]
The integral on the right-hand side converges,
and the inequalities for $G_\alpha$ follow immediately.
For $G_\alpha'$ and $G_\alpha''$, we can use the same arguments
when $|x_1| \ge 1$.

\paragraph{Step 2: estimates for $|x_1| < 1$.}
For $G_\alpha$, we know that $\|G_\alpha\|_{H^1(\R)}$
is bounded uniformly in $\alpha$. We conclude that
$|G_\alpha(x_1)|$ is bounded uniformly in $\alpha \in (0, \frac{\pi}{2}]$
and $x_1 \in [-1, 1]$.

For $G_\alpha'$ and $G_\alpha''$, we first observe that
\[
|G_\alpha'(x_1)| \le 2G_\alpha(x_1) + \frac{4\sin \alpha}{\pi} \int_2^\infty \, \frac{dt}{t^2} = 2G_\alpha(x_1) + \frac{2\sin \alpha}{\pi}
\]
and
\[
|G_\alpha''(x_1)| \le 4G_\alpha(x_1) + \frac{4\sin \alpha}{\pi} \int_2^\infty t^{-1} e^{-t|x_1|} \, dt.
\]
Since
\[
\int_2^\infty t^{-1} e^{-t|x_1|} \, dt \le \int_2^{2/|x_1|} \frac{dt}{t} + \int_2^\infty e^{-s} \, \frac{ds}{s} = \log \frac{1}{|x_1|} + \int_2^\infty e^{-s} \, \frac{ds}{s},
\]
the desired inequalities follow for $|x_1| < 1$ as well.
\end{proof}

A considerable part of the subsequent analysis is based
on the decay behaviour of $G_\alpha$ and its derivatives,
together with the following principle: if
$G, \psi \in L^1(\R) \cap L^\infty(\R)$, then
\[
(G * \psi)(x_1) = \int_{x_1/2}^\infty (G(t) \psi(x_1 - t) + G(x_1 - t) \psi (t)) \, dt;
\]
therefore,
\begin{equation} \label{eqn:convolution}
|(G * \psi)(x_1)| \le \|G\|_{L^\infty(x_1/2, \infty)} \|\psi\|_{L^1(\R)} + \|G\|_{L^1(\R)} \|\psi\|_{L^\infty(x_1/2, \infty)}.
\end{equation}

\subsection{Polynomial decay for $h < 1$}

For $h<1$, we will prove polynomial decay for minimisers of $E_h$ in
$\A_h(d)$ for $d\in \{\frac{\alpha}{\pi}, 1 - \frac{\alpha}{\pi}\}$. The following decay estimates improve the results of Chermisi-Muratov \cite[Lemma 5]{Chermisi-Muratov:13}. In particular, we prove cubic and quartic decay of $f'$ and $f''$, respectively, as well as a new $L^1$-estimate for
$f$, which is fundamental for the proofs of our main results stated in Section \ref{sec:main}.

\begin{theorem} \label{thm:decay_h<1}
There exist universal constants $c, C >0$ with the following property.
For every $h = \cos \alpha$ with $\alpha \in (0, \frac{\pi}{2}]$,
there exists a unique increasing, odd function $\phi \colon \R \to \R$
such that $m = (\cos \phi, \sin \phi)$ is
a minimiser of $E_h$ in $\A_h(\alpha/\pi)$. Furthermore, the function
$f = \cos \phi - \cos \alpha$ satisfies
\[
0<f(x_1) \le \frac{C}{x_1^2}, \quad |f'(x_1)| \le \frac{C}{x_1^3}, \quad |f''(x_1)| \le \frac{C}{x_1^4}, \quad \text{and} \quad |\Lambda f(x_1)| \le \frac{C \alpha}{x_1^2} \quad \text{for all $x_1 \ge \frac{c}{\alpha}$}
\]
and also
\[
\|f\|_{L^1(\R)} \le C \alpha.
\]
\end{theorem}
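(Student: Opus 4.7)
The plan is to obtain existence, uniqueness, and monotonicity of $\phi$ from Chermisi--Muratov's rearrangement arguments, and then to derive the decay estimates by treating the Euler--Lagrange equation as a perturbation of its linearisation about $\phi\equiv\alpha$ and convolving against a rescaled version of the fundamental solution $G_\alpha$ from Lemma~\ref{lem:fundamental_solution}. For the first part I follow Chermisi--Muratov \cite{Chermisi-Muratov:13}: applied to $m_1-\cos\alpha\ge 0$, their symmetric-decreasing rearrangement strictly decreases $E_h$ unless $m_1$ is already symmetric-decreasing, so a minimiser in $\A_h(\alpha/\pi)$ exists with $m_1$ even, $m_1(0)=1$, and $m_1$ monotone on $[0,\infty)$. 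Lemma~\ref{lem:passages} gives $m_1^{-1}(1)=\{0\}$ and Lemma~\ref{lem:symmetry} forces $m_2$ to be odd; together these produce an odd, increasing lifting $\phi$, with uniqueness coming from the strict rearrangement inequality.

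For the pointwise decay, with $h=k=\cos\alpha$ I rewrite \eqref{eqn:Euler-Lagrange_m_1} as
\[
-f'' + \sin^2\alpha\,(f-\Lambda f) = N(f,f'),
\]
where $N(f,f') := (2f\cos\alpha+f^2)(f-\Lambda f) + (f')^2(f+\cos\alpha)/(1-m_1^2)$ vanishes to quadratic order. The linear operator on the left is conjugate, via the rescaling $y = x_1\sin\alpha$, to the operator $L$ of \eqref{eqn:linearised}, so its fundamental solution is $\tilde G_\alpha(x_1) = \sin^{-1}\alpha\,G_\alpha(x_1\sin\alpha)$. Lemma~\ref{lem:fundamental_solution} then yields $|\tilde G_\alpha(x_1)|\lesssim (1+x_1^2\sin^2\alpha)^{-1} + \sin^{-1}\alpha\,e^{-|x_1|\sin\alpha/2}$, with each successive derivative gaining a factor of $\sin\alpha/(1+|x_1|\sin\alpha)$ in the far field (up to a logarithm). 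Starting from Chermisi--Muratov's decay $f = O(1/x_1^2)$ together with the energy bound $\E_h(\alpha/\pi)\lesssim\alpha^3$ from Lemma~\ref{lem:cubic_growth}, I bootstrap through $f^{(j)} = \tilde G_\alpha^{(j)}*N$ using the split \eqref{eqn:convolution}: the $L^1$-mass of $N$ and its local $L^\infty$-tail, combined with the improved decay of the differentiated kernel, produce $|f'|\le C/x_1^3$ and $|f''|\le C/x_1^4$ on $\{x_1\ge c/\alpha\}$.

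With the pointwise bounds in hand, the $L^1$-estimate follows from the universal inequality $0\le f\le 1-\cos\alpha\le \alpha^2/2$ (valid because $\phi\in[-\alpha,\alpha]$): the core $|x_1|\le c/\alpha$ contributes $\lesssim \alpha^2\cdot(1/\alpha)=O(\alpha)$ and the tail $|x_1|>c/\alpha$ contributes $\lesssim\int_{c/\alpha}^\infty x_1^{-2}\,dx_1=O(\alpha)$. For $|\Lambda f(x_1)|\le C\alpha/x_1^2$ on $\{x_1\ge c/\alpha\}$, I use the principal-value formula \eqref{eqn:Dirichlet-to-Neumann} and split at $|t-x_1|=x_1/2$: the far part is dominated by $\|f\|_{L^1}/x_1^2\lesssim \alpha/x_1^2$, while the near part is handled by a second-order Taylor expansion of $f$ around $x_1$, whose first-order term cancels by antisymmetry of the principal value, producing a smaller $O(1/x_1^3)$ contribution. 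The main obstacle is executing the convolution bootstrap with constants that remain genuinely universal, since the behaviour of $\tilde G_\alpha$ and its derivatives differs markedly across the three regimes $|x_1|\ll 1/\sin\alpha$, $|x_1|\sim 1/\sin\alpha$, and $|x_1|\gg 1/\sin\alpha$, and the nonlinearity $N$ couples the decay rates of $f$, $f'$, and $\Lambda f$ in a way that has to be unwound iteratively.
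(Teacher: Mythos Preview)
Your overall strategy coincides with the paper's: rewrite the Euler--Lagrange equation as a quadratic perturbation of the linear operator with fundamental solution $G_\alpha$, then bootstrap via convolution using \eqref{eqn:convolution}. Your choice to keep $f$ in the original variable and rescale the kernel to $\tilde G_\alpha(x_1)=\sin^{-1}\alpha\,G_\alpha(x_1\sin\alpha)$ is equivalent to the paper's choice of rescaling $f$ to $g(x_1)=\sin^{-2}\alpha\,f(x_1/\sin\alpha)$ and convolving against $G_\alpha$ itself; the paper's route makes the uniformity in $\alpha$ somewhat more transparent, since after rescaling all the $L^2$ quantities one needs are $O(1)$ directly from $\E_h(\alpha/\pi)\le C\alpha^3$.

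There is, however, a genuine circularity in your outline that you should resolve explicitly. Your nonlinearity $N$ contains the term $-(2f\cos\alpha+f^2)\Lambda f$, so running $f^{(j)}=\tilde G_\alpha^{(j)}*N$ for $j=1,2$ requires pointwise control of $\Lambda f$ \emph{before} you have the quartic decay of $f''$; but your proposed $\Lambda f$ argument (Taylor expansion on the near part of the principal value) \emph{requires} $|f''|\le C/x_1^4$. The paper breaks this loop by first invoking Lemma~\ref{lem:estimate_of_u'} (itself a consequence of the $H^2$ estimate in Lemma~\ref{lem:higher_derivatives}) to obtain a crude but uniform $|\Lambda g(x_1)|\le C/(\sqrt{\alpha}\,|x_1|)$, which is enough to start the iteration; it then alternates between improving $g''$ and improving $\Lambda g$ (the latter via Proposition~\ref{prop:Lambda_interpolation}), passing through an intermediate $|g''|\le C|\log|x_1||/|x_1|^4$ before closing. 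Your Taylor-expansion argument for $\Lambda f$ is a clean alternative to Proposition~\ref{prop:Lambda_interpolation} at the final step, but you still need some preliminary $\Lambda f$ bound to feed into the bootstrap.

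A second point: you take Chermisi--Muratov's $f=O(1/x_1^2)$ as the seed, but their constants depend on $\alpha$, whereas the theorem demands universal constants in the range $x_1\ge c/\alpha$. The paper therefore does not import their decay; instead it starts from the elementary monotone bound $g(x_1)\le \|g\|_{L^2}/\sqrt{x_1}$ (uniform by Step~2) together with the $H^2$ estimate \eqref{eqn:L^2_norm_of_g''}, and bootstraps three times ($\sqrt{x_1}\to x_1\to x_1^2$) to reach the quadratic decay with a universal constant. If you rely on Chermisi--Muratov as a black box you will have to re-derive uniformity separately, which is precisely the ``main obstacle'' you flag at the end. Your $L^1$ bound and the final $\Lambda f$ estimate are fine once the rest is in place.
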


\begin{proof}
We use various universal constants in this proof, and we will
abuse notation and indiscriminately use the symbol $C$ for most
of them. The existence of a unique symmetric minimiser follows by symmetrization via rearrangement as proved in the
works of Melcher \cite{Me1} and Chermisi-Muratov
\cite{Chermisi-Muratov:13} {(see also Lemma \ref{lem:symmetry} above)}. Moreover, $\phi$ is increasing with $\phi(\R)=(-\alpha, \alpha)$. By the symmetry, the function
$\phi$ is odd. Thus it suffices to prove the
inequalities. To this end, we first rescale the solutions.

\paragraph{Step 1: rescaling.} Set $f = \cos \phi - \cos \alpha$ and
\[
g(x_1) = \frac{1}{\sin^2 \alpha} f\left(\frac{x_1}{\sin \alpha}\right).
\]
As $0<f\leq 1-\cos \alpha$ in $\R$, we deduce that $0<g\leq 1$.
Moreover, as $f$ satisfies \eqref{eqn:Euler-Lagrange_m_1} away from $x_1=0$,
we know that $g$ is a solution of the equation
\[
g'' = - \frac{(g')^2(g \sin^2 \alpha + \cos \alpha)}{1 - 2g \cos \alpha - g^2 \sin^2 \alpha} + (g - \sin \alpha \Lambda g)(1 - 2g \cos \alpha - g^2 \sin^2 \alpha), \quad x_1\neq 0.
\]
Define the operator $L$ as in \eqref{eqn:linearised}.
Then we can write the equation in the form
\begin{equation} \label{eqn:Euler-Lagrange_g}
Lg = A (g')^2 + gB (g - \sin \alpha \Lambda g) \quad \text{in } \R\setminus \{0\},
\end{equation}
where
\[
B = 2 \cos \alpha + g \sin^2 \alpha=\cos\alpha+\cos \phi(\frac{\blank}{\sin \alpha})
\]
and
\[
A = \frac{g \sin^2 \alpha + \cos \alpha}{1 - gB}=\sin^2\alpha 
\frac{\cos \phi(\frac{\blank}{\sin \alpha})}{\sin^2 \phi(\frac{\blank}{\sin \alpha})} \quad \text{in } \R\setminus \{0\}.
\]
The function $B$ is bounded (with $|B|\leq 2$ in $\R$), 
whereas $A$ is unbounded for every $\alpha\in (0, \frac \pi 2]$ (since $A(x_1)\to \infty$ as $x_1\to 0$) and $A > 0$ for $x_1\neq 0$. However, for any $x_1$ such that
$|\phi(\frac{x_1}{\sin \alpha})| \ge \frac{\alpha}{2}$, we have $A(x_1) \le C$.

\paragraph{Step 2: prove $L^2$-estimates.}
We want to show that
\begin{equation} \label{eqn:L^2_norms_involving_g}
\left(\int_{-\infty}^\infty A(g')^2 \, dx_1\right)^{1/2} + \|g\|_{L^2(\R)} + \alpha \|g'\|_{L^2(\R)} \le C.
\end{equation}
To this end, we first compute
\[
A(x_1) (g'(x_1))^2 = \frac{\cos \phi(\frac{x_1}{\sin \alpha})}{\sin^4 \alpha} \left(\phi'\left(\frac{x_1}{\sin \alpha}\right)\right)^2, \quad x_1\neq 0.
\]
Therefore,
\[
\int_{-\infty}^\infty A(g')^2 \, dx_1 \le \frac{1}{\sin^3 \alpha} \int_{-\infty}^\infty (\phi')^2 \, dx_1 \le \frac{2E_h(m)}{\sin^3 \alpha},
\]
where $m = (\cos \phi, \sin \phi)$.
Furthermore, we compute
\[
\|g\|_{L^2(\R)}^2 = \frac{\|f\|_{L^2(\R)}^2}{\sin^3 \alpha} \le \frac{2E_h(m)}{\sin^3 \alpha}
\]
and similarly
\[
\|g'\|^2_{L^2(\R)} = \frac{\|f'\|_{L^2(\R)}^2}{\sin^5 \alpha} \le \frac{2 E_h(m)}{\sin^5 \alpha}.
\]
Using Lemma \ref{lem:cubic_growth}, we obtain \eqref{eqn:L^2_norms_involving_g}.

\paragraph{Step 3: prove preliminary pointwise estimates.}
Next we want to establish the following inequalities: 
\begin{alignat}{2}
\label{eqn:initial_decay}
0<g(x_1) &\leq \frac{C}{\sqrt{|x_1|}} & \quad & \text{for any $x_1 \neq 0$}, \\
\label{estim_g'}
|g'(x_1)| &\leq \frac{C}{\sqrt{\alpha} |x_1|}  && \text{for any $x_1 \neq 0$,} \\
\label{eqn:estim_Lambda_g}
|(\Lambda g)(x_1)| & \leq \frac{C}{\sqrt{\alpha} |x_1|} && \text{for $|x_1| > \sin \alpha$}.
\end{alignat}

For the proof of \eqref{eqn:initial_decay}, we will in fact
show that $g(x_1) \le x_1^{-1/2} \|g\|_{L^2(\R)}$ for $x_1 > 0$.
The inequality then follows by the symmetry and \eqref{eqn:L^2_norms_involving_g}. Assume, for contradiction,
that there exists $x_1>0$ with $g(x_1)>x_1^{-1/2} \|g\|_{L^2(\R)}$. Then for every $t\in (0, x_1)$, we have $g(t)\geq g(x_1)>x_1^{-1/2} \|g\|_{L^2(\R)}$, because $g$ is non-increasing. Therefore, 
$$\int_0^{x_1} g^2\, dt> \|g\|_{L^2(\R)}^2 \int_0^{x_1} \frac{1}{x_1}\, dt=\|g\|^2_{L^2(\R)},$$
which is a contradiction.

As $\phi(0)=0$ and $\phi$ is increasing, we have $0<\phi<\alpha$ for $x_1>0$.
Thus we may use Lemma \ref{lem:higher_derivatives} (for $a=0$ and
with $R/\sin \alpha$ instead of $R$) and Lemma \ref{lem:cubic_growth}
to conclude that
\[
\int_{\frac{R}{\sin \alpha}}^\infty \left((\phi'')^2 + (\phi')^2 \sin^2 \phi + (\phi')^4\right) \, dx_1 \le \frac{C\sin^2 \alpha \, E_h(m)}{R^2}\leq  
\frac{C\sin^5 \alpha}{R^2} \quad \text{for any $R > 0$.}
\]
Hence
\begin{equation} \label{eqn:L^2_norm_of_g''}
\int_{R}^\infty \left(\sin^2 \alpha (g'')^2 + (g')^2 \right) \, dx_1 \le \frac{C}{R^2} \quad \text{for any $R > 0$.}
\end{equation}
In particular, the Cauchy-Schwartz inequality implies, for every $t>R$,
that
$$\sin \alpha \left|(g'(R))^2 - (g'(t))^2\right|\leq 2 \sin \alpha \int_R^t |g' g''|\, dx_1\leq \frac{C}{R^2}.$$
As $g'\in L^2(\R)$, we know that $\liminf_{t\to \infty} |g'(t)|=0$;
so \eqref{estim_g'} follows.
We finally apply Lemmas \ref{lem:estimate_of_u'} and \ref{lem:cubic_growth} to obtain
\[
|(\Lambda g)(x_1)|=\frac{1}{\sin^3\alpha} |(\Lambda f)(\frac{x_1}{\sin \alpha})| \le \frac{C}{\sqrt{\alpha} |x_1|} \quad \text{for } |x_1| > \sin \alpha,
\]
which is \eqref{eqn:estim_Lambda_g}.

We also note that as a consequence of \eqref{eqn:initial_decay}, there exists a constant $a \geq 1$ (independent of $\alpha$)
such that 
\be
\label{def_a}
\phi\left(\frac{x_1}{\sin \alpha}\right) \ge \frac{\alpha}{2} \text{ (and hence $A(x_1) \le C$) whenever $x_1 \ge a$}.
\ee

\paragraph{Step 4: improve the decay.} We now show that
\eqref{eqn:initial_decay} can be improved as follows: 
\[
g(x_1) \le \frac{C}{|x_1|} \quad \text{for $|x_1| \ge {2a}$.}
\]
For this purpose, we use the fact that $g = G_\alpha * Lg$. As $|B|\leq 2$ and $g>0$ in $\R$, we have
\begin{equation} \label{eqn:g_estimate}
g \le G_\alpha * A(g')^2 + 2G_\alpha * g^2 + 2\alpha G_\alpha * g|\Lambda g| \quad \text{for } x_1\neq 0.
\end{equation}
Applying an inequality of the type of \eqref{eqn:convolution}, we find
\begin{align*}
(G_\alpha *A(g')^2 )(x_1) &= \int_{x_1/2}^\infty \bigg(G_\alpha(t) [A(g')^2](x_1 - t) + G_\alpha(x_1 - t)  [A(g')^2](t)\bigg) \, dt\\
&\leq  \|G_\alpha\|_{L^\infty(x_1/2, \infty)} \|A(g')^2\|_{L^1(\R)} + \|G_\alpha\|_{L^\infty(\R)} \|A(g')^2\|_{L^1(x_1/2, \infty)}.
\end{align*}
By \eqref{def_a}, we have $|A(x_1)| \le C$ for $|x_1| \ge a$.
Hence when $|x_1| \ge 2a$, Lemma \ref{lem:fundamental_solution}, together with
\eqref{eqn:L^2_norms_involving_g} and
\eqref{eqn:L^2_norm_of_g''}, implies that
\[
(G_\alpha * A(g')^2)(x_1) \le \frac{C}{x_1^2}.
\]
Similarly, we use \eqref{eqn:convolution} to estimate the
other two terms in \eqref{eqn:g_estimate}. Owing to
\eqref{eqn:L^2_norms_involving_g} and \eqref{eqn:initial_decay},
we obtain
\[
|(G_\alpha * g^2)(x_1)| \le \frac{C}{|x_1|}, \quad x_1\neq 0.
\]
Because
\[
\|g \Lambda g\|_{L^1(\R)} \le \|g\|_{L^2(\R)} \|g'\|_{L^2(\R)} \le \frac{C}{\alpha}
\]
by \eqref{eqn:L^2_norms_involving_g} and
\[
g(x_1) |(\Lambda g)(x_1)| \le \frac{C}{\sqrt{\alpha} |x_1|^{3/2}}, \quad x_1\neq 0,
\]
by \eqref{eqn:initial_decay} and \eqref{eqn:estim_Lambda_g},
we also have
\[
|\alpha (G_\alpha * g|\Lambda g|)(x_1)| \le \frac{C}{|x_1|^{3/2}}, \quad x_1\neq 0.
\]
Therefore, the desired decay for $g$ follows when $|x_1| \ge {2a}$.

\paragraph{Step 5: conclusion.} 
We can use the conclusion of Step 4 to improve the above estimates again.
Namely, we find that
\[
|(G_\alpha * g^2)(x_1)| \le \frac{C}{x_1^2}
\]
and
\[
|\alpha (G_\alpha * g\Lambda g)(x_1)| \le \frac{C}{x_1^2}
\]
for $|x_1| \ge 4a$. Hence
\be
\label{quadrat_decay}
0<g(x_1) \le \frac{C}{x_1^2} \quad \text{for $|x_1| \ge 4a$.}
\ee
Using the formula
\begin{equation} \label{eqn:convolution2}
g' = G_\alpha' * Lg
\end{equation}
and taking advantage of \eqref{quadrat_decay},
we repeat the arguments from Step 4 to obtain, for $|x_1| \ge 8a$,
$$|(G'_\alpha * A(g')^2)(x_1)| \le \frac{C}{x_1^2}, \quad |(G_\alpha * g^2)(x_1)| \le \frac{C}{|x_1|^3},
\quad |\alpha (G_\alpha * g|\Lambda g|)(x_1)| \le \frac{C}{|x_1|^{3}}.$$
Therefore,
\[
|g'(x_1)| \le \frac{C}{|x_1|^2} \quad \text{for $|x_1| \ge 8a$}.
\]
Using this estimate, we obtain
$$\int_{|x_1|/2}^\infty A (g')^2\, dt\leq \frac{C}{|x_1|^3}, \, \quad  |x_1|\geq 16 a,$$
so that $|(G'_\alpha * A(g')^2)(x_1)| \le \frac{C}{x_1^3}$, and finally,
\[
|g'(x_1)| \le \frac{C}{|x_1|^3} \quad \text{for $|x_1| \ge 16a$}.
\]
As $g'' = G_\alpha'' * Lg$, the same method\footnote{Note that $G''_\alpha$ does not belong to $L^\infty$ (by Lemma \ref{lem:fundamental_solution}) so that we can only use $L^1$ estimates near $x_1=0$.} implies, for $|x_1| \ge 32a$, that
\begin{align*}
|(G''_\alpha * A(g')^2)(x_1)| &\le \|G''_\alpha\|_{L^\infty(x_1/2, \infty)} \|A(g')^2\|_{L^1(\R)} + \|G''_\alpha\|_{L^1(\R)} \|A(g')^2\|_{L^\infty(x_1/2, \infty)}
\leq \frac{C}{x_1^4},\\ 
|(G''_\alpha * g^2)(x_1)| &\le \frac{C}{|x_1|^4},
\quad |\alpha (G''_\alpha * g|\Lambda g|)(x_1)| \le \frac{C}{|x_1|^{3}}.
\end{align*}
This in turn yields
$$|g''(x_1)| \le \frac{C}{|x_1|^3}, \quad |x_1|\geq 32 a.$$ In order to obtain the desired quartic power decay of $g''$, we need to improve the estimate of $g|\Lambda g|$. To this end, we use Proposition
\ref{prop:Lambda_interpolation}
(applied with $p$ sufficiently close to $1$, $q=1$ and $R=x_1/2$).
We find that
\[
|\Lambda g(x_1)| \le \frac{C \big|\log |x_1|\big|}{|x_1|^{2}}
\]
for $|x_1| \ge 64 a$, using the fact that
$\|g\|_{L^1(\R)} \le C$ (because $g$ is bounded and satisfies \eqref{quadrat_decay}). Hence
\[
g(x_1) |\Lambda g(x_1)| \le \frac{C\big|\log |x_1|\big|}{|x_1|^{4}}, \quad \text{as well as} \quad  |\alpha (G''_\alpha * g|\Lambda g|)(x_1)| \le  \frac{C\big|\log |x_1|\big|}{|x_1|^{4}},
\]
which yields
\[
|g''(x_1)| \le  \frac{C\big|\log |x_1|\big|}{|x_1|^{4}}
\]
for $|x_1| \ge 128 a$. Applying Proposition
\ref{prop:Lambda_interpolation} again, we obtain 
$$|\Lambda g(x_1)| \le \frac{C}{x_1^2},$$
leading to
\[
|g''(x_1)| \le \frac{C}{x_1^4}  \quad \text{for $|x_1| \ge 256a$}.
\]
Now the inequalities for $f$ follow by rescaling.
\end{proof}

We also state a similar statement for minimisers in the set $\A_h(1 - \alpha/\pi)$,
but we are not concerned about the dependence of the constants on $\alpha$ here.

\begin{theorem} \label{thm:alpha_large}
Suppose that $h = \cos \alpha$ for $\alpha \in (0, \frac{\pi}{2}]$.
Then there exists an increasing function $\phi \colon \R \to \R$ such that $\phi - \pi$ is odd and
$m = (\cos \phi, \sin \phi)$ is
a minimiser of $E_h$ in $\A_h(1 - \alpha/\pi)$. Furthermore,
the function $f = \cos \alpha - \cos \phi$ satisfies
\[
\limsup_{x_1 \to \pm\infty} \left(x_1^2 |f(x_1)| + |x_1|^3 |f'(x_1)| + x_1^4 |f''(x_1)| + x_1^2 |\Lambda f(x_1)|\right)< \infty.
\]
\end{theorem}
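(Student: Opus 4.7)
The plan is to mimic closely the proof of Theorem \ref{thm:decay_h<1}, profiting from the fact that the present statement does not demand uniformity in $\alpha$, so all constants may depend on $\alpha$ and the bookkeeping is considerably lighter.

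\emph{Existence and symmetry.} By the monotone rearrangement arguments of Melcher \cite{Me1} and Chermisi--Muratov \cite{Chermisi-Muratov:13}, the infimum $\E_h(1 - \alpha/\pi)$ is attained by a minimiser with an increasing lifting $\phi$. Combined with Lemma \ref{lem:symmetry} and a suitable translation, we may take $\phi - \pi$ odd, so that $\phi(0) = \pi$, $\phi(+\infty) = 2\pi - \alpha$, and $\phi(-\infty) = \alpha$. Lemma \ref{lem:passages} then identifies $x_1 = 0$ as the unique zero of $\sin \phi$. By oddness of $\phi - \pi$ it suffices to prove the four decay estimates as $x_1 \to +\infty$.

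\emph{Initial estimates.} Set $f = \cos \alpha - \cos \phi \ge 0$. Since $\phi \in (\pi, 2\pi - \alpha)$ on $(0, \infty)$, $\sin \phi$ is bounded away from $0$ on any interval $(1, \infty)$, so Lemma \ref{lem:higher_derivatives} (applied with $a = 0$) together with $E_h(m) \le C$ from Lemma \ref{lem:energy_estimate_for_1-alpha/pi} yields
\[
\int_R^\infty \left( (\phi'')^2 + (\phi' \sin \phi)^2 + (\phi')^4 \right) dx_1 \le \frac{C}{R^2} \quad \text{for every } R \ge 1,
\]
while Lemma \ref{lem:estimate_of_u'} gives the preliminary bound $|\Lambda f(x_1)| \le C/x_1$ for large $x_1$.

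\emph{Bootstrap via the fundamental solution.} Define the rescaled function $g(y) = \sin^{-2}\!\alpha \, f(y/\sin \alpha)$. A direct computation identical to Step 1 of the proof of Theorem \ref{thm:decay_h<1} shows that $g$ satisfies
\[
Lg = A (g')^2 + g B (g - \sin \alpha \, \Lambda g) \quad \text{on } \R \setminus \{0\},
\]
with $L$ as in \eqref{eqn:linearised}, $|B| \le 2$, and $A$ uniformly bounded on $\{|y| \ge \delta\}$ for every $\delta > 0$. Since $g$ is continuous and $Lg$ is locally integrable on $\R$, the convolution identities $g = G_\alpha * Lg$, $g' = G_\alpha' * Lg$, $g'' = G_\alpha'' * Lg$ hold globally. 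Plugging in the estimates of the previous step and applying Lemma \ref{lem:fundamental_solution} together with the convolution principle \eqref{eqn:convolution}, I would iterate exactly as in Steps 3--5 of the proof of Theorem \ref{thm:decay_h<1}: first $g(y) \le C/|y|$, then $g(y) \le C/y^2$, then $|g'(y)| \le C/|y|^3$, $|g''(y)| \le C/y^4$, and finally $|\Lambda g(y)| \le C/y^2$ via Proposition \ref{prop:Lambda_interpolation}. Rescaling back gives the theorem.

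\emph{The main obstacle.} Contrary to the setting of Theorem \ref{thm:decay_h<1}, $g$ is not uniformly bounded ($g(0) = (1 + \cos \alpha)/\sin^2 \alpha$ may be large) and the coefficient $A$ is singular at $y = 0$ because $1 - m_1^2$ vanishes there (rather than only at $\pm \infty$). Both obstructions are local near $y = 0$ and are handled by splitting $G_\alpha * Lg = G_\alpha * (\chi_{\loc} Lg) + G_\alpha * ((1 - \chi_{\loc}) Lg)$ with $\chi_{\loc}$ a smooth cutoff near $0$: the local piece is an $L^1$ quantity (the smoothness of $\phi$ near $0$ from Proposition \ref{prop:regularity} together with $\sin \phi(0) = 0$ making $(g')^2$ integrable there), whose convolution with $G_\alpha$, $G_\alpha'$, $G_\alpha''$ inherits the decay of the kernel at $\pm \infty$ given by Lemma \ref{lem:fundamental_solution}; the outer piece is treated as in Theorem \ref{thm:decay_h<1}.
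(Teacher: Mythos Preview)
Your proposal is correct and matches the paper's own proof, which simply reads ``This can be proved with the same arguments.'' Your write-up is in fact more detailed than what the paper supplies.

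One minor point: the ``main obstacle'' you flag is not actually new. In Theorem~\ref{thm:decay_h<1} the coefficient $A$ is \emph{also} singular at $y=0$, since there too $m_1(0)=\cos\phi(0)=1$ and hence $1-m_1^2$ vanishes (your parenthetical ``rather than only at $\pm\infty$'' misreads that proof). The reason this causes no trouble in either case is Step~2 of that proof: the identity $A(g')^2 = \sin^{-4}\!\alpha\,\cos\phi(\cdot/\sin\alpha)\,(\phi'(\cdot/\sin\alpha))^2$ shows that $A(g')^2\in L^1(\R)$ with norm controlled by the exchange energy, regardless of the singularity of $A$ itself. The same identity (now with $\cos\phi$ possibly negative, so one takes $|A|(g')^2$) works here, and your additional cutoff $\chi_{\mathrm{loc}}$ near the origin, while harmless, is unnecessary.
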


\begin{proof}
This can be proved with the same arguments.

\end{proof}

\section{Concentration compactness} \label{sect:concentration-compactness}

\subsection{Strategy}

We want to prove Theorem \ref{thm:existence_h>1} and Theorem \ref{thm:existence_h<1}
through the analysis of minimising sequences for $E_h$ in the sets
$\A_h(d)$. Similarly to many other variational problems involving
topological information, the main difficulty in proving existence of
minimisers is a possible `escape to infinity' of a topologically
non-trivial part of the members of a minimising sequence.
(This corresponds to the `dichotomy' case in the concentration-compactness
framework of Lions \cite{Lions:84}.) In order
to prevent this, we want to improve Proposition \ref{prop:subadditivity} by showing that
\begin{equation} \label{eqn:strict_subadditivity}
\E_h(d) < \E_h(d_1) + \E_h(d_2)
\end{equation}
for all appropriate decompositions $d = d_1 + d_2$ into smaller
winding numbers. We will achieve this by constructing a magnetisation profile
of winding number $d$ from two energy minimisers in $\A_h(d_1)$ and $\A_h(d_2)$
and estimating the energy {(see Theorems \ref{thm:strict_subadditivity_h>1} and \ref{thm:strict_subadditivity_h<1} below)}. This is where the analysis of
the Euler-Lagrange equation from the previous sections,
and in particular the decay at $\pm \infty$, will be crucial.

In this chapter, we show how inequalities of the type \eqref{eqn:strict_subadditivity}
give rise to minimisers in $\A_h(d)$. Due to the symmetry proved
in Lemma \ref{lem:symmetry}, we may in fact work with a somewhat
weaker hypothesis than expected.

\subsection{Statement}

We formulate the following result for $d \in \N - \frac{\alpha}{\pi}$
only (which in the case $h > 1$ means $d \in \N$). Although a similar
statement would always be true, we do not expect that the hypothesis of Theorem \ref{thm:existence}
will be satisfied if $h < 1$ and $d \in \N$ or $d \in \N + \frac{\alpha}{\pi}$.
Of course we automatically obtain statements for
$d \in \frac{\alpha}{\pi} - \N$ as well.

\begin{theorem}[Concentration compactness] \label{thm:existence}
Suppose that $d = \ell - \alpha/\pi$ for some $\ell \in \N$ such
that
\[
\E_h(d) < 2\E_h(d') + \E_h(d - 2d')
\]
for $d' = 1 - \alpha/\pi, 1, 2 - \alpha/\pi, 2, \ldots, \ell/2 - 1, \ell/2 - \alpha/\pi$
if $\ell$ is even and for
$d' = 1 - \alpha/\pi, 1, 2 - \alpha/\pi, 2, \ldots, (\ell - 1)/2 -\alpha/\pi, (\ell - 1)/2$
if $\ell$ is odd. Then $E_h$ attains its infimum in $\A_h(d)$.
\end{theorem}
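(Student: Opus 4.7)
The plan is a concentration--compactness argument for a minimising sequence in which the symmetry granted by Lemma \ref{lem:symmetry} narrows the only loss-of-compactness scenario to a fully symmetric splitting of the winding number, which is then ruled out by the strict subadditivity hypothesis. I would first take a minimising sequence $(m^n) \subset \A_h(d)$ with $E_h(m^n) \to \E_h(d)$; since $d \in \Z \pm \alpha/\pi$, Lemma \ref{lem:symmetry} lets me replace each $m^n$ by a symmetric competitor with no larger energy. Writing $m^n = (\cos \phi^n, \sin \phi^n)$ with $\phi^n$ odd, the uniform energy bound controls $\|(\phi^n)'\|_{L^2(\R)}$ and $\|m_1^n - k\|_{\dot H^{1/2}(\R)}$. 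A standard diagonal extraction then yields $m^n \to m^\infty$ weakly in $H_\loc^1(\R)$ and pointwise a.e., with $m^\infty$ symmetric and $E_h(m^\infty) \le \E_h(d)$ by lower semicontinuity of each of the three terms of $E_h$. Set $d^\infty = \deg(m^\infty)$; if $d^\infty = d$ then $m^\infty \in \A_h(d)$ is the sought minimiser, and the proof is complete.

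Suppose instead $d^\infty \ne d$. By symmetry the deficit $d - d^\infty$ is split equally between the two ends, so $d - d^\infty = 2d'$ for some $d' > 0$, and the quantisation of admissible degrees forces $d'$ to fall into the explicit list of the hypothesis. The next task is to decompose $m^n$ into three pieces living in separated intervals: a central piece of winding $d - 2d'$ converging to $m^\infty$, and two symmetric escaping packets of winding $d'$ drifting to $\pm\infty$. Concretely, fix $\epsilon > 0$ and apply Corollary \ref{cor:localisation} to $m^\infty$ to produce $m^\infty_R \in \A_h(d - 2d')$ that is constant outside $[-R, R]$ with $E_h(m^\infty_R) \le E_h(m^\infty) + \epsilon \le \E_h(d - 2d') + \epsilon + o(1)$ (the monotonicity $\E_h(d - 2d') \le E_h(m^\infty)$ following from Proposition \ref{prop:monotonicity} or directly from the construction). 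For $n$ large, the pointwise a.e.\ convergence combined with Lemma \ref{lem:deflections_cost_energy} (which forbids transition points of $m_1^n$ through $\pm 1$ from clustering at finite cost) lets me identify, for each $n$, a translate of an escaping packet $\mu^n \in \A_h(d')$ centred at some $b^n \to \infty$ whose support of $m_1^n - k$ lies in $[b^n - r^n, b^n + r^n]$ with $b^n - r^n \to \infty$, plus its reflection. Proposition \ref{prop:localisation}, fed with $\omega, \sigma, \tau$ constructed from $\phi^n$, $(\phi^n)'$ and $\Lambda(m_1^n - k)$ in the intermediate regions (where all three are $L^2$-small by finite energy), allows me to replace $m^n$ by this three-piece profile at a cost that vanishes as $n \to \infty$ and $R \to \infty$.

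With this decomposition in hand, the final step is to estimate $E_h(m^n)$ from below by the sum of the energies of the three pieces. The local terms $\frac12 \int (|m'|^2 + 2W)$ split additively since the pieces occupy disjoint intervals. For the nonlocal term, writing $m_1^n - k = f_0 + f_+ + f_-$ according to the three pieces and expanding $\|m_1^n - k\|_{\dot H^{1/2}(\R)}^2 = \sum_i \|f_i\|_{\dot H^{1/2}}^2 + 2\sum_{i<j} \langle f_i, f_j\rangle_{\dot H^{1/2}}$, Lemma \ref{lem:separated_supports} bounds each cross term by a constant times the product of $L^2$ norms divided by the separation distance, which tends to zero. Consequently
\[
E_h(m^n) \ge E_h(m^\infty_R) + 2 E_h(\mu^n) - o(1) \ge \E_h(d - 2d') + 2\E_h(d') - \epsilon - o(1).
\]
Sending $n \to \infty$ and then $\epsilon \to 0$ yields $\E_h(d) \ge \E_h(d - 2d') + 2\E_h(d')$, contradicting the strict subadditivity hypothesis. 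Hence $d^\infty = d$ after all and $m^\infty$ attains the infimum.

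The main obstacle will be the rigorous execution of the middle step: extracting a clean three-piece decomposition from a minimising sequence whose individual profiles are not known to satisfy the decay bounds of Section \ref{sect:decay} (those are proved only for actual minimisers). The tools at hand are Proposition \ref{prop:localisation}, which needs $\omega, \sigma, \tau \in L^2$ on the gaps, and Lemma \ref{lem:deflections_cost_energy}, which organises the passages of $m_1^n$ through $\pm 1$ into finitely many clusters either staying bounded (feeding into $m^\infty$) or escaping to $\pm\infty$ in symmetric pairs (forming $\mu^n$ and its reflection). Once the separations are sent to infinity the nonlocal interaction vanishes via Lemma \ref{lem:separated_supports}, closing the argument.
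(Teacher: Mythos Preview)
Your overall strategy matches the paper's: symmetrise, take a weak $H^1_\loc$ limit $m^\infty$, and show that a degree deficit would force the forbidden inequality $\E_h(d) \ge 2\E_h(d') + \E_h(d-2d')$. However, the paper's execution of the decomposition step is considerably simpler than what you propose, and avoids your ``main obstacle'' entirely. Rather than locating escaping packets centred at drifting points $b^n \to \infty$, the paper cuts the sequence itself at the deterministic scale $j$: it first arranges (by passing to a subsequence) that $m^j$ is uniformly close to $m$ on $[-2j,2j]$, so that in particular $\|m_1^j - k\|_{L^\infty([-2j,-j]\cup[j,2j])} \to 0$. Then explicit cut-off functions supported in $[j,2j]$ and $[-2j,-j]$ split $m^j$ into a central piece $\tilde m^j$ (agreeing with $m^j$ on $[-j,j]$) and two outer pieces $\hat m^{\pm j}$ (agreeing with $m^j$ on $[\pm 2j, \pm\infty)$). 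Because $m_1^j - k$ is uniformly small on the cut-off zones, the anisotropy and exchange energies split up to $o(1)$, and Lemma \ref{lem:separated_supports} kills the nonlocal cross terms. This yields $\limsup_j \bigl(E_h(\tilde m^j) + E_h(\hat m^j) + E_h(\hat m^{-j})\bigr) \le \E_h(d)$, from which $\E_h(|\tilde d|) + 2\E_h(\frac12|d-\tilde d|) \le \E_h(d)$ follows directly. No knowledge of where the escaping transitions sit is needed; Proposition \ref{prop:localisation} and Lemma \ref{lem:deflections_cost_energy} are not invoked here.

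There are also two genuine gaps in your argument that the paper fills. First, you assert $d' > 0$ without justification: a priori one could have $\tilde d = d^\infty > d$ or $\tilde d < 0$. The paper rules out $\tilde d > d$ via Proposition \ref{prop:monotonicity} (which would give $\E_h(|\tilde d|) \ge \E_h(d)$, inconsistent with the splitting inequality) and rules out $\tilde d \le 0$ via the symmetry $m(0)=(\pm1,0)$ together with Proposition \ref{prop:subadditivity}. Second, when $h<1$ and $\alpha \in (0,\pi/2)$, the claim that $d' = \tfrac12(d-\tilde d)$ lies in the hypothesis list (and not, say, in $\Z + \alpha/\pi$) requires an extra observation: since $d \in \N - \alpha/\pi$ forces $\lim_{x_1\to\pm\infty} m_2^j = \mp\sin\alpha$, the outer pieces $\hat m^{\pm j}$ inherit these boundary values, which constrains $\tfrac12(d-\tilde d) \in \Z \cup (\Z - \alpha/\pi)$. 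Without this step your list of excluded $d'$ would be incomplete.
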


\begin{proof} We divide the proof in several steps.

\paragraph{Step 1: pick a minimising sequence.}
Consider a minimising sequence $(m^j)_{j \in \N}$ of $E_h$
in $\A_h(d)$. By Lemma \ref{lem:symmetry}, we may assume that
each $m^j$ is symmetric. In particular, we have $m^j(0) = ((-1)^\ell, 0)$
for every $j \in \N$.
It is clear that a subsequence converges weakly in $H^{1}_\loc(\R; \Ss^1)$.
We may assume without loss of generality that this applies to
the whole sequence, i.e., that $m^j \rightharpoonup m$
weakly in $H^{1}_\loc(\R; \Ss^1)$ for some $m \in H^{1}_\loc(\R; \Ss^1)$.
Then $m$ is symmetric as well with $m(0)=((-1)^\ell, 0)$.
It is also clear that the energy is lower semicontinuous with
respect to such convergence. Thus
\[
E_h(m) \le \liminf_{j \to \infty} E_h(m^j) = \E_h(d).
\]
In particular, we have $\lim_{x_1 \to \pm \infty} m_1(x_1) = k$,
and the winding number $\tilde{d} = \deg(m)$ is well-defined and belongs to $\Z + \{0, \pm \alpha/\pi\}$.
Because of the symmetry and because $m(0) = (\pm 1, 0)$,
we have $\tilde{d} \not= 0$.
If we can show that $\tilde{d} = d$, then it follows that
$m \in \A_h(d)$ and that $m$ is a minimiser of $E_h$ in
this set, which then concludes the proof. The aim of the next steps is to show that $\tilde{d} = d$. 

\paragraph{Step 2: some properties of the minimising sequence.}
First note that
in the case $h < 1$, we obviously have $(m_1 - h)^2 \le 2W(m)$,
whereas in the case $h > 1$, we have
\[
(m_1 - 1)^2  \le 2(1 - m_1) \le 2(1 - m_1) + \frac{1}{h - 1} (1 - m_1)^2 = \frac{2}{h - 1} W(m).
\]
Hence $m_1 - k \in L^2(\R)$, which implies that
\[
\lim_{j \to \infty} \int_{[-2j, -j] \cup [j, 2j]} (m_1 - k)^2 \, dx_1 = 0.
\]
Without loss of generality, we may assume that
\begin{equation} \label{eqn:L^2-close}
\int_{-2j}^{2j} |m^j - m|^2 \, dx_1 \le \frac{1}{j^5}
\end{equation}
for every $j \in \N$ (as we can always select a subsequence with this property and then relabel the indices). Then
\begin{equation} \label{eqn:L^2-convergence}
\lim_{j \to \infty} \int_{[-2j, -j] \cup [j, 2j]} (m_1^j - k)^2 \, dx_1 = 0
\end{equation}
as well. Since
\begin{equation} \label{eqn:H^1-bound}
\limsup_{j \to \infty} \int_{-\infty}^\infty |(m^j)'|^2 \, dx_1 < \infty
\end{equation}
and
\[
\left\|\frac{d}{dx_1} |m - m^j|^2\right\|_{L^1(-2j, 2j)} \le 2\|m' - (m^j)'\|_{L^2(\R)} \|m - m^j\|_{L^2(-2j, 2j)},
\]
then \eqref{eqn:L^2-close} and \eqref{eqn:H^1-bound}, together with
the fact that $m(0)=m^j(0)$, yield:
\begin{equation} \label{eqn:uniform_convergence}
\limsup_{j \to \infty} j \|m^j - m\|_{L^\infty(-2j, 2j)} = 0.
\end{equation}
Similarly, as there exist $t_j \in (j, 2j)$ and $s_j \in (-2j, -j)$
such that $m_1^j(t_j), m_1^j(s_j)\to k$ as $j\to \infty$, we deduce
\begin{equation} \label{eqn:L^infty_estimate}
\lim_{j \to \infty} \|m_1^j - k\|_{L^\infty([-2j, -j] \cup [j, 2j])} = 0.
\end{equation}
Moreover, it follows from
\eqref{eqn:uniform_convergence} that
\begin{equation} \label{eqn:degree}
\lim_{j \to \infty} \int_{-2j}^{2j} (m^j)^\perp \cdot (m^j)' \, dx_1 = 2\pi \tilde{d}.
\end{equation}
Indeed, let $\phi$ and $\phi^j$ be continuous liftings of $m$ and $m^j$, respectively. Due to \eqref{eqn:uniform_convergence}, 
we may assume that
$\|\phi^j - \phi\|_{L^\infty(-2j, 2j)} \to 0$, too. As
$$\int_{-2j}^{2j} (m^j)^\perp \cdot (m^j)' \, dx_1=\phi^j(2j)-\phi^j(-2j),$$ 
we conclude that \eqref{eqn:degree} holds true.

\paragraph{Step 3: cut-off.}
Choose $\eta \in C^\infty(\R)$ with $\eta \equiv 0$
in $(-\infty, 0]$, $\eta \equiv 1$ in $[1, \infty)$, and $0 < \eta < 1$
in $(0, 1)$. For $j \in \Z \setminus \{0\}$, let
$\hat{\eta}_j(x_1) = \eta(4x_1/j - 7)$ and $\tilde{\eta}_{|j|}(x_1) = \eta(4x_1/j - 4)+\eta(-4x_1/j - 4)$. Now define, for $j \in \Z \setminus \{0\}$, the functions
\[
\hat{m}_1^j = \hat{\eta}_j m_1^{|j|} + (1 - \hat{\eta}_j) k
\]
(cut off to the left of $2j$ if $j > 0$ and to the right
of $-2j$ if $j < 0$) and
\[
\tilde{m}_1^{j} = (1 - \tilde{\eta}_{j}) m_1^{j} + \tilde{\eta}_{j}  k, \quad j>0
\]
(cut off outside of $(-j, j)$). {Note that for $j\in \N$, the functions $\tilde{m}_1^{j}-k$, $\hat{m}_1^{-j}-k$ and $\hat{m}_1^{j}-k$ have disjoint support.}
For $j \in \Z$ with $|j|$ sufficiently large, owing to
\eqref{eqn:L^infty_estimate}, there exist
functions $\hat{m}_2^j \colon \R \to [-1, 1]$
such that $\hat{m}_2^j(x_1) = m_2^{|j|}(x_1)$ if $j > 0$ and $x_1 \ge 2j$ or
$j < 0$ and $x_1 \le -2j$ and such that
$\hat{m}^j = (\hat{m}_1^j, \hat{m}_2^j)$ takes values in $\Ss^1$.
Similarly, for $j \in \N$ sufficiently large, there
exists a function $\tilde{m}_2^j \colon \R \to [-1, 1]$
such that $\tilde{m}_2^j(x_1) = m_2^j(x_1)$ for $|x_1| \le j$
and such that $\tilde{m}^j = (\tilde{m}_1^j, \tilde{m}_2^j)$
takes values in $\Ss^1$. The aim of the next steps is to prove that
\begin{equation} \label{eqn:limiting_energy}
\limsup_{j \to \infty} \left(E_h(\tilde{m}^j) + E_h(\hat{m}^j) + E_h(\hat{m}^{-j})\right) \le \E_h(d).
\end{equation}

\paragraph{Step 4: estimate the anisotropy and exchange energy.}
Because we have the pointwise inequalities
$W(m^j) \ge W(\hat{m}^j)$ and $W(m^j) \ge W(\tilde{m}^j)$, it is clear
that
\[
\limsup_{j \to \infty} \int_{-\infty}^\infty \left(W(\tilde{m}^j) + W(\hat{m}^j) + W(\hat{m}^{-j}) - W(m^j)\right) \, dx_1 \le 0.
\]
In order to estimate the exchange energy, note first that in $[-2j, -j]\cup [j, 2j]$, we have
$$
\left((\tilde{m}_1^j)'\right)^2 = (1 - \tilde{\eta}_j)^2 \left((m_1^j)'\right)^2 - 2(1 - \tilde{\eta}_j)\tilde{\eta}_j'  (m_1^j - k)(m_1^j)' + (\tilde{\eta}_j')^2(m_1^j - k)^2
$$
and
\[
\left((\hat{m}_1^{\pm j})'\right)^2 = \hat{\eta}_{\pm j}^2 \left((m_1^j)'\right)^2 + 2\hat{\eta}_{\pm j} \hat{\eta}_{\pm j}' (m_1^j - k) (m_1^j)' + (\hat{\eta}_{\pm j}')^2(m_1^j - k)^2.
\]

In the case $h < 1$, the integrals of the last two terms in each
identity over $(-2j, -j)\cup (j,2j)$ will tend to $0$ as $j \to \infty$
due to \eqref{eqn:L^2-convergence}, \eqref{eqn:H^1-bound}, and
the inequalities
$\|(\hat{\eta}^{\pm j})'\|_{L^\infty(\R)} + \|(\tilde{\eta}^j)'\|_{L^\infty(\R)} \leq \|\eta'\|_{L^\infty(\R)}/j$.
Because of \eqref{eqn:L^infty_estimate}, we have
\[
1 - (\tilde{m}_1^j)^2 \to \sin^2 \alpha \quad \text{and} \quad 1 - (\hat{m}_1^{\pm j})^2 \to \sin^2 \alpha
\]
uniformly in $[-2j, -j]\cup [j, 2j]$, as well as $1 - (m_1^j)^2 \to \sin^2 \alpha$.
It follows that
\begin{equation} \label{eqn:no_energy_gain}
\limsup_{j \to \infty} \int_{-\infty}^{\infty} \left(\frac{\left((\tilde{m}_1^j)'\right)^2}{1 - (\tilde{m}_1^j)^2} + 
\frac{\left((\hat{m}_1^j)'\right)^2}{1 - (\hat{m}_1^j)^2}+ \frac{\left((\hat{m}_1^{-j})'\right)^2}{1 - (\hat{m}_1^{-j})^2} - \frac{\left((m_1^j)'\right)^2}{1 - (m_1^j)^2}\right) \, dx_1 \le 0.
\end{equation}

In the case $h > 1$, we note that
\[
1 - \tilde{m}_1^j = (1 - \tilde{\eta}_j)(1 - m_1^j) \quad \text{and} \quad 1 - \hat{m}_1^j = \hat{\eta}_j(1 - m_1^j)
\]
for $j>0$.
Due to the uniform convergence of $1 + \tilde{m}_1^j \to 2$, $1 + \hat{m}_1^j \to 2$, as well as $1 + m_1^j \to 2$ in $[j, 2j]$ as $j\to \infty$, 
estimating the exchange energy reduces to analysing the following terms:
\[
\frac{\left((\tilde{m}_1^j)'\right)^2}{1 - \tilde{m}_1^j} = (1 - \tilde{\eta}_j) \frac{\left((m_1^j)'\right)^2}{1 - m_1^j} + 2\tilde{\eta}_j' (m_1^j)' - \frac{(\tilde{\eta}_j')^2}{1 - \tilde{\eta}_j} (m_1^j - 1)
\]
and
\[
\frac{\left((\hat{m}_1^j)'\right)^2}{1 - \hat{m}_1^j} = \hat{\eta}_j \frac{\left((m_1^j)'\right)^2}{1 - m_1^j} - 2\hat{\eta}_j' (m_1^j)' - \frac{(\hat{\eta}_j')^2}{\hat{\eta}_j} (m_1^j - 1).
\]
By l'H\^opital's rule,
\[
\lim_{x_1 \nearrow 1} \frac{(\eta'(x_1))^2}{1 - \eta(x_1)} = - 2\eta''(1) = 0,
\]
and thus the function $\frac{(\tilde{\eta}_j')^2}{1 - \tilde{\eta}_j}$ is bounded and supported on $[-2j, -j]\cup [j, 2j]$. Similar arguments apply to $\hat{\eta}_j$.
Moreover, we obviously have
\[
\|\tilde{\eta}_j' (m_1^j)'\|_{L^1(\R)} \le \frac{4}{\sqrt{j}} \|\eta'\|_{L^2(\R)} \|(m^j)'\|_{L^2(\R)} \to 0
\]
as $j \to \infty$. Since the corresponding
estimates hold in $[-2j, -j]$ for $\hat{m}_1^{-j}$
instead of $\hat{m}_1^j$, we conclude that \eqref{eqn:no_energy_gain} holds true in the case $h > 1$, too.

\paragraph{Step 5: estimate the stray field energy.}
Next we want to estimate $\|\tilde{m}_1^j - k\|_{\dot{H}^{1/2}(\R)}$ and
$\|\hat{m}_1^j - k\|_{\dot{H}^{1/2}(\R)}$. Let
$\tilde{v}_j = V(\tilde{m}^j)$ and $\hat{v}_{\pm j} = V(\hat{m}^{\pm j})$ as defined in \eqref{def:Vm}.
Furthermore, let $v_j = V(m^j)$ and $w_j = v_j - \tilde{v}_j - \hat{v}_j - \hat{v}_{-j}$
for $j \in \N$. Then $w_j(\blank, 0) \to 0$ in $L^2(\R)$ by
\eqref{eqn:L^2-convergence}, while $w_j'(\blank, 0)$ remains bounded
in $L^2(\R)$ by \eqref{eqn:H^1-bound}. Therefore, standard interpolation between $\dot{H}^1(\R)$ and $L^2(\R)$ implies that
\[
\lim_{j \to \infty} \int_{\R_+^2} |\nabla w_j|^2 \, dx =\lim_{j \to \infty} \|w_j(\cdot, 0)\|_{\dot{H}^{1/2}(\R)}^2 = 0.
\]
Hence
\begin{equation} \label{eqn:limit_v_j}
\lim_{j \to \infty} \int_{\R_+^2} (|\nabla \tilde{v}_j + \nabla \hat{v}_j + \nabla \hat{v}_{-j}|^2 - |\nabla v_j|^2) \, dx = 0
\end{equation}
by the triangle inequality.
Moreover, as the sequences $(\tilde{m}_1^j - k)_{j \in \N}$ and $(\hat{m}_1^j - k)_{j \in \Z \setminus \{0\}}$ are bounded in
$H^{1}(\R)$, it follows that
\[
\limsup_{j \to \infty} \int_{\R_+^2} \left(|\nabla \tilde{v}_j|^2 + |\nabla \hat{v}_j|^2 + |\nabla \hat{v}_{-j}|^2 \right)\, dx < \infty.
\]
By Lemma \ref{lem:separated_supports}, integration by parts yields
\[
\lim_{j \to \infty} \int_{\R_+^2} \nabla \tilde{v}_j \cdot \nabla \hat{v}_{\pm j} \, dx \stackrel{\eqref{egalite}}{=}-\lim_{j \to \infty} 
\int_{\R} \Lambda(\tilde{m}_1^j - k) (\hat{m}_1^{\pm j}-k) \, dx_1\stackrel{ \eqref{eqn:H^{1/2}-inner_product}}{=} 0
\]
and
\[
\lim_{j \to \infty} \int_{\R_+^2} \nabla \hat{v}_j \cdot \nabla \hat{v}_{-j} \, dx = 0.
\]
Therefore, in view of \eqref{eqn:limit_v_j}, we obtain
\[
\lim_{j \to \infty} \int_{\R_+^2} \left(|\nabla \tilde{v}_j|^2 + |\nabla \hat{v}_j|^2 + |\nabla \hat{v}_{-j}|^2 - |\nabla v_j|^2\right) \, dx = 0.
\]
Now \eqref{eqn:limiting_energy} is proved.

\paragraph{Step 6: conclusion.}
We conclude from \eqref{eqn:L^infty_estimate} and
\eqref{eqn:degree} that $\deg(\tilde{m}^j) = \tilde{d}$
whenever $j$ is sufficiently large. Then by the symmetry, we have
$\deg(\hat{m}^{\pm j}) = \frac{1}{2} (d - \tilde{d})$.
Because of \eqref{eqn:limiting_energy}, we have
\[
\E_h(|\tilde{d}|) + 2\E_h\left(\frac{1}{2}|d - \tilde{d}|\right) \le \E_h(d).
\]
It is clear that $\E_h(\delta) > 0$ whenever $\delta \not= 0$.
Therefore, we can draw the following conclusions from the
above inequality. First, we have already seen that $\tilde{d} \not= 0$.
Second, we conclude that $\tilde{d} \le d$. (Otherwise, Proposition
\ref{prop:monotonicity} would imply that $\E_h(|\tilde{d}|) \ge \E_h(d)$,
which is inconsistent with the inequality.) Third, we conclude that
$\tilde{d} \ge 0$. (Otherwise, set $d_1 = \frac{1}{2}(d - \tilde{d})$
and choose the largest number $d_2 \le \frac{1}{2}(d - \tilde{d})$
such that Proposition \ref{prop:subadditivity} applies to
$d_1$ and $d_2$. Then $d_1 + d_2 \ge d$ and hence
$\E_h(d) \le \E_h(d_1) + \E_h(d_2) \le 2\E_h(\frac{1}{2}|d - \tilde{d}|)$,
contradicting the inequality again.) So we have $0 < \tilde{d} \le d$.

If $h > 1$ or $\alpha = \frac{\pi}{2}$, it is readily seen that
the hypothesis of the theorem excludes all possibilities except
$\tilde{d} = d$. If $h < 1$ and $\alpha \in (0, \frac{\pi}{2})$,
we note that the assumption $d \in \N - \alpha/\pi$
implies that
\[
\lim_{x_1 \to \pm \infty} m_2^j(x_1) = \mp \sin \alpha
\]
for every $j \in \N$. Due to the construction, $\hat{m}^j$ agrees with $m^j$ in $[2j, \infty)$ for $j > 0$
and in $(-\infty, -2j]$ for $j < 0$, respectively; therefore,
$\lim_{x_1 \to \pm \infty} \hat{m}_2^{\pm j}(x_1) = \mp \sin \alpha$
as well, and it follows that $\frac{1}{2}(d - \tilde{d}) \in \Z + \{0, -\alpha/\pi\}$.
Thus in this case as well, the hypothesis of the theorem
excludes all possibilities except $\tilde{d} = d$.
\end{proof}

\section{Proofs of the main results} \label{sect:proofs}

\subsection{Proof of Theorem \ref{thm:existence_h>1}}

For the proof of Theorem \ref{thm:existence_h>1}, it now
suffices to show that the strict inequalities required for Theorem~\ref{thm:existence} are satisfied
in the relevant situation.

\begin{theorem} \label{thm:strict_subadditivity_h>1}
Suppose that $h > 1$ and $d_1, d_2 \in \N$ are such that $E_h$
attains its infima in $\A_h(d_1)$ and in $\A_h(d_2)$.
Then $\E_h(d_1 + d_2) < \E_h(d_1) + \E_h(d_2)$.
\end{theorem}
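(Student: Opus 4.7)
My plan is to construct, for large $r$, a competitor $m_r \in \A_h(d_1 + d_2)$ obtained by concatenating far-apart translates of the two given minimizers, and to exploit the attractive interaction through the nonlocal term to produce a strictly negative gain of order $1/r^2$ that overwhelms the exponentially small cost of gluing. The guiding idea is that for $h > 1$ we have $k = 1$ and $m_1 - 1 \leq 0$ pointwise, so any two admissible profiles always attract each other through the $\dot{H}^{1/2}$-inner product, in the spirit of Lemma \ref{lem:attraction}.

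Concretely, I would pick minimizers $m^i \in \A_h(d_i)$ ($i = 1, 2$), which by Lemma \ref{lem:symmetry} may be assumed symmetric. Theorem \ref{thm:exponential_decay} together with the symmetry then yields constants $a, c, C > 0$ and $\beta \in (1, 2)$ such that, with $\phi_i$ a lift of $m^i$,
\[
|\phi_i|, |\phi_i'|, |\phi_i''| \leq e^{c(a - |x_1|)} \quad \text{and} \quad |\Lambda(m^i_1 - 1)(x_1)| \leq \frac{C}{|x_1 - a|^\beta} \quad \text{for } |x_1| \geq a.
\]
Feeding these bounds as $\omega, \sigma, \tau$ into Proposition \ref{prop:localisation}, for each $R \geq a$ I obtain $\tilde m^i \in \A_h(d_i)$ equal to $m^i$ on $[-R, R]$ and to $(1, 0)$ outside $[-2R, 2R]$, with $|\tilde m^i_1 - 1| \leq |m^i_1 - 1|$ and
\[
E_h(\tilde m^i) \leq E_h(m^i) + C_1 e^{-cR/2},
\]
since all error terms in Proposition \ref{prop:localisation} are exponentially small in $R$. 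For $r > 4R$ I then glue by setting $m_r = \tilde m^1(\blank + r)$ on $(-\infty, 0]$ and $m_r = \tilde m^2(\blank - r)$ on $(0, \infty)$; continuity is automatic because both pieces equal $(1, 0)$ near $0$, and a direct lifting check gives $m_r \in \A_h(d_1 + d_2)$.

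Now set $F := 1 - \tilde m^1_1(\blank + r)$ and $G := 1 - \tilde m^2_1(\blank - r)$; both are nonnegative and supported in the disjoint intervals $[-r - 2R, -r + 2R]$ and $[r - 2R, r + 2R]$. The exchange and anisotropy parts of $E_h(m_r)$ split additively. By linearity of the boundary-value problem \eqref{eqn:harmonic}--\eqref{eqn:boundary_data}, $U(m_r)(x_1, x_2) = U(\tilde m^1)(x_1 + r, x_2) + U(\tilde m^2)(x_1 - r, x_2)$, so the stray-field energy decomposes into the two individual pieces plus the cross term $\langle F, G \rangle_{\dot{H}^{1/2}(\R)}$. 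Since $FG \equiv 0$ on $\R$, identity \eqref{eqn:H^{1/2}-inner_product} reduces the cross term to
\[
\langle F, G \rangle_{\dot{H}^{1/2}(\R)} = -\frac{1}{\pi}\int_{\supp F}\int_{\supp G} \frac{F(s) G(t)}{(s - t)^2}\, ds\, dt \leq -\frac{\|F\|_{L^1(\R)}\,\|G\|_{L^1(\R)}}{\pi(2r + 4R)^2},
\]
using $|s - t| \leq 2r + 4R$ on the integration region. Because $\tilde m^i = m^i$ on $[-R, R]$, the $L^1$-norms of $F$ and $G$ admit a positive lower bound independent of $R$ once $R$ is large, so the cross term is at most $-c_0/r^2$ for some $c_0 > 0$ depending only on $m^1$ and $m^2$.

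Assembling everything,
\[
\E_h(d_1 + d_2) \leq E_h(m_r) \leq \E_h(d_1) + \E_h(d_2) - \frac{c_0}{r^2} + 2 C_1 e^{-cR/2}.
\]
Choosing, say, $R = r/8$ and $r$ large makes the $-c_0/r^2$ gain dominate the exponentially small truncation error, yielding the strict inequality. The main obstacle I expect is pinning down the attractive cross term precisely enough to beat the localization cost; the sharp exponential decay of Theorem \ref{thm:exponential_decay} is what makes this possible, since only this ensures that the gluing error decays faster than any polynomial rate in $r$.
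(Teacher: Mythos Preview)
Your proposal is correct and follows essentially the same approach as the paper's proof: localize each minimizer using Proposition~\ref{prop:localisation} with exponentially small error (via Theorem~\ref{thm:exponential_decay}), glue translated copies, and show the attractive cross term of order $1/r^2$ dominates. The only cosmetic differences are that the paper works with a single parameter (translating by $6R$ and truncating at $2R$) rather than your two parameters $r, R$, and that the paper invokes Lemma~\ref{lem:attraction} and Lemma~\ref{lem:deflections_cost_energy} where you compute the cross term and the $L^1$ lower bound directly.
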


\begin{proof}
Let $\epsilon > 0$.
Suppose that $m^1 \in \A_h(d_1)$ and $m^2 \in \A_h(d_2)$
are such that
\[
E_h(m^1) = \E_h(d_1) \quad \text{and} \quad E_h(m^2) = \E_h(d_2).
\]
Then by Theorem \ref{thm:exponential_decay}
and Proposition \ref{prop:localisation},
there exist $a, b, c > 0$ such that for any $R > a$, we can construct
$\tilde{m}^1 \in \A_h(d_1)$ and $\tilde{m}^2 \in \A_h(d_2)$
with $\tilde{m}_1^1 \equiv 1$ and $\tilde{m}_1^2 \equiv 1$
outside of $(-2R, 2R)$ and such that
\[
E_h(\tilde{m}^1) \le \E_h(d_1) + ce^{-bR} \quad \text{and} \quad E_h(m^2) \le \E_h(d_2) + ce^{-bR}.
\]
Since $d_1 \not= 0$ and $d_2 \not= 0$, {by Lemma \ref{lem:deflections_cost_energy},} there exists a universal
constant $C_1 > 0$ such that
\[
\|1 - \tilde{m}_1^1\|_{L^1(\R)} \ge C_1 \quad \text{and} \quad \|1 - \tilde{m}_1^2\|_{L^1(\R)} \ge C_1.
\]
Suppose that $\tilde{m}^1 = (\cos \tilde{\phi}_1, \sin \tilde{\phi}_1)$ and
$\tilde{m}^2 = (\cos \tilde{\phi}_2, \sin \tilde{\phi}_2)$
with $\tilde{\phi}_1(x_1) = 0$ for $x_1 \ge 2R$ and $\tilde{\phi}_2(x_1) = 0$
for $x_1 \le -2R$. Then we define
\[
\phi(x_1) = \begin{cases}
\tilde{\phi}_1(x_1 + 6R) & \text{if $x_1 < 0$}, \\
\tilde{\phi}_2(x_1 - 6R) & \text{if $x_1 \ge 0$}.
\end{cases}
\]
Let $m = (\cos \phi, \sin \phi)$. Then $\deg(m) = d_1 + d_2$, and
we have
\[
\int_{-\infty}^\infty \left(|m'|^2 + 2W(m)\right) \, dx = \int_{-\infty}^\infty \left(|(\tilde{m}^1)'|^2 + |(\tilde{m}^2)'|^2 + 2W(\tilde{m}^1) + 2W(\tilde{m}^2)\right) \, dx.
\]
Let $\tilde{u}_1 = U(\tilde{m}^1)$ and  $\tilde{u}_2 = U(\tilde{m}^2)$ defined as in \eqref{def:Um}.
Furthermore, let $u = U(m)$. Then, by the uniqueness of $U(m)$, we have $u(x)=\tilde{u}_1(x_1 + 6R, x_2)+\tilde{u}_2(x_1 - 6R, x_2)$ for $x\in \R^2_+$.
Then we also have
\[
\int_{\R_+^2} |\nabla u|^2 \, dx = \int_{\R_+^2} \left(|\nabla \tilde{u}_1|^2 + |\nabla \tilde{u}_2|^2\right) \, dx + 2 \int_{\R_+^2} \nabla \tilde{u}_1(x_1 + 6R, x_2) \cdot \nabla \tilde{u}_2(x_1 - 6R, x_2) \, dx.
\]
By Lemma \ref{lem:attraction}, we have
\[
\begin{split}
\int_{\R_+^2} \nabla \tilde{u}_1(x_1 + 6R, x_2) \cdot \nabla \tilde{u}_2(x_1 - 6R, x_2) \, dx &\stackrel{\eqref{eqn:H^{1/2}-inner_product}}{\le} -\frac{1}{256\pi R^2} \|1 - \tilde{m}_1^1\|_{L^1(\R)} \|1 - \tilde{m}_1^2\|_{L^1(\R)} \\
&\le - \frac{C_1^2}{256 \pi R^2}.
\end{split}
\]
Hence
\[
E_h(m) \le E_h(\tilde{m}^1) + E_h(\tilde{m}^2) - \frac{C_1^2}{256\pi R^2} \le \E_h(d_1) + \E_h(d_2) + 2ce^{-bR} - \frac{C_1^2}{256\pi R^2}.
\]
Therefore,
\[
\E_h(d_1 + d_2) \le \E_h(d_1) + \E_h(d_2) + 2ce^{-bR} - \frac{C_1^2}{16 \pi R^2}.
\]
For $R$ sufficiently large, this yields the desired inequality.
\end{proof}

\begin{proof}[Proof of Theorem \ref{thm:existence_h>1}]
It suffices to consider $d \in \N$; indeed, for $d=0$, a constant configuration will minimise
$E_h$ in $\A_h(0)$ and the case $d \in -\N$ is reduced to
$d \in \N$ by a change of orientation.

We prove the statement by induction. For $d = 1$,
it follows from  the symmetrisation arguments of Melcher \cite{Me1} and
Chermisi-Muratov \cite{Chermisi-Muratov:13} that a minimiser exists.
Now suppose that minimisers exist in $\A_h(d')$ for any
$d' = 1, \ldots, d - 1$. Then Theorem \ref{thm:strict_subadditivity_h>1}
implies that
\[
\E_h(d) < \E_h(d') + \E_h(d - d')
\]
for $d' = 1, \ldots, d - 1$. It follows that the hypothesis of Theorem \ref{thm:existence}
is satisfied and that $\E_h$ is attained in $\A_h(d)$.
\end{proof}

\subsection{Proof of Theorem \ref{thm:existence_h<1}}

Similarly to the previous section, the following strict inequality is the key here.

\begin{theorem} \label{thm:strict_subadditivity_h<1}
There exists a number $H \in [0, 1)$ such that whenever $h=\cos \alpha \in [H, 1)$,
\[
\E_h(2 - \alpha/\pi) < 2\E_h(1 - \alpha/\pi) + \E_h(\alpha/\pi).
\]
\end{theorem}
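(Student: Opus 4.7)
\medskip

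\noindent\textbf{Proof plan.}
The plan is to imitate the construction of Theorem \ref{thm:strict_subadditivity_h>1}, but with a three-wall configuration: two ``large'' walls (each of winding $1-\alpha/\pi$) flanking one ``small'' wall (of winding $\alpha/\pi$) at the origin. Since $2(1-\alpha/\pi)+\alpha/\pi = 2-\alpha/\pi$, this is topologically compatible (after shifting phases by multiples of $2\pi$ between walls). Let $m^S\in \A_h(\alpha/\pi)$ and $m^L\in \A_h(1-\alpha/\pi)$ be the symmetric minimisers provided by Theorems \ref{thm:decay_h<1} and \ref{thm:alpha_large}. The key structural facts I will use are: $\|m^S_1 - k\|_{L^1(\R)} \le C\alpha$ (by Theorem \ref{thm:decay_h<1}), while $\|k - m^L_1\|_{L^1(\R)} \ge C_0 > 0$ independently of $\alpha$ close to $1$, since $m^L_1$ attains the value $-1$ (Lemma \ref{lem:passages}) and then $|k - m^L_1|\ge \frac{1}{2}$ on a universal interval by Lemma \ref{lem:deflections_cost_energy}.

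Given $R>0$ large, apply Proposition \ref{prop:localisation} together with the polynomial decay estimates of Theorems \ref{thm:decay_h<1} and \ref{thm:alpha_large} to produce $\tilde m^S$ and $\tilde m^L$ with $m_1-k$ supported in $[-R,R]$ and
\[
E_h(\tilde m^S)\le \E_h(\alpha/\pi)+\eta(R), \qquad E_h(\tilde m^L)\le \E_h(1-\alpha/\pi)+\eta(R),
\]
where $\eta(R)\le C(\alpha)/R^3$ (the exponents in Proposition \ref{prop:localisation} applied with $\omega,\sigma,\tau$ of order $1/x^2, 1/x^3, 1/x^2$ give an $\alpha$-dependent constant multiplied by $R^{-3}$). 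Then, for $T\ge 3R$, define $m\in \A_h(2-\alpha/\pi)$ by placing $\tilde m^L(\blank+T)$, $\tilde m^S$, and $\tilde m^L(\blank-T)$ side by side with appropriate $2\pi\Z$-shifts in the phase so that the total winding is $2-\alpha/\pi$.

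Because the three translates of $m_1-k$ have pairwise disjoint supports, the local part $\int(|m'|^2+2W(m))\,dx_1$ is exactly the sum of the three local parts of the pieces, and by uniqueness of $U$, $V(m)=V(\tilde m^L(\blank+T))+V(\tilde m^S)+V(\tilde m^L(\blank-T))$. The stray field energy thus decomposes into three self-terms plus three cross terms of the form $\scp{f_i}{f_j}_{\dot H^{1/2}}$. The large-large interaction is between two nonpositive functions (mass $\ge C_0$) at distance $\sim 2T$; by (the obvious rescaled version of) Lemma \ref{lem:attraction} applied to $k-\tilde m^L_1$, this cross term is $\le -c_1/T^2$ for some universal $c_1>0$. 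Each small-large interaction pairs a nonnegative function of $L^1$-mass $\le C\alpha$ with a nonpositive function of bounded $L^1$-mass at distance $\sim T$; the analogous computation (the sign of the integrand in \eqref{eqn:H^{1/2}-inner_product} is reversed) gives a repulsive contribution $\le c_2\alpha/T^2$ each. Summing,
\[
E_h(m)\le \E_h(\alpha/\pi)+2\E_h(1-\alpha/\pi)+\frac{2c_2\alpha - c_1}{T^2}+3\eta(R).
\]

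Choose $\alpha$ small enough (i.e.\ $H$ close enough to $1$) so that $2c_2\alpha\le c_1/2$, then choose $T=R$ sufficiently large so that $3\eta(R)\le C(\alpha)/R^3 < c_1/(4T^2)$. The resulting $m\in\A_h(2-\alpha/\pi)$ satisfies $E_h(m)<\E_h(\alpha/\pi)+2\E_h(1-\alpha/\pi)$, which gives the claim. The main obstacle will be the quantitative bookkeeping of the localisation cost from Proposition \ref{prop:localisation}: the constants coming from Theorem \ref{thm:alpha_large} are not uniform in $\alpha$, but since we fix $\alpha$ first (to beat the repulsion) and then send $R\to\infty$, this $\alpha$-dependence is harmless for the existence statement asserted by Theorem \ref{thm:existence_h<1}.
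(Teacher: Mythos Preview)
Your proposal is correct and follows essentially the same route as the paper: the same three-wall (large--small--large) construction, localisation via Proposition~\ref{prop:localisation} using the polynomial decay of Theorems~\ref{thm:decay_h<1} and~\ref{thm:alpha_large} with $O(R^{-3})$ cost, and the same balance of $\dot H^{1/2}$ cross-terms (large--large attraction $\sim -1/T^2$ versus small--large repulsion $\sim \alpha/T^2$), followed by fixing $\alpha$ small and then sending $R\to\infty$. Two minor remarks: your appeal to Lemma~\ref{lem:deflections_cost_energy} for the uniform $L^1$ lower bound on $k-\tilde m^L_1$ tacitly requires the uniform energy bound of Lemma~\ref{lem:energy_estimate_for_1-alpha/pi} (which the paper invokes explicitly via a $C^{0,1/2}$ argument), and the final clause ``choose $T=R$'' contradicts your earlier constraint $T\ge 3R$---you mean $T$ a fixed multiple of $R$ (e.g.\ $T=3R$, the paper takes $4R$) so that the three supports stay disjoint.
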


\begin{proof}
Let $m^\sharp \in \A_h(\alpha/\pi)$ be a minimiser as in Theorem
\ref{thm:decay_h<1} and let $m^\flat \in \A_h(1 - \alpha/\pi)$
be a minimiser as in Theorem \ref{thm:alpha_large}. Set
$f^\sharp = m_1^\sharp - h$ and $f^\flat = m_1^\flat - h$.
Then $f^\sharp \ge 0$ and $f^\flat \le 0$. We have
\[
\|f^\sharp\|_{L^1(\R)} \le C_1 \alpha
\]
for a universal constant $C_1$ by Theorem \ref{thm:decay_h<1}.
Furthermore, by the decay established in this theorem, we may
apply Proposition \ref{prop:localisation} to $m^\sharp$
with three functions $\omega, \sigma, \tau$ that satisfy
\[
\omega(x_1) {\leq } \frac{C_2}{x_1^2}, \quad \sigma(x_1) \le \frac{C_2}{|x_1|^3}, \quad \tau(x_1) \le \frac{C_2}{x_1^2},
\]
for $x_1 \ge c/\alpha$, where $c, C_2 > 0$ are universal constants.
Hence for any $R > 2c/\alpha$ there exists a constant $C_3$
(possibly depending on $h$, but on nothing else) such that
there is a map $\tilde{m}^\sharp \in \A_h(\alpha/\pi)$ with
$\tilde{m}_1^\sharp = \cos \alpha$ outside of $[-R, R]$ and
\[
E_h(\tilde{m}^\sharp) \le \E_h(\alpha/\pi) + \frac{C_3}{R^3}.
\]
Furthermore, the function $\tilde{f}^\sharp = \tilde{m}_1^\sharp - h\geq 0$ still
satisfies
\begin{equation} \label{eqn:small_L^1}
\|\tilde{f}^\sharp\|_{L^1(\R)} \le C_1 \alpha.
\end{equation}
Similarly, there exists a map $\tilde{m}^\flat \in \A_h(1 - \alpha/\pi)$
such that $\tilde{m}_1^\flat = h$ outside of $[-R, R]$ and
\[
E_h(\tilde{m}^\flat) \le \E_h(1 - \alpha/\pi) + \frac{C_4}{R^3},
\]
where $C_4 = C_4(h)$.

Since $m^\flat \in \A_h(1 - \alpha/\pi)$ is symmetric,
we have $m^\flat(0) = -1$ (as a complex number on $\Ss^1$).
By Lemma \ref{lem:energy_estimate_for_1-alpha/pi}, there
exists a universal constant $C_5$ such that $E_h(m^\flat) \le C_5$.
Thus we obtain a universal bound for $m_1^\flat - h$ in
$H^{1}(\R)$ and therefore in $C^{0, 1/2}([-1, 1])$.
The same is true for $\tilde{m}^\flat_1 - h$ whenever $\alpha \le c$ (as $R>2$),
because in this case, the two functions agree in $[-1, 1]$.
It follows that for $\tilde{f}^\flat = \tilde{m}_1^\flat - h\leq 0$, we have
\begin{equation} \label{eqn:large_L^1}
\|\tilde{f}^\flat\|_{L^1(\R)} \ge C_6
\end{equation}
for a universal constant $C_6 > 0$.

Define 
\[
m(x_1) = \begin{cases}
\tilde{m}^\flat(x_1 + 4R) & \text{if $x_1 < -2R$}, \\
\tilde{m}^\sharp(x_1) & \text{if $|x_1| \le 2R$}, \\
\tilde{m}^\flat(x_1 - 4R) & \text{if $x_1 > 2R$}.
\end{cases}
\]
Then $m \in \A_h(2 - \alpha/\pi)$ and the arguments from the proof of Theorem \ref{thm:strict_subadditivity_h>1} (used
to compute the stray field) yield:
\begin{multline*}
E_h(m) \le \E_h(\alpha/\pi) + 2\E_h(1 - \alpha/\pi) + \frac{C_3 + 2C_4}{R^3} \\
+ \langle \tilde{f}^\flat(\blank + 4R), \tilde{f}^\flat(\blank - 4R)\rangle_{\dot{H}^{1/2}(\R)} + \langle \tilde{f}^\flat(\blank + 4R), \tilde{f}^\sharp\rangle_{\dot{H}^{1/2}(\R)} + \langle \tilde{f}^\sharp, \tilde{f}^\flat(\blank - 4R)\rangle_{\dot{H}^{1/2}(\R)}.
\end{multline*}
By Lemma \ref{lem:attraction} and \eqref{eqn:large_L^1}, we have
a universal constant $C_7$ such that
\[
\langle \tilde{f}^\flat(\blank + 4R), \tilde{f}^\flat(\blank - 4R)\rangle_{\dot{H}^{1/2}(\R)} \le -\frac{C_7}{R^2}.
\]
On the other hand, using \eqref{eqn:small_L^1}, we obtain
another universal constant $C_8$ with 
\[
\langle \tilde{f}^\flat(\blank + 4R), \tilde{f}^\sharp\rangle_{\dot{H}^{1/2}(\R)} + \langle \tilde{f}^\sharp, \tilde{f}^\flat(\blank - 4R)\rangle_{\dot{H}^{1/2}(\R)} \le \frac{C_8 \alpha}{R^2}.
\]
Hence
\[
E_h(m) \le \E_h(\alpha/\pi) + 2\E_h(1 - \alpha/\pi) + \frac{C_3 + 2C_4}{R^3} + \frac{C_8 \alpha - C_7}{R^2}.
\]
If we choose $\alpha$ small enough (i.e., $H$ sufficiently close to $1$)
and $R$ large enough, then
\[
\E_h(2 - \alpha/\pi) \le E_h(m) < \E_h(\alpha/\pi) + 2\E_h(1 - \alpha/\pi),
\]
as required.
\end{proof}

\begin{proof}[Proof of Theorem \ref{thm:existence_h<1}]
This is now a
direct consequence of Theorem \ref{thm:strict_subadditivity_h<1}
and Theorem \ref{thm:existence}.
\end{proof}

\subsection{Proof of Theorem \ref{thm:non-existence}}

The statement of Theorem \ref{thm:non-existence} is an
immediate consequence of Proposition \ref{prop:subadditivity}
and the following result.

\begin{lemma}
If $h < 1$, then for any $m \in \A_h(1)$,
\[
E_h(m) > \E_h(\alpha/\pi) + \E_h(1 - \alpha/\pi).
\]
\end{lemma}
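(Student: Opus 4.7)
The strategy is to decompose $m \in \A_h(1)$ into two pieces $m^{(1)} \in \A_h(\alpha/\pi)$ and $m^{(2)} \in \A_h(1-\alpha/\pi)$ so that the exchange and anisotropy energies split exactly, while the stray field energy splits with \emph{strict} loss by the repulsion estimate of Lemma~\ref{lem:repulsion}. Specifically, I would set $f = m_1 - h$, $f_+ = \max(f, 0)$, $f_- = \min(f, 0)$, and define $m_1^{(1)} = f_+ + h = \max(m_1, h)$ and $m_1^{(2)} = f_- + h = \min(m_1, h)$. A lifting $\phi$ of $m$ satisfies $\phi(+\infty) - \phi(-\infty) = 2\pi$, so by the intermediate value theorem $m_1$ attains both $1$ and $-1$; hence $f_+ \not\equiv 0 \not\equiv f_-$ and $f$ changes sign.

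For the topological step I would take $m_2^{(i)} = \sigma_i \sqrt{1 - (m_1^{(i)})^2}$ with $\sigma_i \in \{\pm 1\}$ locally constant, allowed to flip only where $(m_1^{(i)})^2 = 1$. Since $m_1^{(1)} \in [h, 1]$, the lifting $\phi_1$ takes values in $\bigcup_{n \in \Z} [-\alpha + 2\pi n, \alpha + 2\pi n]$ and, by continuity, remains in a single component; hence $\deg(m^{(1)}) \in \{0, \pm \alpha/\pi\}$, and the value $\alpha/\pi$ is realised by any odd number of flips of $\sigma_1$ (at least one such point is available since $m_1 = 1$ somewhere). Symmetrically, $m_1^{(2)} \in [-1, h]$ confines $\phi_2$ to one component of $\bigcup_{n \in \Z} [\alpha + 2\pi n, 2\pi - \alpha + 2\pi n]$, so $\deg(m^{(2)}) \in \{0, \pm(1 - \alpha/\pi)\}$, and winding $1 - \alpha/\pi$ is attainable because $m_1 = -1$ somewhere.

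To compare energies, I would use $|(m^{(i)})'|^2 = ((m_1^{(i)})')^2 / (1 - (m_1^{(i)})^2)$ to get $|(m^{(1)})'|^2 = |m'|^2 \mathbb{1}_{\{m_1 > h\}}$ and $|(m^{(2)})'|^2 = |m'|^2 \mathbb{1}_{\{m_1 < h\}}$ a.e.\ (on $\{m_1 = h\}$, Stampacchia's lemma gives $m_1' = 0$ a.e.), so the exchange integrals sum to $\int_\R |m'|^2 \, dx_1$. The identity $W(m_1^{(1)}) + W(m_1^{(2)}) = \tfrac{1}{2}(f_+^2 + f_-^2) = \tfrac{1}{2} f^2 = W(m_1)$ follows from $f_+ f_- \equiv 0$. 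Finally, Lemma~\ref{lem:repulsion} yields $\|f_+\|_{\dot H^{1/2}}^2 + \|f_-\|_{\dot H^{1/2}}^2 < \|f\|_{\dot H^{1/2}}^2$ strictly, since $f$ changes sign. Summing, $E_h(m^{(1)}) + E_h(m^{(2)}) < E_h(m)$, and the desired bound follows from $E_h(m^{(i)}) \ge \E_h(d_i)$.

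The main difficulty is the topological book-keeping for $m^{(i)}$: one must confirm that each lifting $\phi_i$ stays in a single component of its allowed range (a consequence of continuity combined with the constraint $\cos \phi_i \gtrless h$) and that the sign choices for $m_2^{(i)}$ at the (possibly infinitely many) zeros of $1 - (m_1^{(i)})^2$ can indeed be arranged so as to realise the prescribed winding. Once this is set up, the three-term energy splitting is essentially routine.
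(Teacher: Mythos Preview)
Your proposal is correct and follows essentially the same route as the paper: the paper also sets $m_1^+ = \max\{m_1,h\}$, $m_1^- = \min\{m_1,h\}$, observes that the exchange and anisotropy energies split exactly, and invokes Lemma~\ref{lem:repulsion} for the strict inequality in the stray-field term. The only difference is cosmetic: the paper delegates the topological step (choosing the signs of $m_2^\pm$ to land in $\A_h(\alpha/\pi)$ and $\A_h(1-\alpha/\pi)$) to Lemma~\ref{lem:degree}, whereas you spell it out by hand.
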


\begin{proof}
Define $m_1^+ = \max\{m_1, k\}$ and $m_1^- = \min\{m_1, k\}$.
Then clearly there exist $a_+, a_- \in \R$ such that
$m_1^+(a_+) = 1$ and $m_1^-(a_-) = - 1$.
Thus by Lemma \ref{lem:degree}, there exist $m_2^\pm \colon \R \to [-1, 1]$
such that $m^+ = (m_1^+, m_2^+) \in \A_h(\alpha/\pi)$
and $m^- = (m_1^-, m_2^-)$ in $\A_h(1 - \alpha/\pi)$.
Moreover,
\[
\int_{-\infty}^\infty \left(\frac{1}{2} |m'|^2 + W(m)\right) \, dx_1 = \int_{-\infty}^\infty \left(\frac{1}{2} |(m^+)'|^2 + W(m^+) + \frac{1}{2} |(m^-)'|^2 + W(m^-)\right) \, dx_1.
\]
Hence Lemma \ref{lem:repulsion} implies that
$E_h(m) > E_h(m^+) + E_h(m^-) \ge \E_h(1 - \alpha/\pi) + \E_h(\alpha/\pi)$.
\end{proof}

\section*{Appendix. Nonexistence of critical points in a local model}

In order to highlight the role of the nonlocal term for the existence of
minimisers (or even critical points) carrying a winding number $d\geq 1$
for our variational problem, we discuss the corresponding model
without the nonlocal term. In this situation, we have a well-known
nonexistence result.

For $h\geq 0$, $h\neq 1$, we consider the following
Allen-Cahn type energy defined for $\phi:\R\to \R$ (representing the angle of an $\Ss^1$-valued transition layer $m=(\cos \phi, \sin \phi)$):
$$F_h(\phi)=\frac{1}{2} \int_{-\infty}^\infty \left((\phi')^2 + 2W(\phi)\right) \, dt.$$
Here we use the same potential $W$ as in \eqref{aniso}. That is,
$$
W(\phi) = \begin{cases}
\frac 1 2 (\cos \phi-h)^2 & \text{if } h<1,\\
\frac 1 2 (2h-1-\cos \phi)(1-\cos \phi) & \text{if } h>1.
\end{cases} 
$$
The Euler-Lagrange equation associated to a critical point $\phi$ of $F_h$ is now given by
\be
\label{Allen}
\phi''=W'(\phi)\quad \text{in } \R.\ee
Denote again $$\alpha=\arccos \min\{h,1\} \in [0, \frac \pi 2].$$ We impose the following boundary condition at infinity:
$$\phi(\pm \infty):=\lim_{t \to \pm \infty} \phi(t) \in 2\pi\Z+\{-\alpha, \alpha\}.$$
Then the winding number of $m=(\cos \phi, \sin \phi):\R\to \Ss^1$ is given by
$$\deg(m)=\frac{\phi(+\infty)-\phi(-\infty)}{2\pi}\in \Z + \left\{0, \pm \frac{\alpha}{\pi}\right\}.$$

We have the following nonexistence result. Although this is a
well-known fact, we give a proof for completeness.

\begin{theorem}
Suppose that $\phi:\R\to \R$ is a non-constant
solution of equation \eqref{Allen} with boundary condition $\phi(\pm \infty) \in 2\pi\Z+\{-\alpha, \alpha\}$.
Let $d$ be the winding number corresponding to $\phi$.
If $h > 1$, then one has $d = \pm 1$. If $h < 1$, then
one has $d = \pm \alpha/\pi$ or $d = \pm (1 - \alpha/\pi)$.
\end{theorem}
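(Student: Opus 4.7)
The plan is to exploit the first-integral structure of the autonomous ODE $\phi''=W'(\phi)$ to reduce the problem to a phase-plane analysis, and then show that no solution can cross a zero of $W$ in finite time.

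First, I would multiply \eqref{Allen} by $\phi'$ to obtain the conservation law $\frac{1}{2}(\phi')^2-W(\phi)=E$ for some constant $E\in\R$. To pin down $E$, note that by hypothesis $\phi(\pm\infty)$ exist and lie in the zero set $Z:=2\pi\Z+\{-\alpha,\alpha\}$ of $W$. By the mean value theorem applied on intervals $[n,n+1]$, there exists a sequence $\xi_n\to+\infty$ with $\phi'(\xi_n)=\phi(n+1)-\phi(n)\to 0$. Evaluating the conservation law along this sequence, and using that $W\circ\phi$ is continuous with $W(\phi(+\infty))=0$, yields $E=0$. Hence
\begin{equation*}
(\phi')^2=2W(\phi)\quad\text{in }\R.
\end{equation*}

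Next, I would use ODE uniqueness to show that $\phi$ cannot attain a value in $Z$ at any finite point. Indeed, if $\phi(x_0)=z\in Z$ for some $x_0\in\R$, then $W(z)=0$ forces $\phi'(x_0)=0$, and since $W'(z)=0$ as well (each zero of $W$ is also a critical point, because $W\geq 0$), the constant function $\psi\equiv z$ solves \eqref{Allen} with the same initial data $(\psi(x_0),\psi'(x_0))=(z,0)$. By Picard--Lindel\"of uniqueness, $\phi\equiv z$, contradicting the non-constancy assumption. Consequently $\phi(\R)\subset\R\setminus Z$, so $(\phi')^2=2W(\phi)>0$ everywhere, which forces $\phi'$ to have constant sign; in particular $\phi$ is strictly monotone on all of $\R$.

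Finally, I would conclude by computing the possible windings. Since $\phi$ is strictly monotone on $\R$ and its limits $\phi(\pm\infty)$ are consecutive elements of $Z$, the difference $\phi(+\infty)-\phi(-\infty)$ equals the gap between two adjacent zeros of $W$. In the case $h>1$ one has $\alpha=0$ and $Z=2\pi\Z$, with every consecutive gap equal to $\pm 2\pi$, giving $d=\pm 1$. In the case $h<1$ the set $Z$ consists of $\pm\alpha+2\pi\Z$, so the consecutive gaps are alternately $\pm 2\alpha$ and $\pm(2\pi-2\alpha)$, giving $d=\pm\alpha/\pi$ or $d=\pm(1-\alpha/\pi)$. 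The main technical obstacle is the rigorous justification of $E=0$, which is handled by the mean value sequence argument above; once this is established, the rest follows from elementary phase-plane considerations and ODE uniqueness.
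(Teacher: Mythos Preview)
Your proof is correct and follows essentially the same strategy as the paper: derive the first integral $(\phi')^2=2W(\phi)+\text{const}$, show the constant vanishes using the boundary conditions, and then invoke ODE uniqueness at the equilibria in $Z$ to conclude that the trajectory connects two \emph{consecutive} zeros of $W$. The only notable difference is how the constant is shown to be zero: the paper phrases this in dynamical-systems language (the $\omega$-limit set of a bounded orbit of $X'=V(X)$ must be an equilibrium, forcing $\phi'(\pm\infty)=0$), whereas you extract a sequence $\xi_n\to\infty$ with $\phi'(\xi_n)\to 0$ directly via the mean value theorem. Your route is slightly more elementary and sidesteps the need to argue that $\phi'$ itself has a limit at infinity; otherwise the two arguments coincide.
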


\begin{proof}
First, note that every solution $\phi$ of \eqref{Allen} satisfies
$$(\phi')^2 - 2W(\phi)=q \quad \text{in } \R$$
for some constant $q\in \R$. We want to prove that $q=0$. Indeed, as $\phi$ has finite limits at infinity, the above equation implies that $\phi'(\pm \infty)=\ell_\pm$ for some
$\ell_\pm\in \R$. It is enough to prove that $\ell_\pm=0$. For
this purpose, consider $X=(\phi, \phi')$ and note that $X$
solves the following system of ODEs,
\be
\label{dynam}
X'=V(X),
\ee
generated by the vector field $V(X)=(X_2, W'(X_1))$. Since $t\mapsto X(t)$ stays confined in a compact set of $\R^2$ and has a limit point as $t\to \pm \infty$ 
(by our boundary conditions for the solution $\phi$), this limit point is a critical point of the vector field $V$, i.e., we have
$X'=(0,0)$. This implies that $\ell_\pm=0$, and thus, that $q=0$.
In particular, the trajectory $\{X(t)=(\phi(t), \phi'(t))\}_{t\in \R}$ is included in the zero set of the Hamiltonian
$$H(X_1, X_2)=\frac 1 2 X_2^2-W(X_1), \quad X\in \R^2.$$
We denote $Z^\pm=\set{(X_1, X_2)}{\pm X_2>0, \, H(X_1, X_2)=0}$ and $$Z^0=\set{(X_1, 0)}{H(X_1, 0)=0}=2\pi \Z+\{\pm \alpha\}.$$
It is readily seen that
any connected component of $Z^+$ and $Z^-$ ends at two consecutive points
of $Z^0$ (see Fig.\ \ref{fig:hamiltonian}).
Obviously, any zero of $Z^0$ is a stationary solution of \eqref{dynam}. Therefore, by the uniqueness of solutions to initial value
problems for \eqref{dynam}, the trajectory $\{X(t)=(\phi(t), \phi'(t))\}_t$ begins and ends at two consecutive points in $Z^0$.
That is, in the case $h > 1$, we have winding number $\pm 1$,
and in the case $h < 1$, we have 
$d = \pm \alpha/\pi$ or $d = \pm (1 - \alpha/\pi)$.

\begin{figure}[htbp]
 \centering
 \begin{minipage}{0.4\linewidth}
   \centering
\includegraphics[width=\textwidth]{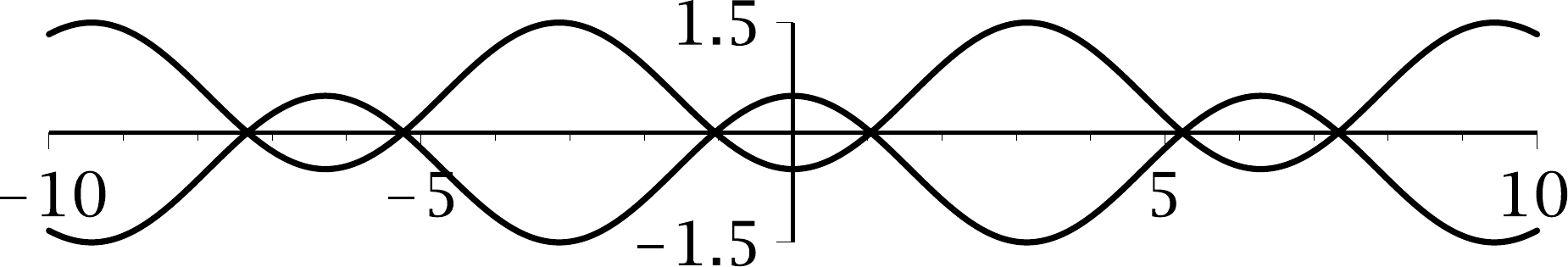}
 \end{minipage}
  \hspace{1cm}
 \begin{minipage}{0.4\linewidth}
  \centering
 \includegraphics[width=\textwidth]{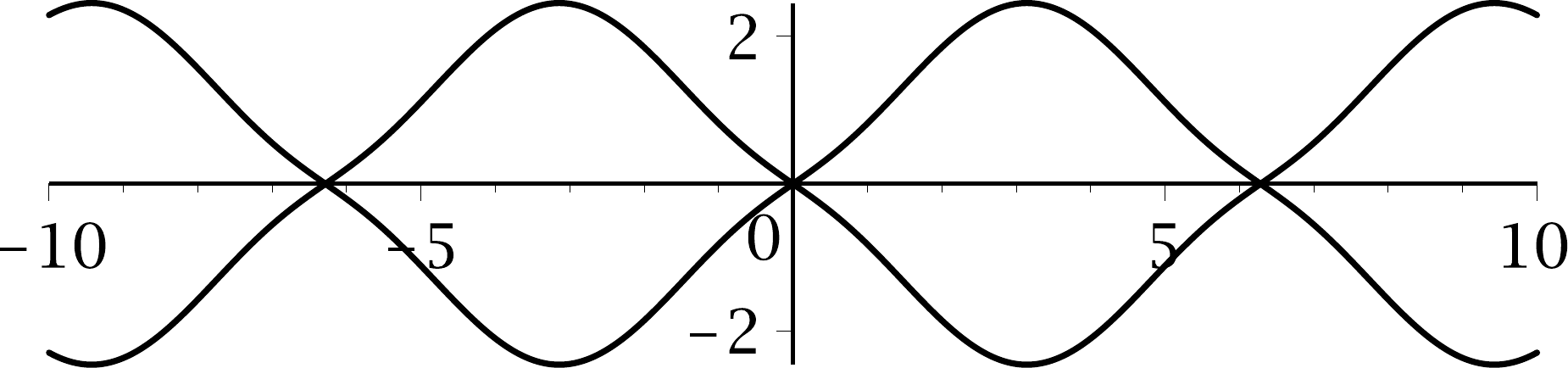}
 \end{minipage}
\caption{The zero set of the Hamiltonian $H$ for $h=\frac 1 2$ (left) and $h=2$ (right). }
  \label{fig:hamiltonian}
\end{figure}

\end{proof}

\bibliographystyle{amsplain}
\bibliography{bib}

\providecommand{\bysame}{\leavevmode\hbox to3em{\hrulefill}\thinspace}
\providecommand{\MR}{\relax\ifhmode\unskip\space\fi MR }
\providecommand{\MRhref}[2]{%
  \href{http://www.ams.org/mathscinet-getitem?mr=#1}{#2}
}
\providecommand{\href}[2]{#2}
\begin{thebibliography}{10}

\bibitem{CMO07}
Antonio Capella, Christof Melcher, and Felix Otto, \emph{Wave-type dynamics in
  ferromagnetic thin films and the motion of {N}\'eel walls}, Nonlinearity
  \textbf{20} (2007), no.~11, 2519--2537.

\bibitem{Chermisi-Muratov:13}
Milena Chermisi and Cyrill~B. Muratov, \emph{One-dimensional {N}\'eel walls
  under applied external fields}, Nonlinearity \textbf{26} (2013), no.~11,
  2935--2950.

\bibitem{Cote-Ignat-Miot}
Rapha{\"e}l C{\^o}te, Radu Ignat, and Evelyne Miot, \emph{A thin-film limit in
  the {L}andau--{L}ifshitz--{G}ilbert equation relevant for the formation of
  {N}\'eel walls}, J. Fixed Point Theory Appl. \textbf{15} (2014), no.~1,
  241--272.

\bibitem{DKO}
Antonio DeSimone, Hans Kn{\"u}pfer, and Felix Otto, \emph{2-d stability of the
  {N}\'eel wall}, Calc. Var. Partial Differential Equations \textbf{27} (2006),
  no.~2, 233--253.

\bibitem{DKMOreduced}
Antonio DeSimone, Robert~V. Kohn, Stefan M{\"u}ller, and Felix Otto, \emph{A
  reduced theory for thin-film micromagnetics}, Comm. Pure Appl. Math.
  \textbf{55} (2002), no.~11, 1408--1460.

\bibitem{DKMO_rep}
\bysame, \emph{Repulsive interaction of {N}\'eel walls, and the internal length
  scale of the cross-tie wall}, Multiscale Model. Simul. \textbf{1} (2003),
  no.~1, 57--104.

\bibitem{DiNezza-Palatucci-Valdinoci:12}
Eleonora Di~Nezza, Giampiero Palatucci, and Enrico Valdinoci,
  \emph{Hitchhiker's guide to the fractional {S}obolev spaces}, Bull. Sci.
  Math. \textbf{136} (2012), no.~5, 521--573.

\bibitem{HS98}
Alex Hubert and Rudolf Sch{\"a}fer, \emph{Magnetic domains: The analysis of
  magnetic microstructures}, Springer-Verlag, Berlin, 1998.

\bibitem{Ig}
Radu Ignat, \emph{A {$\Gamma$}-convergence result for {N}\'eel walls in
  micromagnetics}, Calc. Var. Partial Differential Equations \textbf{36}
  (2009), no.~2, 285--316.

\bibitem{Ignat_Knupfer}
Radu Ignat and Hans Kn\"upfer, \emph{Vortex energy and $360^\circ$ {N}\'eel
  walls in thin-film micromagnetics}, Comm. Pure Appl. Math. \textbf{63}
  (2010), no.~12, 1677--1724.

\bibitem{IgnMosARMA}
Radu Ignat and Roger Moser, \emph{Interaction {E}nergy of {D}omain {W}alls in a
  {N}onlocal {G}inzburg--{L}andau {T}ype {M}odel from {M}icromagnetics}, Arch.
  Ration. Mech. Anal. \textbf{221} (2016), no.~1, 419--485.

\bibitem{IO}
Radu Ignat and Felix Otto, \emph{A compactness result in thin-film
  micromagnetics and the optimality of the {N}\'eel wall}, J. Eur. Math. Soc.
  (JEMS) \textbf{10} (2008), no.~4, 909--956.

\bibitem{Lieb-Loss:01}
Elliott~H. Lieb and Michael Loss, \emph{Analysis}, second ed., Graduate Studies
  in Mathematics, vol.~14, American Mathematical Society, Providence, RI, 2001.

\bibitem{Lions:84}
P.-L. Lions, \emph{The concentration-compactness principle in the calculus of
  variations. {T}he locally compact case. {I}}, Ann. Inst. H. Poincar\'e Anal.
  Non Lin\'eaire \textbf{1} (1984), no.~2, 109--145.

\bibitem{Me1}
Christof Melcher, \emph{The logarithmic tail of {N}\'eel walls}, Arch. Ration.
  Mech. Anal. \textbf{168} (2003), no.~2, 83--113.

\bibitem{Me2}
\bysame, \emph{Logarithmic lower bounds for {N}\'eel walls}, Calc. Var. Partial
  Differential Equations \textbf{21} (2004), no.~2, 209--219.

\end{thebibliography}

\end{document}